\patchcmd{\thmhead}{(#3)}{#3}{}{} % remove parentheses in optional argument of theorem
\setlist[enumerate]{label=\mbox{(\textit{\roman*}\hspace{.08em})},font=\rm,itemsep=0em}
\newcommand{\hair}{\ifmmode\mskip1.5mu\else\kern0.08em\fi}
\tikzset{>=stealth}
\newcommand{\tikzto}{\mathrel{\tikz[baseline]\draw[ ->,line width=.45pt] (0ex,0.65ex) -- (3ex,0.65ex);}}
\newcommand{\tikzmapsto}{\mathrel{\tikz[baseline]\draw[|->,line width=.45pt] (0pt,0.65ex) -- (3ex,0.65ex);}}
\newcommand{\tikzshortrightarrow}{\mathrel{\tikz[baseline]\draw[->,line width=.45pt] (0ex,0.65ex) -- (1.75ex,0.65ex);}}
\newcommand{\tikztwoheadrightarrow}{\mathrel{\tikz[baseline]\draw[->>,line width=.45pt] (0ex,0.65ex) -- (3.5ex,0.65ex);}}
\newcommand{\tikztoarg}[1]{\mathrel{\tikz[baseline]\path[->,line width=.45pt] (0ex,0.65ex) edge node[above=-.4ex, overlay, font=\scriptsize] {$#1$} (3.5ex,.65ex);}}
\newcommand{\tikztosim}{\mathrel{\tikz[baseline] \path[->,line width=.45pt] (0ex,0.65ex) edge node[above=-.8ex, overlay, font=\normalsize, pos=.45] {$\sim$} (3.5ex,.65ex);}}
\newcommand{\hdot}{{\mkern.7mu\tikz[baseline] \draw [line width=1pt, line cap=round, fill=black] (0,0.5ex) circle(0.13ex);}}
\newcommand{\ldot}{{\mkern.1mu\tikz[baseline] \draw [line width=1pt, line cap=round, fill=black] (0,0.35ex) circle(0.13ex);\mkern3mu}}
\numberwithin{equation}{section}
\newtheorem{theorem}[equation]{Theorem}
\newtheorem*{theorem*}{Theorem}
\newtheorem{proposition}[equation]{Proposition}
\newtheorem{lemma}[equation]{Lemma}
\newtheorem*{lemma*}{Lemma}
\newtheorem{observation}[equation]{Observation}
\newtheorem{corollary}[equation]{Corollary}
\newtheorem*{corollary*}{Corollary}
\newtheorem{conjecture}[equation]{Conjecture}
\theoremstyle{definition}
\newtheorem{definition}[equation]{Definition}
\newtheorem{notation}[equation]{Notation}
\theoremstyle{remark}
\newtheorem{remark}[equation]{Remark}
\newtheoremstyle{none}% name
{} % Space above, empty = `usual value'
{} % Space below
{} % Body font
{} % Indent amount (empty = no indent, \parindent = para indent)
{} % Thm head font
{} % Punctuation after thm head
{0pt} % Space after thm head: \newline = linebreak
{} % Thm head spec
\theoremstyle{none}
\newtheorem*{none*}{}
\renewcommand{\Bar}{\operatorname{Bar}}
\newcommand{\e}{\mathrm{e}}
\DeclareMathOperator{\Ext}{Ext}
\DeclareMathOperator{\HH}{HH}
\DeclareMathOperator{\Hom}{Hom}
\newcommand{\id}{\mathrm{id}}
\newcommand{\K}{{\mathrm{K}}}
\newcommand{\Q}{{\mathrm{Q}}}
\newcommand{\I}{{\mathrm{I}}}
\newcommand{\KK}{{\overline{\mathrm{K}}}{}}
\newcommand{\QQ}{{\overline{\mathrm{Q}}}{}}
\newcommand{\II}{{\overline{\mathrm{I}}}{}}
\newcommand{\RR}{{\overline{\mathrm{R}}}{}}
\renewcommand{\SS}{{\overline{\mathrm{S}}}{}}
\newcommand{\cupd}[1]{\underaccent{\smile}{#1}}
\newcommand{\capd}[1]{\accentset{\frown}{#1}}
\newcommand{\up}{{\mkern.5mu\begin{tikzpicture}[x=.27em,y=.45em,decoration={markings,mark=at position 0.99 with {\arrow[black]{Stealth[length=4.8pt]}}}]
\draw[color=white,postaction={decorate}] (0,-.1) -- (0,.1);
\end{tikzpicture}\mkern.5mu}}
\newcommand{\dn}{{\mkern.5mu\begin{tikzpicture}[x=.27em,y=.45em,decoration={markings,mark=at position 0.99 with {\arrow[black]{Stealth[length=4.8pt]}}}]
\draw[color=white,postaction={decorate}] (0,.1) -- (0,-.1);
\end{tikzpicture}\mkern.5mu}}
\newcommand{\UP}[1]{\draw[color=white,postaction={decorate}] (#1,-.4) -- (#1,.35);}
\newcommand{\DN}[1]{\draw[color=white,postaction={decorate}] (#1,.4) -- (#1,-.35);}
\newcommand{\CIRCLE}[1]{\draw[line width=.5pt,line cap=round] (#1,-.21) -- (#1,.21) arc[start angle=180, end angle=0, radius=.5] -- ++(0,-.42) arc[start angle=0, end angle=-180, radius=.5];}
\newcommand{\ROUNDCIRCLE}[2]{\draw[line width=.5pt,line cap=round] (#1,-.21) -- (#1,.21) arc[start angle=180, end angle=0, radius=#2] -- ++(0,-.42) arc[start angle=0, end angle=-180, radius=#2];}
\newcommand{\TINYCIRCLE}[1]{\draw[line width=.5pt,line cap=round] (#1,-.3) -- (#1,.3) arc[start angle=180, end angle=0, radius=1] -- ++(0,-.6) arc[start angle=0, end angle=-180, radius=1];}
\newcommand{\CIRCLES}[2]{\draw[line width=.5pt,line cap=round] (#1,-.21) -- (#1,.21) arc[start angle=180, end angle=0, x radius=#2, y radius =.75*#2] -- ++(0,-.42) arc[start angle=0, end angle=-180, x radius=#2, y radius=.75*#2];}
\newcommand{\SCIRCLES}[2]{\draw[line width=.5pt,line cap=round] (#1,-.21) -- (#1,.21) arc[start angle=180, end angle=0, x radius=#2, y radius =(.55-.015*#2)*#2] -- ++(0,-.42) arc[start angle=0, end angle=-180, x radius=#2, y radius=(.55-.015*#2)*#2];}
\newcommand{\SCCIRCLE}[1]{\draw[line width=.5pt,line cap=round] (#1,0) arc[start angle=180, end angle=-180, x radius=1.5, y radius=1.2];}
\newcommand{\CAP}[1]{\draw[line width=.5pt,line cap=round] (#1,.21) arc[start angle=180, end angle=0, radius=.5];}
\newcommand{\CUP}[1]{\draw[line width=.5pt,line cap=round] (#1,-.21) arc[start angle=-180, end angle=0, radius=.5];}
\newcommand{\RAY}[1]{\draw[line width=.5pt,line cap=round] (#1,-1) -- (#1,1);}
\newcommand{\RAYS}[2]{\draw[line width=.5pt,line cap=round] (#1,-#2) -- (#1,#2);}
\newcommand{\RAYUP}[1]{\draw[line width=.5pt,line cap=round] (#1,-.21) -- (#1,1);}
\newcommand{\RAYDN}[1]{\draw[line width=.5pt,line cap=round] (#1,.21) -- (#1,-1);}
\newcommand{\CONNECT}[1]{\draw[line width=.5pt,line cap=round] (#1,-.21) -- (#1,.21);}
\newcommand{\CCIRCLES}[1]{\draw[line width=.5pt,line cap=round] (#1,0) arc[start angle=180, end angle=-180, radius=1];}
\newcommand{\MRAY}[1]{\draw[line width=.5pt,line cap=round] (#1,1) -- ++(0,-2) ++(-.25,0) -- ++(.5,0) ++(0,2) -- ++(-.5,0);}
\newcommand{\SCCAP}[1]{\draw[line width=.5pt,line cap=round] (#1,0) arc[start angle=180, end angle=0, x radius=1.5, y radius=1.2];}
\newcommand{\SCCUP}[1]{\draw[line width=.5pt,line cap=round] (#1,0) arc[start angle=-180, end angle=0, x radius=1.5, y radius=1.2];}
\newcommand{\CCCAP}[2]{\draw[line width=.5pt,line cap=round] (#1,.21) arc[start angle=180, end angle=0,  x radius=#2, y radius=.75*#2];}
\newcommand{\CCCUP}[2]{\draw[line width=.5pt,line cap=round] (#1,-.21) arc[start angle=-180, end angle=0, radius=#2, y radius=.75*#2 ];}
\newcommand{\ROUNDCAP}[2]{\draw[line width=.5pt,line cap=round] (#1,.21) arc[start angle=180, end angle=0, radius=#2];}
\newcommand{\ROUNDCUP}[2]{\draw[line width=.5pt,line cap=round] (#1,-.21) arc[start angle=-180, end angle=0, radius=#2];}
\newcommand{\CONNECTUP}[1]{\draw[line width=.5pt,line cap=round] (#1,0) -- (#1,.21);}
\newcommand{\CONNECTDN}[1]{\draw[line width=.5pt,line cap=round] (#1,-.21) -- (#1,0);}
\newcommand{\RRAYUP}[1]{\draw[line width=.5pt,line cap=round] (#1,0) -- (#1,1);}
\newcommand{\RRAYDN}[1]{\draw[line width=.5pt,line cap=round] (#1,0) -- (#1,-1);}
\newcommand{\RARC}[1]{% arc right
\draw[line width=.45pt,line cap=round] (#1,-.21) -- ++(0,.42) arc[start angle=180, end angle=120, radius=1];
\draw[line width=.45pt,line cap=round] (#1,-.21) -- ++(0,.42) arc[start angle=180, end angle=120, radius=1];
\draw[line width=.45pt,line cap=round] (#1,-.21) arc[start angle=180, end angle=240, radius=1];
\draw[dash pattern=on 0pt off 1.3pt, line width=.45pt, line cap=round] (#1,-.21) ++(0,.42) ++(60:1) arc[start angle=120, end angle=80, radius=1.2];
\draw[dash pattern=on 0pt off 1.3pt, line width=.45pt, line cap=round] (#1,-.21) ++(-60:1) arc[start angle=-120, end angle=-80, radius=1.2];}
\newcommand{\RRARC}[1]{% arc right
\draw[line width=.45pt,line cap=round] (#1,-.21) -- ++(0,.42) arc[start angle=180, end angle=120, radius=1];
\draw[line width=.45pt,line cap=round] (#1,-.21) -- ++(0,.42) arc[start angle=180, end angle=120, radius=1];
\draw[line width=.45pt,line cap=round] (#1,-.21) arc[start angle=180, end angle=240, radius=1];
\draw[dash pattern=on 0pt off 1.3pt, line width=.45pt, line cap=round] (#1,-.21) ++(0,.42) ++(60:1) arc[start angle=120, end angle=90, radius=1.2];
\draw[dash pattern=on 0pt off 1.3pt, line width=.45pt, line cap=round] (#1,-.21) ++(-60:1) arc[start angle=-120, end angle=-90, radius=1.2];}
\newcommand{\LARC}[1]{% arc left
\draw[line width=.45pt,line cap=round] (#1,-.21) -- ++(0,.42) arc[start angle=0, end angle=60, radius=1];
\draw[line width=.45pt,line cap=round] (#1,-.21) arc[start angle=0, end angle=-60, radius=1];
\draw[dash pattern=on 0pt off 1.3pt, line width=.45pt, line cap=round] (#1,-.21) ++(0,.42) ++(120:1) arc[start angle=60, end angle=100, radius=1];
\draw[dash pattern=on 0pt off 1.3pt, line width=.45pt, line cap=round] (#1,-.21) ++(-120:1) arc[start angle=-60, end angle=-100, radius=1];}
\newcommand{\LLARC}[1]{% arc left
\draw[line width=.45pt,line cap=round] (#1,-.21) -- ++(0,.42) arc[start angle=0, end angle=60, radius=1];
\draw[line width=.45pt,line cap=round] (#1,-.21) arc[start angle=0, end angle=-60, radius=1];
\draw[dash pattern=on 0pt off 1.3pt, line width=.45pt, line cap=round] (#1,-.21) ++(0,.42) ++(120:1) arc[start angle=60, end angle=90, radius=1];
\draw[dash pattern=on 0pt off 1.3pt, line width=.45pt, line cap=round] (#1,-.21) ++(-120:1) arc[start angle=-60, end angle=-90, radius=1];}
\newcommand{\UPRARC}[1]{% down arc right
\draw[line width=.45pt,line cap=round] (#1,-.21) -- ++(0,.42) arc[start angle=180, end angle=120, radius=1];
\draw[dash pattern=on 0pt off 1.3pt, line width=.45pt, line cap=round] (#1,-.21) ++(0,.42) ++(60:1) arc[start angle=120, end angle=80, radius=1.2];
}
\newcommand{\UPLARC}[1]{% up arc left
\draw[line width=.45pt,line cap=round] (#1,-.21) -- ++(0,.42) arc[start angle=0, end angle=60, radius=1];
\draw[dash pattern=on 0pt off 1.3pt, line width=.45pt, line cap=round] (#1,-.21) ++(0,.42) ++(120:1) arc[start angle=60, end angle=100, radius=1];
}
\newcommand{\DNRARC}[1]{% down arc right
\draw[line width=.45pt,line cap=round] (#1,-.21) arc[start angle=180, end angle=240, radius=1];
\draw[dash pattern=on 0pt off 1.3pt, line width=.45pt, line cap=round] (#1,-.21) ++(-60:1) arc[start angle=-120, end angle=-80, radius=1.2];}
\newcommand{\DNLARC}[1]{% down arc left
\draw[line width=.45pt,line cap=round] (#1,-.21) arc[start angle=0, end angle=-60, radius=1];
\draw[dash pattern=on 0pt off 1.3pt, line width=.45pt, line cap=round] (#1,-.21) ++(-120:1) arc[start angle=-60, end angle=-100, radius=1];}
\newcommand{\DDOTS}[1]{\node[font=\scriptsize] at (#1,0) {.\hspace{.6pt}.\hspace{.6pt}.};}
\newcommand{\DDOT}[1]{\node[font=\scriptsize] at (#1,0) {.};}
\begin{document}

\title[A$_\infty$ deformations of extended Khovanov arc algebras]{A$_\infty$ deformations of extended Khovanov arc algebras and Stroppel's conjecture}

\author{Severin Barmeier}
\address{Universität zu Köln, Mathematisches Institut, Weyertal 86-90, 50931 Köln, Germany \\ \textup{and} Hausdorff Research Institute for Mathematics, Poppelsdorfer Allee 45, 53115 Bonn, Germany}
\email{s.barmeier@gmail.com}

\author{Zhengfang Wang}
\address{Nanjing University, School of Mathematics, Nanjing 210093, Jiangsu, China \\ \textup{and} Universität Stuttgart, Institut für Algebra und Zahlentheorie, Pfaffen\-wald\-ring 57, 70569 Stuttgart, Germany}
\email{zhengfangw@nju.edu.cn}

\keywords{extended Khovanov arc algebras, Koszul duality, A$_\infty$ deformations, reduction systems, Kazhdan--Lusztig polynomials}

\subjclass[2010]{Primary 16E40; Secondary 18G70, 16S80, 17B10, 57K18, 53D37}

\vspace{-2em}
\begin{abstract}
Extended Khovanov arc algebras $\K_m^n$ are graded associative algebras which naturally appear in a variety of contexts, from knot and link homology, low-dimensional topology and topological quantum field theory to representation theory and symplectic geometry. C.~Stroppel conjectured in her ICM 2010 address that the bigraded Hochschild cohomology groups of $\K_m^n$ vanish in a certain range, implying that the algebras $\K_m^n$ admit no nontrivial A$_\infty$ deformations, in particular that the algebras are intrinsically formal.

Whereas Stroppel's conjecture is known to hold for the algebras $\K_m^1$ and $\K_1^n$ by work of Seidel and Thomas, we show that $\K_m^n$ does in fact admit nontrivial A$_\infty$ deformations with nonvanishing higher products for all $m, n \geq 2$.

We describe both the extended Khovanov arc algebras $\K_m^n$ and their Koszul duals concretely as path algebras of quivers with relations. This allows us to give an explicit algebraic construction of A$_\infty$ deformations of $\K_m^n$ by using the correspondence between A$_\infty$ deformations of a Koszul algebra and filtered associative deformations of its Koszul dual.
\end{abstract}

\maketitle

\vspace{-2em}
\tableofcontents

\section{Introduction}

Khovanov arc algebras were introduced by Khovanov in his seminal work \cite{khovanov} on the categorification of the Jones polynomial of links. The Khovanov arc algebras are ``diagrammatic'' algebras whose basis is given by ``arc diagrams'' with composition defined by stacking the diagrams vertically and applying certain simplification rules obtained from a $2$-dimensional TQFT (see \S\ref{section:arcalgebras} for more details). In \cite{stroppel1} Stroppel introduced the {\it extended Khovanov arc algebras} $\K_m^n$ for $m, n \geq 1$ as a generalisation, where the arc diagrams contain not only closed arcs (circles) but also open arcs (lines). The algebras $\K_m^n$ were also studied by Chen and Khovanov \cite{chenkhovanov} by viewing them as subquotients of the classical Khovanov arc algebras. Brundan and Stroppel showed that the algebras $\K_m^n$ are quasi-hereditary covers of the classical Khovanov arc algebras and that they have many nice properties, for example they are Koszul, cellular and homologically smooth \cite{brundanstroppel1,brundanstroppel2}. See also Stroppel's ICM 2022 address \cite{stroppel3} for a recent account and further applications to link and tangle invariants.

Besides their role in link homology, the algebras $\K_m^n$ play a central role in representation theory and naturally appear in symplectic geometry. In \cite{stroppel1} it is shown that the category $\mathrm{rep} (\K_m^n)$ of finite-dimensional representations of $\K_m^n$ is equivalent to the category of perverse sheaves on the Grassmannian $\mathrm{Gr} (m, m + n)$ and also equivalent to the principal block of the parabolic Bernstein--Gelfand--Gelfand category $\mathcal O$ associated to the parabolic subalgebra corresponding to $\mathfrak{gl}_m (\mathbb C) \oplus \mathfrak{gl}_n (\mathbb C) \subset \mathfrak{gl}_{m+n} (\mathbb C)$, see Brundan and Stroppel \cite{brundanstroppel1,brundanstroppel2,brundanstroppel3,brundanstroppel4}, also for further applications in representation theory.

A symplectic analogue of classical Khovanov homology and the Jones polynomial was given by Seidel and Smith \cite{seidelsmith} via the Lagrangian Floer homology of certain nilpotent slices, described by Manolescu \cite{manolescu} as Hilbert schemes of surfaces. Abouzaid and Smith \cite{abouzaidsmith1,abouzaidsmith2} proved this symplectic Khovanov homology to be isomorphic to Khovanov homology (after collapsing the bigrading of the latter). Similar results were recently obtained for the extended Khovanov arc algebras $\K_m^n$ by Mak and Smith \cite{maksmith} who showed that the DG category $\mathrm{perf}_{\mathrm{dg}} (\K_m^n)$ of perfect DG modules is quasi-equivalent to the Fukaya--Seidel A$_\infty$ category $\mathcal{FS} (X_m^n, \pi_m^n)$, where $X_m^n = \mathrm{Hilb}^m (A_{m+n-1}) \smallsetminus D$ is the affine complement of a certain divisor $D \subset \mathrm{Hilb}^m (A_{m+n-1})$ of the Hilbert scheme of $m$ points on the Milnor fibre (the generic deformation) of an $A_{m+n-1}$ surface singularity and $\pi_m^n \colon X_m^n \tikzto \mathbb C$ is a symplectic Lefschetz fibration. The key observation of their proof is that $\mathcal{FS} (X_m^n, \pi_m^n)$ admits a generator whose endomorphism A$_\infty$ algebra turns out to be formal and A$_\infty$-quasi-isomorphic to $\K_m^n$. Whereas there is a natural ``geometric'' generator of the Fukaya--Seidel category $\mathcal{FS} (X_m^n, \pi_m^n)$ given by the Lefschetz thimbles associated to vanishing cycles \cite[\S 18]{seidel2}, the endomorphism A$_\infty$ algebra of this generator is {\it not} formal whenever $m, n \geq 2$. This is shown in \cite[App.~A]{maksmith} using the nontrivial A$_\infty$ structure on the $\Ext$-algebra of parabolic Verma modules given by Klamt and Stroppel \cite{klamtstroppel}. The quasi-equivalence between $\mathcal{FS} (X_m^n, \pi_m^n)$ and the DG category of perfect DG modules over $\K_m^n$ should therefore be understood as a formality result for $\mathcal{FS} (X_m^n, \pi_m^n)$. All of these results can be viewed as part of the continually growing dictionary between knot homology, symplectic geometry and representation theory.

In her ICM 2010 address, Stroppel stated the following conjecture about the bigraded Hochschild cohomology groups of $\K_m^n$.

\begin{conjecture}[{\cite[Conj.~2.7]{stroppel2}}]
\label{conjecture:stroppel}
$\HH^2_{i-2} (\K_m^n, \K_m^n) = 0$ if $i \neq 0$.\footnote{What we denote by $\HH^2_{i-2} (A, A)$ in this article is denoted by $\mathbb H^{i} (A)_{2-i}$ in \cite{stroppel2}.}
\end{conjecture}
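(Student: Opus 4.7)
The plan is to exploit the Koszul property of $\K_m^n$: by the general formality-versus-deformation dictionary alluded to in the abstract, the vanishing $\HH^2_{i-2}(\K_m^n, \K_m^n) = 0$ for $i \neq 0$ is equivalent to rigidity of the Koszul dual $(\K_m^n)^{!}$ among filtered associative deformations. Proving the conjecture therefore reduces to showing that every filtered associative deformation of $(\K_m^n)^{!}$ is equivalent to its associated graded algebra, and it is on this side---the filtered, purely associative side---that the Hochschild computation is lightest, since $(\K_m^n)^{!}$ is a finite-dimensional quadratic algebra whose deformations are controlled by a small finite complex.

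First step. Describe $(\K_m^n)^{!}$ explicitly as a path algebra $kQ/I$ of a finite quiver with quadratic relations, using the arc-diagram presentation of the extended Khovanov arc algebras. Choose a monomial reduction system for $I$ compatible with the path-length grading, and use it to build a small projective bimodule resolution of $(\K_m^n)^{!}$ whose terms are indexed by the overlap ambiguities of the reduction system (in the style of Bergman and Chouhy--Solotar). Applying $\Hom_{A^{e}}(-, A)$ then produces a concrete bigraded complex $C^{*}_{*}$ computing $\HH^{*}_{*}\bigl((\K_m^n)^{!}, (\K_m^n)^{!}\bigr)$.

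Second step. Identify the bidegree component of $C^{2}$ corresponding under Koszul duality to $\HH^{2}_{i-2}(\K_m^n, \K_m^n)$. A $2$-cocycle in this bidegree is, concretely, an assignment to each overlap ambiguity of a linear combination of paths of appropriate weight satisfying the cocycle condition, and a coboundary corresponds to a filtered change of generators of $(\K_m^n)^{!}$. The goal is then, case-by-case on the combinatorial type of the ambiguity, to exhibit an explicit change of generators trivializing each cocycle, extending the Seidel--Thomas argument from the edge cases $\K_m^1$ and $\K_1^n$.

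Main obstacle. The step I expect to fail is exactly this ambiguity-by-ambiguity trivialization, because for $m, n \geq 2$ the quiver of $(\K_m^n)^{!}$ acquires vertices at which two independent arc surgeries interact, producing overlap ambiguities that admit strictly more $2$-cocycles than the space of infinitesimal changes of generators can absorb. The sensible next move is therefore not to push harder on rigidity but to write down such a cocycle explicitly: if one can be exhibited and shown not to be a coboundary, it directly yields a nontrivial higher product $m_k$ with $k \geq 3$ on the A$_\infty$ side and refutes the conjecture. My expectation is that the genuine ``obstacle'' to the proof strategy above is the existence of a family of such concrete counterexamples, and that the combinatorics of the reduction system for $(\K_m^n)^{!}$ is precisely what makes them tractable to describe.
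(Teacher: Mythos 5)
The statement you were asked to prove is a conjecture that the paper in fact \emph{refutes} for all $m, n \geq 2$ (Theorem~\ref{thm:maintheorem}); it is known to hold only when $\min(m,n)=1$. Your proposal correctly diagnoses both that the conjecture fails and where: the reduction-system computation of $\HH^2_q\bigl(\KK_m^n, \KK_m^n\bigr)$ produces a cocycle in a certain Adams degree that no infinitesimal change of generators absorbs. The machinery you describe --- present $\KK_m^n$ as $\Bbbk\QQ_m^n/\II_m^n$, choose a reduction system compatible with the path-length grading, pass to the Chouhy--Solotar complex, and identify $\HH^2$ of the Koszul dual with first-order deformations of the reduction system --- is exactly the paper's (Proposition~\ref{proposition:relationdual}, Theorem~\ref{theorem:irreduciblediamond}, Corollary~\ref{corollary:firstorder}). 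The realization of your prediction occurs in Adams degree $2mn-6$: the paper writes out the eleven-parameter cochain space \eqref{align:generalformawidetildephimn}, observes that every such cochain is automatically a cocycle because all overlaps resolve to zero for degree reasons, and then shows the coboundary image is cut out by the single linear relation \eqref{align:constraint}, leaving a one-dimensional cohomology class that it promotes to an explicit nontrivial $\mathrm{A}_\infty$ deformation of $\K_m^n$ (Corollary~\ref{corollary:Ainfinitydeformation}). The only gap between your proposal and a complete argument is that you leave ``exhibit such a cocycle and show it is not a coboundary'' as a step for later; that is precisely the content of \S\ref{section:conjecture} and the appendix.
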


Whereas the classical Khovanov arc algebras are symmetric algebras with large Hochschild (co)homology groups which can be described in terms of the Khovanov homology of torus links (see Rozansky \cite[Thm.~6.9]{rozansky}), their quasi-hereditary covers $\K_m^n$ are homologically smooth and have much smaller Hochschild cohomology. Conjecture \ref{conjecture:stroppel} implies in particular that (contrary to the classical arc algebras) the extended Khovanov arc algebras $\K_m^n$ are {\it intrinsically formal}, i.e.\ that every A$_\infty$ structure on $\K_m^n$ is equivalent to the original associative structure. It was already known by earlier work of Seidel and Thomas \cite[Lem.~4.21]{seidelthomas} (also for other choices of grading) that Conjecture \ref{conjecture:stroppel} holds for $\K_m^n$ whenever $\min (m, n) = 1$. Moreover, Conjecture \ref{conjecture:stroppel} would yield the formality result of Mak and Smith \cite{maksmith} mentioned above, as well as the formality result of Abouzaid and Smith \cite{abouzaidsmith1} for the symplectic arc algebras.

In contrast, our main result is the following.

\begin{theorem*}[Theorem \ref{thm:maintheorem} and Corollary \ref{corollary:Ainfinitydeformation}]
For all $m, n \geq 2$ we have the following:
\begin{enumerate}
\item $\HH^2_{i-2} (\K_m^n, \K_m^n)$ is $1$-dimensional for $i = 2mn-4$. \label{main1}
\item $\K_m^n$ is not intrinsically formal. \label{main2}
\item $\K_m^n$ admits an explicit nontrivial A$_\infty$ deformation. \label{main3}
\end{enumerate}
\end{theorem*}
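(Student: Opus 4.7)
My plan is to exploit the Koszul duality of $\K_m^n$ to reduce everything to a computation on the Koszul dual, which should be an easier, PBW-type algebra. First I would give explicit quiver-with-relations presentations of both $\K_m^n$ and its Koszul dual $(\K_m^n)^!$, reading off the arrows from the usual arc-diagram basis and the relations from the surgery rules of the underlying $2$-dimensional TQFT. With such presentations fixed, I would pick a reduction system (a PBW-type basis) and use the associated Anick-style resolution to compute a small bigraded model for the Hochschild cochain complex of $(\K_m^n)^!$, then transport the answer to $\K_m^n$ via the Koszul-duality isomorphism of bigraded Hochschild cohomologies.

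The centrepiece of part (\ref{main1}) is locating the class in the extremal bidegree with $i = 2mn-4$. Because this bidegree lies so far from the diagonal, there should be essentially only one candidate: a $2$-cocycle that deforms a single relation of maximal possible internal degree on the Koszul-dual side. I would write this cocycle down in terms of the chosen reduction system and check that it is not a coboundary by direct inspection of the reduction-system complex; the upper bound $\dim \leq 1$ should follow by pigeonhole, by observing that the relevant bigraded piece of the cochain complex contains only a small number of paths of the correct total length and internal weight, and that all but one of these are eliminated by the quadratic relations.

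For parts (\ref{main2}) and (\ref{main3}), I would invoke the correspondence between filtered associative deformations of $(\K_m^n)^!$ and A$_\infty$ deformations of $\K_m^n$ that the introduction advertises: the $2$-cocycle of part (\ref{main1}) corresponds to a filtered perturbation of a single relation of $(\K_m^n)^!$, which should lift to an honest associative deformation because there is no room in the relevant total bidegree for an obstruction in $\HH^3$ on the Koszul-dual side. Transporting this filtered deformation back through Koszul duality produces an explicit A$_\infty$ deformation of $\K_m^n$ with a nonvanishing higher product in the arity predicted by the bidegree, and hence shows that $\K_m^n$ is not intrinsically formal.

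The main obstacle, I expect, is twofold. First, bounding the extremal bidegree piece of Hochschild cohomology tightly enough to rule out additional cocycles will require a careful combinatorial analysis of arc diagrams weighted by the internal grading, rather than an abstract vanishing argument, since for $\min(m,n) = 1$ the corresponding piece vanishes by Seidel--Thomas, so any proof must genuinely use that $m, n \geq 2$. Second, verifying that the candidate filtered deformation of $(\K_m^n)^!$ extends to all orders (and not merely to first order) will demand an explicit normal-form computation, likely by exhibiting a confluent perturbed reduction system and checking that the overlap ambiguities resolve.
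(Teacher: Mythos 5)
Your proposal follows essentially the same strategy as the paper: realise both $\K_m^n$ and its Koszul dual $\KK_m^n$ as quivers with relations, fix a reduction system and use the associated Bardzell--Chouhy--Solotar resolution to compute the top nonvanishing bigraded piece $\HH^2_{2mn-6}$ of the Hochschild cohomology on the Koszul-dual side, transport the answer through Keller's bigraded Koszul-duality isomorphism, and lift a first-order deformation of $\KK_m^n$ to an actual filtered deformation (the paper, as you anticipate, shows all overlap ambiguities resolve vacuously for degree reasons), whose minimal model gives the nontrivial $\mathrm A_\infty$ deformation. The only ingredient you do not foresee in detail is that the diamond condition for the chosen reduction system is verified by identifying the irreducible paths with the Kazhdan--Lusztig combinatorics, but this sits squarely inside the ``careful combinatorial analysis'' you flag as an expected obstacle.
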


Although \ref{main1} settles Conjecture \ref{conjecture:stroppel} negatively for $m, n \geq 2$, \ref{main2} and \ref{main3} establish the existence of interesting higher structures on the extended Khovanov arc algebras $\K_m^n$ whenever $m, n \geq 2$, see Corollary \ref{corollary:Ainfinitydeformation} and also Fig.~\ref{figure:diagramdeformation} for a diagrammatic interpretation of the A$_\infty$ deformations of $\K_2^2$. Moreover, the A$_\infty$ deformations of $\K_m^n$ can be viewed as deformations of the Fukaya--Seidel category $\mathcal{FS} (X_m^n, \pi_m^n)$ (see Remark \ref{remark:fukayaseidel}). 

Our proof relies on a method of computing Hochschild cohomology and associative deformations developed in \cite{barmeierwang1} and can be divided into three steps as follows. In the first step, we realise $\K_m^n$ as a quiver with relations, i.e.\ we give an algebra isomorphism $\K_m^n \simeq \Bbbk \Q_m^n / \I_m^n$ for some finite quiver $\Q_m^n$ with admissible ideal $\I_m^n$ which may be of independent interest. We show that the quiver $\Q_m^n$ is the double quiver of a bipartite graph $\Gamma_m^n$ (Proposition \ref{proposition:bipartite}) and that the ideal $\I_m^n$ is generated by three types of quadratic relations (Proposition \ref{proposition:relations}). 

In the second step, noting that $\K_m^n$ is a Koszul algebra \cite[\S 5]{brundanstroppel2}, we describe its Koszul dual as $\KK_m^n = \Bbbk \QQ_m^n / \II_m^n$, where $\QQ_m^n$ is the opposite quiver of $\Q_m^n$ and $\II_m^n$ the ideal generated by the orthogonal quadratic relations (Proposition \ref{proposition:relationdual}). We show that $\KK_m^n$ admits a natural reduction system $\RR_m^n$ satisfying the diamond condition by showing that the number of the associated irreducible paths can be described by the Kazhdan--Lusztig polynomials (see \S\ref{subsection:irreduciblepaths}).

In the third step, we describe deformations of the reduction system $\RR_m^n$ to construct an explicit (nontrivial) first-order deformation of $\RR_m^n$ so that $\HH^{2}_{2mn-6}(\KK_m^n, \KK_m^n)$ is $1$-dimensional. By Keller's duality theorem (Theorem \ref{theorem:keller}) we obtain that $\HH^{2}_{2mn-6}(\K_m^n, \K_m^n)$ is also $1$-dimensional. The above construction in fact also gives a nontrivial (filtered) associative deformation of $\KK_m^n$. The minimal model of the derived endomorphism algebra of its simples then gives an explicit A$_\infty$ deformation of $\K_m^n$ (see Proposition \ref{prop:minimalmodel} and Corollary \ref{corollary:Ainfinitydeformation}).

The simplest example of a nontrivial A$_\infty$ deformation of $\K_m^n$ can be found in \S\ref{subsection:K22}, where the main results are proven for $\K_2^2$. (See Fig.~\ref{figure:22} for an illustration of the degree 0 and 1 arc diagrams in $\K_2^2$.) In fact, \S\ref{subsection:K22} can also be viewed as a blueprint for the general case which is technically more involved, but structurally similar. For degree reasons the algebra $\K_2^2$ in fact admits a {\it unique} A$_\infty$ deformation which is given in Corollary \ref{corollary:Ainfinitydeformation22}. A diagrammatic description of this A$_\infty$ deformation is given in Fig.~\ref{figure:diagramdeformation}.

\section{Extended Khovanov arc algebras}
\label{section:arcalgebras}

\subsection{Diagrammatics and grading}

We first recall the diagrammatics for the extended Khovanov arc algebras. For more details we refer to \cite{stroppel1,brundanstroppel1}.

Let $m, n \geq 1$ be fixed natural numbers. We fix a horizontal line in the plane and $m+n$ distinct points on it.

A {\it cup diagram} is a diagram consisting of $k \leq \lfloor \frac{m+n}{2} \rfloor$ nested cups and $m + n - 2k$ half-lines which are attached to the $m+n$ fixed points on the horizontal line. More precisely, the cups are nested lower semicircles lying below the horizontal line and joining pairs of points and the half-lines are attached to single points and extend downwards in such a way that cups and half-lines pairwise do not intersect (in particular, half-lines cannot be nested inside cups). For example, 
\[
\begin{tikzpicture}[x=.75em,y=.75em,decoration={markings,mark=at position 0.99 with {\arrow[black]{Stealth[length=4.8pt]}}}]
\RRAYDN{0} \CONNECTDN{1} \CCCUP{1}{1.5} \CONNECTDN{2} \CUP{2} \CONNECTDN{3} \CONNECTDN{4}
\end{tikzpicture}
\]
is a cup diagram for $m + n = 5$.

A {\it cap diagram} is the mirror image of a cup diagram obtained by reflecting all cups along the fixed horizontal line. For example,
\[
\begin{tikzpicture}[x=.75em,y=.75em,decoration={markings,mark=at position 0.99 with {\arrow[black]{Stealth[length=4.8pt]}}}]
\RRAYUP{0} \CONNECTUP{1} \CCCAP{1}{1.5} \CONNECTUP{2} \CAP{2} \CONNECTUP{3} \CONNECTUP{4}
\end{tikzpicture}
\]
is a cap diagram.

A {\it weight} of type ${}_m^n$ is a sequence of weight symbols $\dn$ and $\up$ placed at the $m+n$ points, where $m$ is the total number of $\dn$'s and $n$ is the total number of $\up$'s. For example,
\[
\begin{tikzpicture}[x=.75em,y=.75em,decoration={markings,mark=at position 0.99 with {\arrow[black]{Stealth[length=4.8pt]}}}]
\UP{0} \DN{1} \DN{2} \UP{3} \UP{4}
\end{tikzpicture}
\]
is a weight of type ${}_2^3$. Let $\Lambda_m^n$ denote the set of weights of type $_m^n$. For example $\Lambda_1^2 = \{ \up \, \up \, \dn, \up \, \dn \, \up, \dn \, \up \, \up \}$. Note that $\# \Lambda_m^n = \binom{m + n}{m}$. 

Given a weight $\nu$, let $\cupd \nu$ denote the unique cup diagram obtained by recursively connecting directly neighbouring $\dn \, \up$ pairs in $\nu$ with a cup, ignoring $\up$'s and $\dn$'s already connected in a previous step, drawing half-lines extending downwards for the remaining positions, and deleting the weight symbols $\dn$ and $\up$ in the end. For example,
\[
\begin{tikzpicture}[x=.75em,y=.75em,decoration={markings,mark=at position 0.99 with {\arrow[black]{Stealth[length=4.8pt]}}}]
\begin{scope}[shift={(0,0)}]
\node at (-1.8,0) {$\nu = {}$};
\UP{0} \DN{1} \DN{2} \UP{3} \UP{4}
\node at (6,-.18) {$\tikzshortrightarrow$};
\end{scope}
\begin{scope}[shift={(8,0)}]
\UP{0} \DN{1} \DN{2} \UP{3} \UP{4}
\CUP{2} \CONNECTDN{3}
\node at (6,-.18) {$\tikzshortrightarrow$};
\end{scope}
\begin{scope}[shift={(16,0)}]
\UP{0} \DN{1} \DN{2} \UP{3} \UP{4}
\CCCUP{1}{1.5} \CUP{2} \CONNECTDN{3} \CONNECTDN{4}
\node at (6,-.18) {$\tikzshortrightarrow$};
\end{scope}
\begin{scope}[shift={(24,0)}]
\UP{0} \DN{1} \DN{2} \UP{3} \UP{4}
\RAYDN{0} \CCCUP{1}{1.5} \CUP{2} \CONNECTDN{3} \CONNECTDN{4}
\node at (6,-.18) {$\tikzshortrightarrow$};
\end{scope}
\begin{scope}[shift={(32,.2)}]
\RRAYDN{0} \CONNECTDN{1} \CCCUP{1}{1.5} \CONNECTDN{2} \CUP{2} \CONNECTDN{3} \CONNECTDN{4}
\node at (5.8,-.38) {${} = \cupd \nu$};
\end{scope}
\end{tikzpicture}
\]
(Note that $\cupd \nu$ does not depend on the order of the neighbouring $\dn \, \up$ pairs chosen in the recursion.)

\begin{figure}
%\centering
\begin{tikzpicture}[x=.75em,y=.75em]
\begin{scope}[decoration={markings,mark=at position 0.99 with {\arrow[black]{Stealth[length=4.8pt]}}}]
\DN{0} \UP{1} \DN{2} \UP{3} \DN{4} \UP{5} \DN{6} \UP{7} \DN{8} \UP{9} \DN{10} \UP{11} \UP{12} \DN{13} \DN{14}
\CIRCLE{3} \CIRCLE{8}
\draw[line width=.5pt,line cap=round] (6,-1.5) -- (6,.15) arc[start angle=180, end angle=0, radius=.5] -- ++(0,-.3) arc[start angle=-180, end angle=0, x radius=1.5, y radius=.75*1.5] -- ++(0,.3) arc[start angle=0, end angle=180, x radius=4.5, y radius=.55*4.5] -- ++(0,-.3) arc[start angle=0, end angle=-180, radius=.5] -- ++(0,.3) arc[start angle=180, end angle=0, x radius=5.5, y radius=.6*5.5] -- ++(0,-.3) arc[start angle=-180, end angle=0, x radius=1.5, y radius=.75*1.5] -- (14,3.5);
\draw[line width=.5pt,line cap=round] (2,-.15) -- ++(0,.3) arc[start angle=180, end angle=0, x radius=1.5, y radius=.75*1.5] -- ++(0,-.3) arc[start angle=0, end angle=-180, x radius=1.5, y radius=.75*1.5];
\draw[line width=.5pt,line cap=round] (12,3.5) -- (12,-.15) arc[start angle=-180, end angle=0, radius=.5] -- (13,3.5);
\end{scope}
\begin{scope}[shift={(18,.91)},decoration={markings,mark=at position 0.8 with {\arrow[black]{Stealth[length=2.8pt]}}}]
\DN{0} \DN{1} \DN{2} \DN{3} \UP{4} \UP{5} \DN{6} \UP{7} \DN{8} \UP{9} \UP{10} \UP{11} \UP{12} \DN{13} \DN{14}
\end{scope}
\begin{scope}[shift={(18,.5)}]
\draw[line width=.5pt,line cap=round] (0,.5) arc[start angle=180, end angle=0, x radius=5.5, y radius=.6*5.5];
\draw[line width=.5pt,line cap=round] (1,.5) arc[start angle=180, end angle=0, x radius=4.5, y radius=.55*4.5];
\draw[line width=.5pt,line cap=round] (2,.5) arc[start angle=180, end angle=0, x radius=1.5, y radius=.75*1.5];
\draw[line width=.5pt,line cap=round] (3,.5) arc[start angle=180, end angle=0, radius=.5];
\draw[line width=.5pt,line cap=round] (6,.5) arc[start angle=180, end angle=0, radius=.5];
\draw[line width=.5pt,line cap=round] (8,.5) arc[start angle=180, end angle=0, radius=.5];
\draw[line width=.5pt,line cap=round] (12,.5) -- (12,3.75);
\draw[line width=.5pt,line cap=round] (13,.5) -- (13,3.75);
\draw[line width=.5pt,line cap=round] (14,.5) -- (14,3.75);
\node[font=\footnotesize,right] at (15,1.2) {$\capd \beta$ \ cap diagram};
\node[font=\footnotesize,right] at (15,-.5) {$\lambda$ \ weight};
\node[font=\footnotesize,right] at (15,-2.2) {$\cupd \alpha$ \ cup diagram};
\end{scope}
\begin{scope}[shift={(18,-.91)},decoration={markings,mark=at position 0.8 with {\arrow[black]{Stealth[length=2.8pt]}}}]
\DN{0} \UP{1} \DN{2} \DN{3} \UP{4} \UP{5} \DN{6} \DN{7} \DN{8} \UP{9} \UP{10} \DN{11} \DN{12} \UP{13} \UP{14}
\end{scope}
\begin{scope}[shift={(18,-.5)}]
\draw[line width=.5pt,line cap=round] (0,-.5) arc[start angle=-180, end angle=0, radius=.5];
\draw[line width=.5pt,line cap=round] (2,-.5) arc[start angle=-180, end angle=0, x radius=1.5, y radius=.75*1.5];
\draw[line width=.5pt,line cap=round] (3,-.5) arc[start angle=-180, end angle=0, radius=.5];
\draw[line width=.5pt,line cap=round] (7,-.5) arc[start angle=-180, end angle=0, x radius=1.5, y radius=.75*1.5];
\draw[line width=.5pt,line cap=round] (8,-.5) arc[start angle=-180, end angle=0, radius=.5];
\draw[line width=.5pt,line cap=round] (11,-.5) arc[start angle=-180, end angle=0, x radius=1.5, y radius=.75*1.5];
\draw[line width=.5pt,line cap=round] (12,-.5) arc[start angle=-180, end angle=0, radius=.5];
\draw[line width=.5pt,line cap=round] (6,-.5) -- (6,-2);
\end{scope}
\begin{scope}[shift={(18,0)},decoration={markings,mark=at position 0.99 with {\arrow[black]{Stealth[length=4.8pt]}}}]
\DN{0} \UP{1} \DN{2} \UP{3} \DN{4} \UP{5} \DN{6} \UP{7} \DN{8} \UP{9} \DN{10} \UP{11} \UP{12} \DN{13} \DN{14}
\end{scope}
\end{tikzpicture}%
\caption{An arc diagram (left) of type $_8^7$ of degree $6$ and its decomposition as a gluing of a cup diagram and a cap diagram along a weight (right)}
\label{figure:87}
\end{figure}

Let $\alpha, \beta, \lambda$ be weights of type ${}_m^n$. The cup diagram $\cupd \alpha$ and the cap diagram $\capd \beta$ can be glued along $\lambda$ as follows. The weight symbols of $\lambda$ are placed at the $m + n$ fixed points and the cups, caps and half-lines in $\cupd \alpha$ and $\capd \beta$ are glued together at these $m + n$ points, giving rise to open and closed arcs in the plane passing through the $\dn$'s and $\up$'s of the weight $\lambda$. An {\it arc diagram} of type $_m^n$ is such a gluing, which we denote by $\cupd \alpha \lambda \capd \beta$, satisfying the following constraints:
\begin{enumerate}
\item \label{arc2} the $\up$'s and $\dn$'s of $\lambda$ through which any single arc passes induce a well-defined orientation on the arc
\item \label{arc3} for any two half-lines extending both to the top or both to the bottom, their orientation must {\it not} be of the form $\dn \dotsb \up$.
\end{enumerate}
See Fig.~\ref{figure:87} for an illustration of an arc diagram as a gluing of a cup and cap diagram along a weight and Fig.~\ref{figure:12} for an illustration of all arc diagrams for all weights of type $_1^2$. Note that the following diagrams are {\it not} arc diagrams of type $_1^2$
\[
\begin{tikzpicture}[x=.75em,y=.75em,decoration={markings,mark=at position 0.99 with {\arrow[black]{Stealth[length=4.8pt]}}}]
\begin{scope}[shift={(6,0)}]
\UP{0} \UP{1} \DN{2}
\RAYDN{0} \CAP{0} \RAYDN{1} \RAY{2}
\end{scope}
\begin{scope}[shift={(12,0)}]
\DN{0} \UP{1} \UP{2}
\RAYDN{0} \CAP{0} \RAYDN{1} \RAY{2}
\end{scope}
\begin{scope}[shift={(18,0)}]
\UP{0} \DN{1} \UP{2}
\RAY{0} \RAY{1} \RAY{2}
\end{scope}
\end{tikzpicture}
\]
because they violate conditions \ref{arc2}, \ref{arc3} and \ref{arc3}, respectively.

\begin{figure}
\centering
\begin{tikzpicture}[x=.75em,y=.75em,decoration={markings,mark=at position 0.99 with {\arrow[black]{Stealth[length=4.8pt]}}}]
\begin{scope}[shift={(0,2.5)}]
\node at (-7,0) {weight};
\node at (1,0) {degree $0$};
\node at (9,0) {degree $1$};
\node at (17,0) {degree $2$};
\end{scope}
\begin{scope}[shift={(-8,0)}]
\UP{0} \UP{1} \DN{2}
\end{scope}
\begin{scope}[shift={(-8,-3)}]
\UP{0} \DN{1} \UP{2}
\end{scope}
\begin{scope}[shift={(-8,-6)}]
\DN{0} \UP{1} \UP{2}
\end{scope}
\begin{scope}[shift={(0,0)}]
\UP{0} \UP{1} \DN{2}
\RAY{0} \RAY{1} \RAY{2}
\end{scope}
\begin{scope}[shift={(0,-3)}]
\UP{0} \DN{1} \UP{2}
\RAY{0} \CIRCLE{1}
\end{scope}
\begin{scope}[shift={(0,-6)}]
\DN{0} \UP{1} \UP{2}
\CIRCLE{0} \RAY{2}
\end{scope}
\begin{scope}[shift={(6,0)}]
\UP{0} \UP{1} \DN{2}
\RAY{0} \RAYUP{1} \CUP{1} \RAYUP{2}
\end{scope}
\begin{scope}[shift={(10,0)}]
\UP{0} \UP{1} \DN{2}
\RAY{0} \RAYDN{1} \CAP{1} \RAYDN{2}
\end{scope}
\begin{scope}[shift={(6,-3)}]
\UP{0} \DN{1} \UP{2}
\RAYDN{0} \CAP{0} \CONNECT{1} \CUP{1} \RAYUP{2}
\end{scope}
\begin{scope}[shift={(10,-3)}]
\UP{0} \DN{1} \UP{2}
\RAYUP{0} \CUP{0} \CONNECT{1} \CAP{1} \RAYDN{2}
\end{scope}
\begin{scope}[shift={(16,0)}]
\UP{0} \UP{1} \DN{2}
\RAY{0} \CIRCLE{1}
\end{scope}
\begin{scope}[shift={(16,-3)}]
\UP{0} \DN{1} \UP{2}
\CIRCLE{0} \RAY{2}
\end{scope}
\end{tikzpicture}
\caption{All arc diagrams for weights of type $_1^2$}
\label{figure:12}
\end{figure}

On the right of Fig.~\ref{figure:87} we have also drawn the weight symbols of $\alpha$ and $\beta$ for reference, but in the resulting arc diagram, the weights $\alpha$ and $\beta$ are only reflected in the cups and caps, but not in the orientation of the arcs which are determined by the weight $\lambda$.

\begin{definition}[{\cite[\S 2]{brundanstroppel1}}]
\label{definition:arcdiagramdegreezero}
The {\it degree} of an arc diagram is given by the number of clockwise caps and clockwise cups it contains. Note that for each weight $\lambda$, the arc diagram $e_\lambda := \cupd \lambda \lambda \capd \lambda$ is the unique arc diagram of degree $0$, since the cups in $\cupd \lambda$ and the caps in $\capd \lambda$ are all oriented counterclockwise in $e_\lambda$.

The number of (counterclockwise) circles in $e_\lambda$ is called the {\it defect} of $\lambda$ and is denoted by $\mathrm{def} (\lambda)$.
\end{definition}

For example, the arc diagram of Fig.~\ref{figure:87} is of degree $6$ as it contains four clockwise cups and two clockwise caps. 

\begin{definition}
\label{definition:Kmn}
Let $\Bbbk$ be a field of characteristic $0$. The {\it extended Khovanov arc algebra} $\K_m^n$ is the $\mathbb N$-graded $\Bbbk$-algebra with basis given by the arc diagrams for weights of type $_m^n$, graded by their degree, and multiplication given as follows.

For any two arc diagrams $a = \cupd \alpha \lambda \capd \beta$ and $b = \cupd \gamma \mu \capd \delta$, their product $ab$ is zero unless $\beta = \gamma$ in which case $ab$ is obtained by writing $a$ below $b$, connecting the open arcs (lines) in the same position, and repeatedly performing the surgery
\[
\begin{tikzpicture}[x=.6em,y=.6em,decoration={markings,mark=at position 0.99 with {\arrow[black]{Stealth[length=3.8pt]}}}]
\begin{scope} % top
\DDOTS{0} \DDOTS{2} \DDOTS{4}
\draw[line width=.5pt,line cap=round] (1,.3) -- (1,0) arc[start angle=-180, end angle=0, radius=1] -- ++(0,.3);
\draw[dash pattern=on 0pt off 1.3pt, line width=.5pt, line cap=round] (1,.35) -- ++(0,.7);
\draw[dash pattern=on 0pt off 1.3pt, line width=.5pt, line cap=round] (3,.35) -- ++(0,.7);
\end{scope}
\begin{scope}[shift={(0,-3)}] % bottom
\DDOTS{0} \DDOTS{2} \DDOTS{4}
\draw[line width=.5pt,line cap=round] (1,-.3) -- (1,0) arc[start angle=180, end angle=0, radius=1] -- ++(0,-.3);
\draw[dash pattern=on 0pt off 1.3pt, line width=.5pt, line cap=round] (1,-.35) -- ++(0,-.7);
\draw[dash pattern=on 0pt off 1.3pt, line width=.5pt, line cap=round] (3,-.35) -- ++(0,-.7);
\end{scope}
\draw[->, line width=.5pt] (6,-1.5) -- ++(2,0);
\begin{scope}[shift={(10,0)}]
\begin{scope} % top
\DDOTS{0} \DDOTS{2} \DDOTS{4}
\draw[line width=.5pt,line cap=round] (1,.3) -- (1,-3.3);
\draw[line width=.5pt,line cap=round] (3,.3) -- (3,-3.3);
\draw[dash pattern=on 0pt off 1.3pt, line width=.5pt, line cap=round] (1,.35) -- ++(0,.7);
\draw[dash pattern=on 0pt off 1.3pt, line width=.5pt, line cap=round] (3,.35) -- ++(0,.7);
\end{scope}
\begin{scope}[shift={(0,-3)}] % bottom
\DDOTS{0} \DDOTS{2} \DDOTS{4}
\draw[dash pattern=on 0pt off 1.3pt, line width=.5pt, line cap=round] (1,-.35) -- ++(0,-.7);
\draw[dash pattern=on 0pt off 1.3pt, line width=.5pt, line cap=round] (3,-.35) -- ++(0,-.7);
\end{scope}
\end{scope}
\end{tikzpicture}
\]
on the underlying (unoriented) arcs for a cup--cap pair (i.e.\ cup and cap correspond to each other in the reflection identifying $\capd \beta$ and $\cupd \beta$) which can be connected without crossing other arcs. After each surgery (cutting the arcs open and reconnecting them as illustrated), the resulting arcs are reoriented by labelling the connected components containing the cup and the cap by one of $1$, $\epsilon$, or $\zeta$ according to whether the component is a closed arc oriented counterclockwise, a closed arc oriented clockwise, or an oriented open arc (line), respectively, and reorienting the resulting connected components according to the following rules:
\begin{enumerate}
\item If the surgery produces one connected component out of two, orient it according to
\begin{alignat*}{4}
1 \otimes 1 &\tikzmapsto 1 \qquad & 1 \otimes \epsilon &\tikzmapsto \epsilon \qquad & 1 \otimes \zeta &\tikzmapsto \zeta \qquad & \epsilon \otimes \zeta &\tikzmapsto 0 \\
\epsilon \otimes 1 &\tikzmapsto \epsilon & \epsilon \otimes \epsilon &\tikzmapsto 0 & \zeta \otimes 1 &\tikzmapsto \zeta & \zeta \otimes \epsilon &\tikzmapsto 0
\end{alignat*}
where $\tikzmapsto 0$ indicates that the arc diagram is deleted and does not contribute to the product.
\item If the surgery produces two connected components out of one, orient them according to
\[
1 \tikzmapsto 1 \otimes \epsilon + \epsilon \otimes 1 \qquad \epsilon \tikzmapsto \epsilon \otimes \epsilon \qquad \zeta \tikzmapsto \epsilon \otimes \zeta
\]
where the first rule means that the surgery produces a sum of two diagrams by duplicating --- in one diagram one of the two new closed arcs is oriented clockwise and the other counterclockwise and in the other diagram they are oriented in the opposite way (see the lower part of Fig.~\ref{figure:multiplication}). The two diagrams should separately be simplified further if necessary.
\item If the surgery produces two connected components out of two, then this only happens if both are open arcs (lines $\zeta \otimes \zeta$) and one sets $ab = 0$ unless both half-lines from one of the lines were oriented $\up$ and both half-lines from the other line were oriented $\dn$ and the orientation from the half-lines is carried over to the resulting lines (see the upper part of Fig.~\ref{figure:multiplication}). Symbolically we write
\[
\zeta \otimes \zeta \tikzmapsto \begin{cases} 0 \\ \zeta \otimes \zeta. \end{cases}
\]
\end{enumerate}
Finally, if all cup--cap pairs have been removed through surgery in each summand, then we can identify the weights and obtain a sum of arc diagrams for some weights of the same type. 
\end{definition}

For example, $\epsilon \tikzmapsto \epsilon \otimes \epsilon$ indicates that if the surgery produces two closed arcs out of one clockwise closed arc ($\epsilon$), then the resulting closed arcs are both oriented clockwise ($\epsilon \otimes \epsilon$). 
See the lower part of Fig.~\ref{figure:multiplication} for an illustration.

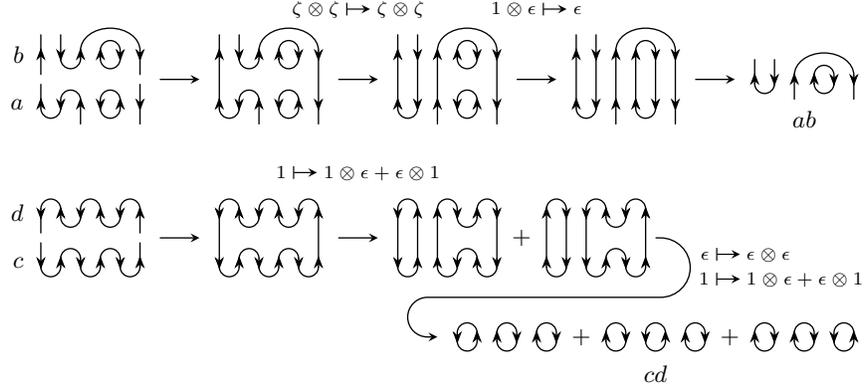
\begin{figure}
\begin{tikzpicture}[x=.75em,y=.75em,decoration={markings,mark=at position 0.99 with {\arrow[black]{Stealth[length=4.8pt]}}}]
\begin{scope}
\draw[->, line width=.5pt] (6,-1.25) -- ++(2,0);
\draw[->, line width=.5pt] (15,-1.25) -- ++(2,0);
\draw[->, line width=.5pt] (24,-1.25) -- ++(2,0);
\draw[->, line width=.5pt] (33,-1.25) -- ++(2,0);
\node[font=\scriptsize] at (16,2.25) {\strut$\zeta \otimes \zeta \tikzmapsto \zeta \otimes \zeta$};
\node[font=\scriptsize] at (25,2.25) {\strut$1 \otimes \epsilon \tikzmapsto \epsilon$};
\begin{scope} % top
\node[font=\small,left] at (-.4,0) {$b$};
\UP{0} \DN{1} \UP{2} \UP{3} \DN{4} \DN{5}
\RAY{0} \RAYUP{1} \CUP{1} \CONNECT{2} \CCCAP{2}{1.5} \CIRCLE{3} \RAYDN{5}
\end{scope}
\begin{scope}[shift={(0,-2.5)}] % bottom
\node[font=\small,left] at (-.4,0) {$a$};
\UP{0} \DN{1} \UP{2} \DN{3} \UP{4} \DN{5}
\RAYUP{0} \CUP{0} \CONNECT{1} \CAP{1} \RAYDN{2} \CIRCLE{3} \RAY{5}
\end{scope}
\begin{scope}[shift={(9,0)}]
\begin{scope} % top
\UP{0} \DN{1} \UP{2} \UP{3} \DN{4} \DN{5}
\RAYUP{1} \CUP{1} \CONNECT{2} \CCCAP{2}{1.5} \CIRCLE{3}
\end{scope}
\begin{scope}[shift={(0,-2.5)}] % bottom
\UP{0} \DN{1} \UP{2} \DN{3} \UP{4} \DN{5}
\CUP{0} \CONNECT{1} \CAP{1} \RAYDN{2} \CIRCLE{3}
\draw[line width=.5pt,line cap=round] (0,-.21) -- (0,3.5);
\draw[line width=.5pt,line cap=round] (5,-1) -- (5,2.71);
\end{scope}
\end{scope}
\begin{scope}[shift={(18,0)}]
\begin{scope} % top
\UP{0} \DN{1} \UP{2} \UP{3} \DN{4} \DN{5}
\CCCAP{2}{1.5} \CIRCLE{3}
\end{scope}
\begin{scope}[shift={(0,-2.5)}] % bottom
\UP{0} \DN{1} \UP{2} \DN{3} \UP{4} \DN{5}
\CUP{0} \CIRCLE{3}
\draw[line width=.5pt,line cap=round] (0,-.21) -- (0,3.5);
\draw[line width=.5pt,line cap=round] (1,-.21) -- (1,3.5);
\draw[line width=.5pt,line cap=round] (2,-1) -- (2,2.71);
\draw[line width=.5pt,line cap=round] (5,-1) -- (5,2.71);
\end{scope}
\end{scope}
\begin{scope}[shift={(27,0)}]
\begin{scope} % top
\UP{0} \DN{1} \UP{2} \UP{3} \DN{4} \DN{5}
\CCCAP{2}{1.5} \CAP{3}
\end{scope}
\begin{scope}[shift={(0,-2.5)}] % bottom
\UP{0} \DN{1} \UP{2} \UP{3} \DN{4} \DN{5}
\CUP{0} \CUP{3}
\draw[line width=.5pt,line cap=round] (0,-.21) -- (0,3.5);
\draw[line width=.5pt,line cap=round] (1,-.21) -- (1,3.5);
\draw[line width=.5pt,line cap=round] (2,-1) -- (2,2.71);
\draw[line width=.5pt,line cap=round] (3,-.21) -- (3,2.71);
\draw[line width=.5pt,line cap=round] (4,-.21) -- (4,2.71);
\draw[line width=.5pt,line cap=round] (5,-1) -- (5,2.71);
\end{scope}
\end{scope}
\begin{scope}[shift={(36,0)}]
\begin{scope}[shift={(0,-1.25)}] % top
\UP{0} \DN{1} \UP{2} \UP{3} \DN{4} \DN{5}
\RAYUP{0} \CUP{0} \RAYUP{1} \RAYDN{2} \CCCAP{2}{1.5} \CIRCLE{3} \RAYDN{5}
\node[font=\small] at (2.5,-2) {$ab$};
\end{scope}%
\end{scope}%
\end{scope}
%the other multiplication cd
\begin{scope}[shift={(0,-8)}]
\draw[->, line width=.5pt] (6,-1.25) -- ++(2,0);
\draw[->, line width=.5pt] (15,-1.25) -- ++(2,0);
\draw[->, line width=.5pt] (31,-1.25) -- ++(.25,0) arc[start angle=90,end angle=-90,radius=1.5] -- ++(-11.75,0) arc[start angle=90,end angle=270,radius=1] -- ++(.5,0);
\node[font=\scriptsize] at (16,2) {\strut$1\tikzmapsto 1 \otimes \epsilon+ \epsilon\otimes 1$};
\node[right,font=\scriptsize] at (33.25,-2.125) {\strut$\mathllap{\epsilon} \tikzmapsto \epsilon\otimes \epsilon$};
\node[right,font=\scriptsize] at (33.25,-3.375) {\strut$\mathllap{1} \tikzmapsto 1 \otimes \epsilon+ \epsilon\otimes 1$};
\begin{scope} % top
\node[font=\small,left] at (-.4,0) {$d$};
\DN{0} \UP{1} \DN{2} \UP{3} \DN{4} \UP{5}
\RAYDN{0} \CAP{0} \CUP{1}  \CONNECT{1}  \CAP{2}  \CONNECT{2} \CUP{3}  \CONNECT{3} \CAP{4} \CONNECT{4}  \RAYDN{5}\CONNECT{5} 
\end{scope}
\begin{scope}[shift={(0,-2.5)}] % bottom
\node[font=\small,left] at (-.4,0) {$c$};
\DN{0} \UP{1} \DN{2} \UP{3} \DN{4} \UP{5} 
\RAYUP{0} \CUP{0} \CAP{1} \CONNECT{1}  \CUP{2}  \CONNECT{2} \CAP{3}  \CONNECT{3} \CUP{4}  \CONNECT{4} \RAYUP{5}\CONNECT{5} 
\end{scope}
\begin{scope}[shift={(9,0)}]
\begin{scope} % top
\DN{0} \UP{1} \DN{2} \UP{3} \DN{4} \UP{5}
\CONNECT{0}\CAP{0} \CONNECT{1} \CUP{1} \CONNECT{2}  \CAP{2}  \CUP{3} \CONNECT{3} \CAP{4} \CONNECT{4} \CONNECT{5} 
\end{scope}
\begin{scope}[shift={(0,-2.5)}] % bottom
\DN{0} \UP{1} \DN{2} \UP{3} \DN{4} \UP{5}
\CONNECT{0}\CUP{0} \CONNECT{1}\CAP{1} \CUP{2}  \CONNECT{2} \CAP{3} \CONNECT{3}\CUP{4} \CONNECT{4}\CONNECT{5} 
\draw[line width=.5pt,line cap=round] (0,-.21) -- (0,2.5);
\draw[line width=.5pt,line cap=round] (5,0) -- (5,2.71);
\end{scope}
\end{scope}
\begin{scope}[shift={(18,0)}]
\node[font=\small] at (6.25,-1.25) {$+$};
\begin{scope}
\begin{scope} % top
\DN{0} \UP{1} \UP{2} \DN{3} \UP{4} \DN{5}
\CONNECT{0} \CAP{0} \CONNECT{1}\CAP{2} \CONNECT{2} \CUP{3} \CONNECT{3} \CAP{4} \CONNECT{4} \CONNECT{5} 
\end{scope}
\begin{scope}[shift={(0,-2.5)}] % bottom
\DN{0} \UP{1} \UP{2} \DN{3}\UP{4} \DN{5}
\CUP{0}\CONNECT{0} \CUP{2}\CONNECT{1} \CAP{3} \CONNECT{2}\CUP{4} \CONNECT{3} \CONNECT{4}\CONNECT{5} 
\draw[line width=.5pt,line cap=round] (0,-.21) -- (0,2.5);
\draw[line width=.5pt,line cap=round] (1,-.21) -- (1,2.5);
\draw[line width=.5pt,line cap=round] (2,-.21) -- (2,2.5);
\draw[line width=.5pt,line cap=round] (5,0) -- (5,2.71);
\end{scope}
\end{scope}
\begin{scope}[shift={(7.5,0)}]
\begin{scope} % top
\UP{0} \DN{1} \DN{2} \UP{3} \DN{4} \UP{5}
\CAP{0} \CONNECT{1}\CAP{2} \CONNECT{2} \CUP{3} \CONNECT{3} \CAP{4} \CONNECT{4} \CONNECT{5} 
\end{scope}
\begin{scope}[shift={(0,-2.5)}] % bottom
\UP{0} \DN{1} \DN{2} \UP{3}\DN{4} \UP{5}
\CUP{0}\CONNECT{1} \CUP{2} \CONNECT{2}\CAP{3} \CONNECT{3}\CUP{4} \CONNECT{4}\CONNECT{5} 
\draw[line width=.5pt,line cap=round] (0,-.21) -- (0,2.5);
\draw[line width=.5pt,line cap=round] (1,-.21) -- (1,2.5);
\draw[line width=.5pt,line cap=round] (2,-.21) -- (2,2.5);
\draw[line width=.5pt,line cap=round] (5,0) -- (5,2.71);
\end{scope}
\end{scope}
\end{scope}
\begin{scope}[shift={(21,-5)}]
\node[font=\small] at (6.25,-1.25) {$+$};
\node[font=\small] at (13.75,-1.25) {$+$};
\begin{scope}[shift={(0,-1.25)}] % top
\DN{0} \UP{1} \UP{2} \DN{3} \UP{4} \DN{5}
\CIRCLE{0} \CIRCLE{2} \CIRCLE{4} 
\end{scope}%
\begin{scope}[shift={(7.5,-1.25)}] % top
\UP{0} \DN{1} \DN{2} \UP{3} \UP{4} \DN{5}
\CIRCLE{0} \CIRCLE{2} \CIRCLE{4} 
\node[font=\small,text height=1ex] at (2.5,-2) {$cd$};
\end{scope}%
\begin{scope}[shift={(15,-1.25)}] % top
\UP{0} \DN{1} \UP{2} \DN{3} \DN{4} \UP{5}
\CIRCLE{0} \CIRCLE{2} \CIRCLE{4} 
\end{scope}
\end{scope}
\end{scope}
\end{tikzpicture}
\caption{Multiplications of arc diagrams in $\K_3^3$}
\label{figure:multiplication}
\end{figure}

\begin{remark}[Relation to TQFT]
The explicit description of the multiplication in $\K_m^n$ (Definition \ref{definition:Kmn}) can be obtained by viewing $\K_m^n$ as a quotient of Khovanov's original arc algebra $\mathrm{H}_{m+n}^{m+n}$ where all arc diagrams consist of (oriented) closed arcs. In $\mathrm{H}_k^k$, the multiplication only uses the rules involving $1, \epsilon$ and these rules are given by the multiplication and comultiplication in the Frobenius algebra $\Bbbk [\epsilon] / (\epsilon^2)$ or, equivalently, by the algebraic structure in the corresponding $2$-dimensional TQFT. This latter, topological viewpoint is useful for proving well-definedness and associativity of the multiplication as well as compatibility with the grading. In particular, the multiplication is independent of the order of the performed surgeries. 
\end{remark}

Note also that $\K_m^n$ is finite-dimensional, since there are finitely many arc diagrams for fixed $m, n \geq 1$. However, the dimension of $\K_m^n$ grows quite quickly in $m$ and $n$. For instance, $\dim_\Bbbk \K_l^1 = \dim_\Bbbk \K_1^l = 4l+1$ and $\dim_\Bbbk \K_l^2 = \dim_\Bbbk \K_2^l = 8l^2 + 14l - 13$, which may be computed using the Cartan matrix  factorisation \cite[(5.17)]{brundanstroppel2}.

\subsection{Generators and relations}
\label{subsection:genrel}

In order to calculate the Hochschild cohomology of $\K_m^n$, our first step is to translate the diagrammatics of $\K_m^n$ into generators and relations, i.e.\ we shall write $\K_m^n$ as the path algebra of a finite quiver $\Q_m^n$ modulo a two-sided ideal generated by three types of quadratic relations. This explicit algebraic description may be of independent interest.

Algorithmic descriptions of quivers with relations describing principal blocks of parabolic category $\mathcal O$ for any parabolic subalgebra $\mathfrak p \subset \mathfrak g$ in a semisimple complex Lie algebra were considered by Vybornov \cite{vybornov}, but they are generally difficult to make explicit. An explicit description for $\K_m^n$ in terms of generators and relations was first given from the point of view of perverse sheaves by Braden \cite{braden} (see also \cite[\S 5.6]{stroppel1} for a comparison with the diagrammatic description), but the relations in \cite{braden} are not homogeneous with respect to the natural grading of $\K_m^n$.

Our description is based on the diagrammatic description given by Brundan and Stroppel \cite{brundanstroppel1} which can be directly related to describing parabolic category $\mathcal O$ for parabolics associated to two-block partitions $\mathfrak{gl}_m (\mathbb C) \oplus \mathfrak{gl}_n (\mathbb C) \subset \mathfrak{gl}_{m+n} (\mathbb C)$ \cite{brundanstroppel3}.

It follows from \cite{beilinsonginzburgsoergel} and \cite[\S 5]{brundanstroppel2} that $\K_m^n$ is a Koszul algebra generated in degree $1$, so that we may take $\Q_m^n$ to be the quiver with vertices corresponding to arc diagrams of degree $0$ and arrows corresponding to arc diagrams of degree $1$. The vertices of $\Q_m^n$ are thus given by the degree $0$ arc diagrams $e_\lambda = \cupd \lambda \lambda \capd \lambda$ (see Definition \ref{definition:arcdiagramdegreezero}) and there is a natural surjective algebra map 
\begin{align}\label{align:rhomap}
\rho \colon \Bbbk \Q_m^n \tikztwoheadrightarrow \K_m^n
\end{align}
which sends the vertices $e_\lambda$ to the degree $0$ arc diagrams $e_\lambda$ and sends arrows to the corresponding degree $1$ arc diagrams. 

We first describe $\Q_m^n$ as the double quiver of a certain bipartite graph $\Gamma_m^n$.

\begin{definition}
Let $m, n \geq 1$ be fixed and let $\Gamma_m^n$ be the (simple and undirected) graph defined as follows:
\begin{itemize}
\item {\bf Vertices}\; The vertices $\mathrm V (\Gamma_m^n) = \{ e_\lambda \}_{\lambda \in \Lambda_m^n}$ are given by the degree $0$ arc diagrams.
\item {\bf Edges}\; There is an edge between two vertices $e_\lambda$ and $e_\mu$ if the weight of the one is obtained from the weight of the other by exchanging a $\dn \dotsb \up$ pair lying in the same counterclockwise circle.
\end{itemize}
\end{definition}

\begin{lemma}\label{lemma:bipartitegraph}
The quiver $\Q_m^n$ for $\K_m^n$ is the double quiver of $\Gamma_m^n$, i.e.\ obtained by replacing each edge in $\Gamma_m^n$ by two arrows in opposite directions. 
\end{lemma}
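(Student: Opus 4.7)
The plan is to identify the arrows of $\Q_m^n$ (equivalently, the degree~$1$ arc diagrams of $\K_m^n$) with pairs consisting of an edge of $\Gamma_m^n$ together with a choice of direction. A degree~$1$ arc diagram $\cupd\alpha\,\nu\,\capd\beta$ contains exactly one clockwise cup or cap, so I split the analysis into two cases: Case~A, in which the clockwise element is a cup in $\cupd\alpha$, and Case~B, in which it is a cap in $\capd\beta$. The reflectional symmetry between cup and cap diagrams allows us to focus on Case~A.

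In Case~A, suppose the clockwise cup joins positions $i<j$, so that $\alpha_i = \dn$, $\alpha_j = \up$ and $\nu_i = \up$, $\nu_j = \dn$. The counterclockwise condition on the remaining cups of $\cupd\alpha$ forces $\nu$ to agree with $\alpha$ at every other cup endpoint, while condition \ref{arc3} applied to the lower half-lines forces $\nu$ at the half-line positions of $\cupd\alpha$ to be of the form $\up\dotsb\up\dn\dotsb\dn$; since $\alpha$ has the same pattern and the same counts of $\up$'s and $\dn$'s at those positions, this forces $\nu=\alpha$ at the half-line positions too. Hence $\nu$ is obtained from $\alpha$ by swapping the single $\dn\up$ pair at positions $i,j$. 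Analogously, the counterclockwise condition on all caps of $\capd\beta$ together with condition \ref{arc3} applied to the upper half-lines characterises the underlying matching of $\capd\beta$ as the unique non-crossing matching of $\dn\up$ pairs in $\nu$ whose complement is of the form $\up\dotsb\up\dn\dotsb\dn$; this is precisely the matching produced by the recursive construction of $\capd\nu$ from $\nu$, so $\capd\beta = \capd\nu$ and $\beta=\nu$. The arc diagram is therefore $\cupd\alpha\,\nu\,\capd\nu$, corresponding via \eqref{align:rhomap} to the arrow $\alpha \to \nu$. The symmetric analysis in Case~B yields $\alpha = \nu$ and the arrow $\nu \to \beta$.

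To match these arrows with the edges of $\Gamma_m^n$, observe that since $\capd\alpha = \cupd\alpha^*$, each cup of $\cupd\alpha$ at positions $i,j$ glues with its mirror cap in $\capd\alpha$ to form one of the counterclockwise circles of $e_\alpha$, passing through exactly positions $i$ and $j$. This yields a bijection between cups of $\cupd\alpha$ and counterclockwise circles of $e_\alpha$, under which swapping the $\dn\up$ pair at the endpoints of such a circle is precisely the operation defining an edge of $\Gamma_m^n$. Hence Case~A arrows $\alpha \to \nu$ correspond to pairs (edge of $\Gamma_m^n$, direction from the $\dn\up$ side $\alpha$ to the $\up\dn$ side $\nu$), and Case~B accounts for the reverse direction; the two cases produce disjoint arrows because the swap positions form a $\dn\up$ pair in the source of a Case~A arrow but in the target of a Case~B arrow. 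Each edge of $\Gamma_m^n$ is therefore covered by exactly one arrow in each direction, proving that $\Q_m^n$ is the double quiver of $\Gamma_m^n$. The main technical step is the uniqueness underlying $\beta = \nu$, which hinges on the fact that the recursive pairing algorithm on $\nu$ produces the unique non-crossing matching of $\dn\up$ pairs whose complement is $\up\dotsb\up\dn\dotsb\dn$.
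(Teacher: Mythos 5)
Your proof is correct and follows the same basic route as the paper's: identify arrows of $\Q_m^n$, equivalently degree-$1$ arc diagrams, with directed edges of $\Gamma_m^n$ by matching the single clockwise cup (or cap) with a counterclockwise circle in some $e_\lambda$. The difference in emphasis is that the paper takes as given, from the setup of $\Q_m^n$ around \eqref{align:rhomap}, that every degree-$1$ arc diagram is of the form $\cupd\lambda\mu\capd\mu$ or $\cupd\mu\mu\capd\lambda$, and its proof then only needs to relate degree-$1$ cup diagrams $\cupd\lambda\mu$ to counterclockwise circles in $e_\lambda$. You instead establish this parametrisation from scratch: in your Case~A you argue that once the clockwise cup is fixed, the counterclockwise condition on the remaining cups of $\cupd\alpha$ pins down $\nu$ at all cup endpoints, condition \ref{arc3} on the lower half-lines forces the $\up\dotsb\up\dn\dotsb\dn$ pattern and hence $\nu=\alpha$ there, and the parallel argument on the top forces $\capd\beta = \capd\nu$. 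This makes your argument more self-contained; the paper's proof is shorter because it inherits the classification of degree-$1$ arc diagrams from the preceding setup.

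One imprecision worth flagging: the assertion that the constraints characterise $\capd\nu$ as ``the unique non-crossing matching of $\dn\up$ pairs in $\nu$ whose complement is of the form $\up\dotsb\up\dn\dotsb\dn$'' is not quite correct as stated. For $\nu = \dn\,\up\,\dn\,\up$, both the adjacent matching $\{1,2\},\{3,4\}$ (empty complement) and the single nested pair $\{1,4\}$ (complement $\{2,3\}$ carrying $\up\,\dn$) are non-crossing with complement of the stated form. What actually delivers uniqueness is the additional requirement, built into the definition of a cup or cap diagram, that half-lines and cups pairwise do not intersect, so an unmatched position cannot sit underneath a cup. With that constraint the recursive pairing algorithm does produce the only admissible matching and your argument goes through; since this is the real engine behind $\beta=\nu$, it would be worth making it explicit.
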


\begin{proof}
Clearly, $\Q_m^n$ has the same vertices as $\Gamma_m^n$. If there is an edge between $e_\lambda$ and $e_\mu$ in $\Gamma_m^n$ so that $e_\mu$ is obtained from $e_\lambda$ by exchanging a $\dn \dotsb \up$ pair lying in a counterclockwise circle of $e_\lambda$, then $\cupd \lambda \mu$ (resp.\ $ \mu \capd \lambda$) is a well-defined degree $1$ oriented cup diagram (resp.\ cap diagram). As a result, $\cupd \lambda \mu \capd \mu$ (resp.\ $\cupd \mu \mu \capd \lambda$) is a degree $1$ arc diagram from $\lambda$ to $\mu$ (resp.\ from $\mu$ to $\lambda$).

Conversely, if $\cupd \lambda \mu \capd \mu$ is a degree $1$ arc diagram from $\lambda$ to $\mu$ then $\cupd \lambda \mu$ is a degree $1$ oriented cup diagram. This means that $\cupd \lambda \mu$ is obtained from  $\cupd \lambda \lambda$ by exchanging the $\dn \dotsb \up$ pair lying in a counterclockwise cup of $\cupd \lambda \lambda$. In other words, $\mu$ can be obtained from $\lambda$ by exchanging the $\dn \dotsb \up$ pair lying in the counterclockwise cup. So there is an edge between $e_\lambda$ and $e_\mu$ in $\Gamma_m^n$. 
\end{proof}

In order to draw $\Gamma_m^n$, we shall order the vertices by the {\it Bruhat order} (see \cite[\S 2]{brundanstroppel1}), which is a partial order on $\Lambda_m^n \simeq \mathrm V (\Gamma_m^n)$, such that $\up \dotsb \up \, \dn \dotsb \dn$ is the highest weight and $\dn \dotsb \dn \, \up \dotsb \up$ the lowest weight. Concretely, we may define the {\it height} $\lvert \lambda \rvert$ of a weight $\lambda$ by summing the number of $\up$'s on the left of each $\dn$ in $\lambda$, for example
\[
\lvert \up \, \up \, \up \, \underset{_3}{\dn} \, \underset{_3}{\dn} \rvert = 3 + 3 = 6 \qquad \lvert \up \, \underset{_1}{\dn} \, \up \, \underset{_2}{\dn} \, \up \rvert = 1 + 2 = 3.
\]
Permuting $\up$'s to the left and $\dn$'s to the right increases the height and the Bruhat order. Fig.~\ref{figure:graphs} shows several graphs, where in each diagram the vertices aligned horizontally have the same height.

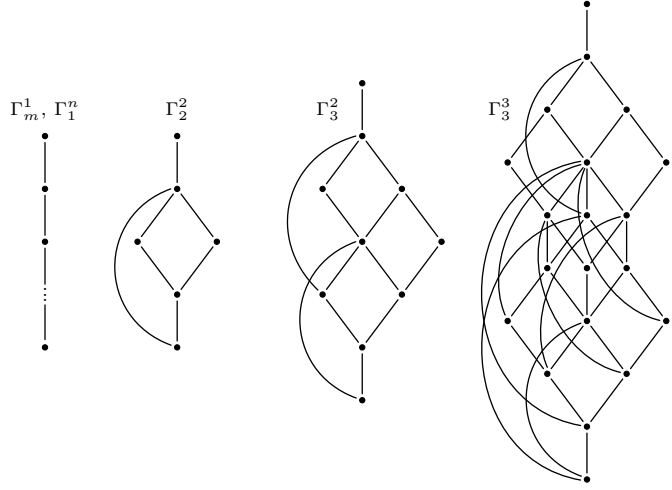
\begin{figure}% figure:graphs
%\centering
\begin{tikzpicture}[x=.5em,y=.5em]
\begin{scope}[shift={(0,0)}]
\node[font=\scriptsize] at (0,10) {$\Gamma_m^1$, $\Gamma_1^n$};
\node[shape=circle,scale=.5] (1-1) at (0,8) {};
\draw[fill=black] (0,8) circle(.1em);
\node[shape=circle,scale=.5] (1-2) at (0,4) {};
\draw[fill=black] (0,4) circle(.1em);
\node[shape=circle,scale=.5] (1-3) at (0,0) {};
\draw[fill=black] (0,0) circle(.1em);
\node[shape=circle,scale=.8] (1-4) at (0,-4) {};
\node[font=\scriptsize] at (0,-3.6) {.};
\node[font=\scriptsize] at (0,-4) {.};
\node[font=\scriptsize] at (0,-4.4) {.};
\node[shape=circle,scale=.6] (1-5) at (0,-8) {};
\draw[fill=black] (0,-8) circle(.1em);
\draw[line width=.5pt,line cap=round] (1-1) -- (1-2) -- (1-3) -- (1-4) -- (1-5);
\end{scope}
\begin{scope}[shift={(10,0)}]
\node[font=\scriptsize] at (0,10) {$\Gamma_2^2$};
\node[shape=circle,scale=.5] (2-1) at (0,8) {};
\draw[fill=black] (0,8) circle(.1em);
\node[shape=circle,scale=.5] (2-2) at (0,4) {};
\draw[fill=black] (0,4) circle(.1em);
\node[shape=circle,scale=.5] (2-3a) at (-3,0) {};
\draw[fill=black] (-3,0) circle(.1em);
\node[shape=circle,scale=.5] (2-3b) at (3,0) {};
\draw[fill=black] (3,0) circle(.1em);
\node[shape=circle,scale=.5] (2-4) at (0,-4) {};
\draw[fill=black] (0,-4) circle(.1em);
\node[shape=circle,scale=.5] (2-5) at (0,-8) {};
\draw[fill=black] (0,-8) circle(.1em);
\draw[line width=.5pt,line cap=round] (2-1) -- (2-2) -- (2-3a) -- (2-4) -- (2-5) (2-2) -- (2-3b) -- (2-4);
\path[line width=.5pt,line cap=round,out=-160,in=160,looseness=1.3] (2-2.180) edge (2-5);
\end{scope}
\begin{scope}[shift={(24,0)}]
\node[left=.5em,font=\scriptsize] at (0,10) {$\Gamma_3^2$};
\node[shape=circle,scale=.5] (3-1) at (0,12) {};
\draw[fill=black] (0,12) circle(.1em);
\node[shape=circle,scale=.5] (3-2) at (0,8) {};
\draw[fill=black] (0,8) circle(.1em);
\node[shape=circle,scale=.5] (3-3a) at (-3,4) {};
\draw[fill=black] (-3,4) circle(.1em);
\node[shape=circle,scale=.5] (3-3b) at (3,4) {};
\draw[fill=black] (3,4) circle(.1em);
\node[shape=circle,scale=.5] (3-4a) at (0,0) {};
\draw[fill=black] (0,0) circle(.1em);
\node[shape=circle,scale=.5] (3-4b) at (6,0) {};
\draw[fill=black] (6,0) circle(.1em);
\node[shape=circle,scale=.5] (3-5a) at (-3,-4) {};
\draw[fill=black] (-3,-4) circle(.1em);
\node[shape=circle,scale=.5] (3-5b) at (3,-4) {};
\draw[fill=black] (3,-4) circle(.1em);
\node[shape=circle,scale=.5] (3-6) at (0,-8) {};
\draw[fill=black] (0,-8) circle(.1em);
\node[shape=circle,scale=.5] (3-7) at (0,-12) {};
\draw[fill=black] (0,-12) circle(.1em);
\draw[line width=.5pt,line cap=round] (3-1) -- (3-2) -- (3-3a) -- (3-4a) -- (3-5a) -- (3-6) -- (3-7) (3-2) -- (3-3b) -- (3-4a) -- (3-5b) -- (3-6) (3-3b) -- (3-4b) -- (3-5b);
\path[line width=.5pt,line cap=round,out=-164,in=136,looseness=1.2] (3-2.180) edge (3-5a);
\path[line width=.5pt,line cap=round,out=-160,in=160,looseness=1.3] (3-4a.180) edge (3-7);
\end{scope}
\begin{scope}[shift={(41,0)}]
\node[left=2.5em,font=\scriptsize] at (0,10) {$\Gamma_3^3$};
\node[shape=circle,scale=.5] (4-1) at (0,18) {};
\draw[fill=black] (0,18) circle(.1em);
\node[shape=circle,scale=.5] (4-2) at (0,14) {};
\draw[fill=black] (0,14) circle(.1em);
\node[shape=circle,scale=.5] (4-3a) at (-3,10) {};
\draw[fill=black] (-3,10) circle(.1em);
\node[shape=circle,scale=.5] (4-3b) at (3,10) {};
\draw[fill=black] (3,10) circle(.1em);
\node[shape=circle,scale=.5] (4-4a) at (-6,6) {};
\draw[fill=black] (-6,6) circle(.1em);
\node[shape=circle,scale=.5] (4-4b) at (0,6) {};
\draw[fill=black] (0,6) circle(.1em);
\node[shape=circle,scale=.5] (4-4c) at (6,6) {};
\draw[fill=black] (6,6) circle(.1em);
\node[shape=circle,scale=.5] (4-5a) at (-3,2) {};
\draw[fill=black] (-3,2) circle(.1em);
\node[shape=circle,scale=.5] (4-5b) at (0,2) {};
\draw[fill=black] (0,2) circle(.1em);
\node[shape=circle,scale=.5] (4-5c) at (3,2) {};
\draw[fill=black] (3,2) circle(.1em);
\node[shape=circle,scale=.5] (4-6a) at (-3,-2) {};
\draw[fill=black] (-3,-2) circle(.1em);
\node[shape=circle,scale=.5] (4-6b) at (0,-2) {};
\draw[fill=black] (0,-2) circle(.1em);
\node[shape=circle,scale=.5] (4-6c) at (3,-2) {};
\draw[fill=black] (3,-2) circle(.1em);
\node[shape=circle,scale=.5] (4-7a) at (-6,-6) {};
\draw[fill=black] (-6,-6) circle(.1em);
\node[shape=circle,scale=.5] (4-7b) at (0,-6) {};
\draw[fill=black] (0,-6) circle(.1em);
\node[shape=circle,scale=.5] (4-7c) at (6,-6) {};
\draw[fill=black] (6,-6) circle(.1em);
\node[shape=circle,scale=.5] (4-8a) at (-3,-10) {};
\draw[fill=black] (-3,-10) circle(.1em);
\node[shape=circle,scale=.5] (4-8b) at (3,-10) {};
\draw[fill=black] (3,-10) circle(.1em);
\node[shape=circle,scale=.5] (4-9) at (0,-14) {};
\draw[fill=black] (0,-14) circle(.1em);
\node[shape=circle,scale=.5] (4-10) at (0,-18) {};
\draw[fill=black] (0,-18) circle(.1em);
\draw[line width=.5pt,line cap=round] (4-1) -- (4-2) -- (4-3a) -- (4-4a) -- (4-5a) -- (4-6a) (4-2) -- (4-3b) -- (4-4b) -- (4-5b) -- (4-6a) (4-3b) -- (4-4c) -- (4-5c) -- (4-6b) (4-5c) -- (4-6c) (4-3a) -- (4-4b) -- (4-5a) -- (4-6b) (4-4b) -- (4-5c) (4-5b) -- (4-6c) (4-6a) -- (4-7a) -- (4-8a) -- (4-9) -- (4-10) (4-6a) -- (4-7b) -- (4-8a) (4-6b) -- (4-7b) (4-8b) -- (4-7b) -- (4-6c) -- (4-7c) -- (4-8b) -- (4-9);
\path[line width=.5pt,line cap=round,out=-160,in=160,looseness=1.3] (4-2) edge (4-5b);
\path[line width=.5pt,line cap=round,out=-167,in=113,looseness=0.9] (4-4b.200) edge (4-7a);
\path[line width=.5pt,line cap=round,out=-113,in=167,looseness=0.9] (4-4b.250) edge (4-7c.175);
\path[line width=.5pt,line cap=round,out=-167,in=113,looseness=0.9] (4-5c) edge (4-8a.90);
\path[line width=.5pt,line cap=round,out=-113,in=167,looseness=0.9] (4-5a) edge (4-8b);
\path[line width=.5pt,line cap=round,out=-160,in=160,looseness=1.3] (4-7b) edge (4-10.150);
\path[line width=.5pt,line cap=round,out=-170,in=170,looseness=1.3] (4-4b.170) edge (4-9);
\path[line width=.5pt,line cap=round,out=-170,in=170,looseness=1.3] (4-5b) edge (4-10.185);
\end{scope}
\end{tikzpicture}
\caption{Examples of $\Gamma_m^n$ whose double quivers $\Q_m^n$ generate $\K_m^n$} 
\label{figure:graphs}
\end{figure}

The edges of $\Gamma_m^n$ only connect vertices corresponding to weights with height of different parity. This can be seen as follows. Assume that there is an edge between $\lambda$ and $\mu$ so that $\mu$ is obtained from $\lambda$ by exchanging a $\dn \dotsb \up$ pair lying in some circle $C$ of the arc diagram $e_\lambda$. Denote by $k$ the number of circles inside $C$ in $e_\lambda$. Then we have $k$ $\up$'s and $k$ $\dn$'s lying between the $\dn \dotsb \up$ pair being exchanged, whence $\lvert \mu \rvert = \lvert \lambda \rvert + 2k + 1$. This observation immediately yields the following result.

\begin{proposition}
\label{proposition:bipartite}
The graph $\Gamma_m^n$ is bipartite when decomposing $\mathrm V (\Gamma_m^n)$ into vertices of even and odd heights. In particular, paths in $\Gamma_m^n$ of even length can only connect weights whose heights are of the same parity and paths of odd length can only connect weights whose heights are of different parity.
\end{proposition}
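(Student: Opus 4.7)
The plan is to verify that a single edge of $\Gamma_m^n$ always changes the parity of the height by one; bipartiteness and the "in particular" clause then follow formally.

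Suppose $e_\lambda$ and $e_\mu$ are joined by an edge, so $\mu$ arises from $\lambda$ by swapping a $\dn \cdots \up$ pair lying on a single counterclockwise circle $C$ of $e_\lambda$, with $\dn$ at some position $a$ and $\up$ at some position $b > a$. The first task is to describe the $b - a - 1$ positions strictly between $a$ and $b$. Since $e_\lambda$ is a degree-zero arc diagram, the arcs incident to these intermediate positions can neither cross the cup nor the cap making up $C$; hence no half-line can sit inside $C$, and no arc can connect an interior position to one outside $C$. Therefore the interior positions are partitioned by the $k$ nested circles strictly inside $C$, each of which is itself counterclockwise (being part of $e_\lambda$) and thus contributes exactly one $\up$ and one $\dn$ between positions $a$ and $b$, giving $k$ of each.

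The second step is to compute $|\mu| - |\lambda|$ directly from the definition of height. Let $s$ denote the number of $\up$'s left of position $a$ in $\lambda$. Passing from $\lambda$ to $\mu$ we lose the contribution $s$ of the old $\dn$ at $a$, we gain the contribution $s + 1 + k$ of the new $\dn$ at $b$ (accounting for the new $\up$ at $a$ and the $k$ interior $\up$'s), and each of the $k$ interior $\dn$'s contributes an extra $+1$ because position $a$ has flipped from $\dn$ to $\up$; all other contributions are unchanged. Summing gives $|\mu| - |\lambda| = -s + (s + 1 + k) + k = 2k + 1$, which is odd. Consequently adjacent vertices of $\Gamma_m^n$ have heights of different parity.

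Bipartiteness of $\Gamma_m^n$ with respect to the parity of the height is now immediate. The final clause follows by a one-line induction on path length: a path of length $\ell$ flips the parity of the height $\ell$ times, so its endpoints have heights of the same parity if and only if $\ell$ is even. The only real content of the proof is the first step — identifying the interior positions with the $k$ nested circles and ruling out interior half-lines — while the height computation and the passage from edges to paths are routine.
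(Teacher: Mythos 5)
Your proof is correct and follows the same approach as the paper's: it identifies the positions strictly between the exchanged $\dn$ and $\up$ with the $k$ circles nested inside $C$ (so $k$ $\up$'s and $k$ $\dn$'s) and deduces $\lvert \mu \rvert = \lvert \lambda \rvert + 2k+1$. The paper's version is terser but relies on exactly this identification and this parity count; your explicit bookkeeping of the three types of changed contributions and the ruling-out of interior half-lines simply spells out what the paper leaves to the reader.
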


Since $\K_m^n$ is Koszul and hence quadratic, we also need a good understanding of the length $2$ paths  in $\Gamma_m^n$ and we give a complete description in Proposition \ref{proposition:edgedescriptions}. The description is based on the following two lemmas.

\begin{lemma}
\label{lemma:paths1}
Let $m, n \geq 1$ and let $\lambda, \mu \in \Lambda_m^n$ be two distinct weights such that $\lvert \lambda \rvert \leq \lvert \mu \rvert$.
\begin{enumerate}
\item \label{paths1} There is no length $2$ (undirected) path in $\Gamma_m^n$ between $\lambda$ and $\mu$ passing through a vertex $\nu$ with $\lvert \nu \rvert$ equal to $\lvert \lambda \rvert$ or $\lvert \mu \rvert$.
\item \label{paths2} The set of length $2$ paths in $\Gamma_m^n$ passing through vertices $\nu$ satisfying $\lvert \nu \rvert < \lvert \lambda \rvert$ contains at most one element.
\item \label{paths3} The set of length $2$ paths in $\Gamma_m^n$ passing through vertices $\nu$ satisfying $\lvert \nu \rvert > \lvert \mu \rvert$ contains at most one element.
\end{enumerate}
\end{lemma}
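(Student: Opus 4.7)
Part (i) follows at once from the bipartite property of Proposition \ref{proposition:bipartite}: on any length $2$ path $\lambda \mathbin{-} \nu \mathbin{-} \mu$ the parities of $|\lambda|, |\nu|, |\mu|$ alternate, so $|\nu| \neq |\lambda|$ and $|\nu| \neq |\mu|$.

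For part (ii) the key simplification is that every circle of the degree-$0$ diagram $e_\nu$ consists of exactly two positions $p < q$ with $\nu_p = \dn$, $\nu_q = \up$, connected by a cup of $\cupd \nu$ and its mirror cap of $\capd \nu$; all such circles are counterclockwise. An edge from $\nu$ to a higher-height vertex therefore corresponds to picking a single cup of $\cupd \nu$ and swapping its two endpoints. Suppose now that $\lambda \mathbin{-} \nu \mathbin{-} \mu$ is a length-$2$ path with $|\nu| < |\lambda|$ coming from cups $\{i_1, j_1\}$ and $\{i_2, j_2\}$ of $\cupd \nu$. Since $\lambda \neq \mu$ these cups are distinct, hence disjoint, and a direct comparison of $\lambda, \mu, \nu$ shows that $\lambda$ and $\mu$ differ at exactly the four positions in $\{i_1, j_1, i_2, j_2\}$, with
\[
A := \{p \mid \lambda_p = \up,\, \mu_p = \dn\} = \{i_1, j_2\}, \qquad B := \{p \mid \lambda_p = \dn,\, \mu_p = \up\} = \{j_1, i_2\}.
\]
Writing $A = \{a_1 < a_2\}$ and $B = \{b_1 < b_2\}$, the data of $\nu$ reduces to a choice of partition of $A \cup B$ into ordered pairs $(i_1, j_1) \in A \times B$ and $(i_2, j_2) \in B \times A$ with $i_\ell < j_\ell$. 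The plan is to show that the requirement that $\{i_1, j_1\}$ and $\{i_2, j_2\}$ are both cups of the single cup diagram $\cupd \nu$ --- in particular non-crossing on the horizontal line --- singles out at most one such partition. Running through the six possible interleavings of $\{a_1, a_2, b_1, b_2\}$ in $\{1, \dots, m+n\}$ and checking the order and non-crossing constraints by direct inspection confirms this and proves (ii).

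Part (iii) is handled by a parallel analysis in which $\nu$ plays the role of the higher vertex: the edges $\lambda \mathbin{-} \nu$ and $\mu \mathbin{-} \nu$ now correspond to cups $C_1$ of $\cupd \lambda$ and $C_2$ of $\cupd \mu$ (in two different cup diagrams), with $\nu$ obtained from $\lambda$ by swapping the endpoints of $C_1$ and from $\mu$ by swapping the endpoints of $C_2$. The positions where $\lambda$ and $\mu$ disagree form the symmetric difference $C_1 \triangle C_2$, and once $\lambda, \mu$ are fixed the pair $(C_1, C_2)$ --- and hence $\nu$ --- is pinned down by the fact that every position belongs to at most one cup of $\cupd \lambda$ (respectively $\cupd \mu$); another short case analysis on the interleavings handles the bookkeeping. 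The main obstacle in both (ii) and (iii) is carrying out this finite but slightly intricate case check; beyond the bipartite property and the rigid two-position structure of circles in $e_\nu$ recalled above, no further ideas are needed.
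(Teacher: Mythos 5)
Your part (i) is exactly the paper's argument. For parts (ii) and (iii), you take a genuinely different route: you enumerate the possible middle vertices $\nu$ directly by partitioning the $4$ positions where $\lambda$ and $\mu$ disagree and checking non-crossing of cups, while the paper instead supposes two such middle vertices $\nu \neq \nu'$ exist and derives a parity contradiction --- $\nu$ and $\nu'$ would have to be the two diagrams in \eqref{possibilities}, which share an edge and hence have heights of opposite parity, but both are adjacent to $\lambda$, forcing equal parity by Proposition \ref{proposition:bipartite}. The paper's route avoids any case enumeration; your route is more hands-on and, for part (ii), does check out (I followed your six orderings of $\{a_1, a_2, b_1, b_2\}$ and in each at most one of the two candidate partitions is non-crossing). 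Since being a pair of cups of $\cupd\nu$ is strictly stronger than being non-crossing, the non-crossing test alone suffices for your upper bound.

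For part (iii) your sketch has a real gap that does not arise in (ii). In (ii) the two cups live in one cup diagram $\cupd\nu$, so they cannot share a position; you use this to get the clean $4$-element symmetric difference and the single partition question. In (iii) the two cups $C_1$ and $C_2$ live in \emph{different} cup diagrams $\cupd\lambda$ and $\cupd\mu$, so $C_1 \cap C_2$ can be a singleton, in which case $\lambda$ and $\mu$ disagree at only $2$ positions and the "partition of four positions" picture breaks down. You allude to this by writing the symmetric difference $C_1 \triangle C_2$, but the claim that ``the pair $(C_1, C_2)$ is pinned down by the fact that every position belongs to at most one cup of $\cupd\lambda$ (respectively $\cupd\mu$)'' is exactly the point that needs proof, and it has content in both the $|C_1 \triangle C_2| = 2$ and $|C_1 \triangle C_2| = 4$ cases (in the $4$-element case one still has to rule out the swap $C_1 \leftrightarrow C_2'$, $C_1' \leftrightarrow C_2$, which you can do by observing a pair cannot be a cup of both $\cupd\lambda$ and $\cupd\mu$ because $\lambda$ and $\mu$ disagree there). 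Also, your closing remark that ``beyond the bipartite property $\dots$ no further ideas are needed'' is slightly misleading: your own arguments for (ii) and (iii) never invoke the bipartite property, only the case analysis; the bipartite property is the paper's idea, which you do not in fact use beyond part (i).
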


\begin{proof}
\ref{paths1} follows directly from Proposition \ref{proposition:bipartite}, since $\nu$ is connected to $\lambda$ and $\mu$ by an edge whence  $\lvert \nu \rvert$ is different from $\lvert \lambda \rvert$ and $ \lvert \mu \rvert  \mod 2$.

To prove \ref{paths2}, let $\nu, \nu'$ be two distinct weights both connected to $\lambda$ and to $\mu$ by an edge with $\lvert \nu \rvert, \lvert \nu' \rvert < \lvert \lambda \rvert \leq \lvert \mu \rvert$. Since $\lambda$ and $\mu$ are distinct, they are obtained from $\nu$ (resp.\ $\nu'$) by exchanging two $\dn \dotsb \up$ pairs lying in distinct  circles of $e_\nu$ (resp.\ $e_{\nu'}$). These circles are either nested or not, so that $e_\nu, e_{\nu'}$ are of the following forms
\begin{equation}
\label{possibilities}
\begin{tikzpicture}[baseline=-.5ex,x=.6em,y=.6em,decoration={markings,mark=at position 0.99 with {\arrow[black]{Stealth[length=3.8pt]}}}]
\node at (4,0) {or};
\begin{scope}[shift={(-8,0)}]
\DDOTS{0} \DDOTS{2} \DDOTS{4} \DDOTS{6} \DDOTS{8}
\DN{1} \DN{3} \UP{5} \UP{7}
\draw[line width=.5pt,line cap=round] (1,-.15) -- ++(0,.3) arc[start angle=180, end angle=0, x radius=3, y radius=.75*3] -- ++(0,-.3) arc[start angle=0, end angle=-180, x radius=3, y radius=0.75*3];
\draw[line width=.5pt,line cap=round] (3,-.15) -- ++(0,.3) arc[start angle=180, end angle=0, radius=1] -- ++(0,-.3) arc[start angle=0, end angle=-180, radius=1];
\end{scope}
\begin{scope}[shift={(8,0)}]
\DDOTS{0} \DDOTS{2} \DDOTS{4} \DDOTS{6} \DDOTS{8}
\DN{1} \UP{3} \DN{5} \UP{7}
\draw[line width=.5pt,line cap=round] (1,-.15) -- ++(0,.3) arc[start angle=180, end angle=0, radius=1] -- ++(0,-.3) arc[start angle=0, end angle=-180, radius=1];
\draw[line width=.5pt,line cap=round] (5,-.15) -- ++(0,.3) arc[start angle=180, end angle=0, radius=1] -- ++(0,-.3) arc[start angle=0, end angle=-180, radius=1];
\end{scope}
\end{tikzpicture}
\end{equation}
and $\lambda, \mu, \nu, \nu'$ agree on all other positions represented by the dots. Note that  there is an edge between $\nu$ and $\nu'$ since the diagram on the right of \eqref{possibilities} may be obtained from the one on the left by exchanging the $\dn \dotsb \up$ pair in the inner circle, which implies that $\lvert \nu \rvert \not\equiv  \lvert \nu' \rvert \mod 2$. Since $\nu$ and $\nu'$ are connected to $\lambda$ by an edge we must also have $\lvert \nu \rvert \equiv \lvert \nu' \rvert\mod 2$ by Proposition \ref{proposition:bipartite}, giving a contradiction.

\ref{paths3} follows similarly, for if $\nu, \nu'$ are two distinct weights both connected to $\lambda$ and $\mu$ by an edge with $\lvert \lambda \rvert \leq \lvert \mu \rvert < \lvert \nu \rvert, \lvert \nu' \rvert$, then $\nu, \nu'$ are obtained from $\lambda$ by exchanging two $\dn \dotsb \up$ pairs in distinct circles of $e_\nu$, and $\nu, \nu'$ can also be obtained from $\mu$ in the same way. Thus $\lambda, \mu$ are both of the form \eqref{possibilities} agreeing on all positions represented by the dots. We get a contradiction since $\lvert \lambda \rvert \equiv \lvert \mu \rvert \mod 2$ and yet, the two arc diagrams in \eqref{possibilities} share an edge, whence their heights are different modulo $2$.
\end{proof}

\begin{lemma}\label{lemma:squarespaths}
Let $m, n \geq 2$. There are no other paths of length $2$ in $\Gamma_m^n$ (except the illustrated ones) between any two distinct vertices of the square in Fig.~\ref{figure:squares} (a) or in Fig.~\ref{figure:squares} (c). Moreover, any square in $\Gamma_m^n$ is of one of the forms appearing in Fig.~\ref{figure:squares}.
\end{lemma}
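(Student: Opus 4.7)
The plan is to combine Lemma \ref{lemma:paths1} with a careful case analysis of how two consecutive $\dn \dotsb \up$ exchanges can commute in the bipartite graph $\Gamma_m^n$.

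For the first assertion, fix two non-adjacent vertices $\lambda, \mu$ of a square in Fig.~\ref{figure:squares}(a) or (c) with $\lvert \lambda \rvert \leq \lvert \mu \rvert$. Any length-$2$ path between them goes through some vertex $\nu$ whose height has parity opposite to that of $\lvert \lambda \rvert$ (Proposition \ref{proposition:bipartite}) and different from both $\lvert \lambda \rvert$ and $\lvert \mu \rvert$ (Lemma \ref{lemma:paths1}\,(i)). Parts (ii) and (iii) of Lemma \ref{lemma:paths1} cap the number of such intermediate vertices at heights below $\lvert \lambda \rvert$ and above $\lvert \mu \rvert$ at one each. Reading off from Fig.~\ref{figure:squares}\,(a) and (c) the heights of the two displayed intermediate vertices and verifying that no admissible height remains strictly between $\lvert \lambda \rvert$ and $\lvert \mu \rvert$ in these two square types, we conclude that the only length-$2$ paths are the two already drawn.

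For the second assertion, let $\lambda - \nu - \mu - \nu' - \lambda$ be an arbitrary square with $\lvert \lambda \rvert \leq \lvert \mu \rvert$. Each of the two passages $\lambda \to \nu \to \mu$ and $\lambda \to \nu' \to \mu$ is specified by a sequence of two elementary exchanges: an exchange of a $\dn \dotsb \up$ pair lying in a counterclockwise circle of the respective intermediate arc diagram. The approach is then to enumerate all configurations of two $\dn \dotsb \up$ pairs in $e_\lambda$ (or, equivalently, in $e_\mu$) that can produce $\mu$ from $\lambda$ via two genuinely distinct intermediate weights. The natural stratification is by the relative position of the two affected circles/pairs: (1) pairs in two disjoint circles of $e_\lambda$, (2) pairs in two nested circles, and (3) two $\dn \dotsb \up$ pairs within the same counterclockwise circle. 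Each configuration must be checked against the constraint that the second exchange in each passage still takes place inside a counterclockwise circle of the intermediate diagram, and the resulting squares should then match the types listed in Fig.~\ref{figure:squares}.

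The main obstacle is the bookkeeping in case (3), and more generally whenever the two $\dn \dotsb \up$ pairs are nested or share a circle: performing one exchange first may reshape the circle structure (merging or splitting circles, flipping orientations) so that the second pair no longer lies in a single counterclockwise circle of the new diagram. This is what rules out spurious configurations and forces the square types to be exactly those in Fig.~\ref{figure:squares}. The delicate part of the argument is therefore to track, in each nested subcase, how circles and orientations evolve after a single exchange, and to verify that only the displayed configurations both yield valid squares and exhaust all possibilities.
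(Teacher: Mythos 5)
Your plan for the first assertion has a genuine gap. In square~(a), the two drawn intermediate vertices $\nu,\nu'$ have heights satisfying $\lvert\lambda\rvert < \lvert\nu\rvert < \lvert\nu'\rvert < \lvert\mu\rvert$, i.e.\ they lie \emph{strictly between} $\lvert\lambda\rvert$ and $\lvert\mu\rvert$. So your step ``verifying that no admissible height remains strictly between $\lvert\lambda\rvert$ and $\lvert\mu\rvert$'' is false as stated, and the entire burden of the proof falls precisely in this range: one must show that any $\xi$ with $\lvert\lambda\rvert<\lvert\xi\rvert<\lvert\mu\rvert$ connected to both $\lambda$ and $\mu$ by an edge actually coincides with $\nu$ or $\nu'$. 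Lemma~\ref{lemma:paths1}\,(ii), (iii) say nothing about this range, and Proposition~\ref{proposition:bipartite} only constrains parity, not the count. The paper closes this by arguing about \emph{positions}: $\lambda$ and $\mu$ differ in exactly four positions, and $\xi$ must be reached from $\lambda$ by exchanging one $\dn\dotsb\up$ pair lying inside those four positions and inside a circle of $e_\lambda$, which forces $\xi\in\{\nu,\nu'\}$. Without this position-tracking argument, the first assertion is not established. A similar issue affects your treatment of square~(c), where there are five vertices (not four): the extra bottom vertex $\kappa$ requires a separate case analysis (which the paper carries out), and your proposal does not identify or address this.

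For the second assertion, your stratification is off. Case~(3), ``two $\dn\dotsb\up$ pairs within the same counterclockwise circle,'' does not correspond to a valid pair of edges originating at a vertex: an edge of $\Gamma_m^n$ is one exchange of one $\dn\dotsb\up$ pair in one circle of $e_\lambda$, so two distinct edges starting at $e_\lambda$ always correspond to two distinct circles $C_1, C_2$. The relevant trichotomy (which the paper uses) is: $C_1, C_2$ nested with a circle between them enclosing the inner one $\Rightarrow$ square~(a); nested with no circle between $\Rightarrow$ square~(c); not nested $\Rightarrow$ square~(b). You also acknowledge that the ``bookkeeping in case~(3)'' and the tracking of circle/orientation evolution ``is therefore'' the delicate part, but you do not carry it out -- so the proposal is a sketch with the central step unverified rather than a complete proof. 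The paper's approach, once the right trichotomy is in place, is considerably shorter because one never needs to track orientation flips: it suffices to observe that both edges point upward from $e_\lambda$ and the unique fourth vertex is then determined by exchanging the other $\dn\dotsb\up$ pair in the intermediate weights.
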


\begin{remark}
Note that the top square in Fig.~\ref{figure:squares} (c) is a special case of the square in Fig.~\ref{figure:squares} (b). Namely, if the middle
\begin{tikzpicture}[baseline=-.5ex]
\DDOTS{0}
\end{tikzpicture}
in the arc diagrams in Fig.~\ref{figure:squares} (b) is empty or consists of nested counterclockwise circles, then Fig.~\ref{figure:squares} (b) can be extended to Fig.~\ref{figure:squares} (c). So by a similar argument, the statement in Lemma \ref{lemma:squarespaths} holds for the square (b) if it cannot be extended to Fig.~\ref{figure:squares} (c). 

\begin{figure}% figure:squares
\begin{tikzpicture}[baseline=-.5ex,x=.6em,y=.6em]
\node at (4,8) {(a)};
\begin{scope}[decoration={markings,mark=at position 0.82 with {\arrow[black]{Stealth[length=2.4pt]}}}]
\DN{2} \UP{6}
\begin{scope}[shift={(-8,8)}]
\DN{2} \UP{6}
\end{scope}
\begin{scope}[shift={(8,8)}]
\DN{2} \UP{6}
\end{scope}
\begin{scope}[shift={(0,16)}]
\DN{2} \UP{6}
\end{scope}
\end{scope}
\begin{scope}[shift={(4,0)}]
\draw[line width=.5pt,line cap=round] (-2.6,2.6) -- (-5.4,5.4);
\draw[line width=.5pt,line cap=round] (2.6,2.6) -- (6,6);
\begin{scope}[shift={(0,16)}]
\draw[line width=.5pt,line cap=round] (-1.5,-1.5) -- (-5.4,-5.4);
\draw[line width=.5pt,line cap=round] (1.5,-1.5) -- (6,-6);
\end{scope}
\end{scope}
\begin{scope}[decoration={markings,mark=at position 1 with {\arrow[black]{Stealth[length=3.8pt]}}}]
\DDOTS{0} \DDOTS{2} \DDOTS{4} \DDOTS{6} \DDOTS{8}
\DN{1} \DN{3} \UP{5} \UP{7}
\CIRCLES{1}{3} \ROUNDCIRCLE{3}{1}
\draw[dash pattern=on 2.1pt off 1.53pt, line width=.35pt,line cap=round] (2,-.21) -- (2,.21) arc[start angle=180, end angle=0, x radius=2, y radius=.8*2] -- ++(0,-.42) arc[start angle=0, end angle=-180, x radius=2, y radius =.8*2];
\end{scope}
\begin{scope}[shift={(-8,8)},decoration={markings,mark=at position 0.99 with {\arrow[black]{Stealth[length=3.8pt]}}}]
\DDOTS{0} \DDOTS{2} \DDOTS{4} \DDOTS{6} \DDOTS{8}
\DN{1} \UP{3} \DN{5} \UP{7}
\CIRCLES{1}{3} \CIRCLE{2} \CIRCLE{5}
\end{scope}
\begin{scope}[shift={(8,8)},decoration={markings,mark=at position 0.99 with {\arrow[black]{Stealth[length=3.8pt]}}}]
\DDOTS{0} \DDOTS{2} \DDOTS{4} \DDOTS{6} \DDOTS{8}
\UP{1} \DN{3} \UP{5} \DN{7}
\LARC{1} \ROUNDCIRCLE{3}{1} \RARC{7}
\draw[dash pattern=on 2.1pt off 1.53pt, line width=.35pt,line cap=round] (2,-.21) -- (2,.21) arc[start angle=180, end angle=0, x radius=2, y radius=.8*2] -- ++(0,-.42) arc[start angle=0, end angle=-180, x radius=2, y radius=.8*2];
\end{scope}
\begin{scope}[shift={(0,16)},decoration={markings,mark=at position 0.99 with {\arrow[black]{Stealth[length=3.8pt]}}}]
\DDOTS{0} \DDOTS{2} \DDOTS{4} \DDOTS{6} \DDOTS{8}
\UP{1} \UP{3} \DN{5} \DN{7}
\LARC{1} \CIRCLE{2} \CIRCLE{5} \RARC{7}
\end{scope}
\begin{scope}[shift={(31,0)}] %% right diagram
\node at (4,8) {(b)};
\begin{scope}[shift={(4,0)}]
\draw[line width=.5pt,line cap=round] (-1.9,1.9) -- (-6.1,6.1);
\draw[line width=.5pt,line cap=round] (1.9,1.9) -- (6.1,6.1);
\begin{scope}[shift={(0,16)}]
\draw[line width=.5pt,line cap=round] (-1.5,-1.5) -- (-6.1,-6.1);
\draw[line width=.5pt,line cap=round] (1.5,-1.5) -- (6.1,-6.1);
\end{scope}
\end{scope}
\begin{scope}[decoration={markings,mark=at position 0.99 with {\arrow[black]{Stealth[length=3.8pt]}}}]
\DDOTS{0} \DDOTS{2} \DDOTS{4} \DDOTS{6} \DDOTS{8}
\DN{1} \UP{3} \DN{5} \UP{7}
\ROUNDCIRCLE{1}{1} \ROUNDCIRCLE{5}{1}
\end{scope}
\begin{scope}[shift={(8,8)},decoration={markings,mark=at position 0.99 with {\arrow[black]{Stealth[length=3.8pt]}}}]
\DDOTS{0} \DDOTS{2} \DDOTS{4} \DDOTS{6} \DDOTS{8}
\UP{1} \DN{3} \DN{5} \UP{7}
\LARC{1} \RARC{3} \ROUNDCIRCLE{5}{1}
\end{scope}
\begin{scope}[shift={(-8,8)},decoration={markings,mark=at position 0.99 with {\arrow[black]{Stealth[length=3.8pt]}}}]
\DDOTS{0} \DDOTS{2} \DDOTS{4} \DDOTS{6} \DDOTS{8}
\DN{1} \UP{3} \UP{5} \DN{7}
\ROUNDCIRCLE{1}{1} \LARC{5} \RARC{7}
\end{scope}
\begin{scope}[shift={(0,16)},decoration={markings,mark=at position 0.99 with {\arrow[black]{Stealth[length=3.8pt]}}}]
\DDOTS{-.3} \DDOTS{1.7} \DDOTS{4} \DDOTS{6.3} \DDOTS{8.3}
\UP{.7} \DN{2.7} \UP{5.3} \DN{7.3}
\LARC{.7} \RRARC{2.7} \LLARC{5.3} \RARC{7.3}
\end{scope}
\end{scope} %% end right diagram
\begin{scope}[shift={(15.5,-19.5)}] 
\node at (4,20) {(c)};
\begin{scope}[shift={(4,0)}]
\draw[line width=.5pt,line cap=round] (-1.9,1.9) -- (-6.1,6.1);
\draw[line width=.5pt,line cap=round] (1.9,1.9) -- (6.1,6.1);
\draw[line width=.5pt,line cap=round] (0,-1.5) -- (0,-4);
\begin{scope}[shift={(0,16)}]
\draw[line width=.5pt,line cap=round] (-1.5,-1.5) -- (-6.1,-6.1);
\draw[line width=.5pt,line cap=round] (1.5,-1.5) -- (6.1,-6.1);
\draw[line width=.5pt,line cap=round] (-6,0) arc[start angle=105, end angle=255, radius=7.2em];
\end{scope}
\end{scope}
\begin{scope}[shift={(0,-7)}, decoration={markings,mark=at position 0.99 with {\arrow[black]{Stealth[length=3.8pt]}}}]
\DDOTS{0} \DDOTS{2} \DDOTS{4} \DDOTS{6} \DDOTS{8}
\DN{1} \DN{3} \UP{5} \UP{7}
\CIRCLES{1}{3} \ROUNDCIRCLE{3}{1}
\end{scope}
\begin{scope}[decoration={markings,mark=at position 0.99 with {\arrow[black]{Stealth[length=3.8pt]}}}]
\DDOTS{0} \DDOTS{2} \DDOTS{4} \DDOTS{6} \DDOTS{8}
\DN{1} \UP{3} \DN{5} \UP{7}
\ROUNDCIRCLE{1}{1} \ROUNDCIRCLE{5}{1}
\end{scope}
\begin{scope}[shift={(8,8)}, decoration={markings,mark=at position 0.99 with {\arrow[black]{Stealth[length=3.8pt]}}}]
\DDOTS{0} \DDOTS{2} \DDOTS{4} \DDOTS{6} \DDOTS{8}
\UP{1} \DN{3} \DN{5} \UP{7}
\LARC{1} \RARC{3} \ROUNDCIRCLE{5}{1}
\end{scope}
\begin{scope}[shift={(-8,8)}, decoration={markings,mark=at position 0.99 with {\arrow[black]{Stealth[length=3.8pt]}}}]
\DDOTS{0} \DDOTS{2} \DDOTS{4} \DDOTS{6} \DDOTS{8}
\DN{1} \UP{3} \UP{5} \DN{7}
\ROUNDCIRCLE{1}{1} \LARC{5} \RARC{7}
\end{scope}
\begin{scope}[shift={(0,16)}, decoration={markings,mark=at position 0.99 with {\arrow[black]{Stealth[length=3.8pt]}}}]
\DDOTS{0} \DDOTS{2} \DDOTS{4} \DDOTS{6} \DDOTS{8}
\UP{1} \DN{3} \UP{5} \DN{7}
\LARC{1} \ROUNDCIRCLE{3}{1} \RARC{7}
\end{scope}
\end{scope}
\end{tikzpicture}
\caption{Squares in $\Gamma_m^n$ giving rise to commutative squares in $\K_m^n$. In (a) the dashed circle indicates the innermost circle enclosing the smaller solid circle, which must exist and be different from the bigger solid circle for the square to exist (cf.\ Fig.~\ref{figure:single} (c) for when such a circle does not exist)}
\label{figure:squares}
\end{figure}
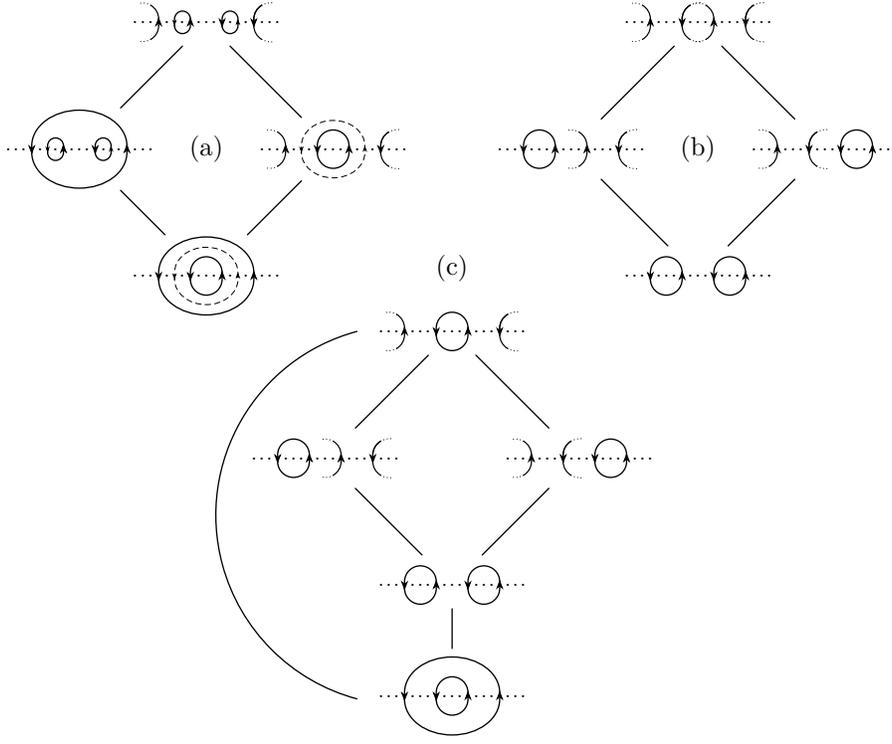

\begin{figure}
\begin{tikzpicture}[baseline=-.5ex,x=.6em,y=.6em,decoration={markings,mark=at position 0.99 with {\arrow[black]{Stealth[length=3.8pt]}}}]
\begin{scope}[shift={(-10,0)}] %% top diagram left
\begin{scope} % top
\node at (3,3.5) {(a)};
\draw[line width=.5pt,line cap=round] (3,-2.5) -- (3,-5.5);
\DDOTS{0} \DDOTS{2} \DDOTS{4} \DDOTS{6}
\UP{1} \UP{3} \DN{5}
\RAYS{1}{1.3} \RAYS{3}{1.3} \RARC{5}
\end{scope}
\begin{scope}[shift={(0,-8)}] % middle
\draw[line width=.5pt,line cap=round] (3,-2.5) -- (3,-5.5);
\DDOTS{0} \DDOTS{2} \DDOTS{4} \DDOTS{6}
\UP{1} \DN{3} \UP{5}
\RAYS{1}{1.3} \ROUNDCIRCLE{3}{1}
\end{scope}
\begin{scope}[shift={(0,-16)}] % bottom
\DDOTS{0} \DDOTS{2} \DDOTS{4} \DDOTS{6}
\DN{1} \UP{3} \UP{5}
\ROUNDCIRCLE{1}{1} \RAYS{5}{1.3}
\end{scope}
\end{scope}
\begin{scope}[shift={(3,0)}] %% top diagram middle
\begin{scope} % top
\node at (3,3.5) {(b)};
\draw[line width=.5pt,line cap=round] (3,-2.5) -- (3,-5.5);
\DDOTS{0} \DDOTS{2} \DDOTS{4} \DDOTS{6}
\UP{1} \DN{3} \DN{5}
\LARC{1} \RAYS{3}{1.3} \RAYS{5}{1.3}
\end{scope}
\begin{scope}[shift={(0,-8)}] % middle
\draw[line width=.5pt,line cap=round] (3,-2.5) -- (3,-5.5);
\DDOTS{0} \DDOTS{2} \DDOTS{4} \DDOTS{6}
\DN{1} \UP{3} \DN{5}
\ROUNDCIRCLE{1}{1} \RAYS{5}{1.3}
\end{scope}
\begin{scope}[shift={(0,-16)}] % bottom
\DDOTS{0} \DDOTS{2} \DDOTS{4} \DDOTS{6}
\DN{1} \DN{3} \UP{5}
\RAYS{1}{1.3} \ROUNDCIRCLE{3}{1}
\end{scope}
\end{scope}
\begin{scope}[shift={(16,0)}] %% top diagram right
\begin{scope} % top
\node at (4,3.5) {(c)};
\draw[line width=.5pt,line cap=round] (4,-2.5) -- (4,-5.5);
\DDOTS{0} \DDOTS{2} \DDOTS{4} \DDOTS{6} \DDOTS{8}
\UP{1} \UP{3} \DN{5} \DN{7}
\LARC{1} \LARC{3} \RARC{5} \RARC{7}
\end{scope}
\begin{scope}[shift={(0,-8)}] % middle
\draw[line width=.5pt,line cap=round] (4,-2.5) -- (4,-5);
\DDOTS{0} \DDOTS{2} \DDOTS{4} \DDOTS{6} \DDOTS{8}
\UP{1} \DN{3} \UP{5} \DN{7}
\LARC{1} \ROUNDCIRCLE{3}{1} \RARC{7}
\end{scope}
\begin{scope}[shift={(0,-16)}] % bottom
\DDOTS{0} \DDOTS{2} \DDOTS{4} \DDOTS{6} \DDOTS{8}
\DN{1} \DN{3} \UP{5} \UP{7}
\CIRCLES{1}{3} \ROUNDCIRCLE{3}{1}
\end{scope}
\end{scope}
\begin{scope}[shift={(0,-38)}] %% bottom diagram
\begin{scope}[shift={(6,0)}]
\begin{scope}[shift={(0,16)}]
\node at (0,-5.5) {(d)};
\draw[line width=.5pt,line cap=round] (-1.9,-1.9) -- (-7.5,-7.5);
\draw[line width=.5pt,line cap=round] (1.9,-1.9) -- (7.5,-7.5);
\end{scope}
\end{scope}
% right
\begin{scope}[shift={(10,6)}]
\DDOTS{0} \DDOTS{2} \DDOTS{4} \DDOTS{6} \DDOTS{8} \DDOTS{10} \DDOTS{12}
\UP{1} \DN{3} \DN{5} \DN{7} \UP{9} \UP{11}
\LARC{1} \RARC{3} \CIRCLES{5}{3} \ROUNDCIRCLE{7}{1}
\end{scope}
% left
\begin{scope}[shift={(-10,6)}]
\DDOTS{0} \DDOTS{2} \DDOTS{4} \DDOTS{6} \DDOTS{8} \DDOTS{10} \DDOTS{12}
\DN{1} \DN{3} \UP{5} \UP{7} \UP{9} \DN{11}
\CIRCLES{1}{3} \ROUNDCIRCLE{3}{1} \LARC{9} \RARC{11}
\end{scope}
% top
\begin{scope}[shift={(0,16)}]
\DDOTS{0} \DDOTS{2} \DDOTS{4} \DDOTS{6} \DDOTS{8} \DDOTS{10} \DDOTS{12}
\UP{1} \DN{3} \UP{5} \DN{7} \UP{9} \DN{11}
\LARC{1} \ROUNDCIRCLE{3}{1} \ROUNDCIRCLE{7}{1} \RARC{11}
\end{scope}
\end{scope}
\end{tikzpicture}
\caption{Paths in $\Gamma_m^n$ of length $2$ without parallel length $2$ paths. In (c) there is no circle between the two nested circles enclosing the smaller one (cf.\ Fig.~\ref{figure:squares} (a) for the case when such a circle does exist)}
\label{figure:single}
\end{figure}
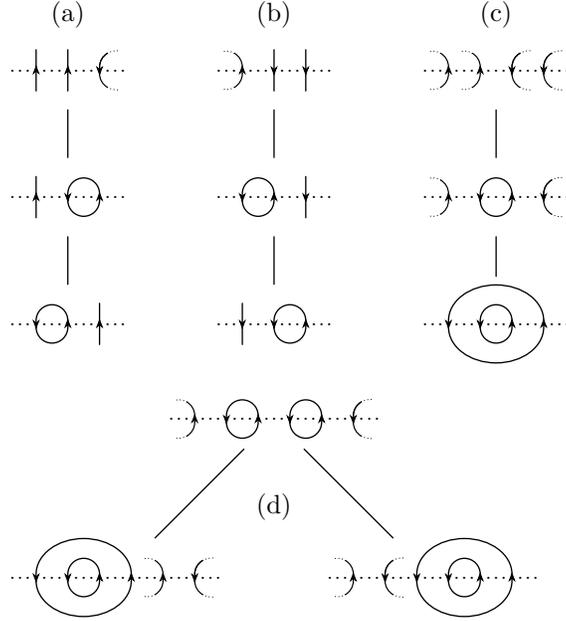

Note that in Fig.~\ref{figure:squares} (c) there are three squares, the obvious square at the top, and two further squares obtained by deleting the left or right vertex (see Remark \ref{remark:commutativity} for illustrations).
\end{remark}

\begin{proof}[Proof of Lemma \ref{lemma:squarespaths}]
Let us first consider square (a). Denote the bottom, top, left and right vertices by $\lambda, \mu, \nu$ and $\nu'$,  respectively, and note that by Lemma \ref{lemma:paths1} \ref{paths1} $\lvert \lambda \rvert < \lvert \nu \rvert < \lvert \nu' \rvert < \lvert \mu \rvert$. If there is a path of length $2$ between $\nu$ and $\nu'$ passing through a vertex $\xi$ which is different from $\lambda$ and $\mu$, then by Lemma \ref{lemma:paths1} \ref{paths2} and \ref{paths3} we have $\lvert \nu \rvert  < \lvert \xi \rvert < \lvert \nu' \rvert$. Note that $\nu$ and $\nu'$ are distinct in four positions and agree on all other positions. Since $\xi$ is connected to $\nu$ by an edge, we have that $\xi$ and $\nu$ are distinct in two positions and thus $\xi$ must be obtained from $\nu$ by exchanging a $\dn \dotsb \up$ pair which lies in two of the four positions and also in a circle of $e_{\nu}$ (i.e.\ the outer one). This implies $\xi = \mu$, giving a contradiction.  

If there is a path of length $2$ between $\lambda$ and $\mu$ passing through a vertex $\xi$ which is different from $\nu$ and $\nu'$, then we have the following cases.

\noindent {\it Case $\lvert \xi \rvert > \lvert \mu \rvert$.}\; Since $\lambda$ and $\mu$ are distinct in four positions and agree on all other positions represented by the dots, we have that $\xi$ is obtained from $\lambda$ by exchanging a $\dn \dotsb \up$ pair which lies in two of the four positions and also in a circle of $e_{\lambda}$ (resp. $e_\mu$). Then $\xi$ has to equal $\nu$ or $\nu'$ which contradicts the assumption.

\noindent {\it Case $ \lvert \xi \rvert < \lvert \lambda \rvert$.}\; We have that $\lambda$ is obtained from $\xi$ by exchanging a $\dn \dotsb \up$ pair which lies in two of the four positions. But this cannot happen since in the four positions $\lambda$ is of the form $\dotsb \dn \dotsb \dn \dotsb \up \dotsb \up \dotsb$ and thus is already as low as possible.

\noindent {\it Case $\lvert \lambda \rvert  < \lvert \xi \rvert < \lvert \mu \rvert$.}\; By a similar reason we have that $\xi$ is obtained from $\lambda$ by exchanging a $\dn \dotsb \up$ pair which lies in two of the four positions and also lies in a circle of $e_\lambda$. So $\xi$ has to equal $\nu$ or $\nu'$.

Now let us consider square (c). Denote the vertices from bottom to top by $\kappa, \lambda, \nu, \nu'$ and $\mu$. Note that $\lambda$ (resp.\ $\nu$) and $\mu$ (resp.\ $\nu'$) are distinct in four positions and agree on all other positions.  Then by similar  arguments as before, we have that there are no other paths of length $2$ between $\lambda$ (resp.\ $\nu$) and $\mu$ (resp.\ $\nu'$). It thus suffices to prove the assertion for $\kappa$ and $\nu$ (resp.\ $\nu'$). If there is a path of length $2$ between $\kappa $ and $\nu$ passing through a vertex $\xi$ which is different from $\lambda$ and $\mu$, then by Lemma \ref{lemma:paths1} \ref{paths2} we have $\lvert \xi \rvert < \lvert \nu \rvert$.

\noindent {\it Case $\lvert \kappa \rvert <  \lvert \xi \rvert  < \lvert \nu \rvert$.}\; Note that $\kappa$ and $\nu$ are distinct in two positions and agree on all other positions.  Then $\xi$ is obtained from $\kappa$ by exchanging a $\dn \dotsb \up$ pair lying in a circle $C$ of $e_{\kappa}$ such that either the $\dn$ or $\up$ lies in one of the two positions (otherwise, $\xi$ and $
\nu$ are distinct in four positions whence there are no edges between them). Thus, $C$ has to be one of the nested circles in $e_{\kappa}$ so that $\xi = \lambda$.

\noindent {\it Case $\lvert \xi \rvert < \lvert \kappa \rvert$.}\; We have that $\kappa, \nu$ are obtained from $\xi$ by exchanging two $\dn \dotsb \up$ pairs in distinct circles $C, C'$ in $e_\xi$, so $\kappa$ and $\nu$ are distinct in four positions which contradicts the fact that $\kappa$ and $\nu$ are only distinct in two positions. 

To obtain a square in Fig.~\ref{figure:squares}, we may start with a vertex $e_\lambda$ and choose two circles $C_1$ and $C_2$ in $e_\lambda$ which correspond to two edges starting from $e_\lambda$. If $C_1$ and $C_2$ are nested such that there is a circle between them enclosing the smaller one then the two edges may be completed to the square (a), if they are nested such that there is no circle between them then we obtain the square (c), and if they are not nested then we obtain the square (b). Note that any square may be obtained in this way so that it is one of the forms in Fig.~\ref{figure:squares}.
\end{proof}

We have the following complete description of the length $2$ paths in $\Gamma_m^n$.
\begin{proposition}
\label{proposition:edgedescriptions}
Let $\lambda, \mu \in \Lambda^n_m$ be two distinct weights and assume that $\lvert \lambda \rvert \leq \lvert \mu \rvert$. Then there are at most three paths of length $2$ between $\lambda$ and $\mu$. Concretely, if there exists a length $2$ path between $\lambda$ and $\mu$, then we have the following cases
\[
\begin{tikzpicture}[x=.45em,y=.45em]
\node at (-2.7,5) {one path};
\begin{scope}[shift={(2,0)}]
\node[font=\scriptsize] at (0,3) {$\mu$};
\node[font=\scriptsize] at (0,-7.25) {\strut$\lambda$};
\node[shape=circle,scale=.5] (1-1) at (0,2) {};
\draw[fill=black] (0,2) circle(.1em);
\node[shape=circle,scale=.5] (1-2) at (0,-2) {};
\draw[fill=black] (0,-2) circle(.1em);
\node[shape=circle,scale=.5] (1-3) at (0,-6) {};
\draw[fill=black] (0,-6) circle(.1em);
\draw[line width=.5pt,line cap=round] (1-1) -- (1-2) -- (1-3);
\end{scope}
\begin{scope}[shift={(-5,0)}]
\node[font=\scriptsize] at (-3,-7.25) {\strut$\lambda$};
\node[font=\scriptsize] at (3,-7.25) {\strut$\mu$};
\node[shape=circle,scale=.5] (2-1) at (0,2) {};
\draw[fill=black] (0,2) circle(.1em);
\node[shape=circle,scale=.5] (2-2) at (-3,-6) {};
\draw[fill=black] (-3,-6) circle(.1em);
\node[shape=circle,scale=.5] (2-3) at (3,-6) {};
\draw[fill=black] (3,-6) circle(.1em);
\draw[line width=.5pt,line cap=round] (2-2) -- (2-1) -- (2-3);
\end{scope}
\begin{scope}[shift={(-17,0)}]
\node[font=\scriptsize] at (0,3) {$\mu$};
\node[font=\scriptsize] at (0,-7.25) {\strut$\lambda$};
\node[shape=circle,scale=.5] (3-1) at (0,2) {};
\draw[fill=black] (0,2) circle(.1em);
\node[shape=circle,scale=.5] (3-2) at (-3,-2) {};
\draw[fill=black] (-3,-2) circle(.1em);
\node[shape=circle,scale=.5] (3-3) at (3,-2) {};
\draw[fill=black] (3,-2) circle(.1em);
\node[shape=circle,scale=.5] (3-4) at (0,-6) {};
\draw[fill=black] (0,-6) circle(.1em);
\draw[line width=.5pt,line cap=round] (3-1) -- (3-2) -- (3-4) -- (3-3) -- (3-1);
\end{scope}
\begin{scope}[shift={(-27,0)}]
\node at (0,5) {two paths};
\node[font=\scriptsize] at (-3.95,-2) {\strut$\lambda$};
\node[font=\scriptsize] at (3.95,-2) {\strut$\mu$};
\node[shape=circle,scale=.5] (3-1) at (0,2) {};
\draw[fill=black] (0,2) circle(.1em);
\node[shape=circle,scale=.5] (3-2) at (-3,-2) {};
\draw[fill=black] (-3,-2) circle(.1em);
\node[shape=circle,scale=.5] (3-3) at (3,-2) {};
\draw[fill=black] (3,-2) circle(.1em);
\node[shape=circle,scale=.5] (3-4) at (0,-6) {};
\draw[fill=black] (0,-6) circle(.1em);
\draw[line width=.5pt,line cap=round] (3-1) -- (3-2) -- (3-4) -- (3-3) -- (3-1);
\end{scope}

\begin{scope}[shift={(-37,0)}]
\node[font=\scriptsize] at (1.7,-2) {\strut$\mu$};
\node[font=\scriptsize] at (-1.7,-2) {\strut$\mu$};
\node[font=\scriptsize] at (0,-11.25) {\strut$\lambda$};
\node[shape=circle,scale=.5] (3-4a) at (0,2) {};
\draw[fill=black] (0,2) circle(.1em);
\node[shape=circle,scale=.5] (3-5a) at (-3,-2) {};
\draw[fill=black] (-3,-2) circle(.1em);
\node[shape=circle,scale=.5] (3-5b) at (3,-2) {};
\draw[fill=black] (3,-2) circle(.1em);
\node[shape=circle,scale=.5] (3-6) at (0,-6) {};
\draw[fill=black] (0,-6) circle(.1em);
\node[shape=circle,scale=.5] (3-7) at (0,-10) {};
\draw[fill=black] (0,-10) circle(.1em);
\draw[line width=.5pt,line cap=round] (3-4a) -- (3-5a) -- (3-6) -- (3-7) (3-4a) -- (3-5b) -- (3-6);
\path[line width=.5pt,line cap=round,out=-160,in=160,looseness=1.3] (3-4a.180) edge (3-7);
\end{scope}

\begin{scope}[shift={(-51,0)}]
\node at (0,5) {three paths};
\node[font=\scriptsize] at (0,3) {$\mu$};
\node[font=\scriptsize] at (.66,-7.25) {\strut$\lambda$};
\node[shape=circle,scale=.5] (3-4a) at (0,2) {};
\draw[fill=black] (0,2) circle(.1em);
\node[shape=circle,scale=.5] (3-5a) at (-3,-2) {};
\draw[fill=black] (-3,-2) circle(.1em);
\node[shape=circle,scale=.5] (3-5b) at (3,-2) {};
\draw[fill=black] (3,-2) circle(.1em);
\node[shape=circle,scale=.5] (3-6) at (0,-6) {};
\draw[fill=black] (0,-6) circle(.1em);
\node[shape=circle,scale=.5] (3-7) at (0,-10) {};
\draw[fill=black] (0,-10) circle(.1em);
\draw[line width=.5pt,line cap=round] (3-4a) -- (3-5a) -- (3-6) -- (3-7) (3-4a) -- (3-5b) -- (3-6);
\path[line width=.5pt,line cap=round,out=-160,in=160,looseness=1.3] (3-4a.180) edge (3-7);
\end{scope}
\end{tikzpicture}
\]
where
\begin{enumerate}
\item \label{length3} there are exactly three paths if $\lambda$ and $\mu$ are the two indicated vertices in Fig.~\ref{figure:squares} (c)
\item \label{length2} there are exactly two paths if $\lambda$ and $\mu$ are the two indicated vertices of Fig.~\ref{figure:squares} (c) or the ones of Fig.~\ref{figure:squares} (a) or (b) (excluding the case appearing in \ref{length3})
\item \label{length1} there is exactly one path if the vertices and edges are as in Fig.~\ref{figure:single}.
\end{enumerate}
\end{proposition}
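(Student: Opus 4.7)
The plan is to decompose length $2$ paths $\lambda - \nu - \mu$ by the height $\lvert \nu \rvert$ of the middle vertex and combine Lemma \ref{lemma:paths1} with the square classification of Lemma \ref{lemma:squarespaths}. By Lemma \ref{lemma:paths1}\ref{paths1}, every such $\nu$ satisfies $\lvert \nu \rvert \ne \lvert \lambda \rvert, \lvert \mu \rvert$, so the paths partition into three groups: those with $\lvert \nu \rvert < \lvert \lambda \rvert$ (\emph{below}), those with $\lvert \lambda \rvert < \lvert \nu \rvert < \lvert \mu \rvert$ (\emph{intermediate}), and those with $\lvert \nu \rvert > \lvert \mu \rvert$ (\emph{above}). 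Lemma \ref{lemma:paths1}\ref{paths2}, \ref{paths3} bound the outer groups by at most one path each, so the real work is to bound the intermediate group and then to match the resulting configurations against Fig.~\ref{figure:squares} and Fig.~\ref{figure:single}.

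For the intermediate group, any two distinct paths $\lambda - \nu - \mu$ and $\lambda - \nu' - \mu$ assemble into a $4$-cycle with $\lambda, \mu$ as opposite corners, which by Lemma \ref{lemma:squarespaths} must be a square of type (a), (b) or (c) in Fig.~\ref{figure:squares}. Each such square only contributes two intermediate vertices between $\lambda$ and $\mu$, and a hypothetical third intermediate vertex would produce further $4$-cycles not fitting the classification; the circle-and-swap-position bookkeeping already used in the proof of Lemma \ref{lemma:squarespaths} shows that the unordered pair of $\dn \dotsb \up$ swaps taking a common intermediate neighbour to $\lambda$ and to $\mu$ is uniquely determined by $(\lambda, \mu)$, so the intermediate group has at most two elements. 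Combining with the outer bounds yields a crude total of $1+2+1 = 4$.

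I would then sharpen $4 \to 3$ by ruling out the simultaneous coexistence of below, above, and two intermediate vertices for the same pair $(\lambda, \mu)$. Each outer path combines with an intermediate path to form an additional $4$-cycle, which by Lemma \ref{lemma:squarespaths} is again a square of Fig.~\ref{figure:squares}; a direct diagrammatic check (tracking which circles of $e_\lambda$ and $e_\mu$ host the relevant $\dn \dotsb \up$ pairs) shows that two intermediate paths together with an outer path can only fit the configuration of Fig.~\ref{figure:squares}(c), and the top-bottom asymmetry of (c) --- only the bottom vertex $\kappa$ lies ``below'', there is no matching above vertex --- forbids the fourth path. This gives exactly three paths precisely for the diagonal pair of Fig.~\ref{figure:squares}(c), settling case \ref{length3}.

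Case \ref{length2} is then obtained by dropping one of the three paths: the pair $(\lambda, \mu)$ is either the diagonal of a square of type (a) or (b), or one of the other indicated pairs inside Fig.~\ref{figure:squares}(c). Case \ref{length1} is obtained by directly inspecting Fig.~\ref{figure:single} and checking that in each configuration no further circle of $e_\lambda$ or $e_\mu$ admits a $\dn \dotsb \up$ swap producing an extra common neighbour of $\lambda$ and $\mu$. The main obstacle will be the sharpening step $4 \to 3$ and the exhaustive enumeration of Fig.~\ref{figure:single}; both come down to a careful diagrammatic case analysis of which pairs of circles in $e_\lambda$ (equivalently $e_\mu$) can be simultaneously used, in the same spirit as the arguments already carried out in Lemma \ref{lemma:paths1} and Lemma \ref{lemma:squarespaths}.
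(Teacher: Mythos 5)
Your proposal is broadly sound and uses the same two key lemmas the paper relies on (Lemma~\ref{lemma:paths1} and Lemma~\ref{lemma:squarespaths}), but it is organized differently and, in places, reproves what those lemmas already hand you. Once two length~$2$ paths $\lambda - \nu - \mu$ and $\lambda - \nu' - \mu$ are recognized as a square, the first assertion of Lemma~\ref{lemma:squarespaths} (together with the remark handling square~(b)) tells you outright how many length~$2$ paths the diagonal pair $\lambda, \mu$ carries --- two for (a) and (b), three for the indicated pair in (c) --- so the crude $1+2+1=4$ bound, the $4 \to 3$ sharpening, and the slightly loosely stated claim that the ``unordered pair of swaps is uniquely determined by $(\lambda, \mu)$'' are all re-derivations of content already packaged in that lemma; the paper accordingly dispatches parts \ref{length3} and \ref{length2} in one line. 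Where your proposal and the paper genuinely agree on the remaining work, and where you correctly identify the main obstacle, is part \ref{length1}: one must show that every length~$2$ path not lying in a square is one of the configurations in Fig.~\ref{figure:single}, and that these configurations indeed do not sit in any square. The paper carries this out as an explicit case analysis (on whether $\lambda,\mu$ differ in two or four positions, which pair of those positions the middle vertex $\nu$ exchanges, and whether the relevant circles nest), and this is the substance you have deferred; your sketch addresses only the converse direction (no extra neighbour inside a Fig.~\ref{figure:single} shape) and would still need to establish that any stray length~$2$ path outside those shapes extends to a square. So: same approach at heart, but the height-stratified bound is unnecessary scaffolding and the real burden of part \ref{length1} is still to be discharged.
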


\begin{proof}
\ref{length3} and \ref{length2} follow from Lemma \ref{lemma:squarespaths} and \ref{length1} follows by noting that the length $2$ paths in Fig.~\ref{figure:single} are precisely all the possible paths of length $2$ which do not appear in any square. For instance, to see this for Fig.~\ref{figure:single} (d), assume that there is a length $2$ path between $\lambda$ and $\mu$ passing through a vertex $\nu$ such that $\lvert \nu \rvert > \lvert \mu \rvert \geq \lvert \lambda \rvert$. Then $\nu$ is obtained from $\lambda$ (resp.\ $\mu$) by exchanging a $\dn \dotsb \up$ pair lying in a circle of $e_{\lambda}$ (resp.\ $e_{\mu}$). Note that $\lambda, \mu$ can differ in at most four positions. Since $\lambda \neq \mu$ and $\lvert \lambda \rvert \leq \lvert \mu \rvert$ we have the following three cases.

In the first case, $\lambda$ and $\mu$ are distinct in two positions and agree on all other positions. We claim this can only happen when $\lambda, \mu, \nu$ are vertices in Fig.~\ref{figure:squares} (c) with $\lambda$ the bottom vertex, $\nu$ the top vertex and $\mu$ either the left or the right vertex. Firstly, note that the two positions in which $\lambda, \mu$ differ cannot lie in a single circle of $e_\lambda$, as otherwise $\mu$ would be obtained by changing the $\dn \dotsb \up$ pair lying in this circle, contradicting the fact that $\lambda$ and $\mu$ have the same height modulo $2$. Since the two distinct positions should agree after exchanging a single $\dn \dotsb \up$ pair in both $\lambda$ and in $\mu$, one finds that $\lambda, \mu, \nu$ must be the claimed vertices in Fig.~\ref{figure:squares} (c).

In the second case, $\lambda$ and $\mu$ are distinct in four positions and $\nu$ is of the form $\dotsb \up \dotsb \dn \dotsb \up \dotsb \dn \dotsb$ in these four positions. Then $\nu$ has to be obtained from $\lambda$ or $\mu$ by exchanging a $\dn \dotsb \up$ pair lying in the first and second position or lying in the third and fourth position in $e_\lambda$ or $e_\mu$, whence $\lambda, \mu, \nu$ may be completed into a square as in Fig.~\ref{figure:squares} (b). 

In the third case, $\lambda$ and $\mu$ are distinct in four positions and $\nu$ is of the form $\dotsb \up \dotsb \up \dotsb \dn \dotsb \dn \dotsb$.  If $\nu$ is obtained from $\lambda$ or $\mu$ by exchanging the $\dn \dotsb \up$ lying in the first and fourth position or lying in the second and third position, then $\lambda, \mu, \nu$ can be completed into a square as in Fig.~\ref{figure:squares} (a). If $\nu$ is obtained from $\lambda$ or $\mu$ by exchanging the $\dn \dotsb \up$ lying in the first and third position or lying in the second and fourth position, then we obtain Fig.~\ref{figure:single} (d) which by Lemma \ref{lemma:squarespaths} cannot appear in any square.
\end{proof}

\begin{definition}
Let $\Q_m^n$ be the quiver obtained from $\Gamma_m^n$ by replacing each edge by two arrows in opposite directions.
\begin{figure}
%\centering
\begin{tikzpicture}[x=.5em,y=.5em]
\begin{scope}[shift={(0,0)}]
\node[font=\small] at (0,10) {$\Gamma_m^n$};
\node[shape=circle,scale=.5] (T) at (0,6) {};
\draw[fill=black] (0,6) circle(.1em);
\node[shape=circle,scale=.5] (B) at (0,0) {};
\draw[fill=black] (0,0) circle(.1em);
\node[font=\scriptsize] at (0,-1) {$\lambda$};
\node[font=\scriptsize] at (0,7) {$\mu$};
\path[-, line width=.5pt] (B) edge node[font=\scriptsize, left=-.4ex] {\strut $\gamma_\lambda^\mu$} (T);
\end{scope}
\begin{scope}[shift={(8,0)}]is emptyis empty
\node[font=\small] at (0,10) {$\Q_m^n$};
\node[shape=circle,scale=.5] (T) at (0,6) {};
\draw[fill=black] (0,6) circle(.1em);
\node[shape=circle,scale=.5] (B) at (0,0) {};
\draw[fill=black] (0,0) circle(.1em);
\path[->, line width=.5pt] (B) edge[transform canvas={xshift=-.4ex}] node[font=\scriptsize, left=-.4ex] {\strut $x_\lambda^\mu$} (T);
\path[->, line width=.5pt] (T) edge[transform canvas={xshift=.4ex}]  node[font=\scriptsize, right=-.4ex] {\strut $y_\lambda^\mu$} (B);
\end{scope}
\begin{scope}[shift={(16,0)}]
\node[font=\small] at (0,10) {$\QQ_m^n$};
\node[shape=circle,scale=.5] (T) at (0,6) {};
\draw[fill=black] (0,6) circle(.1em);
\node[shape=circle,scale=.5] (B) at (0,0) {};
\draw[fill=black] (0,0) circle(.1em);
\path[<-, line width=.5pt] (B) edge[transform canvas={xshift=-.4ex}] node[font=\scriptsize, left=-.4ex] {\strut $\bar x_\lambda^\mu$} (T);
\path[<-, line width=.5pt] (T) edge[transform canvas={xshift=.4ex}]  node[font=\scriptsize, right=-.4ex] {\strut $\bar y_\lambda^\mu$} (B);
\end{scope}
\end{tikzpicture}
\caption{An edge in the graph $\Gamma_m^n$ between two weights $\lambda, \mu \in \Lambda_m^n$ where $\mu$ is obtained from $\lambda$ so that $\lvert \lambda \rvert < \lvert \mu \rvert$ and the corresponding arrows in the double quiver $\Q_m^n$ for $\K_m^n$ and in the opposite quiver $\QQ_m^n$ for the Koszul dual $\KK_m^n$}
\label{figure:double}
\end{figure}
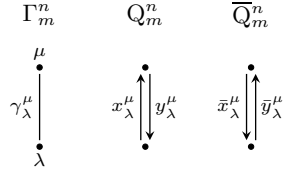
Edges only connect vertices of different heights. For an edge $\gamma_\lambda^\mu$ in $\Gamma_m^n$ connecting two vertices $\lambda, \mu \in \Lambda_m^n$ with $\lvert \lambda \rvert < \lvert \mu \rvert$, we denote the ascending arrow in $\Q_m^n$ (from $\lambda$ to $\mu$) by $x_\lambda^\mu$ and the descending arrow (from $\mu$ to $\lambda$) by $y_\lambda^\mu$ (see Fig.~\ref{figure:double} and Fig.~\ref{figure:22} for an illustration of $\Q_2^2$). 
\end{definition}

\begin{remark}\label{remark:defect}
Note that the arrow $x_\lambda^\mu$ (resp.\ $y_\lambda^\mu$) corresponds to the degree $1$ arc diagram $\cupd \lambda \mu \capd \mu$ (resp.\ $\cupd \mu \mu \capd \lambda$) in $\Q_m^n$. The total number of ascending arrows starting from $\lambda$ equals the number of circles in $e_\lambda$, i.e.\ the  defect $\mathrm{def}(\lambda)$ (see Definition \ref{definition:arcdiagramdegreezero}).
\end{remark}

The following definition will be used to describe the quadratic relations of $\K_m^n$ and $\KK_m^n$.

\begin{definition}\label{definition:number}
Fix a weight $\lambda \in \Lambda_m^n$. Let $\mu$ be a weight obtained from $\lambda$ by  exchanging a $\dn \dotsb \up$ pair lying in  a circle $C$ of $e_\lambda$. Let $\kappa$ be a weight such that $\lambda$ is obtained from $\kappa$ by exchanging a $\dn \dotsb \up$ pair lying in a circle $D$ of $e_{\kappa}$. Then define an integer $c_\kappa^\mu(\lambda)$ as follows
\[
c_{\kappa}^{\mu}(\lambda) = \begin{cases}
1 & \textup{if the circle $C$ does not appear in $e_{\kappa}$}\\
0 & \textup{if $C$ appears in $e_{\kappa}$ but does not enclose the circle $D$}\\
(-1)^{j-1}2 & \textup{if $C$ in $e_{\kappa}$ is the $j$th circle from inner to outer enclosing $D$}. 
\end{cases}
\]
\end{definition}

Recall from \eqref{align:rhomap} the surjective algebra map $\rho \colon \Bbbk \Q_m^n \tikzto \K_m^n$.
 
\begin{proposition}
\label{proposition:relations}
The map $\rho$ sends the following quadratic relations to $0$. 
\begin{itemize}
\item {\bf Monomial relations.} The paths between the top and bottom vertices in Fig.~\ref{figure:single} (a) and (b) are zero respectively. 
\item {\bf Commutativity relations across all squares.} All parallel paths of length $2$ excluding $2$-cycles in $\Q_m^n$ are equal.
\item {\bf Relations at vertices.} Fix $\lambda \in \Lambda_m^n$. Let $x_\lambda^{\mu_i}$  be the ascending arrows in $\Q_m^n$ starting at $e_\lambda$ for $1 \leq i \leq \mathrm{def}(\lambda)$. Let $x_{\kappa}^\lambda$ be any ascending arrow ending at $e_\lambda$. Then we have
\begin{align}\label{align:relationsvertex}
y^\lambda_{\kappa} x_{\kappa}^\lambda = \sum_{i = 1}^{\mathrm{def}(\lambda)} c_{\kappa}^{\mu_i}(\lambda) x_\lambda^{\mu_i}y_\lambda^{\mu_i} 
\end{align}
where $c_{\kappa}^{\mu_i}(\lambda)$ is given in Definition \ref{definition:number}. In particular, if $\lambda$ is the highest weight then $y^\lambda_{\kappa} x_{\kappa}^\lambda = 0$ and if $\lambda$ is the lowest weight then \eqref{align:relationsvertex} is empty. 
\end{itemize}
\end{proposition}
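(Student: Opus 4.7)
The overall plan is to verify each of the three types of relations by direct computation in $\K_m^n$ using the diagrammatic multiplication rules of Definition \ref{definition:Kmn}. Recall that under $\rho$ the arrow $x_\lambda^\mu$ (resp.\ $y_\lambda^\mu$) is sent to the degree $1$ arc diagram $\cupd\lambda\mu\capd\mu$ (resp.\ $\cupd\mu\mu\capd\lambda$), so computing the image of a length $2$ path amounts to stacking two such arc diagrams and performing the prescribed surgeries. Since the TQFT viewpoint guarantees associativity and independence of the order of surgeries, we are free to carry out the surgery on the cup--cap pair that is geometrically convenient in each local picture.

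For the \textbf{monomial relations} in Fig.~\ref{figure:single} (a) and (b) the relevant arc diagrams differ only in a small region containing half-lines attached at the top or the bottom. When the two degree $1$ diagrams along such a length $2$ path are stacked, the unique cup--cap pair to be resolved connects two half-lines whose orientations force a $\zeta\otimes\zeta$ reconnection in which one of the resulting lines acquires two $\up$'s and the other two $\dn$'s in the forbidden configuration $\dn\dotsb\up$ among half-lines pointing to the same side. By the third bullet of Definition \ref{definition:Kmn}, this product vanishes. The two subcases (a) and (b) are handled symmetrically and reduce to the same local picture after rotation.

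For the \textbf{commutativity relations}, we use the full classification of squares given in Lemma \ref{lemma:squarespaths} together with the independence of surgery order. In each of the squares of Fig.~\ref{figure:squares}, the two parallel length $2$ paths are realised by performing the same two elementary surgeries in opposite orders on disjoint (or properly nested) cup--cap pairs. Since the local TQFT operations on disjoint regions of the diagram commute and re-orient the resulting components in the same way regardless of order, the two compositions yield the same arc diagram in $\K_m^n$. In the special case of the three squares contained in Fig.~\ref{figure:squares} (c), one extra verification is required to match the three different ways of going up through the apex, but this again reduces to reordering two commuting local surgeries.

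The \textbf{relations at vertices} \eqref{align:relationsvertex} are the main technical step. Fix $\lambda$, let $D$ be the circle in $e_\kappa$ exchanged to reach $\lambda$, and compute the LHS $\rho(y^\lambda_\kappa x_\kappa^\lambda)$ by stacking $\cupd\kappa\lambda\capd\lambda$ below $\cupd\lambda\lambda\capd\kappa$. The unique cup--cap pair associated with $D$ is resolved using the coproduct rule $1 \mapsto 1\otimes\epsilon + \epsilon\otimes 1$, which produces a sum of arc diagrams with one newly created circle oriented either clockwise or counterclockwise; each such summand has the form of a product $x_\lambda^{\mu_i} y_\lambda^{\mu_i}$, where $C$ is the circle in $e_\lambda$ involved in the corresponding surgery. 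Expanding the RHS $\rho(x_\lambda^{\mu_i}y_\lambda^{\mu_i})$ by the same rules and comparing coefficients on the basis of degree $0$ arc diagrams $e_\nu$ yields the three cases of $c_\kappa^{\mu_i}(\lambda)$: if $C$ is not present in $e_\kappa$ the LHS contributes exactly once (coefficient $1$); if $C$ is present in $e_\kappa$ but disjoint from $D$ the contributions from the two orientations of the coproduct combine with the $\epsilon\otimes\epsilon \mapsto 0$ rule to cancel (coefficient $0$); and if $C$ is the $j$-th nested circle enclosing $D$, the surgery must be performed inside a nested configuration whose orientation tracking produces a sign $(-1)^{j-1}$ together with a factor $2$ coming from the two terms in $1 \mapsto 1\otimes\epsilon+\epsilon\otimes 1$. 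The boundary cases (highest and lowest weight) are immediate, since in the former case the sum is empty while in the latter there are no ascending arrows into $\lambda$. The main obstacle is the bookkeeping in this last nested subcase, which I would handle by induction on $j$, reducing each step to the $j=1$ configuration and accumulating the alternating sign from a local flip of orientation on each enclosing circle.
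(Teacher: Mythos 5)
Your reasoning on the monomial relations, while slightly garbled in places (the configuration producing two $\up$'s on one line and two $\dn$'s on the other is the \emph{allowed} case of $\zeta\otimes\zeta$, not the forbidden one), captures the essential mechanism: the $\zeta\otimes\zeta\mapsto 0$ rule in Definition~\ref{definition:Kmn}. This part is essentially the paper's argument.

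Your argument for the commutativity relations, however, contains a genuine gap. You claim the two parallel length-$2$ paths across a square ``are realised by performing the same two elementary surgeries in opposite orders'' and invoke surgery-order independence. But this is not what happens. The product $\rho(x_\lambda^\nu x_\nu^\mu)$ is computed by stacking $\cupd\lambda\nu\capd\nu$ below $\cupd\nu\mu\capd\mu$, whereas $\rho(x_\lambda^{\nu'} x_{\nu'}^\mu)$ stacks $\cupd\lambda\nu'\capd{\nu'}$ below $\cupd{\nu'}\mu\capd\mu$. These are two entirely different stacks of arc diagrams passing through different intermediate weights $\nu\neq\nu'$; surgery-order independence is a statement about a \emph{fixed} stack, not about two different products. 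What one must actually do, and what the paper does, is carry out both multiplications explicitly and verify that each evaluates to the same arc diagram $\cupd\nu\mu\capd{\nu'}$ (the claim in the paper's proof). This is a calculation, not a formal commutation argument, and your proposal does not supply it.

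The vertex relations argument also has a substantive gap in the mechanism producing the coefficients $c_\kappa^{\mu_i}(\lambda)$. You attribute the $(-1)^{j-1}2$ factors for nested circles to ``orientation tracking'' in a surgery together with the two terms in the coproduct, and propose an induction on $j$. That is not how they arise. In the paper one first shows, by a direct computation with the coproduct rule $1\mapsto 1\otimes\epsilon+\epsilon\otimes 1$, that
\[
\rho(y_\kappa^\lambda x_\kappa^\lambda) = \cupd\lambda\mu_1\capd\lambda + \cupd\lambda\mu_2\capd\lambda,
\qquad
\rho(x_\lambda^{\mu_i} y_\lambda^{\mu_i}) = \cupd\lambda\mu_i\capd\lambda + \cupd\lambda\mu_{i+1}\capd\lambda
\]
for $1\le i < k$, with $\rho(x_\lambda^{\mu_k} y_\lambda^{\mu_k}) = \cupd\lambda\mu_k\capd\lambda$; here $\mu_1,\mu_2$ correspond to the two circles of $e_\lambda$ not present in $e_\kappa$, and $\mu_i$ for $i\ge 3$ to the circles of $e_\kappa$ enclosing $D$. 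The coefficients $(-1)^{j-1}2$ then come from inverting this bidiagonal system to express $\cupd\lambda\mu_1\capd\lambda + \cupd\lambda\mu_2\capd\lambda$ as an alternating telescoping combination of the $\rho(x_\lambda^{\mu_i}y_\lambda^{\mu_i})$; they are an artifact of linear elimination, not of signs accumulated during the surgery itself. Without this elimination step your proof does not produce the correct coefficients.
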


\begin{remark}
\label{remark:commutativity}
Note that by Proposition \ref{proposition:edgedescriptions} the parallel length $2$ paths appearing in the commutativity relations across squares are of the following forms
\[
\begin{tikzpicture}[x=.25em,y=.25em]
\begin{scope}[shift={(-2.5,-2)}]
\begin{scope}
\node[shape=circle,scale=.5] (T) at (0,2.5) {};
\draw[fill=black] (0,2.5) circle(.1em);
\node[shape=circle,scale=.5] (L) at (-3.5,-2) {};
\draw[fill=black] (-3.5,-2) circle(.1em);
\node[shape=circle,scale=.5] (R) at (3.5,-2) {};
\draw[fill=black] (3.5,-2) circle(.1em);
\node[shape=circle,scale=.5] (M) at (0,-6.5) {};
\draw[fill=black] (0,-6.5) circle(.1em);
\draw[->,line width=.5pt,line cap=round] (T) -- (L);
\draw[->,line width=.5pt,line cap=round] (T) -- (R);
\draw[->,line width=.5pt,line cap=round] (L) -- (M);
\draw[->,line width=.5pt,line cap=round] (R) -- (M);
\end{scope}
\begin{scope}[shift={(11,0)}]
\node[shape=circle,scale=.5] (T) at (0,2.5) {};
\draw[fill=black] (0,2.5) circle(.1em);
\node[shape=circle,scale=.5] (L) at (-3.5,-2) {};
\draw[fill=black] (-3.5,-2) circle(.1em);
\node[shape=circle,scale=.5] (R) at (3.5,-2) {};
\draw[fill=black] (3.5,-2) circle(.1em);
\node[shape=circle,scale=.5] (M) at (0,-6.5) {};
\draw[fill=black] (0,-6.5) circle(.1em);
\draw[<-,line width=.5pt,line cap=round] (T) -- (L);
\draw[->,line width=.5pt,line cap=round] (T) -- (R);
\draw[->,line width=.5pt,line cap=round] (L) -- (M);
\draw[<-,line width=.5pt,line cap=round] (R) -- (M);
\end{scope}
\begin{scope}[shift={(22,0)}]
\node[shape=circle,scale=.5] (T) at (0,2.5) {};
\draw[fill=black] (0,2.5) circle(.1em);
\node[shape=circle,scale=.5] (L) at (-3.5,-2) {};
\draw[fill=black] (-3.5,-2) circle(.1em);
\node[shape=circle,scale=.5] (R) at (3.5,-2) {};
\draw[fill=black] (3.5,-2) circle(.1em);
\node[shape=circle,scale=.5] (M) at (0,-6.5) {};
\draw[fill=black] (0,-6.5) circle(.1em);
\draw[->,line width=.5pt,line cap=round] (T) -- (L);
\draw[<-,line width=.5pt,line cap=round] (T) -- (R);
\draw[<-,line width=.5pt,line cap=round] (L) -- (M);
\draw[->,line width=.5pt,line cap=round] (R) -- (M);
\end{scope}
\begin{scope}[shift={(33,0)}]
\node[shape=circle,scale=.5] (T) at (0,2.5) {};
\draw[fill=black] (0,2.5) circle(.1em);
\node[shape=circle,scale=.5] (L) at (-3.5,-2) {};
\draw[fill=black] (-3.5,-2) circle(.1em);
\node[shape=circle,scale=.5] (R) at (3.5,-2) {};
\draw[fill=black] (3.5,-2) circle(.1em);
\node[shape=circle,scale=.5] (M) at (0,-6.5) {};
\draw[fill=black] (0,-6.5) circle(.1em);
\draw[<-,line width=.5pt,line cap=round] (T) -- (L);
\draw[<-,line width=.5pt,line cap=round] (T) -- (R);
\draw[<-,line width=.5pt,line cap=round] (L) -- (M);
\draw[<-,line width=.5pt,line cap=round] (R) -- (M);
\end{scope}
\end{scope}
\begin{scope}[shift={(45,0)}]
\begin{scope}
\node[shape=circle,scale=.5] (T) at (0,2) {};
\draw[fill=black] (0,2) circle(.1em);
\node[shape=circle,scale=.5] (L) at (-3,-2) {};
\draw[fill=black] (-3,-2) circle(.1em);
\node[shape=circle,scale=.5] (M) at (0,-6) {};
\draw[fill=black] (0,-6) circle(.1em);
\node[shape=circle,scale=.5] (B) at (0,-11) {};
\draw[fill=black] (0,-11) circle(.1em);
\draw[->,line width=.5pt,line cap=round] (T) -- (L);
\draw[->,line width=.5pt,line cap=round] (L) -- (M);
\draw[<-,line width=.5pt,line cap=round] (M) -- (B);
\path[->,line width=.5pt,line cap=round,out=-160,in=160,looseness=1.3] (T.180) edge (B);
\end{scope}
\begin{scope}[shift={(10,0)}]
\node[shape=circle,scale=.5] (T) at (0,2) {};
\draw[fill=black] (0,2) circle(.1em);
\node[shape=circle,scale=.5] (L) at (-3,-2) {};
\draw[fill=black] (-3,-2) circle(.1em);
\node[shape=circle,scale=.5] (M) at (0,-6) {};
\draw[fill=black] (0,-6) circle(.1em);
\node[shape=circle,scale=.5] (B) at (0,-11) {};
\draw[fill=black] (0,-11) circle(.1em);
\draw[<-,line width=.5pt,line cap=round] (T) -- (L);
\draw[->,line width=.5pt,line cap=round] (L) -- (M);
\draw[->,line width=.5pt,line cap=round] (M) -- (B);
\path[->,line width=.5pt,line cap=round,out=-160,in=160,looseness=1.3] (T.180) edge (B);
\end{scope}
\begin{scope}[shift={(20,0)}]
\node[shape=circle,scale=.5] (T) at (0,2) {};
\draw[fill=black] (0,2) circle(.1em);
\node[shape=circle,scale=.5] (L) at (-3,-2) {};
\draw[fill=black] (-3,-2) circle(.1em);
\node[shape=circle,scale=.5] (M) at (0,-6) {};
\draw[fill=black] (0,-6) circle(.1em);
\node[shape=circle,scale=.5] (B) at (0,-11) {};
\draw[fill=black] (0,-11) circle(.1em);
\draw[<-,line width=.5pt,line cap=round] (T) -- (L);
\draw[<-,line width=.5pt,line cap=round] (L) -- (M);
\draw[->,line width=.5pt,line cap=round] (M) -- (B);
\path[<-,line width=.5pt,line cap=round,out=-160,in=160,looseness=1.3] (T.180) edge (B);
\end{scope}
\begin{scope}[shift={(30,0)}]
\node[shape=circle,scale=.5] (T) at (0,2) {};
\draw[fill=black] (0,2) circle(.1em);
\node[shape=circle,scale=.5] (L) at (-3,-2) {};
\draw[fill=black] (-3,-2) circle(.1em);
\node[shape=circle,scale=.5] (M) at (0,-6) {};
\draw[fill=black] (0,-6) circle(.1em);
\node[shape=circle,scale=.5] (B) at (0,-11) {};
\draw[fill=black] (0,-11) circle(.1em);
\draw[->,line width=.5pt,line cap=round] (T) -- (L);
\draw[<-,line width=.5pt,line cap=round] (L) -- (M);
\draw[<-,line width=.5pt,line cap=round] (M) -- (B);
\path[<-,line width=.5pt,line cap=round,out=-160,in=160,looseness=1.3] (T.180) edge (B);
\end{scope}
\end{scope}
\begin{scope}[shift={(88,0)}]
\begin{scope}
\node[shape=circle,scale=.5] (T) at (0,2) {};
\draw[fill=black] (0,2) circle(.1em);
\node[shape=circle,scale=.5] (R) at (3,-2) {};
\draw[fill=black] (3,-2) circle(.1em);
\node[shape=circle,scale=.5] (M) at (0,-6) {};
\draw[fill=black] (0,-6) circle(.1em);
\node[shape=circle,scale=.5] (B) at (0,-11) {};
\draw[fill=black] (0,-11) circle(.1em);
\draw[->,line width=.5pt,line cap=round] (T) -- (R);
\draw[->,line width=.5pt,line cap=round] (R) -- (M);
\draw[<-,line width=.5pt,line cap=round] (M) -- (B);
\path[->,line width=.5pt,line cap=round,out=-160,in=160,looseness=1.3] (T.180) edge (B);
\end{scope}
\begin{scope}[shift={(11,0)}]
\node[shape=circle,scale=.5] (T) at (0,2) {};
\draw[fill=black] (0,2) circle(.1em);
\node[shape=circle,scale=.5] (R) at (3,-2) {};
\draw[fill=black] (3,-2) circle(.1em);
\node[shape=circle,scale=.5] (M) at (0,-6) {};
\draw[fill=black] (0,-6) circle(.1em);
\node[shape=circle,scale=.5] (B) at (0,-11) {};
\draw[fill=black] (0,-11) circle(.1em);
\draw[<-,line width=.5pt,line cap=round] (T) -- (R);
\draw[->,line width=.5pt,line cap=round] (R) -- (M);
\draw[->,line width=.5pt,line cap=round] (M) -- (B);
\path[->,line width=.5pt,line cap=round,out=-160,in=160,looseness=1.3] (T.180) edge (B);
\end{scope}
\begin{scope}[shift={(22,0)}]
\node[shape=circle,scale=.5] (T) at (0,2) {};
\draw[fill=black] (0,2) circle(.1em);
\node[shape=circle,scale=.5] (R) at (3,-2) {};
\draw[fill=black] (3,-2) circle(.1em);
\node[shape=circle,scale=.5] (M) at (0,-6) {};
\draw[fill=black] (0,-6) circle(.1em);
\node[shape=circle,scale=.5] (B) at (0,-11) {};
\draw[fill=black] (0,-11) circle(.1em);
\draw[<-,line width=.5pt,line cap=round] (T) -- (R);
\draw[<-,line width=.5pt,line cap=round] (R) -- (M);
\draw[->,line width=.5pt,line cap=round] (M) -- (B);
\path[<-,line width=.5pt,line cap=round,out=-160,in=160,looseness=1.3] (T.180) edge (B);
\end{scope}
\begin{scope}[shift={(33,0)}]
\node[shape=circle,scale=.5] (T) at (0,2) {};
\draw[fill=black] (0,2) circle(.1em);
\node[shape=circle,scale=.5] (R) at (3,-2) {};
\draw[fill=black] (3,-2) circle(.1em);
\node[shape=circle,scale=.5] (M) at (0,-6) {};
\draw[fill=black] (0,-6) circle(.1em);
\node[shape=circle,scale=.5] (B) at (0,-11) {};
\draw[fill=black] (0,-11) circle(.1em);
\draw[->,line width=.5pt,line cap=round] (T) -- (R);
\draw[<-,line width=.5pt,line cap=round] (R) -- (M);
\draw[<-,line width=.5pt,line cap=round] (M) -- (B);
\path[<-,line width=.5pt,line cap=round,out=-160,in=160,looseness=1.3] (T.180) edge (B);
\end{scope}
\end{scope}
\end{tikzpicture}
\]
where the first four diagrams appear in Fig.~\ref{figure:squares} (a), (b) and (c) and the last eight in (c) only.
\end{remark}

\begin{proof}[Proof of Proposition \ref{proposition:relations}.]
The monomial relations stem from the vanishing of the following multiplications (using the rule $\zeta \otimes \zeta \tikzmapsto 0$ in Definition \ref{definition:Kmn}):
\[
\begin{tikzpicture}[baseline=-.5ex,x=.6em,y=.6em,decoration={markings,mark=at position .99 with {\arrow[black]{Stealth[length=3.8pt]}}}]
\begin{scope}
\begin{scope}[shift={(-13,0)}]
\begin{scope}[shift={(4,0)}]
\DDOTS{0} \DDOTS{2} \DDOTS{4} \DDOTS{6} 
\UP{1} \UP{3} \DN{5} 
\RAY{1} \RAYUP{3} \ROUNDCUP{3}{1} \UPRARC{5}
\end{scope}
\begin{scope}[shift={(4,-3.5)}]
\DDOTS{0} \DDOTS{2} \DDOTS{4} \DDOTS{6} 
\UP{1} \DN{3} \UP{5} 
\RAYUP{1} \ROUNDCUP{1}{1} \CONNECT{3} \ROUNDCAP{3}{1} \RAYDN{5}
\end{scope}
\node at (13, -1.75) {$={} 0$};
\end{scope}

\begin{scope}[shift={(0,0)}]
\begin{scope}[shift={(4,0)}]
\DDOTS{0} \DDOTS{2} \DDOTS{4} \DDOTS{6} 
\UP{1} \DN{3} \DN{5} 
\UPLARC{1} \RAYUP{3} \RAY{5} \ROUNDCUP{1}{1} 
\end{scope}
\begin{scope}[shift={(4,-3.5)}]
\DDOTS{0} \DDOTS{2} \DDOTS{4} \DDOTS{6} 
\DN{1} \UP{3} \DN{5} 
\RAYDN{1} \ROUNDCUP{3}{1} \CONNECT{3} \ROUNDCAP{1}{1} \RAYUP{5}
\end{scope}
\node at (13, -1.75) {$={} 0$};
\end{scope}

\begin{scope}[shift={(13,0)}]
\begin{scope}[shift={(4,0)}]
\DDOTS{0} \DDOTS{2} \DDOTS{4} \DDOTS{6} 
\UP{1} \DN{3} \UP{5} 
\RAYDN{1} \ROUNDCAP{1}{1} \CONNECT{3} \ROUNDCUP{3}{1} \RAYUP{5}
\end{scope}
\begin{scope}[shift={(4,-3.5)}]
\DDOTS{0} \DDOTS{2} \DDOTS{4} \DDOTS{6} 
\UP{1} \UP{3} \DN{5} 
\RAY{1} \RAYDN{3} \ROUNDCAP{3}{1} \DNRARC{5} \CONNECT{5}
\end{scope}
\node at (13, -1.75) {$={} 0$};
\end{scope}

\begin{scope}[shift={(26,0)}]
\begin{scope}[shift={(4,0)}]
\DDOTS{0} \DDOTS{2} \DDOTS{4} \DDOTS{6} 
\DN{1} \UP{3} \DN{5} 
\RAYUP{1}  \ROUNDCUP{1}{1} \CONNECT{3} \ROUNDCAP{3}{1} \RAYDN{5}
\end{scope}
\begin{scope}[shift={(4,-3.5)}]
\DDOTS{0} \DDOTS{2} \DDOTS{4} \DDOTS{6} 
\UP{1} \DN{3} \DN{5} 
\DNLARC{1} \CONNECT{1} \ROUNDCAP{1}{1} \RAYDN{3} \RAY{5}
\end{scope}
\node at (13, -1.75) {$={} 0$.};
\end{scope}
\end{scope}
\end{tikzpicture}
\]

The commutativity relations across squares follow from the following {\it claim}: Let $p$ be a length $2$ path lying in a square in Fig.~\ref{figure:squares} from $e_\nu$ to $e_{\nu'}$ with $\nu \neq \nu'$. Then $\rho(p)=\cupd \nu \mu \capd \nu'$, where $\mu$ is the top vertex in the square.  

Let us prove the claim for the square (a) in Fig.~\ref{figure:squares}.  Denoting the bottom, top, left and right vertices in Fig.~\ref{figure:squares} (a) by $\lambda$, $\mu$, $\nu$ and $\nu'$ respectively,  we have 
\[
\begin{tikzpicture}[baseline=-.5ex,x=.6em,y=.6em,decoration={markings,mark=at position .99 with {\arrow[black]{Stealth[length=3.8pt]}}}]
\begin{scope}
\node at (-7, -4) {$\rho(x_\lambda^\nu x_\nu^\mu) = \rho(x_\lambda^\nu) \rho( x_\nu^\mu)  =$};
\begin{scope}[shift={(4,-1)}]
\DDOTS{0} \DDOTS{2} \DDOTS{4} \DDOTS{6} \DDOTS{8} \DDOTS{10} \DDOTS{12}
\UP{1} \DN{3} \UP{5} \DN{7} \UP{9} \DN{11}
\LARC{1}  \ROUNDCAP{3}{1} \ROUNDCAP{7}{1} \ROUNDCUP{5}{1} \CCCUP{3}{3} \RARC{11}
\CONNECT{3} \CONNECT{5} \CONNECT{7} \CONNECT{9}
\end{scope}
\begin{scope}[shift={(4,-7)}]
\DDOTS{0} \DDOTS{2} \DDOTS{4} \DDOTS{6} \DDOTS{8} \DDOTS{10} \DDOTS{12}
\UP{1} \DN{3} \DN{5} \UP{7} \UP{9} \DN{11}
\UPLARC{1} \CCCUP{1}{5} \CIRCLES{3}{3} \ROUNDCIRCLE{5}{1} \UPRARC{11}
\end{scope}
\node at (18, -4) {$=$};
\begin{scope}[shift={(20,-4)}]
\DDOTS{0} \DDOTS{2} \DDOTS{4} \DDOTS{6} \DDOTS{8} \DDOTS{10} \DDOTS{12}
\UP{1} \DN{3} \UP{5} \DN{7} \UP{9} \DN{11}
\UPLARC{1}  \CCCUP{1}{5} \CCCUP{3}{3} \ROUNDCUP{5}{1} \ROUNDCAP{3}{1} \ROUNDCAP{7}{1} \UPRARC{11}
\CONNECT{3} \CONNECT{5} \CONNECT{7} \CONNECT{9}
\end{scope}
\end{scope}
\end{tikzpicture}
\]
using the rule $1 \otimes 1 \tikzmapsto 1$ twice. Similarly we have
\[
\begin{tikzpicture}[baseline=-.5ex,x=.6em,y=.6em,decoration={markings,mark=at position .99 with {\arrow[black]{Stealth[length=3.8pt]}}}]
\begin{scope}
\node at (-7, -6) {$\rho(x_\lambda^{\nu'} x_{\nu'}^\mu) = \rho(x_\lambda^{\nu'}) \rho( x_{\nu'}^\mu)  =$};
\begin{scope}[shift={(4,-1.5)}]
\DDOTS{0} \DDOTS{2} \DDOTS{4} \DDOTS{6} \DDOTS{8} \DDOTS{10} \DDOTS{12}
\UP{1} \DN{3} \UP{5} \DN{7} \UP{9} \DN{11}
\UPLARC{1}  \CCCUP{1}{5} \ROUNDCUP{3}{1} \ROUNDCUP{7}{1} \ROUNDCAP{3}{1} \ROUNDCAP{7}{1} \UPRARC{11}
\CONNECT{3} \CONNECT{5} \CONNECT{7} \CONNECT{9}
\end{scope}
\begin{scope}[shift={(4,-10.5)}]
\DDOTS{0} \DDOTS{2} \DDOTS{4} \DDOTS{6} \DDOTS{8} \DDOTS{10} \DDOTS{12}
\DN{1} \DN{3} \UP{5} \DN{7} \UP{9} \UP{11}
\CIRCLES{1}{5} \CCCUP{3}{3} \ROUNDCUP{5}{1} \ROUNDCAP{3}{1} \ROUNDCAP{7}{1}
\CONNECT{3} \CONNECT{5} \CONNECT{7} \CONNECT{9}
\end{scope}
\node at (18, -6) {$=$};
\begin{scope}[shift={(20,-6)}]
\DDOTS{0} \DDOTS{2} \DDOTS{4} \DDOTS{6} \DDOTS{8} \DDOTS{10} \DDOTS{12}
\UP{1} \DN{3} \UP{5} \DN{7} \UP{9} \DN{11}
\UPLARC{1}  \CCCUP{1}{5} \CCCUP{3}{3} \ROUNDCUP{5}{1} \ROUNDCAP{3}{1} \ROUNDCAP{7}{1} \UPRARC{11}
\CONNECT{3} \CONNECT{5} \CONNECT{7} \CONNECT{9}
\end{scope}
\end{scope}
\end{tikzpicture}
\]
using $1 \otimes \ast \tikzmapsto \ast$ once and $1 \otimes 1 \tikzmapsto 1$ twice. This verifies that $\rho(x_\lambda^\nu x_\nu^\mu) =\cupd \lambda \mu \capd \mu= \rho(x_\lambda^{\nu'} x_{\nu'}^\mu) $. Using the involution in Remark \ref{remark:involution} we  have $\rho( y_\nu^\mu y_\lambda^\nu) = \cupd \mu \mu \capd \lambda= \rho(y_{\nu'}^\mu y_\lambda^{\nu'} ).$ By a similar computation we have $$\rho(x_\nu^\mu y_{\nu'}^\mu)=  \cupd \nu \mu \capd \nu' = \rho(y_{\lambda}^\nu x_{\lambda}^{\nu'}), \quad \rho( x_{\nu'}^\mu y_\nu^\mu)=  \cupd \nu' \mu \capd \nu = \rho( y_{\lambda}^{\nu'} x_{\lambda}^\nu).$$ 
This proves the claim for Fig.~\ref{figure:squares} (a). The other cases may be proved in a similar way. 

It remains to verify the relations at vertices. Note that $\lambda$ is obtained from $\kappa$ by exchanging a $\dn \dotsb \up$ pair lying in a circle $D$ of $e_{\kappa}$. We have the following two cases. In the first case, there are circles in $e_\lambda$ which appear in $e_{\kappa}$ enclosing $D$. Then there are two circles (denoted by $C_1$ and $C_2$ from left to right) in $e_\lambda$ which do not appear in $e_{\kappa}$. Note that the innermost circle in $e_\kappa$ enclosing $D$ does not appear in $e_\lambda$.    Denote by $C_i$ the $(i-1)$th circle in $e_\kappa$ enclosing $D$ for $3 \leq i \leq k$ and denote by $\mu_i$ the weights obtained from $\lambda$ by exchanging the $\dn \dotsb \up$ pair lying in $C_i$. Then $e_\kappa, e_\lambda$ and $e_{\mu_i}$ have the following forms
 \[
\begin{tikzpicture}[scale = 0.35, baseline=-.5ex,x=.6em,y=.6em,decoration={markings,mark=at position 1 with {\arrow[black]{Stealth[length=3.2pt]}}}]
\begin{scope}
\begin{scope}[shift={(-96,0)}]
\node[font=\small] at (10,10) {\strut$\kappa$};
\DDOT{0} \DDOT{2} \DDOT{4} \DDOT{6} \DDOT{8} \DDOT{10} \DDOT{12} \DDOT{14} \DDOT{16} \DDOT{18} \DDOT{20}
\CIRCLES{1}{9} \CIRCLES{3}{7} \CIRCLES{5}{5} \CIRCLES{7}{3} \CCIRCLES{9}
\end{scope}
\begin{scope}[shift={(-72,0)}]
\node[font=\small] at (10,10) {\strut$\lambda$};
\DDOT{0} \DDOT{2} \DDOT{4} \DDOT{6} \DDOT{8} \DDOT{10} \DDOT{12} \DDOT{14} \DDOT{16} \DDOT{18} \DDOT{20}
\CIRCLES{1}{9} \CIRCLES{3}{7} \CIRCLES{5}{5} \CCIRCLES{7} \CCIRCLES{11}
\end{scope}
\begin{scope}[shift={(-48,0)}]
\node[font=\small] at (10,10) {\strut$\mu_1$};
\DDOT{0} \DDOT{2} \DDOT{4} \DDOT{6} \DDOT{8} \DDOT{10} \DDOT{12} \DDOT{14} \DDOT{16} \DDOT{18} \DDOT{20}
\CIRCLES{1}{9} \CIRCLES{3}{7} \CCIRCLES{5} \CIRCLES{9}{3} \CCIRCLES{11}
\end{scope}
\begin{scope}[shift={(-24,0)}]
\node[font=\small] at (10,10) {\strut$\mu_2$};
\DDOT{0} \DDOT{2} \DDOT{4} \DDOT{6} \DDOT{8} \DDOT{10} \DDOT{12} \DDOT{14} \DDOT{16} \DDOT{18} \DDOT{20}
\CIRCLES{1}{9} \CIRCLES{3}{7} \CIRCLES{5}{3} \CCIRCLES{7} \CCIRCLES{13}
\end{scope}
\begin{scope}[shift={(0,0)}]
\node[font=\small] at (10,10) {\strut$\mu_3$};
\DDOT{0} \DDOT{2} \DDOT{4} \DDOT{6} \DDOT{8} \DDOT{10} \DDOT{12} \DDOT{14} \DDOT{16} \DDOT{18} \DDOT{20}
\CIRCLES{1}{9} \CCIRCLES{3} \CCIRCLES{7} \CCIRCLES{11} \CCIRCLES{15} 
\end{scope}
\begin{scope}[shift={(26,0)}]
\DDOTS{0}
\end{scope}
\begin{scope}[shift={(32,0)}]
\node[font=\small] at (10,10) {\strut$\mu_k$};
\DDOT{0} \DDOT{2} \DDOT{4} \DDOT{6} \DDOT{8} \DDOT{10} \DDOT{12} \DDOT{14} \DDOT{16} \DDOT{18} \DDOT{20}
\begin{scope}[yshift=3]
\UP{1}
\end{scope}
\begin{scope}[yshift=-3]
\DN{19}
\end{scope}
\RAYS{1}{6} \RAYS{19}{6} \CIRCLES{3}{7} \CIRCLES{5}{5} \CIRCLES{7}{3} \CCIRCLES{9}
\end{scope}
%\node at (26, 0) {$+ $};
\end{scope}
\end{tikzpicture}
\]
Using the rules $1 \otimes 1 \tikzmapsto 1$  and $1 \tikzmapsto 1 \otimes \epsilon + \epsilon \otimes 1$, we have
\[
\begin{tikzpicture}[scale = 0.35, baseline=-.5ex,x=.6em,y=.6em,decoration={markings,mark=at position .99 with {\arrow[black]{Stealth[length=3.2pt]}}}]
\begin{scope}
\node[left] at (0,0) {\strut$\rho(y_{\kappa}^{\lambda})\rho( x_{\kappa}^\lambda) ={}$};
\DDOT{0} \DDOT{2} \DDOT{4} \DDOT{6} \DDOT{8} \DDOT{10} \DDOT{12} \DDOT{14} \DDOT{16} \DDOT{18} \DDOT{20}
\begin{scope}[yshift=3]
\UP{7}
\end{scope}
\begin{scope}[yshift=-3]
\DN{9}
\end{scope}
\CIRCLES{1}{9} \CIRCLES{3}{7} \CIRCLES{5}{5} \TINYCIRCLE{7} \CCIRCLES{11}
\begin{scope}[shift={(29,0)}]
\DDOT{0} \DDOT{2} \DDOT{4} \DDOT{6} \DDOT{8} \DDOT{10} \DDOT{12} \DDOT{14} \DDOT{16} \DDOT{18} \DDOT{20}
\begin{scope}[yshift=3]
\UP{11}
\end{scope}
\begin{scope}[yshift=-3]
\DN{13}
\end{scope}
\CIRCLES{1}{9} \CIRCLES{3}{7} \CIRCLES{5}{5} \CCIRCLES{7} \TINYCIRCLE{11}
\end{scope}
\node at (24.5, 0) {$+$};
\node at (66.5,0) {$ = \cupd \lambda \mu_1 \capd \lambda + \cupd \lambda \mu_2 \capd \lambda$};
\end{scope}

\begin{scope}[shift={(0, -16)}]
\begin{scope}
\node[left] at (0,0) {\strut$\rho(x_{\lambda}^{\mu_1})\rho( y_{\lambda}^{\mu_1}) ={}$};
\DDOT{0} \DDOT{2} \DDOT{4} \DDOT{6} \DDOT{8} \DDOT{10} \DDOT{12} \DDOT{14} \DDOT{16} \DDOT{18} \DDOT{20}
\begin{scope}[yshift=3]
\UP{7}
\end{scope}
\begin{scope}[yshift=-3]
\DN{9}
\end{scope}
\CIRCLES{1}{9} \CIRCLES{3}{7} \CIRCLES{5}{5} \TINYCIRCLE{7} \CCIRCLES{11}
\begin{scope}[shift={(29,0)}]
\DDOT{0} \DDOT{2} \DDOT{4} \DDOT{6} \DDOT{8} \DDOT{10} \DDOT{12} \DDOT{14} \DDOT{16} \DDOT{18} \DDOT{20}
\begin{scope}[yshift=3]
\UP{5}
\end{scope}
\begin{scope}[yshift=-3]
\DN{15}
\end{scope}
\CIRCLES{1}{9} \CIRCLES{3}{7} \CIRCLES{5}{5} \CCIRCLES{7} \CCIRCLES{11}
\end{scope} 
\node at (24.5, 0) {$+$};
\node at (66.5,0) {$ = \cupd \lambda \mu_1 \capd \lambda + \cupd \lambda \mu_3 \capd \lambda.$};
\end{scope}
\end{scope}
\end{tikzpicture}
\]
Similarly for $2 \leq i < k$ we have
$
\rho(x_{\lambda}^{\mu_i})\rho( y_{\lambda}^{\mu_i}) = \cupd \lambda \mu_i \capd \lambda + \cupd \lambda \mu_{i+1} \capd \lambda
$ 
and $ \rho(x_{\lambda}^{\mu_k})\rho( y_{\lambda}^{\mu_k}) =   \cupd \lambda \mu_k \capd \lambda.$ 
Combining the above equalities we obtain 
\[
\rho(y_{\kappa}^{\lambda})\rho( x_{\kappa}^\lambda)  = \rho(x_{\lambda}^{\mu_1} y_{\lambda}^{\mu_1}) + \rho(x_{\lambda}^{\mu_2} y_{\lambda}^{\mu_2}) - 2 \rho(x_{\lambda}^{\mu_3} y_{\lambda}^{\mu_3}) + \dotsb + (-1)^{k}2\rho(x_{\lambda}^{\mu_k} y_{\lambda}^{\mu_k}).
\]
This yields \eqref{align:relationsvertex} since $c_{\kappa}^{\mu_1}(\lambda) =1=  c_{\kappa}^{\mu_2}(\lambda)$ and $c_{\kappa}^{\mu_i}(\lambda) = (-1)^{i} 2$ for $3 \leq i \leq k$. 

In the second case, there is no circle in $e_\lambda$ which appears in $e_{\kappa}$ enclosing $D$. Then $\kappa, \lambda$ have the following subcases
\[
\begin{tikzpicture}[ baseline=-.5ex,x=.6em,y=.6em,decoration={markings,mark=at position .99 with {\arrow[black]{Stealth[length=3.8pt]}}}]
\begin{scope}[shift={(0, 5)}] %\lambda
\node at (-33, 0) {$ \lambda$};
\begin{scope}[shift={(0,0)}]
\DDOTS{0} \DDOTS{2} \DDOTS{4} \DDOTS{6}  \DDOTS{8} 
\DN{1} \UP{3} \DN{5} \UP{7}
\ROUNDCIRCLE{1}{1} \ROUNDCIRCLE{5}{1}
\end{scope}
\begin{scope}[shift={(-10, 0)}]
\DDOTS{0} \DDOTS{2} \DDOTS{4}  \DDOTS{6} 
\UP{1} \DN{3} \UP{5}
\ROUNDCIRCLE{3}{1} \RAYS{1}{1.5}
\end{scope}
\begin{scope}[shift={(-20, 0)}]
\DDOTS{0} \DDOTS{2} \DDOTS{4} \DDOTS{6}  
\DN{1} \UP{3} \DN{5} 
\RAYS{5}{1.5} \ROUNDCIRCLE{1}{1}
\end{scope}
\begin{scope}[shift={(-28, 0)}]
 \DDOTS{0} \DDOTS{2} \DDOTS{4}  
\UP{1} \DN{3} 
\RAYS{1}{1.5} \RAYS{3}{1.5}
\end{scope}
\end{scope}

\begin{scope}%\mu
\node at (-33, 0) {$ \kappa$};
\begin{scope}[shift={(0,0)}]
\DDOTS{0} \DDOTS{2} \DDOTS{4} \DDOTS{6}  \DDOTS{8} 
\DN{1} \DN{3} \UP{5} \UP{7}
\CIRCLES{1}{3} \ROUNDCIRCLE{3}{1}
\end{scope}

\begin{scope}[shift={(-10, 0)}]
\DDOTS{0} \DDOTS{2} \DDOTS{4}  \DDOTS{6} 
\DN{1} \UP{3} \UP{5}
\ROUNDCIRCLE{1}{1} \RAYS{5}{1.5}
\end{scope}
\begin{scope}[shift={(-20, 0)}]
\DDOTS{0} \DDOTS{2} \DDOTS{4} \DDOTS{6}  
\DN{1} \DN{3} \UP{5} 
\RAYS{1}{1.5} \ROUNDCIRCLE{3}{1}
\end{scope}
\begin{scope}[shift={(-28, 0)}]
\DDOTS{0} \DDOTS{2} \DDOTS{4}  
\DN{1} \UP{3} 
\ROUNDCIRCLE{1}{1}
\end{scope}
\end{scope}
\end{tikzpicture}
\]
Note that in the first subcase we have that $c_\kappa^{\mu'}(\lambda) = 0$ where $\mu'$ is any weight obtained from $\lambda$ by exchanging the $\dn \dotsb \up$ pair in a circle of $e_\lambda$ and that 
\[
\begin{tikzpicture}[baseline=-.5ex,x=.6em,y=.6em,decoration={markings,mark=at position .99 with {\arrow[black]{Stealth[length=3.8pt]}}}]
\begin{scope}
\node at (-1, -1.75) {$\rho(y_{\kappa}^{\lambda} x_{\kappa}^\lambda) ={} $};
\begin{scope}[shift={(4,-.2)}]
\DDOTS{0} \DDOTS{2} \DDOTS{4} 
\UP{1} \DN{3} 
\RAYUP{1}  \ROUNDCUP{1}{1} \RAYUP{3}
\end{scope}
\begin{scope}[shift={(4,-3.3)}]
\DDOTS{0} \DDOTS{2} \DDOTS{4} 
\UP{1} \DN{3} 
\RAYDN{1} \ROUNDCAP{1}{1} \RAYDN{3}
\end{scope}
\node at (11, -1.75) {$= 0$.};
\end{scope}
\end{tikzpicture}
\]
For the second and third subcases we have 
$\rho(y_{\kappa}^{\lambda} x_{\kappa}^\lambda)  = \cupd \lambda \mu \capd \lambda= \rho(x_{\lambda}^{\mu} y_\lambda^\mu)$ 
where $\mu$ is obtained from $\lambda$ by exchanging the $\dn \dotsb \up$ pair lying in the (illustrated) circle of $e_{\lambda}$. Note that $c_\kappa^\mu (\lambda) =1$ and $c_\kappa^{\mu'}(\lambda) = 0$ for any $\mu'\neq \mu$. For the fourth subcase we have 
 \[
\begin{tikzpicture}[baseline=-.5ex,x=.6em,y=.6em,decoration={markings,mark=at position .99 with {\arrow[black]{Stealth[length=3.8pt]}}}]
\begin{scope}
\node at (-5, 0) {$\rho(y_{\kappa}^{\lambda} x_{\kappa}^\lambda) ={} $};
\begin{scope}[shift={(0,0)}]
\DDOTS{0} \DDOTS{2} \DDOTS{4} \DDOTS{6}  \DDOTS{8} 
\UP{1} \DN{3} \DN{5} \UP{7}
\ROUNDCIRCLE{1}{1} \ROUNDCIRCLE{5}{1}
\end{scope}
\node at (9.5, 0) {$+$};
\begin{scope}[shift={(11,0)}]
\DDOTS{0} \DDOTS{2} \DDOTS{4} \DDOTS{6}  \DDOTS{8} 
\DN{1} \UP{3} \UP{5} \DN{7}
\ROUNDCIRCLE{1}{1} \ROUNDCIRCLE{5}{1}
\end{scope}
\node at (30, 0) {$= \rho(x_\lambda^{\mu_1} y_\lambda^{\mu_1}) + \rho(x_\lambda^{\mu_2} y_\lambda^{\mu_2})$};
\end{scope}
\end{tikzpicture}
\]
where $\mu_1, \mu_2$ are the weights obtained from $\lambda$ by exchanging the $\dn \dotsb \up$ pairs lying in the (illustrated) circles of $e_{\lambda}$. Note that $c_\kappa^{\mu_1} (\lambda) = 1 = c_\kappa^{\mu_2} (\lambda)$.
\end{proof}

Denote by $\I_m^n$ the two-sided ideal of $\Bbbk \Q_m^n$ generated by the above quadratic relations.  Proposition \ref{proposition:relations} yields an algebra map (still denoted by $\rho$)
\begin{align}\label{align:rho}
\rho \colon \Bbbk \Q_m^n / \I_m^n \tikztwoheadrightarrow \K_m^n.
\end{align}

\begin{remark}\label{remark:involution}
Note that $\Bbbk \Q_m^n / \I_m^n$ admits a natural involution map 
\begin{align}
(-)^*\colon \Bbbk \Q_m^n / \I_m^n \tikzto \Bbbk \Q_m^n / \I_m^n
\end{align}
determined by $(e_\lambda)^*=e_\lambda$ and $(x)^* = y$ and $(y)^*=x$ for a pair of opposite arrows $x, y$ (cf.\ Fig.~\ref{figure:double}).  
We also recall the involution map of $\K_m^n$ defined in \cite[(4.14)]{brundanstroppel1}
$$
(-)^*\colon \K_m^n \tikzto \K_m^n, \quad \cupd \alpha \lambda \capd \beta \tikzmapsto \cupd \beta \lambda \capd \alpha.
$$
Clearly $\rho$ intertwines these two maps, i.e.\ we have $\rho(a^*) = \rho(a)^*$ for any $a \in  \Bbbk \Q_m^n / \I_m^n $. 
\end{remark}

In the rest of this section we will show that the map $\rho$ in \eqref{align:rho} is a bijection. As a result, the algebra $\K_m^n$ can be written as the path algebra $\Bbbk \Q_m^n$ of the quiver $\Q_m^n$ with relations $\I_m^n$. Note that $\rho$ is bijective in degrees $0$ and $1$, and the algebra $\K_m^n$ is Koszul and in particular quadratic. Therefore, it only remains to show that $\rho$ is bijective in degree $2$. 

\begin{lemma}\label{lemma:dimension}
$\dim_\Bbbk e_\lambda (\Bbbk \Q_m^n / \I_m^n)_2 e_\mu \leq \begin{cases} 1 & \text{if } \lambda \neq \mu \\ \mathrm{def} (\lambda) & \text{if } \lambda = \mu. \end{cases}$
\end{lemma}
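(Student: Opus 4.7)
The plan is to fix $\lambda, \mu \in \Lambda_m^n$, enumerate the length-$2$ paths in $\Q_m^n$ from $\lambda$ to $\mu$, and use the three families of quadratic relations given in Proposition \ref{proposition:relations} (monomial, commutativity across squares, relations at vertices) to bound the dimension of their image in $\Bbbk \Q_m^n / \I_m^n$.

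For $\lambda = \mu$, every length-$2$ path $\lambda \to X \to \lambda$ is a $2$-cycle of one of two flavours: the \emph{up-down} paths $x_\lambda^{\mu_i} y_\lambda^{\mu_i}$, one for each ascending neighbour $\mu_i$ of $\lambda$ (equivalently, for each of the $\mathrm{def}(\lambda)$ circles in $e_\lambda$, see Remark \ref{remark:defect}), and the \emph{down-up} paths $y_\kappa^\lambda x_\kappa^\lambda$, one for each descending neighbour $\kappa$. The vertex relation \eqref{align:relationsvertex} expresses every down-up $2$-cycle as a $\Bbbk$-linear combination of up-down ones, so $e_\lambda (\Bbbk \Q_m^n / \I_m^n)_2 e_\lambda$ is spanned by at most the $\mathrm{def}(\lambda)$ elements $x_\lambda^{\mu_i} y_\lambda^{\mu_i}$.

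For $\lambda \neq \mu$ (WLOG $|\lambda| \leq |\mu|$), bipartiteness of $\Gamma_m^n$ (Proposition \ref{proposition:bipartite}) forces the orientation ($x$ or $y$) of each arrow in a length-$2$ path $\lambda \to X \to \mu$ to be determined by the height of $X$ relative to $\lambda$ and $\mu$. Hence length-$2$ paths in $\Q_m^n$ from $\lambda$ to $\mu$ are in bijection with length-$2$ paths between $\lambda$ and $\mu$ in $\Gamma_m^n$, of which there are at most three by Proposition \ref{proposition:edgedescriptions}. In the \emph{one-path} cases (Fig.~\ref{figure:single}) the bound dim $\leq 1$ is immediate, and the monomial relations even force dim $=0$ in the subcases (a) and (b) of Fig.~\ref{figure:single}. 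In the \emph{two-path} cases (Fig.~\ref{figure:squares}~(a), (b), or one of the sub-squares of Fig.~\ref{figure:squares}~(c)) the two paths lie on a common commutative square and the corresponding commutativity relation identifies them. Finally in the \emph{three-path} case (Fig.~\ref{figure:squares}~(c), with $\lambda$ the middle vertex and $\mu$ the top vertex), the three paths lie on the top square together with the two "deleted" squares described in Remark \ref{remark:commutativity}, whose commutativity relations collectively identify all three paths.

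The main bookkeeping hurdle will be to verify that the commutativity relations, formulated for parallel length-$2$ paths with prescribed arrow orientations (as enumerated in Remark \ref{remark:commutativity}), actually apply to the specific up/down patterns realised in each relevant square. Since Remark \ref{remark:commutativity} exhausts the possible oriented squares, this reduces to matching each configuration to an entry of that list.
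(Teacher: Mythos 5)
Your proof is correct and follows the same two-case strategy as the paper: for $\lambda = \mu$ the vertex relations \eqref{align:relationsvertex} reduce all $2$-cycles at $\lambda$ to the $\mathrm{def}(\lambda)$ up-down cycles $x_\lambda^{\mu_i} y_\lambda^{\mu_i}$, and for $\lambda \neq \mu$ the commutativity relations identify all parallel length-$2$ paths. The one redundancy is that you re-derive the case matching via Proposition~\ref{proposition:edgedescriptions} and Remark~\ref{remark:commutativity}; the commutativity relation of Proposition~\ref{proposition:relations} is already stated unconditionally for every pair of parallel length-$2$ paths that are not $2$-cycles, so the paper simply cites it without re-enumerating the squares.
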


\begin{proof}
Let $\lambda \neq \mu$. If $\dim_\Bbbk e_\lambda (\Bbbk \Q_m^n / \I_m^n)_2 e_\mu >1$ then there exist two distinct paths $p, q$ of length $2$ from $e_\lambda$ to $e_\mu$ in $\Q_m^n$ such that $\rho(p) \neq \rho(q)$, which contradicts the commutativity relations across squares. Thus, $\dim_\Bbbk e_\lambda (\Bbbk \Q_m^n / \I_m^n)_2 e_\mu \leq 1$. 

If $\lambda = \mu$, from the relations at each vertex $\lambda$ we note that $e_\lambda (\Bbbk \Q_m^n / \I_m^n)_2 e_\lambda$ is spanned by the set of $2$-cycles at $\lambda$ of the form $xy$. This set corresponds bijectively to the counterclockwise circles in $e_\lambda$, whence $\dim e_\lambda (\Bbbk \Q_m^n / \I_m^n)_2 e_\lambda\leq \mathrm{def}(\lambda)$.
\end{proof}

\begin{lemma}\label{lemma:dimension1}
If $\lambda \neq \mu$ then $\dim_\Bbbk e_\lambda (\Bbbk \Q_m^n / \I_m^n)_2 e_\mu = \dim_\Bbbk e_\lambda (\K_m^n)_2 e_\mu.$
\end{lemma}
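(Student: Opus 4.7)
The plan is to leverage the surjectivity of the algebra map $\rho$ established in Proposition \ref{proposition:relations}, which immediately gives
\[
\dim_\Bbbk e_\lambda (\Bbbk \Q_m^n / \I_m^n)_2 e_\mu \geq \dim_\Bbbk e_\lambda (\K_m^n)_2 e_\mu,
\]
and to combine it with the upper bound $\dim_\Bbbk e_\lambda (\Bbbk \Q_m^n / \I_m^n)_2 e_\mu \leq 1$ from Lemma \ref{lemma:dimension}. Equality then reduces to showing that $\rho$ is injective on this one-dimensional (or zero) space, i.e.\ that any length $2$ path $p$ from $\lambda$ to $\mu$ in $\Q_m^n$ which represents a nonzero element of $\Bbbk\Q_m^n/\I_m^n$ satisfies $\rho(p) \neq 0$ in $\K_m^n$.

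By Proposition \ref{proposition:bipartite} one may assume $\lvert \lambda \rvert \equiv \lvert \mu \rvert \bmod 2$, else both sides vanish trivially. Proposition \ref{proposection:edgedescriptions}---or rather Proposition \ref{proposition:edgedescriptions}---then classifies any length $2$ path between $\lambda$ and $\mu$ into three families: (i)~paths lying inside a square of Fig.~\ref{figure:squares}; (ii)~single paths as in Fig.~\ref{figure:single} (a) or (b), which are precisely the monomial relations and hence vanish in both $\Bbbk\Q_m^n/\I_m^n$ and $\K_m^n$; and (iii)~single paths as in Fig.~\ref{figure:single} (c) or (d).

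For each nontrivial family the plan is to compute $\rho(p)$ directly using the surgery rules of Definition \ref{definition:Kmn}. In family~(i) this computation was already carried out in the proof of Proposition \ref{proposition:relations}, where $\rho(p)$ is shown to equal a nonzero arc diagram of the form $\cupd \lambda \mu \capd \mu$ (or a horizontal/mirror variant). In family~(iii) the same idea applies: one checks that the surgeries prescribed by the given nesting pattern trigger only the \emph{multiplicative} rules $1 \otimes 1 \mapsto 1$, $1 \otimes \epsilon \mapsto \epsilon$ and $\zeta \otimes 1 \mapsto \zeta$ of Definition \ref{definition:Kmn}, and never the vanishing rules $\epsilon \otimes \epsilon \mapsto 0$ or $\zeta \otimes \zeta \mapsto 0$, so that the image is unambiguously a single nonzero degree $2$ arc diagram $\cupd \lambda \nu \capd \mu$ for some intermediate weight $\nu$ (where, in Fig.~\ref{figure:single} (c), $\nu$ is the top vertex and, in (d), $\nu$ is the middle vertex).

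The main obstacle will be the case-by-case bookkeeping for family~(iii), especially in Fig.~\ref{figure:single} (d), where several nested circles of the intermediate weight force one to identify exactly which circles are merged or split under each surgery and to rule out every potential vanishing configuration. I expect this to follow from the observation that the configurations forming family~(iii) are, by definition, precisely those which cannot be completed to a square, and this combinatorial restriction is what prevents the $\epsilon \otimes \epsilon$ or $\zeta \otimes \zeta$ patterns from ever arising during the relevant surgery moves.
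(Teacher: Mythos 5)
Your proposal follows essentially the same route as the paper: establish the two-sided inequality $1 \geq \dim_\Bbbk e_\lambda (\Bbbk \Q_m^n / \I_m^n)_2 e_\mu \geq \dim_\Bbbk e_\lambda (\K_m^n)_2 e_\mu$ from Lemma~\ref{lemma:dimension} and surjectivity of $\rho$, then show $\rho(p)\neq 0$ for any path $p$ that is nonzero in the quotient, splitting into the square case (handled by the claim in the proof of Proposition~\ref{proposition:relations}) and the non-square single-path cases of Fig.~\ref{figure:single}~(c) and~(d), which the paper likewise settles by direct inspection of the arc-diagram products (using the involution of Remark~\ref{remark:involution} to cover the mirror cases). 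The only caveat is that your closing heuristic (``cannot be completed to a square $\Rightarrow$ no vanishing pattern fires'') is not a self-contained argument --- it stands in for the explicit surgery computations the paper actually performs, but the structure and conclusion agree.
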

\begin{proof}
Since $\rho$ is surjective by Lemma \ref{lemma:dimension} we have 
\begin{align*}
1\geq \dim_\Bbbk e_\lambda (\Bbbk \Q_m^n / \I_m^n)_2 e_\mu  \geq \dim_\Bbbk e_\lambda (\K_m^n)_2 e_\mu.
\end{align*}
We claim that the second inequality is an equality. Indeed, if $\dim_\Bbbk e_\lambda (\Bbbk \Q_m^n / \I_m^n)_2 e_\mu = 0$ the claim holds since both sides have to be zero. If $\dim_\Bbbk e_\lambda (\Bbbk Q_m^n / \I_m^n)_2 e_\mu = 1$ then there exists a length $2$ path $p$ in $\Q_m^n$  from $e_\lambda$ to $e_\mu$ such that $[p] \neq 0$ in $e_\lambda (\Bbbk Q_m^n / \I_m^n)_2 e_\mu$. This can only happen in the following  two cases. 

In the first case, the path $p$ appears in a square. Then it follows from the claim in the proof of Proposition \ref{proposition:relations} that $\rho(p)= \cupd \lambda \xi \capd \mu$. So $e_\lambda (\K_m^n)_2 e_\mu \neq 0$. 

In the second case, the path $p$ does not lie in any square. Then $p$ is the ascending path or the descending path of length $2$ in Fig.~\ref{figure:single} (c) or the length $2$ path from left to right or from right to left in Fig.~\ref{figure:single} (d). If $p$ is the ascending path in Fig.~\ref{figure:single} (c) then
\[
\begin{tikzpicture}[x=.75em,y=.75em,decoration={markings,mark=at position 0.99 with {\arrow[black]{Stealth[length=4.8pt]}}}]
\begin{scope}
\node at (-3, 0) {$\rho(p) ={}$};
\DDOTS{0} \DDOTS{2} \DDOTS{4} \DDOTS{6} \DDOTS{8}
\UP{1} \UP{3} \DN{5} \DN{7}
\UPLARC{1}  \UPLARC{3} \UPRARC{5} \UPRARC{7}
 \ROUNDCUP{3}{1} \CCCUP{1}{3}
 \end{scope}
\end{tikzpicture}
\]
which is nonzero in $\K_m^n$. Using the involution in Remark \ref{remark:involution} we obtain an analogous statement for the descending path. If $p$ is the path from left to right in Fig.~\ref{figure:single} (d) then 
\[
\begin{tikzpicture}[x=.75em,y=.75em,decoration={markings,mark=at position 0.99 with {\arrow[black]{Stealth[length=4.8pt]}}}]
\begin{scope}
\node at (-3, 0) {$\rho(p) ={}$};
\DDOTS{0} \DDOTS{2} \DDOTS{4} \DDOTS{6} \DDOTS{8}  \DDOTS{10} \DDOTS{12}
\UP{1} \DN{3} \UP{5} \DN{7} \UP{9} \DN{11}
 \UPLARC{1}  \UPRARC{3} \DNLARC{9} \DNRARC{11}
 \ROUNDCUP{3}{1} \CCCUP{1}{3} \CCCAP{5}{3} \ROUNDCAP{7}{1}
\CONNECT{5} \CONNECT{7} \CONNECT{9} \CONNECT{11}
\end{scope}
\end{tikzpicture}
\]
which is nonzero in $\K_m^n$. Again, using the involution the same holds for the path from right to left.
\end{proof}

\begin{proposition}
The algebra map $\rho\colon \Bbbk \Q_m^n / \I_m^n \tikzto \K_m^n$ is an isomorphism. 
\end{proposition}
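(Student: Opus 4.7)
The plan is to reduce the problem to degree $2$ and then resolve the diagonal part by exhibiting an explicit linearly independent family inside $e_\lambda (\K_m^n)_2 e_\lambda$. Since $\K_m^n$ is Koszul by \cite[\S 5]{brundanstroppel2} and therefore quadratic, and since $\rho$ is visibly bijective in degrees $0$ and $1$ (matching vertices with degree $0$ arc diagrams and arrows with degree $1$ arc diagrams), checking that $\rho$ is an isomorphism in degree $2$ will upgrade automatically to an isomorphism in every degree via the quadratic presentation.

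For degree $2$ it suffices to compare $\dim_\Bbbk e_\lambda (\Bbbk \Q_m^n / \I_m^n)_2 e_\mu$ and $\dim_\Bbbk e_\lambda (\K_m^n)_2 e_\mu$ for every ordered pair $(\lambda, \mu)$. The off-diagonal case $\lambda \neq \mu$ is already handled by Lemma \ref{lemma:dimension1}. For the diagonal case, Lemma \ref{lemma:dimension} gives the upper bound $\dim_\Bbbk e_\lambda (\Bbbk \Q_m^n / \I_m^n)_2 e_\lambda \leq \mathrm{def}(\lambda)$, and the surjectivity of $\rho$ gives $\dim_\Bbbk e_\lambda (\K_m^n)_2 e_\lambda \leq \dim_\Bbbk e_\lambda (\Bbbk \Q_m^n / \I_m^n)_2 e_\lambda$. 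It is therefore enough to exhibit $\mathrm{def}(\lambda)$ linearly independent elements in $e_\lambda (\K_m^n)_2 e_\lambda$.

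For each of the $\mathrm{def}(\lambda)$ counterclockwise circles $C$ of $e_\lambda$, let $\mu_C$ be the weight obtained from $\lambda$ by exchanging the $\dn \dotsb \up$ pair supported on $C$. I claim $\cupd \lambda \mu_C \capd \lambda$ is a valid arc diagram of degree exactly $2$ with source and target both $\lambda$. Indeed, $\mu_C$ agrees with $\lambda$ at every position outside the two endpoints of $C$, so every cup of $\cupd \lambda$, every cap of $\capd \lambda$, and every pair of matching half-lines outside $C$ retains its counterclockwise orientation from $\lambda$ (so conditions (i), (ii) of Definition-of-arc-diagram are unaffected there), while exactly the cup of $\cupd \lambda$ and the cap of $\capd \lambda$ sitting on $C$ are reversed to clockwise by the local flip $\dn \dotsb \up \mapsto \up \dotsb \dn$ in $\mu_C$. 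This contributes exactly one clockwise cup and one clockwise cap, giving degree $2$. As $C$ ranges over the $\mathrm{def}(\lambda)$ circles the middle weights $\mu_C$ are pairwise distinct, so the resulting arc diagrams are pairwise distinct basis elements of $\K_m^n$, yielding $\dim_\Bbbk e_\lambda (\K_m^n)_2 e_\lambda \geq \mathrm{def}(\lambda)$. Combined with the upper bound this forces equality of dimensions on the diagonal, and hence the bijectivity of $\rho$ in degree $2$.

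The only delicate point in this plan is the verification that the arc diagram $\cupd \lambda \mu_C \capd \lambda$ really exists and has the claimed degree; but because the modification $\lambda \rightsquigarrow \mu_C$ is entirely supported inside a single circle of $e_\lambda$, the interaction with nested circles and with half-lines is trivial, so no further combinatorics beyond what was developed in Section \ref{subsection:genrel} is required.
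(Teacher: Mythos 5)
Your proof is correct and takes essentially the same route as the paper: reduce by quadraticity to degree $2$, handle $\lambda\neq\mu$ via Lemma \ref{lemma:dimension1}, and squeeze the diagonal between the upper bound from Lemma \ref{lemma:dimension} and $\mathrm{def}(\lambda)$. The one point where you are more explicit than the paper is in establishing $\dim_\Bbbk e_\lambda(\K_m^n)_2 e_\lambda \geq \mathrm{def}(\lambda)$ by constructing the arc diagrams $\cupd\lambda\,\mu_C\,\capd\lambda$ directly, whereas the paper just cites Remark \ref{remark:defect} — but that is the same underlying bijection between circles of $e_\lambda$ and degree-$2$ arc diagrams at $\lambda$, not a different argument.
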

\begin{proof}
As mentioned above, we need to show that $\rho$ is bijective in degree $2$. For this, it suffices to show that for any weights $\lambda, \mu$ we have 
\begin{align}\label{equality}
\dim_\Bbbk e_\lambda (\Bbbk \Q_m^n / \I_m^n)_2 e_\mu  = \dim_\Bbbk e_\lambda (\K_m^n)_2 e_\mu.
\end{align}
The equality \eqref{equality} holds for $\lambda = \mu$ since 
\[
\mathrm{def}(\lambda) \geq \dim_\Bbbk e_\lambda (\Bbbk \Q_m^n / \I_m^n)_2 e_\lambda  \geq \dim_\Bbbk e_\lambda (\K_m^n)_2 e_\lambda=\mathrm{def}(\lambda)
\]
where the first inequality follows from Lemma \ref{lemma:dimension} and the second one follows from $\rho$ being surjective, and the third equality follows from Remark \ref{remark:defect}. 
If $\lambda \neq \mu$ then the equality \eqref{equality} follows from Lemma \ref{lemma:dimension1}. 
\end{proof}

\section{The Koszul-dual picture}

Let $A = \Bbbk Q / I$, where $Q$ is a finite quiver with vertex set $Q_0$ and $I \subset \Bbbk Q_{\geq 2}$ is a two-sided ideal of relations, where $Q_{\geq 2}$ denotes the set of paths of length $\geq 2$. We say that $A$ is {\it Koszul} in the sense of \cite{priddy,beilinsonginzburgsoergel} if it is graded by the path length such that the $A$-module $\Bbbk Q_0$ admits a graded projective resolution 
\[
\dotsb \tikzto P_{-i} \tikzto \dotsb \tikzto P_{-1} \tikzto P_0 \tikztwoheadrightarrow \Bbbk Q_0
\]
with $P_{-i}$ being generated in degree $i$ as $A$-module. Then the ideal $I$ is necessarily generated by quadratic relations $I_2\subset \Bbbk Q_2$ and the Koszul dual $A^! = \Ext^\hdot_A (\Bbbk Q_0, \Bbbk Q_0)$ (disregarding the grading of $A$) is also a Koszul algebra (with elements in $\Ext^d_A (\Bbbk Q_0, \Bbbk Q_0)$ being of degree $d$) such that $(A^!)^! \simeq A$.

The Koszul dual $A^!$ is isomorphic to the linear dual algebra $\Bbbk \overline Q / (I_2^{\perp})$ which we briefly recall from \cite[\S 2]{beilinsonginzburgsoergel}. Let $\overline Q$ denote the opposite quiver of $Q$, i.e.\ $\overline Q$ has the same vertices as $Q$ and the opposite arrow $\bar a$ for each arrow $a$ in $Q$, so that $\overline{\overline Q}$ can be naturally identified with $Q$.  We have a natural nondegenerate $\Bbbk$-bilinear pairing $\langle - {,} - \rangle \colon \Bbbk Q \times \Bbbk \overline Q \tikzto \Bbbk$ which is uniquely determined by 
\[
\langle a_1a_2\dotsb a_n, \bar b_n\dotsb \bar b_2\bar b_1\rangle = \delta_{a_1, b_1} \dotsb \delta_{a_n, b_n} \quad \text{for paths $a_1\dotsb a_n, b_1\dotsb b_n$ in $Q$.}
\]
Then the quadratic relations for $A^!$ are given by $I_2^{\perp} = \{ p \in \Bbbk \overline Q_2 \mid \langle I_2, p\rangle = 0\}$.

\subsection{Generators and relations of the Koszul dual}

We now give a quiver description for the Koszul dual of the extended Khovanov arc algebras $\K_m^n$.

\begin{notation}
In order to simplify notation, let us write
\[
\KK_m^n := (\K_m^n)^! \simeq \Bbbk \QQ_m^n / \II_m^n
\]
for the Koszul dual of $\K_m^n \simeq \Bbbk \Q_m^n / \I_m^n$, where $\QQ_m^n$ is the opposite quiver of $\Q_m^n$ and $\II_m^n = (\I_m^n)_2^\perp$. We write $\bar x_\lambda^\mu, \bar y_\lambda^\mu$ for the Koszul duals of the generators $x_\lambda^\mu, y_\lambda^\mu$ of $\K_m^n$. Therefore, $\bar x_\lambda^\mu$ is a descending arrow and $\bar y_\lambda^\mu$ an ascending arrow (see Fig.~\ref{figure:double}).
\end{notation}

\begin{remark}\label{remark:involutionmapkosuzldaul}
Similar to Remark \ref{remark:involution}, $\KK_m^n$ also admits an involution 
\[
(-)^*\colon \KK_m^n \tikzto \KK_m^n
\]
satisfying $(\bar x)^* = \bar y$ and $(\bar y)^* = \bar x$ for a pair of opposite arrows $\bar x, \bar y$.
\end{remark}

\begin{proposition}\label{proposition:relationdual}
The ideal $\II_m^n$ is generated by the following quadratic relations:
\begin{itemize}
\item {\bf Monomial relations.} The paths of length $2$ between the top and bottom vertices in Fig.~\ref{figure:single} (c) and (d) are zero. 
\item {\bf Anticommutativity and Plücker-type relations.} If there is more than one parallel path of length $2$ (excluding $2$-cycles), then the sum of all such parallel paths is $0$. 
\item {\bf Relations at vertices.} Fix $\lambda \in \Lambda_m^n$. Let $\bar y^\lambda_{\kappa_j} $ be the ascending arrows in $\QQ_m^n$ ending at $e_\lambda$. Let $\bar y_\lambda^\mu$ be any ascending arrow starting at $e_\lambda$. Then  
\begin{align}
\bar y^\mu_\lambda \bar x_\lambda^\mu =- \sum_{j} c^{ \mu}_{\kappa_j}(\lambda) \bar x^\lambda_{\kappa_j} \bar y^\lambda_{\kappa_j} 
\end{align}
where the coefficients $c^{\mu}_{\kappa_j} (\lambda)$ are given in Definition \ref{definition:number}.
\end{itemize}
\end{proposition}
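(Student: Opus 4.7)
The plan is to use the defining Koszul duality identity $\II_m^n = (\I_m^n)_2^{\perp}$ under the pairing $\langle -, -\rangle$ recalled above. Since both $(\I_m^n)_2 \subset \bigoplus_{\lambda,\mu} e_\lambda(\Bbbk\Q_m^n)_2 e_\mu$ and its orthogonal decompose cell-by-cell over ordered pairs of vertices, I would analyze each cell $e_\lambda(\Bbbk\Q_m^n)_2 e_\mu$ separately and then reassemble. The three bullets of the proposition will correspond, in order, to: the $\lambda\neq\mu$ cells with a unique length-$2$ path of type Fig.~\ref{figure:single} (c) or (d); the $\lambda\neq\mu$ cells with two or three parallel paths coming from a square in Fig.~\ref{figure:squares}; and the diagonal cells $\lambda = \mu$.

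For an off-diagonal cell I would invoke Proposition \ref{proposition:edgedescriptions}. If exactly one length-$2$ path $p$ exists and $(\lambda,\mu)$ appears as in Fig.~\ref{figure:single} (a) or (b), then by Proposition \ref{proposition:relations} the path $p$ itself lies in $\I_m^n$, the $1$-dimensional cell is entirely spanned by $(\I_m^n)_2$ and its orthogonal vanishes, so no contribution arises in $\II_m^n$. If the unique path is as in Fig.~\ref{figure:single} (c) or (d), no generator of $\I_m^n$ lies in that cell, so the orthogonal is the full $1$-dimensional space and yields the monomial relation $\bar p = 0$ in $\II_m^n$. If there are $k\in\{2,3\}$ parallel paths $p_1,\dotsc,p_k$ across a square in Fig.~\ref{figure:squares}, the commutativity relations $p_i - p_j$ span a $(k-1)$-dimensional subspace whose orthogonal is the $1$-dimensional line spanned by $\bar p_1 + \dotsb + \bar p_k$, giving the anticommutativity ($k=2$) or Plücker-type ($k=3$) relation.

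For the diagonal cell $e_\lambda(\Bbbk\Q_m^n)_2 e_\lambda$ — the main computational step — I would fix the basis $a_i := x_\lambda^{\mu_i} y_\lambda^{\mu_i}$ indexed by the vertices $\mu_i$ above $\lambda$ and $b_j := y_{\kappa_j}^{\lambda} x_{\kappa_j}^{\lambda}$ indexed by the vertices $\kappa_j$ below $\lambda$, together with the dual basis $\bar a_i = \bar y_\lambda^{\mu_i}\bar x_\lambda^{\mu_i}$ and $\bar b_j = \bar x_{\kappa_j}^{\lambda}\bar y_{\kappa_j}^{\lambda}$ in the corresponding cell of $\QQ_m^n$ (which come from the reversal rule $a_1 a_2 \mapsto \bar a_2 \bar a_1$). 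The relations at $\lambda$ in $\I_m^n$ are
\[
r_j \;=\; b_j - \sum_{i} c_{\kappa_j}^{\mu_i}(\lambda)\, a_i, \qquad \text{one per } \kappa_j.
\]
Using $\langle a_i, \bar a_{i'}\rangle = \delta_{ii'}$, $\langle b_j, \bar b_{j'}\rangle = \delta_{jj'}$ and vanishing cross-pairings, the orthogonality condition $\langle r_{j'}, \sum_i \alpha_i \bar a_i + \sum_j \beta_j \bar b_j\rangle = 0$ reduces to $\beta_{j'} = \sum_i c_{\kappa_{j'}}^{\mu_i}(\lambda)\alpha_i$. Setting $\alpha_i = \delta_{i i_0}$ produces, for each $\mu_{i_0}$, the orthogonal vector $\bar a_{i_0} + \sum_j c_{\kappa_j}^{\mu_{i_0}}(\lambda)\bar b_j$; moving the $\bar b_j$ terms to the other side yields exactly the stated vertex relation (with the sign $-$ being absorbed automatically).

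The only real obstacle is bookkeeping in this diagonal case: verifying that the reversal rule maps the chosen bases as stated, tracking the sign when rewriting the orthogonal vectors as relations, and confirming that the linear system really has rank $\#\{\kappa_j\}$ so that the orthogonal complement has dimension $\mathrm{def}(\lambda)$, matching Lemma \ref{lemma:dimension}. The edge cases (highest $\lambda$ with no $\mu_i$; lowest $\lambda$ with no $\kappa_j$) fall out of the same framework — both sides of the computation become trivial. Assembling the cell-by-cell descriptions then yields a complete generating set for $\II_m^n$.
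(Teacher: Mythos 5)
Your proof is correct and uses the same perpendicularity pairing $\langle -,-\rangle$ as the paper, but with an important difference in completeness. The paper's own proof only verifies that each stated relation pairs to zero with the generators of $\I_m^n$ — showing they lie in $(\I_m^n)_2^\perp$ — and leaves the span unaddressed there (it is eventually confirmed indirectly via the Diamond Lemma and the Kazhdan--Lusztig dimension count in Theorem \ref{theorem:irreduciblediamond}). Your cell-by-cell decomposition over vertex pairs computes the orthogonal complement in each cell $e_\mu(\Bbbk\QQ_m^n)_2 e_\lambda$ exactly, so inclusion and span emerge simultaneously; in the off-diagonal cells the dimensions you read off match Lemma \ref{lemma:dimension1}, which is a useful free consistency check. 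The diagonal-cell calculation — fixing bases $a_i, b_j$ and their duals $\bar a_i, \bar b_j$, solving the resulting linear system, and noting the rank is $\#\{\kappa_j\}$ — is the genuine value-add: it gives a self-contained proof that the stated relations generate all of $\II_m^n$ rather than merely a subspace of it. Your sign computation ($\bar a_{i_0} + \sum_j c_{\kappa_j}^{\mu_{i_0}}(\lambda)\bar b_j \in \II_m^n$ rewritten as $\bar y_\lambda^{\mu_{i_0}} \bar x_\lambda^{\mu_{i_0}} = -\sum_j c_{\kappa_j}^{\mu_{i_0}}(\lambda) \bar x_{\kappa_j}^\lambda \bar y_{\kappa_j}^\lambda$) and your handling of the edge cases are also correct.
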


\begin{proof}
We need to show that the above relations are orthogonal to the quadratic relations described in Proposition \ref{proposition:relations}.  The monomial relations are clear since by Proposition \ref{proposition:relations} the monomials do not appear in the quadratic relations $(\I_m^n)_2$. The anticommutativity and Plücker-type relations involving length $2$ paths (excluding $2$-cycles) are clearly orthogonal  to the commutativity relations across squares in $\I_m^n$. 

Let us check the relations at vertices. Fix the vertex $e_\lambda$. Let $\mu_j$ be the weights obtained from $\lambda$ by exchanging the $\dn \dotsb \up$ pair lying in  circles $C_j$ of $e_\lambda$ as in Proposition \ref{proposition:relations}. Note that for any $\kappa_k$ and $\mu_l$ we have  
\begin{align*}
\bigl\langle y_{\kappa_k}^\lambda x_{\kappa_k}^\lambda - \textstyle\sum\limits_i c_{ \kappa_k}^{\mu_i} x_\lambda^{\mu_i}y_\lambda^{\mu_i} , \; \bar y_\lambda^{\mu_l} \bar x_\lambda^{\mu_l} + \sum\limits_j c_{\kappa_j}^{\mu_l} \bar x_{\kappa_j}^\lambda \bar y_{\kappa_j}^{\lambda} \bigr\rangle &= \sum_j \delta_{\kappa_k, \kappa_j} c_{\kappa_j}^{\mu_l} - \sum_i \delta_{\mu_i, \mu_l} c_{\kappa_k}^{\mu_i} = 0
\end{align*}
where we use $\langle y_{\kappa_k}^\lambda x_{\kappa_k}^\lambda, \bar x_{\kappa_j}^\lambda \bar y_{\kappa_j}^{\lambda} \rangle= \delta_{\kappa_k, \kappa_j}$ and $\langle x_\lambda^{\mu_i}y_\lambda^{\mu_i} , \bar y_\lambda^{\mu_l} \bar x_\lambda^{\mu_l}\rangle = \delta_{\mu_i, \mu_l}$, and we denote $c_{\kappa}^\mu(\lambda) = c_\kappa^\mu$. This verifies the relations at vertex $e_\lambda$.
\end{proof}

The following definition will be useful later.

\begin{definition}\label{definitionhi}
Let $p=\bar y_{\lambda_1}^{\lambda_2} \bar y_{\lambda_2}^{\lambda_3} \dotsc \bar y_{\lambda_k}^{\lambda_{k+1}}$ be an ascending path of length $k \geq 2$ in $\QQ_m^n$.  Assume that the weight $\lambda_{i+1}$ is obtained from $\lambda_{i}$ by exchanging the $\dn \dotsb \up$ pair lying in a  circle $C_{i}$ of $e_{\lambda_{i}}$ for $1\leq  i \leq k$. 

We denote by $h_i$  the integer such that $C_i$ appears in $e_{\lambda_{i-j}}$ for each $0 \leq j \leq h_i$ but $C_i$ does not appear in $e_{\lambda_{i-h_i-1}}$. For instance, if  $h_i = 0$ then $C_i$ appears in $e_{\lambda_i}$ but does not appear in $e_{\lambda_{i-1}}$. If $h_k = k-1$ then $C_k$ appears in $e_{\lambda_j}$ for all $1\leq j \leq k$.
\end{definition}

\begin{lemma}\label{lemma:cubicrelations}
Let $\bar y_{\lambda_1}^{\lambda_2} \bar y_{\lambda_2}^{\lambda_3} \dotsc \bar y_{\lambda_k}^{\lambda_{k+1}}$ be a path of length $k \geq 3$ such that $\lambda_{i+1}$ is obtained from $\lambda_{i}$ by exchanging the $\dn \dotsb \up$ pair lying in a circle $C_{i}$ of $e_{\lambda_{i}}$ for $1\leq  i \leq k$. Assume that the following two conditions hold:
\begin{enumerate}
\item For each $1\leq i < k$ we have $h_i = 0$ and $h_k = k-1$ (see Definition \ref{definitionhi}).
\item $C_k$ is enclosed in $C_1$ in $e_{\lambda_1}$ and lies on the left of $C_i$ in $e_{\lambda_i}$ for $2 \leq i < k$.
\end{enumerate}
Denote by $\mu_{i+1}$ the weight obtained from $\lambda_{i}$ by exchanging the $\dn \dotsb \up$ pair lying in $C_{k}$ of $e_{\lambda_i}$ for $1\leq i\leq k-1$. Then we have the following two statements.

\begin{figure}
\begin{tikzpicture}[baseline=-.5ex,x=.4em,y=.4em,decoration={markings,mark=at position .85 with {\arrow[black]{Stealth[length=3.6pt]}}}]
\begin{scope}
\begin{scope}[shift={(0,0)}]
\node[left,font=\small] at (-1,0) {$\lambda_4$};
\DDOT{0} \DDOT{2} \DDOT{4} \DDOT{6} \DDOT{8} \DDOT{10} \DDOT{12} \DDOT{14} \DDOT{16}
\begin{scope}[yshift=.1em]
\UP{3} \UP{7} \UP{11} \UP{13}
\end{scope}
\begin{scope}[yshift=-.1em]
\DN{1} \DN{5} \DN{9} \DN{15}
\end{scope}
\ROUNDCIRCLE{1}{1} \ROUNDCIRCLE{5}{1} \ROUNDCIRCLE{9}{1} \LARC{13} \RARC{15} 
\end{scope}

\begin{scope}[shift={(-11,-11)}]
\node[left,font=\small] at (-1,0) {$\lambda_3$};
\DDOT{0} \DDOT{2} \DDOT{4} \DDOT{6} \DDOT{8} \DDOT{10} \DDOT{12} \DDOT{14} \DDOT{16}
\begin{scope}[yshift=.1em]
\UP{3} \UP{9} \UP{11} \UP{13}
\end{scope}
\begin{scope}[yshift=-.1em]
\DN{1} \DN{5} \DN{7} \DN{15}
\end{scope}
\ROUNDCIRCLE{1}{1} \CIRCLES{5}{3} \ROUNDCIRCLE{7}{1} \LARC{13} \RARC{15}
\draw[->,line width=.5pt, line cap=round] (8,0) ++(45:3.5) ++(135:.5ex) -- ++(45:9);
\draw[<-,line width=.5pt, line cap=round] (8,0) ++(45:3.5) ++(-45:.5ex) -- ++(45:9);
\end{scope}

\begin{scope}[shift={(11,-11)}]
\node[right,font=\small] at (17,0) {$\mu_3$};
\DDOT{0} \DDOT{2} \DDOT{4} \DDOT{6} \DDOT{8} \DDOT{10} \DDOT{12} \DDOT{14} \DDOT{16}
\begin{scope}[yshift=.1em]
\UP{3} \UP{7} \UP{11} \UP{15}
\end{scope}
\begin{scope}[yshift=-.1em]
\DN{1} \DN{5} \DN{9} \DN{13}
\end{scope}
\ROUNDCIRCLE{1}{1} \ROUNDCIRCLE{5}{1} \ROUNDCIRCLE{9}{1} \ROUNDCIRCLE{13}{1}
\draw[->,line width=.5pt, line cap=round] (8,0) ++(135:3.25) ++(225:.5ex) -- ++(135:9);
\draw[<-,line width=.5pt, line cap=round] (8,0) ++(135:3.25) ++(45:.5ex) -- ++(135:9);
\end{scope}

\begin{scope}[shift={(5,-22)}]
\node[right,font=\small] at (17,0) {$\lambda_2$};
\DDOT{0} \DDOT{2} \DDOT{4} \DDOT{6} \DDOT{8} \DDOT{10} \DDOT{12} \DDOT{14} \DDOT{16}
\begin{scope}[yshift=.1em]
\UP{3} \UP{9} \UP{11} \UP{15}
\end{scope}
\begin{scope}[yshift=-.1em]
\DN{1} \DN{5} \DN{7} \DN{13}
\end{scope}
\ROUNDCIRCLE{1}{1} \CIRCLES{5}{3} \ROUNDCIRCLE{7}{1} \ROUNDCIRCLE{13}{1}
\draw[->,line width=.5pt, line cap=round] (8,0) ++(70:3.5) ++(160:.5ex) -- ++(70:6.25);
\draw[<-,line width=.5pt, line cap=round] (8,0) ++(70:3.5) ++(-20:.5ex) -- ++(70:6.25);
\draw[->,line width=.5pt, line cap=round] (8,0) ++(140:4) ++(240:.5ex) -- ++(150:11.5);
\draw[<-,line width=.5pt, line cap=round] (8,0) ++(140:4) ++(60:.5ex) -- ++(150:11.5);
\end{scope}

\begin{scope}[shift={(-11,-38)}] 
\node[left,font=\small] at (-1,0) {$\mu_2$};
\DDOT{0} \DDOT{2} \DDOT{4} \DDOT{6} \DDOT{8} \DDOT{10} \DDOT{12} \DDOT{14} \DDOT{16}
\begin{scope}[yshift=.1em]
\UP{7} \UP{11} \UP{13} \UP{15}
\end{scope}
\begin{scope}[yshift=-.1em]
\DN{1} \DN{3} \DN{5} \DN{9}
\end{scope}
\CIRCLES{1}{7} \CIRCLES{3}{5} \ROUNDCIRCLE{5}{1} \ROUNDCIRCLE{9}{1}
\draw[->,line width=.5pt, line cap=round] (8,0) ++(90:6.5) ++(160:.5ex) arc[start angle=170, end angle=122, radius=30];
\draw[<-,line width=.5pt, line cap=round] (8,0) ++(90:6.5) ++(-20:.5ex) arc[start angle=170, end angle=122, radius=29];
\end{scope}

\begin{scope}[shift={(0,-50)}]
\node[left,font=\small] at (-1,0) {$\lambda_1$};
\DDOT{0} \DDOT{2} \DDOT{4} \DDOT{6} \DDOT{8} \DDOT{10} \DDOT{12} \DDOT{14} \DDOT{16}
\begin{scope}[yshift=.1em]
\UP{9} \UP{11} \UP{13} \UP{15}
\end{scope}
\begin{scope}[yshift=-.1em]
\DN{1} \DN{3} \DN{5} \DN{7}
\end{scope}
\CIRCLES{1}{7} \CIRCLES{3}{5} \CIRCLES{5}{3} \ROUNDCIRCLE{7}{1}
\draw[->,line width=.5pt, line cap=round] (8,0) ++(135:7) ++(215:.5ex) -- ++(125:2.5);
\draw[<-,line width=.5pt, line cap=round] (8,0) ++(135:7) ++(35:.5ex) -- ++(125:2.5);
\draw[->,line width=.5pt, line cap=round] (8,0) ++(85:6.25) ++(200:.5ex) arc[start angle=200, end angle=135, radius=18];
\draw[<-,line width=.5pt, line cap=round] (8,0) ++(85:6.25) ++(20:.5ex) arc[start angle=200, end angle=135, radius=17];
\end{scope}
\end{scope}
\begin{scope}[shift={(48,0)}] % right diagram
\begin{scope}[shift={(0,0)}]
\node[right,font=\small] at (13,0) {$\lambda_4$};
\draw[<-,line width=.5pt, line cap=round] (-1.5,0) ++(105:.5) arc[start angle=105, end angle=270, radius=14.2];
\draw[->,line width=.5pt, line cap=round] (-1.5,0) ++(285:.5) arc[start angle=105, end angle=270, radius=13.2];
\DDOT{0} \DDOT{2} \DDOT{4} \DDOT{6} \DDOT{8} \DDOT{10} \DDOT{12}
\begin{scope}[yshift=.1em]
\UP{3} \UP{5} \UP{9}
\end{scope}
\begin{scope}[yshift=-.1em]
\DN{1} \DN{7} \DN{11}
\end{scope}
\ROUNDCIRCLE{1}{1} \LARC{5} \ROUNDCIRCLE{7}{1} \RARC{11} 
\end{scope}
\begin{scope}[shift={(-9,-9)}]
\node[right,font=\small] at (13,0) {$\lambda_3$};
\DDOT{0} \DDOT{2} \DDOT{4} \DDOT{6} \DDOT{8} \DDOT{10} \DDOT{12}
\begin{scope}[yshift=.1em]
\UP{3} \UP{7} \UP{9}
\end{scope}
\begin{scope}[yshift=-.1em]
\DN{1} \DN{5} \DN{11}
\end{scope}
\ROUNDCIRCLE{1}{1} \ROUNDCIRCLE{5}{1} \LARC{9} \RARC{11}
\draw[->,line width=.5pt, line cap=round] (6,0) ++(45:2.95) ++(135:.5ex) -- ++(45:6.5);
\draw[<-,line width=.5pt, line cap=round] (6,0) ++(45:2.95) ++(-45:.5ex) -- ++(45:6.5);
\end{scope}

\begin{scope}[shift={(0,-18)}]
\node[right,font=\small] at (13,0) {$\lambda_2$};
\DDOT{0} \DDOT{2} \DDOT{4} \DDOT{6} \DDOT{8} \DDOT{10} \DDOT{12}
\begin{scope}[yshift=.1em]
\UP{3} \UP{7} \UP{11}
\end{scope}
\begin{scope}[yshift=-.1em]
\DN{1} \DN{5} \DN{9}
\end{scope}
\ROUNDCIRCLE{1}{1} \ROUNDCIRCLE{5}{1} \ROUNDCIRCLE{9}{1}
\draw[<-,line width=.5pt, line cap=round] (-1.5,0) ++(115:.5) arc[start angle=105, end angle=240, radius=17.5];
\draw[->,line width=.5pt, line cap=round] (-1.5,0) ++(295:.5) arc[start angle=105, end angle=240, radius=16.5];
\draw[->,line width=.5pt, line cap=round] (6,0) ++(135:2.75) ++(225:.5ex) -- ++(135:6.5);
\draw[<-,line width=.5pt, line cap=round] (6,0) ++(135:2.75) ++(45:.5ex) -- ++(135:6.5);
\end{scope}

\begin{scope}[shift={(4.5,-27)}] 
\node[right,font=\small] at (13,0) {$\mu_3'$};
\DDOT{0} \DDOT{2} \DDOT{4} \DDOT{6} \DDOT{8} \DDOT{10} \DDOT{12}
\begin{scope}[yshift=.1em]
\UP{3} \UP{9} \UP{11}
\end{scope}
\begin{scope}[yshift=-.1em]
\DN{1} \DN{5} \DN{7}
\end{scope}
\ROUNDCIRCLE{1}{1} \CIRCLES{5}{3} \ROUNDCIRCLE{7}{1}
\draw[->,line width=.5pt, line cap=round] (6,0) ++(110:2.75) ++(200:.5ex) -- ++(110:4.5);
\draw[<-,line width=.5pt, line cap=round] (6,0) ++(110:2.75) ++(20:.5ex) -- ++(110:4.5);
\end{scope}

\begin{scope}[shift={(-4.5,-37)}] 
\node[right,font=\small] at (13,0) {$\mu_2$};
\DDOT{0} \DDOT{2} \DDOT{4} \DDOT{6} \DDOT{8} \DDOT{10} \DDOT{12}
\begin{scope}[yshift=.1em]
\UP{5} \UP{9} \UP{11}
\end{scope}
\begin{scope}[yshift=-.1em]
\DN{1} \DN{3} \DN{7}
\end{scope}
\CIRCLES{1}{5} \ROUNDCIRCLE{3}{1} \ROUNDCIRCLE{7}{1}
\draw[<-,line width=.5pt, line cap=round] (6,0) ++(45:5.5) ++(-45:.5ex) -- ++(45:5);
\draw[->,line width=.5pt, line cap=round] (6,0) ++(45:5.5) ++(135:.5ex) -- ++(45:5);
\end{scope}

\begin{scope}[shift={(-4.5,-50)}]
\node[right,font=\small] at (13,0) {$\lambda_1$};
\DDOT{0} \DDOT{2} \DDOT{4} \DDOT{6} \DDOT{8} \DDOT{10} \DDOT{12}
\begin{scope}[yshift=.1em]
\UP{7} \UP{9} \UP{11}
\end{scope}
\begin{scope}[yshift=-.1em]
\DN{1} \DN{3} \DN{5}
\end{scope}
\CIRCLES{1}{5} \CIRCLES{3}{3} \ROUNDCIRCLE{5}{1}
\draw[<-,line width=.5pt, line cap=round] (6,4.75) ++(.5ex,0) -- ++(0,3.5);
\draw[->,line width=.5pt, line cap=round] (6,4.75) ++(-.5ex,0) -- ++(0,3.5);
\end{scope}
\end{scope}
\end{tikzpicture}
\caption{Paths of length $3$ in $\QQ_m^n$ which are equal in $\KK_m^n$}
\label{figure:length3}
\end{figure}

If in $e_{\lambda_1}$ there exists a circle between $C_1$ and $C_k$ which encloses $C_k$ (see the left part of Fig.~\ref{figure:length3} for $k=3$) then 
\begin{align*}
\bar y_{\lambda_1}^{\lambda_2} \bar y_{\lambda_2}^{\lambda_3} \dotsc \bar y_{\lambda_k}^{\lambda_{k+1}} - (-1)^{k-1} \bar y_{\lambda_1}^{\mu_2} \bar y_{\mu_2}^{\mu_3} \dotsc \bar y_{\mu_{k-1}}^{\mu_{k}}  \bar y_{\mu_k}^{\lambda_{k+1}} &\in \II_m^n \\
 \bar x_{\lambda_k}^{\lambda_{k+1}} \dotsc \bar x_{\lambda_2}^{\lambda_3}  \bar x_{\lambda_1}^{\lambda_{2}} -  (-1)^{k-1} \bar x_{\mu_k}^{\lambda_{k+1}}  \bar x_{\mu_{k-1}}^{\mu_{k}}\dotsc \bar x_{\mu_2}^{\mu_3}  \bar x_{\lambda_1}^{\mu_{2}} & \in \II_m^n.
\end{align*}
Otherwise, we have (see the right part of Fig.~\ref{figure:length3} for $k = 3$)
\begin{align*}
\bar y_{\lambda_1}^{\lambda_2} \bar y_{\lambda_2}^{\lambda_3} \dotsc \bar y_{\lambda_k}^{\lambda_{k+1}} - (-1)^{k-1} \bar y_{\lambda_1}^{\mu_2} \bar y_{\mu_2}^{\mu_3'} \bar y_{\mu_3'}^{\mu_4} \dotsb \bar y_{\mu_{k-1}}^{\mu_{k}}  \bar y_{\mu_k}^{\lambda_{k+1}}  & \in \II_m^n \\
\bar x_{\lambda_k}^{\lambda_{k+1}} \dotsc \bar x_{\lambda_2}^{\lambda_3}  \bar x_{\lambda_1}^{\lambda_{2}} - (-1)^{k-1} \bar x_{\mu_k}^{\lambda_{k+1}} \bar x_{\mu_{k-1}}^{\mu_{k}} \dotsb \bar x_{\mu_3'}^{\mu_4} \bar x_{\mu_2}^{\mu_3'} \bar x_{\lambda_1}^{\mu_2} & \in \II_m^n
\end{align*}
where we note that there are two circles in $e_{\mu_2}$ which do not appear in $e_{\lambda_1}$ and $\mu_3'$ is obtained from $\mu_2$ by exchanging the $\dn \dotsb \up$ pair lying in the left one of the two circles.
\end{lemma}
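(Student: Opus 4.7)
The plan is to verify the ascending version by rewriting the path $p = \bar y_{\lambda_1}^{\lambda_2} \bar y_{\lambda_2}^{\lambda_3} \dotsb \bar y_{\lambda_k}^{\lambda_{k+1}}$ modulo $\II_m^n$ so that the arrow exchanging $C_k$ migrates from the final position to the first; the descending version will then follow at once by applying the involution $(-)^*$ of Remark~\ref{remark:involutionmapkosuzldaul}, which sends $\bar y$ to $\bar x$ and reverses the order of multiplication. Condition~(i) ensures that $C_k$ is a circle of $e_{\lambda_i}$ for every $1 \leq i \leq k$, which is what makes the intermediate weights $\mu_{i+1}$ well-defined and allows the migration to be performed one swap at a time.

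I will carry out $k-2$ ``interior'' swaps indexed by $i = k-1, k-2, \ldots, 2$. At stage $i$ the two adjacent arrows being swapped correspond to exchanges in $C_i$ and then in $C_k$; by condition~(ii), these two circles of $e_{\lambda_i}$ are disjoint with $C_k$ to the left, so the underlying length-$2$ square in $\Gamma_m^n$ is of type~(b) in Fig.~\ref{figure:squares}, whence the anticommutativity relation of Proposition~\ref{proposition:relationdual} applies and contributes a factor of $-1$ per swap. Tracking the intermediate weights using Proposition~\ref{proposition:edgedescriptions} shows that after all interior swaps we arrive at
\[
p \equiv (-1)^{k-2}\, \bar y_{\lambda_1}^{\lambda_2} \bar y_{\lambda_2}^{\mu_3} \bar y_{\mu_3}^{\mu_4} \dotsb \bar y_{\mu_{k-1}}^{\mu_k} \bar y_{\mu_k}^{\lambda_{k+1}} \pmod{\II_m^n}.
\]

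For the final swap I will analyse the prefix $\bar y_{\lambda_1}^{\lambda_2} \bar y_{\lambda_2}^{\mu_3}$. By condition~(ii), $C_k$ is nested inside $C_1$ in $e_{\lambda_1}$, and the argument splits according to whether an additional circle encloses $C_k$ inside $C_1$. In Case~1, the square $\{\lambda_1, \lambda_2, \mu_2, \mu_3\}$ is of type~(a) in Fig.~\ref{figure:squares}, and a final anticommutativity step produces the total sign $(-1)^{k-1}$, establishing the first displayed relation. In Case~2, the prefix $\lambda_1 \to \lambda_2 \to \mu_3$ is precisely the monomial configuration of Fig.~\ref{figure:single}~(c), so by Proposition~\ref{proposition:relationdual} it already lies in $\II_m^n$, whence $p \in \II_m^n$. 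I will then verify that the candidate replacement path appearing in the second displayed relation also contains a length-$2$ sub-path of Fig.~\ref{figure:single}~(c) or~(d) type (the ``$\mu_3'$'' correction is exactly the middle vertex of the $5$-vertex type~(c) structure in Fig.~\ref{figure:squares} that is unavailable for direct anticommutativity in Case~2), so that this candidate path likewise lies in $\II_m^n$ and the second congruence holds trivially with both sides vanishing.

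The hard part will be the combinatorial bookkeeping: at every interior swap I must confirm that the new weight produced by the anticommutativity relation coincides with the prescribed $\mu_{i+1}$, and in Case~2 I must identify which sub-path of the replacement word falls into the Fig.~\ref{figure:single}~(c) or~(d) monomial pattern. Both verifications reduce to a finite case analysis on the relative position of $C_k$ among the circles of the $e_{\lambda_i}$, driven by conditions~(i) and~(ii) and the explicit classification of squares and single length-$2$ paths established in Proposition~\ref{proposition:edgedescriptions} and Lemma~\ref{lemma:squarespaths}.
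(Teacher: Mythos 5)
Your Case~1 analysis is essentially the paper's argument: iterate the anticommutativity relations across the squares $\{\lambda_{i-1},\lambda_i,\mu_i,\mu_{i+1}\}$ (type~(b) for $i>2$, type~(a) for $i=2$ in the paper's indexing), accumulating a factor of $-1$ per swap. That is correct and matches the paper.

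Case~2, however, contains a genuine error. The lemma in this case does \emph{not} assert $p\in\II_m^n$ --- it asserts $p\equiv(-1)^{k-1}\bar y_{\lambda_1}^{\mu_2}\bar y_{\mu_2}^{\mu_3'}\bar y_{\mu_3'}^{\mu_4}\dotsb\bar y_{\mu_k}^{\lambda_{k+1}}$, a nonzero element. This lemma feeds Definition~\ref{definition:reductionsystem}, where for a type-(IV) path $s$ one takes $\varphi_s$ to be the irreducible side of exactly this congruence, and $\varphi_s\in\Bbbk\mathrm{Irr}_{\SS_m^n}$ is a nonzero irreducible (verifiable via Lemma~\ref{lemma:irreduciblepathdescription}), hence nonzero in $\KK_m^n$ by Theorem~\ref{theorem:irreduciblediamond}. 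If $p$ and its claimed replacement were both in $\II_m^n$, the type-(IV) reduction pairs would read $(s,0)$, and the irreducible-path count would not reproduce the Kazhdan--Lusztig polynomial dimensions in \S\ref{subsection:irreduciblepaths}.

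The specific gap is in the ``interior swaps''. In Case~2 the square $\{\lambda_2,\lambda_3,\mu_3,\mu_4\}$ extends to the five-vertex pattern of Fig.~\ref{figure:squares}~(c) (this is exactly what the remark after Lemma~\ref{lemma:squarespaths} cautions about), so the relevant relation at that stage is not a two-term anticommutativity but the three-term Plücker-type relation
\[
\bar y_{\lambda_2}^{\lambda_3}\bar y_{\lambda_3}^{\mu_4}+\bar y_{\lambda_2}^{\mu_3}\bar y_{\mu_3}^{\mu_4}+\bar x_{\mu_3'}^{\lambda_2}\bar y_{\mu_3'}^{\mu_4}=0.
\]
Consequently your intermediate congruence
$p\equiv(-1)^{k-2}\bar y_{\lambda_1}^{\lambda_2}\bar y_{\lambda_2}^{\mu_3}\bar y_{\mu_3}^{\mu_4}\dotsb$
fails in Case~2. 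After prepending $\bar y_{\lambda_1}^{\lambda_2}$, the monomial relation $\bar y_{\lambda_1}^{\lambda_2}\bar y_{\lambda_2}^{\mu_3}=0$ kills the second term, but the third term survives and, via the anticommutativity $\bar y_{\lambda_1}^{\lambda_2}\bar x_{\mu_3'}^{\lambda_2}=-\bar y_{\lambda_1}^{\mu_2}\bar y_{\mu_2}^{\mu_3'}$, produces exactly the $\mu_3'$-path on the right-hand side of the congruence. In short: the vanishing of the monomial does not annihilate $p$; it surfaces the Plücker correction term, which is the entire content of Case~2 and is missing from your argument.
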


\begin{proof}
We only verify the statements for the ascending paths as the statements for descending paths follow by using the involution in Remark \ref{remark:involutionmapkosuzldaul}.

For the first case, note that the weight $\mu_i$ may be also obtained from $\mu_{i-1}$ by exchanging the $\dn \dotsb \up$ pair lying in $C_{i-1}$ so that the vertices $\lambda_{i-1}, \lambda_i, \mu_i, \mu_{i+1}$ form a square in Fig.~\ref{figure:squares} (b) for $2 < i \leq k$ and a square in Fig.~\ref{figure:squares} (a) for $i = 2$. Here, we write $\mu_{k+1} = \lambda_{k+1}$. So we have the anticommutativity relations for $2 \leq i \leq k$
\begin{align} \label{align:commutativitysquareyk}
\bar y_{\lambda_{i-1}}^{\lambda_i} \bar y_{\lambda_i}^{\mu_{i+1}} + \bar y_{\lambda_{i-1}}^{\mu_i} \bar y_{\mu_i}^{\mu_{i+1}} = 0.
\end{align}
Applying these recursively to $\bar y_{\lambda_1}^{\lambda_2} \bar y_{\lambda_2}^{\lambda_3} \dotsc \bar y_{\lambda_k}^{\lambda_{k+1}}$ we obtain the desired equality.

For the second case, the anticommutativity relations \eqref{align:commutativitysquareyk} still hold for $i> 3$. But for $i = 3$ it is replaced by the Plücker-type relation
\begin{align}\label{align:plucker}
\bar y_{\lambda_2}^{\lambda_3} \bar y_{\lambda_3}^{\mu_4} + \bar y_{\lambda_2}^{\mu_3} \bar y_{\mu_3}^{\mu_4} + \bar x_{\mu_3'}^{\lambda_2} \bar y_{\mu_3'}^{\mu_4}= 0.
\end{align}
By the monomial relation  $\bar y_{\lambda_1}^{\lambda_2}  \bar y_{\lambda_2}^{\mu_3}  = 0$ from Fig.~\ref{figure:single} (c) in Proposition \ref{proposition:relationdual}, we have
\begin{align*}
0 &= -\bar y_{\lambda_1}^{\lambda_2} \bar y_{\lambda_2}^{\mu_3} \bar y_{\mu_3}^{\mu_4} \bar y_{\mu_4}^{\mu_5} \dotsb \bar y_{\mu_{k-1}}^{\mu_{k}}   \bar y_{\mu_k}^{\lambda_{k+1}}\\
&=\bar y_{\lambda_1}^{\lambda_2} \bar y_{\lambda_2}^{\lambda_3} \bar y_{\lambda_3}^{\mu_4} \bar y_{\mu_4}^{\mu_5} \dotsb \bar y_{\mu_{k-1}}^{\mu_{k}}  \bar y_{\mu_k} ^{\lambda_{k+1}}  + \bar y_{\lambda_1}^{\lambda_2} \bar x_{\mu_3'}^{\lambda_2} \bar y_{\mu_3'}^{\mu_4} \bar y_{\mu_4}^{\mu_5} \dotsb \bar y_{\mu_{k-1}}^{\mu_{k}}   \bar y_{\mu_k}^{\lambda_{k+1}}\\
&=(-1)^{k-1} \bar y_{\lambda_1}^{\lambda_2} \bar y_{\lambda_2}^{\lambda_3} \bar y_{\lambda_3}^{\lambda_4} \dotsb \bar y_{\lambda_{k-1}}^{\lambda_{k}}  \bar y_{\lambda_k}^{\lambda_{k+1}} - \bar y_{\lambda_1}^{\mu_2} \bar y_{\mu_2}^{\mu_3'} \bar y_{\mu_3'}^{\mu_4} \bar y_{\mu_4}^{\mu_5} \dotsb \bar y_{\mu_{k-1}}^{\mu_{k}}   \bar y_{\mu_k}^{\lambda_{k+1}}
\end{align*}
where the second equality follows from \eqref{align:plucker} and the third one from \eqref{align:commutativitysquareyk} and the relation $ \bar y_{\lambda_1}^{\lambda_2}  \bar x_{\mu_3'}^{\lambda_2} + \bar y_{\lambda_1}^{\mu_2} \bar y_{\mu_2}^{\mu_3'} = 0$.
\end{proof}

\subsection{Reduction systems and Bergman's Diamond Lemma}
\label{subsection:reduction}

In this section we recall the notion of a {\it reduction system} and the closely related Diamond Lemma. Whereas $\K_m^n$ has a diagrammatic $\Bbbk$-basis given by arc diagrams, the Koszul dual $\KK_m^n$ is a priori only defined algebraically. Encoding the relations of $\KK_m^n$ into a reduction system gives a natural algebraic $\Bbbk$-basis of $\KK_m^n$ given by ``irreducible'' paths.

\begin{definition}\label{definition:irreducible}
Let $Q$ be a finite quiver and let $S \subset Q_{\geq 2}$ be a subset of paths of length $\geq 2$ such that for any $s, s' \in S$, $s$ is not a subpath of $s'$. We call a path in $Q$ {\it irreducible} (with respect to $S$) if it does not contain elements in $S$ as subpaths and we denote the set of all irreducible paths by $\mathrm{Irr}_S$.

A {\it reduction system} is given by a set of pairs
\[
R = \{ (s, \varphi_s) \mid s \in S \text{ and } \varphi_s \in \Bbbk \mathrm{Irr}_S \}
\]
such that each path appearing in the linear combination $\varphi_s$ is parallel to $s$.
\end{definition}

Given a reduction system $R = \{ (s, \varphi_s) \}_{s \in S}$, a {\it basic reduction} $\mathbf{r}_{q,s,r}$ is a map $\Bbbk Q \tikzto \Bbbk Q$ determined by
\[
\mathbf{r}_{q,s,r} (p) = 
\begin{cases}
q \varphi_s r & \text{if } p = qsr \\
p & \text{otherwise}
\end{cases}
\]
for any paths $p, q, r \in Q_\ldot$ and any $s \in S$. In other words, $\mathbf{r}_{q,s,r}$ replaces $s$ by $\varphi_s$ in the path $qsr$ and leaves all other paths invariant. A {\it reduction} is any (finite) composition of basic reductions.

\begin{definition}
A reduction system $R = \{ (s, \varphi_s) \}_{s \in S}$ is called
\begin{itemize}
\item {\it reduction-finite} if for any path $p \in Q_\ldot$ and any infinite sequence $(\mathbf r_1, \mathbf r_2, \dotsc)$ of reductions, there exists $n_0 \in \mathbb N$ such that for all $n \geq n_0$ we have $\mathbf r_n \circ \dotsb \circ \mathbf r_1 (p) = \mathbf r_{n_0} \circ \dotsb \circ \mathbf r_1 (p)$
\item {\it reduction-unique} if it is reduction-finite and moreover for any path $p$ and any reductions $\mathbf r, \mathbf r'$ such that $\mathbf r (p), \mathbf r' (p) \in \Bbbk \mathrm{Irr}_S$, we have that $\mathbf r (p) = \mathbf r' (p)$.
\end{itemize}
We say that $R$ satisfies the {\it diamond condition} for a two-sided ideal $I \subset \Bbbk Q$ if $R$ is reduction-unique and $I$ is equal to the ideal generated by the set $\{ s - \varphi_s \}_{s \in S}$.
\end{definition}

\begin{definition}
Let $R = \{ (s, \varphi_s) \}_{s \in S}$ be a reduction system which is reduction-finite. An {\it overlap ambiguity} of $S$ is any path of the form $pqr \in Q_{\geq 3}$ with $p, q, r \in Q_{\geq 1}$ and $pq, qr \in S$.

An overlap ambiguity of $S$ is said to be {\it resolvable} with respect to $R$ if $\mathbf r (p \varphi_{qr}) = \mathbf r' (\varphi_{pq} r)$ for some reductions $\mathbf r, \mathbf r'$.
\end{definition}

The central result about reduction systems is the following Diamond Lemma.

\begin{theorem}[{\cite[Thm.~1.2]{bergman}}]
\label{theorem:diamond}
Let $R = \{ (s, \varphi_s) \}_{s \in S}$ be a reduction system for $\Bbbk Q$ and let $I = \langle s - \varphi_s \rangle_{s \in S}$. If $R$ is reduction-finite, then the following are equivalent:
\begin{enumerate}
\item $R$ is reduction-unique, i.e.\ $R$ satisfies the diamond condition for $I$.
\item All overlap ambiguities of $S$ are resolvable with respect to $R$.
\item The image of the irreducible paths $\mathrm{Irr}_S$ under the projection $\Bbbk Q \tikzto \Bbbk Q / I$ forms a $\Bbbk$-basis of $A = \Bbbk Q / I$. 
\end{enumerate}
\end{theorem}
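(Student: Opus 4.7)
The plan is to establish the cycle of implications \textit{(i)} $\Rightarrow$ \textit{(ii)} $\Rightarrow$ \textit{(i)} together with \textit{(i)} $\iff$ \textit{(iii)}, keeping in mind that by the restriction imposed in Definition \ref{definition:irreducible} no element of $S$ is a subpath of another, so only overlap ambiguities (and no inclusion ambiguities) can arise. The implication \textit{(i)} $\Rightarrow$ \textit{(ii)} is immediate: given an overlap $pqr$ with $pq, qr \in S$, reduction-uniqueness forces $p\varphi_{qr}$ and $\varphi_{pq}r$ to have the same irreducible form, which is exactly resolvability.

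For \textit{(i)} $\iff$ \textit{(iii)}, I would exploit reduction-uniqueness to define a $\Bbbk$-linear \emph{normal form map} $\mathbf r_\infty\colon \Bbbk Q \to \Bbbk\hair\mathrm{Irr}_S$ by setting $\mathbf r_\infty(p)$ equal to the common stable value of any terminating reduction sequence starting at $p$ (well-definedness is reduction-uniqueness together with reduction-finiteness). One checks that $\mathbf r_\infty$ vanishes on each $s - \varphi_s$ and is the identity on $\Bbbk\hair\mathrm{Irr}_S$, hence induces an isomorphism of $\Bbbk$-vector spaces $\Bbbk Q / I \tosim \Bbbk\hair\mathrm{Irr}_S$, which gives the basis statement. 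Conversely, if $\mathrm{Irr}_S$ descends to a basis of $A$, then any two reductions of $p$ to irreducible forms must agree in $\Bbbk Q/I$, and by linear independence they must already agree in $\Bbbk Q$; this yields reduction-uniqueness.

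The main obstacle is \textit{(ii)} $\Rightarrow$ \textit{(i)}. Here I would proceed by Noetherian induction. Reduction-finiteness ensures the existence of a well-founded partial order $\prec$ on $Q_\ldot$ such that each basic reduction strictly decreases all paths appearing in its image (extend $\prec$ to linear combinations via the multi-set order). To prove reduction-uniqueness, it suffices by linearity and induction to show that if two basic reductions $\mathbf r_{q_1, s_1, r_1}$ and $\mathbf r_{q_2, s_2, r_2}$ are both applicable to a single path $p = q_1 s_1 r_1 = q_2 s_2 r_2$, then the two outputs have a common further reduction. There are three cases for how the subpaths $s_1$ and $s_2$ can sit inside $p$: they are disjoint, they share a vertex but do not overlap as paths, or they genuinely overlap. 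In the first two cases the two basic reductions commute outright, and the common refinement is obtained by applying each to the other's output. In the overlap case $p$ contains a subpath of the form $\alpha \beta \gamma$ with $\alpha\beta, \beta\gamma \in S$; resolvability of this overlap ambiguity (hypothesis \textit{(ii)}) provides reductions that equalise $\alpha\varphi_{\beta\gamma}$ and $\varphi_{\alpha\beta}\gamma$, and by induction these further reduce to the same irreducible element. Closing the diamond at this local level and propagating through the Noetherian induction yields reduction-uniqueness of $R$.

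The technical points to handle carefully will be: fixing a suitable well-founded order that is preserved under the multi-set/lexicographic extension needed to treat linear combinations (not just monomial paths); verifying that the three geometric cases of relative position of $s_1, s_2$ in $p$ are exhaustive once inclusion ambiguities are excluded; and checking that the ideal $I$ generated by $\{s - \varphi_s\}_{s \in S}$ is indeed the kernel of $\mathbf r_\infty$, so that the basis statement in \textit{(iii)} is about the intended quotient.
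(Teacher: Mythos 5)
The paper does not prove Theorem~\ref{theorem:diamond}; it simply cites Bergman's original article, so there is no in-paper proof to compare against. Your sketch is, however, a faithful outline of Bergman's own argument: the cycle \textit{(i)}~$\Rightarrow$~\textit{(ii)}, the normal-form map $\mathbf r_\infty$ giving \textit{(i)}~$\Leftrightarrow$~\textit{(iii)}, and the Noetherian induction plus case analysis of two reductions applied to one path (disjoint, adjacent, overlapping) to prove \textit{(ii)}~$\Rightarrow$~\textit{(i)} all match his proof of Theorem~1.2.

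One point deserves a sharper treatment than you give it. You assert that reduction-finiteness ``ensures the existence of a well-founded partial order $\prec$ on $Q_\ldot$ such that each basic reduction strictly decreases all paths appearing in its image.'' Bergman actually runs the argument in the other direction: he \emph{assumes} a compatible semigroup partial order with descending chain condition as a hypothesis, and termination is a consequence, not a premise. In the formulation above the hypothesis is termination alone, so you must build the order from it. The natural candidate is the transitive closure of the relation ``$p'$ appears with nonzero coefficient in some basic reduction of $p$,'' whose well-foundedness on paths is indeed equivalent to reduction-finiteness; but you then have to check that this relation is compatible with concatenation (i.e.\ that $p \succ p'$ implies $qpr \succ qp'r$), since precisely this compatibility lets you localise the Noetherian induction to the small overlap $\alpha\beta\gamma$ sitting inside a longer path. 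Compatibility is automatic here because a basic reduction $\mathbf r_{q,s,r}$ is literally defined by substituting inside a factorisation, but this should be said. You also need to be careful, when passing from single paths to linear combinations via the multiset extension, that Bergman's approach sidesteps cancellation by proving the set of reduction-unique \emph{elements} is a $\Bbbk$-submodule, rather than by reasoning term-by-term; your sketch would be cleaner if it followed his submodule argument explicitly. With these two points filled in, the outline is correct and complete.
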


\subsection{Irreducible paths and Kazhdan--Lusztig polynomials}
\label{subsection:irreduciblepaths}

In this subsection we will define a reduction system $\RR_m^n$ for $\KK_m^n$ and show that it satisfies the diamond condition, i.e.\ it is reduction-unique (cf.\ \S\ref{subsection:reductionsystem}). In \S\ref{section:hochschild} we will see that deformations of $\RR_m^n$ describe A$_\infty$ deformations of $\K_m^n$ and can also be used to compute the Hochschild cohomology of $\K_m^n$.

\begin{definition}[Reduction system for $\KK_m^n$] \label{definition:reductionsystem}
Let $\SS_m^n$ be the set of all paths in $\QQ_m^n$ of the following four types:
\begin{itemize}
\item[(I)] length $2$ paths of the form $\bar y_\lambda^\nu \bar x_\mu^\nu$ for some weights $\lambda, \mu, \nu \in \Lambda_m^n$
\item[(II)] length $2$ paths of the form $\bar y_\lambda^\nu \bar y^\mu_\nu$ or $\bar x^{\mu}_\nu \bar x_{\lambda}^\nu$ in Fig.~\ref{figure:single} (c)
\item[(III)] length $2$ paths of the form $\bar y_\lambda^\nu \bar y_\nu^\mu$ or $\bar x_\nu^\mu \bar x_\lambda^\nu$ in Fig.~\ref{figure:squares}, where $\nu$ is the right vertex in each figure
\item[(IV)] length $\geq 3$ paths of the form $\bar y_{\lambda_1}^{\lambda_2} \bar y_{\lambda_2}^{\lambda_3} \dotsb \bar y_{\lambda_k}^{\lambda_{k+1}}$ or $\bar x_{\lambda_k}^{\lambda_{k+1}} \dotsb \bar x_{\lambda_2}^{\lambda_3} \bar x_{\lambda_1}^{\lambda_{2}}$ as in Lemma \ref{lemma:cubicrelations}
\end{itemize}
where in (I) we allow $\lambda = \mu$.

For each $s \in \SS_m^n$ let $\varphi_s \in \Bbbk \mathrm{Irr}_{\SS_m^n}$ denote the linear combination of the irreducible paths so that $s - \varphi_s \in \II_m^n$. Then we define $\RR_m^n = \{ (s, \varphi_s) \mid s \in \SS_m^n\}$ (see \S\ref{subsection:K22} for a concrete description of $\RR_2^2$). Note that for any path $s \in \SS_m^n$ of type (II) we have $\varphi_s = 0$ by the monomial relations in Proposition \ref{proposition:relationdual}.
\end{definition}

\begin{lemma}
The reduction system $\RR_m^n$ is reduction-finite.
\end{lemma}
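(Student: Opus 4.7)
The plan is to exhibit a strict order on paths that is strictly decreased by every non-trivial basic reduction, and then conclude by the Dershowitz--Manna multiset argument. Since every element of $\RR_m^n$ is length-homogeneous (each monomial of $\varphi_s$ is parallel to $s$ by Definition~\ref{definition:reductionsystem}), a basic reduction $\mathbf{r}_{q,s,r}$ carries a path $qsr$ into a linear combination of paths lying in the \emph{finite} set of paths in $\QQ_m^n$ of the same length $|q|+|s|+|r|$ and same endpoints as $qsr$. Any total order on such a finite set is a well-order, and its multiset extension is strictly decreased by each non-trivial reduction, so termination would follow.

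To define the order, I would equip $\Lambda_m^n$ with the lexicographic order $\preceq_\Lambda$ induced by the convention $\dn \prec \up$ on the $\up/\dn$ sequences, and then compare paths by lexicographic comparison of their vertex sequences $(\lambda_0, \lambda_1, \dotsc, \lambda_N)$. Because $\mathbf{r}_{q,s,r}$ leaves the vertex prefix from $q$ and the suffix from $r$ untouched, verifying lex decrease reduces to showing that for each $s \in \SS_m^n$, the first internal vertex of every $\tilde s$ appearing in $\varphi_s$ is strictly smaller than the corresponding internal vertex of $s$ in $\preceq_\Lambda$.

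The verification is a case-by-case inspection following the four types. Type (II) is immediate since $\varphi_s = 0$. For Type (III), the explicit shapes of the squares in Fig.~\ref{figure:squares} show that, under $\dn \prec \up$, the right vertex $\nu$ carries $\up$ at the leftmost position where it differs from the other middle vertex (or vertices) of the same square, so each path appearing in $\varphi_s$ has lex-smaller middle vertex. For Type (I) with $\lambda = \mu$, the peak $\nu$ is obtained from $\lambda$ by an up-exchange (so $\nu$ carries $\up$ at the leftmost position of disagreement with $\lambda$), while each valley $\kappa_j$ satisfies the reverse relation to $\lambda$; a short analysis of how the two $\dn \dotsb \up$ exchange intervals interact (disjoint or sharing a boundary) yields $\kappa_j \prec_\Lambda \nu$. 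The case $\lambda \neq \mu$ for Type (I) is handled analogously after identifying the possible irreducible parallel length-$2$ paths via Proposition~\ref{proposition:edgedescriptions}. For Type (IV), at the leftmost position of the outer circle $C_1$ (which lies strictly outside $C_k$ by the nesting hypothesis in Lemma~\ref{lemma:cubicrelations}), the vertex $\lambda_2$ obtained by exchanging $C_1$ carries $\up$, whereas $\mu_2$ obtained by exchanging $C_k$ still has $\dn$ there, so $\mu_2 \prec_\Lambda \lambda_2$.

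The main obstacle I anticipate is the bookkeeping in Type (IV): the reduction moves the $C_k$-exchange to the beginning of the sequence and thereby changes \emph{every} internal vertex of the length-$k$ path. One must be careful to show that it is only the first of these changes that matters for the lex comparison, which is precisely the content of the nesting hypothesis $C_k \subset C_1$ in $e_{\lambda_1}$; the same argument, transposed via the involution of Remark~\ref{remark:involutionmapkosuzldaul}, handles the descending elements of Type (IV).
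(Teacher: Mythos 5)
Your overall strategy---finding a well-founded strict order on parallel paths that is monotone under pre- and post-composition and is strictly decreased by every basic reduction---is a legitimate alternative to the paper's argument, and the length-homogeneity of $\RR_m^n$ does correctly localise monotonicity to comparing $s$ with the terms of $\varphi_s$. Your verification for types (I), (II) and (III) is essentially right: in each of those cases exactly one interior weight changes, and that weight strictly drops in the order $\prec_\Lambda$ (for type (I) with $\lambda=\mu$ the middle weight drops from a peak $\nu \succ_\Lambda \lambda$ to a valley $\kappa_j \prec_\Lambda \lambda$; for type (III) the right vertex is replaced by the lex-smaller left vertex and, for Plücker, also by the still smaller bottom vertex).

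The gap is in Type (IV) for the descending orientation, and it is not fixable by the involution. Write the descending element as $\bar x_{\lambda_k}^{\lambda_{k+1}} \dotsb \bar x_{\lambda_1}^{\lambda_2}$, so its vertex sequence read in the direction of composition is $(\lambda_{k+1}, \lambda_k, \dotsc, \lambda_1)$. By Lemma~\ref{lemma:cubicrelations} the reduction replaces it with $(-1)^{k-1}\,\bar x_{\mu_k}^{\lambda_{k+1}} \bar x_{\mu_{k-1}}^{\mu_k}\dotsb\bar x_{\lambda_1}^{\mu_2}$, whose vertex sequence is $(\lambda_{k+1}, \mu_k, \dotsc, \mu_2, \lambda_1)$. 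Hence the first interior vertex changes from $\lambda_k$ to $\mu_k$. Both weights arise from $\lambda_{k-1}$ by a single exchange: $\lambda_k$ by exchanging $C_{k-1}$, and $\mu_k$ by exchanging $C_k$. By hypothesis $C_k$ lies to the left of $C_{k-1}$ in $e_{\lambda_{k-1}}$, so these two intervals are disjoint with $C_k$ entirely on the left. The leftmost position at which $\lambda_k$ and $\mu_k$ disagree is therefore the $\dn$-endpoint of $C_k$, where $\lambda_k$ (unchanged from $\lambda_{k-1}$) carries $\dn$ while $\mu_k$ carries $\up$. With your convention $\dn \prec \up$ this gives $\lambda_k \prec_\Lambda \mu_k$, the \emph{opposite} of the inequality you need: your forward-lex order strictly \emph{increases} under the descending Type (IV) rewrites, and the termination argument fails. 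The involution cannot repair this. It is an anti-homomorphism, so conjugating by $(-)^*$ reverses the direction in which the vertex sequence is read; the forward-lex decrease you proved for the ascending element transports to a \emph{backward}-lex decrease for the descending one, which is a different (and incompatible) order. Indeed, for the ascending Type (IV) element the first interior vertex improves ($\mu_2 \prec_\Lambda \lambda_2$, using that $C_1$'s left endpoint lies to the left of $C_k$'s) but the last interior vertex worsens ($\lambda_k \prec_\Lambda \mu_k$ as above), and vice versa for the descending element, so no single lexicographic order on vertex sequences can serve both orientations at once.

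The paper's proof avoids the need for a globally monotone order. It argues by induction on path length and introduces the set of \emph{peaks} of a path. Only Type (I) reductions change peaks, and they strictly decrease the maximal peak height, which is bounded below; Types (II)--(IV) preserve the peak set, except that a Plücker substitution can create one extra $\bar x$ and hence a new peak, in which case the path is handled by the previous case. On the residual peak-free (purely ascending or purely descending) paths the paper does use a degree-lex-style order, but one defined via the \emph{circle} exchanged at each step (more nested and rightmost circles come first), not via the raw weight sequence, and transferred to the descending case by the involution only after the peak reduction has already been carried out. If you want to keep a lex-of-weights flavour you would need to pair it with an outer, reversal-invariant measure such as peak height, or replace the weight comparison by a comparison of exchanged circles along the lines the paper uses for its ascending sub-order.
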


\begin{proof}
It suffices to show that reduction-finiteness holds when acting on an arbitrary single path $p$. Let us prove this by induction on the path length of $p$. Clearly this holds if $p$ is a vertex or an arrow. 

Assume that it holds for all paths of length $< m$. Let us prove it for any path of length $m$. Let $p = a_1 \dotsb a_m$ be a length $m$ path, where $a_1, \dotsc, a_m$ are arrows in $\QQ_m^n$, and denote by $e_{\lambda_i}$ the starting vertex of $a_i$ for $1\leq i \leq m$. Let us call a vertex $e_{\lambda_i}$ a {\it peak} along $p$ if $\lvert \lambda_{i-1} \rvert < \lvert \lambda_i \rvert > \lvert \lambda_{i+1} \rvert$, i.e.\ $a_{i-1} = \bar y_{\lambda_{i-1}}^{\lambda_i}$ is an ascending arrow but $a_i = \bar x_{\lambda_{i+1}}^{\lambda_i}$ is descending. Denote by $V$ the set of peaks along $p$. Note that peaks are only changed by reductions of type (I) and reductions of types (II)--(IV) leave the peaks along $p$ invariant.

We have the following two cases. In the first case $V$ is not empty. Let $e_{\lambda_{i_0}}\in V$ be a peak whose height is not smaller than the height of any other peak. By induction, both $a_1 \dotsb a_{i_0-1}$ and $a_{i_0} \dotsb a_m$ are reduction-finite so at some point we have to use the reduction of type (I) on $a_{i_0-1}' a_{i_0}'$ passing through $e_{\lambda_{i_0}}$, where $a_{i_0 - 1}'$ (resp.\ $a_{i_0}'$) is the last (resp.\ first) arrow obtained by performing some reductions on $a_1 \dotsb a_{i_0-1}$ (resp.\ $a_{i_0} \dotsb a_m$). (Note that $a_{i_0-1}'$ and $a_{i_0}'$ may or may not coincide with $a_{i_0 - 1}$ and $a_{i_0}$ but $a_{i_0-1}'$ is necessarily ascending and $a_{i_0}'$ descending.) This reduction causes the heights of the peaks to strictly decrease. Since the heights of vertices are bounded below by $0$, it follows that $p$ must be reduction-finite.

In the second case $V$ is empty. Then we have the following three subcases. In the first subcase, the path $p$ is of the form $\bar x_1 \dotsb \bar x_i \bar y_{i+1} \dotsb \bar y_{m}$ for some $1\leq i \leq m-1$. By induction both $\bar x_1 \dotsb \bar x_i$ and $\bar y_{i+1} \dotsb \bar y_m$ are reduction-finite, hence so is $p$ since $\bar x_i \bar y_{i+1}$ is irreducible. In the second subcase, the path $p=\bar y_1 \dotsb \bar y_m$ is an ascending path. For this, we may give an order on ascending paths as follows. Set $\bar y_\lambda^\nu \prec \bar y_\lambda^{\nu'}$ whenever $\nu, \nu'$ are obtained from $\lambda$ by exchanging two different $\dn \dotsb \up$ pairs lying in circles $C, C'$ of $e_\lambda$, respectively, and either $C, C'$ are nested and $C$ is contained in $C'$ or $C, C'$ are disjoint and $C$ lies to the right of $C'$. Extend $\prec$ by the degree--lexicographic order for longer ascending paths. Then reductions of types (II)--(IV) respect this order in the sense that reductions strictly decrease the order, so that $p$ is reduction-finite since the set of length $m$ paths is finite. Note that a descending arrow may appear in a summand $p'$ when performing the reduction of type (III) from the Plücker-type relations on the ascending path $p$. Then there is a peak along $p'$ so that $p'$ is reduction-finite by the first case. In the third subcase, $p$ is a descending path. Then $p$ is reduction-finite by the second subcase and the fact that the reduction system is invariant under involution.
\end{proof}

\begin{remark}\label{remark:reductionsystem}
If $m = 1$ or $n = 1$ then $\SS_m^n$ only consists of paths $\bar y_\lambda^\nu \bar x_\lambda^\nu$ of type (I). The paths of type (IV) only exist when $m \geq 2$ and $n > 2$. In other words, $\SS_m^2$ consists only of paths of types (I)--(III) so that $\RR_m^2$ is quadratic.
\end{remark}

The following lemma completely describes the ascending irreducible paths in $\KK_m^n$.

\begin{lemma}\label{lemma:irreduciblepathdescription}
Let $p=\bar y_{\lambda_1}^{\lambda_2} \bar y_{\lambda_2}^{\lambda_3} \dotsb \bar y_{\lambda_k}^{\lambda_{k+1}}$ be a path of length $k \geq 2$ such that $\lambda_{i+1}$ is obtained from $\lambda_{i}$ by exchanging the $\dn \dotsb \up$ pair lying in a circle $C_{i}$ of $e_{\lambda_{i}}$. 

Then $p$ is irreducible if and only if for any $i$ with  $h_i \neq 0$ (Definition \ref{definitionhi}), the circle $C_i$ either lies on the left of $C_{i-j}$ or encloses $C_{i-j}$ in $e_{\lambda_{i-j}}$ for each $0< j \leq  h_i$. In particular, if $h_i = 0$ for all $1\leq i \leq k$ then $p$ is irreducible. 
\end{lemma}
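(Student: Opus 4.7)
The plan is to prove both directions of the equivalence by examining which subpaths of $p$ could lie in $\SS_m^n$. Since $p$ is purely ascending, none of its subpaths contain a descending arrow, so any reducible subpath must be of type (II), (III), or (IV) as in Definition \ref{definition:reductionsystem}. Types (II) and (III) are length-$2$ subpaths classified respectively by Fig.~\ref{figure:single} (c) and by the right-vertex length-$2$ paths in the squares of Fig.~\ref{figure:squares}, while type (IV) subpaths have length $\geq 3$ and match the combinatorial configuration of Lemma \ref{lemma:cubicrelations}.

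For the direction ``condition $\Rightarrow$ irreducible'', I would argue contrapositively, locating the failure of the condition from a hypothetical reducible subpath. For a length-$2$ reducible subpath $\bar y_{\lambda_{i-1}}^{\lambda_i}\bar y_{\lambda_i}^{\lambda_{i+1}}$, direct inspection of Fig.~\ref{figure:single} (c) and the right-vertex paths of the squares in Fig.~\ref{figure:squares} identifies reducibility with $C_i$ appearing in $e_{\lambda_{i-1}}$ (so $h_i\ge 1$) together with $C_i$ being either disjoint from and to the right of $C_{i-1}$ (squares (b), (c)) or enclosed by $C_{i-1}$ (Fig.~\ref{figure:single} (c) and square (a))---precisely the failure of the condition at $(i,1)$. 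For a type (IV) subpath of length $k+1$ beginning at index $i_0$, Lemma \ref{lemma:cubicrelations} forces $C_{i_0+k}$ to be enclosed by $C_{i_0}$ in $e_{\lambda_{i_0}}$, which violates the condition at $(i_0+k,\,k)$.

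Conversely, suppose the condition fails and pick $i$ minimal and then $j$ minimal such that the condition fails at $(i,j)$. If $j=1$, the correspondence above immediately produces a length-$2$ reducible subpath of type (II) or (III). If $j>1$, minimality of $j$ ensures that $C_i$ lies to the left of or encloses $C_{i-j'}$ in $e_{\lambda_{i-j'}}$ for every $0<j'<j$, while at $j$ the circle $C_i$ is either enclosed by $C_{i-j}$ or disjoint and to the right of it in $e_{\lambda_{i-j}}$. Tracking how $C_i$ survives the intermediate exchanges rules out the disjoint-right configuration, leaving $C_i$ enclosed by $C_{i-j}$ in $e_{\lambda_{i-j}}$. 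The subpath $\bar y_{\lambda_{i-j}}^{\lambda_{i-j+1}}\dotsb\bar y_{\lambda_i}^{\lambda_{i+1}}$ then satisfies the hypotheses of Lemma \ref{lemma:cubicrelations}, yielding a type (IV) reducible subpath and contradicting irreducibility.

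The main obstacle is the $j>1$ case: verifying that the subpath meets the hypotheses of Lemma \ref{lemma:cubicrelations} in full, namely that each intermediate circle $C_{i-j+l-1}$ is newly created (the ``$h'_l=0$'' conditions for $1<l<j+1$), and that $C_i$ lies strictly to the left of---rather than enclosing or being enclosed by---each intermediate $C_{i-j+l-1}$ in $e_{\lambda_{i-j+l-1}}$. The minimality of $i$ is essential here: were $h_{i-j+l-1}>0$ for some intermediate $l$, one could identify a violation of the condition at an index $i'<i$, contradicting minimality. The most delicate part is this local tracking of the positions of the circle $C_i$ through successive exchanges and the interplay between the ``left of'' and ``enclosing'' alternatives permitted by the condition.
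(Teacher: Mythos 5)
Your plan for the ``if'' direction (irreducibility from the condition, argued contrapositively by classifying which subpaths can lie in $\SS_m^n$) matches the paper. For the ``only if'' direction the paper proceeds by induction on $k$, reducing to the case $i_0 = k$, $h_k = k{-}1$, whereas you pick a minimal failure pair $(i,j)$. This reorganisation is reasonable but it is not merely cosmetic, and two steps in your sketch contain genuine gaps.

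First, you assert that ``tracking how $C_i$ survives the intermediate exchanges rules out the disjoint-right configuration.'' It does not. In the paper's proof the analogous configuration (their Case~(1), $C_k$ on the right of $C_1$) is \emph{not} ruled out; instead it is shown to produce a length-$2$ reducible subpath $\bar y_{\lambda_1}^{\lambda_2}\bar y_{\lambda_2}^{\lambda_3}$. The argument uses the positional relations supplied by the induction hypothesis ($C_k$ left of or enclosing $C_2$ means $C_2$ right of, or enclosed in, $C_k$) to deduce that $C_2$ appears in $e_{\lambda_1}$ to the right of $C_1$, hence $h_2>0$ and the condition fails at $(2,1)$. In your framework this would translate into a failure certificate at a smaller index, contradicting minimality of $i$ -- but that is a different (and nontrivial) conclusion from ``this configuration cannot happen,'' and you would need to spell out the positional chase in the same way the paper does for Case~(1). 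As stated the step would simply be wrong.

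Second, your claim that minimality of $i$ forces $h_{i-j+l-1}=0$ for all intermediate $l$ is under-argued. Having $h_{i'}>0$ at some $i'<i$ does not by itself violate the condition at $i'$; for a contradiction with minimality you must show $C_{i'}$ is to the right of, or enclosed in, $C_{i'-1}$ (or some earlier circle) in $e_{\lambda_{i'-1}}$. The paper derives exactly this by combining the induction hypothesis with the geometry of $C_k$ being enclosed in $C_1$: if $h_2>0$ then $C_2$ must be enclosed in or to the right of $C_1$, producing a length-$2$ reducible subpath. You would need the analogous positional argument at each intermediate index. In short, your decomposition into cases $j=1$ versus $j>1$ is workable, but the two ``ruling-out'' claims need to be replaced by the same positional deductions the paper makes, and as written your proposal skips the very parts of the argument that carry the proof.
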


\begin{proof}
Let us prove the ``if'' part.  If $p$ is not irreducible then by definition it contains elements in $\SS_m^n$ as subpaths. We have the following two cases. 

In the first case, there is a subpath $\bar y_{\lambda_{i-1}}^{\lambda_{i}} \bar y_{\lambda_{i}}^{\lambda_{i+1}}$ of length $2$ belonging to $\SS_m^n$. Then $\bar y_{\lambda_{i-1}}^{\lambda_{i}} \bar y_{\lambda_{i}}^{\lambda_{i+1}}$ is in Fig.~\ref{figure:squares}, so that $\lambda_{i}$ is the right vertex in each figure. Note that $C_{i}$ appears in $e_{\lambda_{i-1}}$ and it lies on the right of $C_{i-1}$ for the squares (b) and (c) and is enclosed in $C_{i-1}$ for the square (a), giving a contradiction. 

In the second case, there is a subpath $\bar y_{\lambda_j}^{\lambda_{j+1}} \bar y_{\lambda_{j+1}}^{\lambda_{j+2}} \dotsb \bar y_{\lambda_i}^{\lambda_{i+1}}$ of length $i-j+1>2$ belonging to $\SS_m^n$. Then by the assumption in Lemma \ref{lemma:cubicrelations} we have that $C_i$ appears in $e_{\lambda_j}$ and is enclosed in $C_j$ in $e_{\lambda_j}$, giving a contradiction.

Let us prove the \lq\lq only if'' part. Namely, we need to prove that if there exists $1\leq i_0 \leq k$ with $h_{i_0} \neq 0$ so that one of the following conditions holds
\begin{enumerate}
\item \label{conditionirreduciblepath1} $C_{i_0}$ is on the right of $C_{i_0-j}$ for some $0 < j \leq h_{i_0}$
\item \label{conditionirreduciblepath2}  $C_{i_0}$ is enclosed in $C_{i_0-j}$ for some $0 < j \leq h_{i_0}$
\end{enumerate}
 then $p$ is not irreducible. We will prove this by induction on $k$. Clearly it holds for $k = 2$ by the definition of $\SS_m^n$. Assume $k > 2$. If $i_0 < k$ then consider the subpath $p'=\bar y_{\lambda_1}^{\lambda_2} \bar y_{\lambda_2}^{\lambda_3} \dotsb \bar y_{\lambda_{k-1}}^{\lambda_{k}}$ of length $k-1$. Note that the above conditions \ref{conditionirreduciblepath1} and \ref{conditionirreduciblepath2}  still hold for $p'$.  It follows by induction that $p'$ is not irreducible, so neither is $p$. If $i_0 = k$ but $h_k < k-1$ then consider $p''=\bar y_{\lambda_2}^{\lambda_3} \bar y_{\lambda_2}^{\lambda_3} \dotsb \bar y_{\lambda_{k}}^{\lambda_{k+1}}$. Similarly, the conditions  \ref{conditionirreduciblepath1} and \ref{conditionirreduciblepath2} still hold for $p''$. By  induction  $p''$ is not irreducible, so neither is $p$. 
 
 If $i_0 =k$ and $h_k = k-1$, by the induction hypothesis (on the subpath $ \bar y_{\lambda_2}^{\lambda_3} \dotsb \bar y_{\lambda_{k}}^{\lambda_{k+1}}$) we may assume that $C_k$ either lies on the left of $C_i$ or encloses $C_i$  in  $e_{\lambda_i}$ for each $2\leq  i< k$. Consider the first condition  \ref{conditionirreduciblepath1}, namely $C_k$ is on the right of $C_1$ in $e_{\lambda_1}$. Since by assumption  $C_2$ is either on the right of $C_k$ or enclosed in $C_k$, it follows that $C_2$ has to appear in $e_{\lambda_1}$ and lie on the right of $C_1$, whence $\bar y_{\lambda_1}^{\lambda_2} \bar y_{\lambda_2}^{\lambda_3}$ is not irreducible so that neither is $p$. 
 
Consider  \ref{conditionirreduciblepath2}, namely $C_k$ is enclosed in $C_1$. Since by assumption $C_2$ is either on the right of $C_k$ or enclosed in $C_k$ in $e_{\lambda_2}$,  we may then further assume that $C_2$ does not appear in $e_{\lambda_1}$, i.e.\ $h_2 = 0$ (otherwise $C_2$ must be either enclosed in $C_1$ or on the right of $C_1$, whence $\bar y_{\lambda_1}^{\lambda_2} \bar y_{\lambda_2}^{\lambda_3}$ is not irreducible and neither is $p$). Similarly, we may also assume that $h_3 = 0$.  Otherwise $C_3$ must be either enclosed in $C_2$ or on the right of $C_2$ in $e_{\lambda_2}$, whence $\bar y_{\lambda_2}^{\lambda_3} \bar y_{\lambda_3}^{\lambda_4}$ is not irreducible and neither is $p$. By a similar argument we may assume that $h_i = 0$ for all $2\leq i < k$. Then the path $p$ satisfies the assumption in Lemma \ref{lemma:cubicrelations} so that $p \in \SS_m^n$ which is not irreducible. 
\end{proof}

The diamond condition for $\RR_m^n$ will follow from properties of the {\it Kazhdan--Lusztig polynomials} which were first defined geometrically by Kazhdan and Lusztig \cite{kazhdanlusztig} with a closed formula for Grassmannians given by Lascoux and Schützenberger \cite{lascouxschuetzenberger}, see also \cite[\S 5]{brundanstroppel2}. The Kazhdan--Lusztig polynomials for Grassmannians can be defined in terms of the quiver $\QQ_m^n$ for $\KK_m^n$ as follows. 

\begin{definition}
Let $\lambda, \mu\in \Lambda_m^n$ be two weights. For each $k \geq 0$, denote by $a_k$ the number of {\it ascending} irreducible paths of length $k$ from $\lambda$ to $\mu$.  Define a polynomial 
$$
P_{\lambda, \mu}(q) = \sum_{k \geq 0} a_k q^k
$$
where we set $a_k = 0$ if there are no ascending irreducible paths of length $k$.
\end{definition}

Clearly, $P_{\lambda, \mu} (1)$ is the total number of ascending irreducible paths from $\lambda$ to $\mu$. Note that $a_k$ also equals the number of descending irreducible paths of length $k$ from $\mu$ to $\lambda$. 

\begin{proposition}\label{prop:KL}
The following properties uniquely determine $P_{\lambda, \mu}(q)$.
\begin{enumerate}
\item If $\lambda = \mu$ then $P_{\lambda, \mu}(q) = 1$ and if $\lvert \lambda \rvert \geq \lvert \mu \rvert$ for $\lambda \neq \mu$ then $P_{\lambda, \mu}(q) = 0$. 
\item If $\lvert \lambda \rvert < \lvert \mu \rvert$ denote by $C$ the rightmost circle in $e_\lambda$ which does not enclose any other circles. Let $\lambda'$ and $\mu'$ be the weights in $\Lambda_{m-1}^{n-1}$ obtained from $\lambda$ and $\mu$, respectively, by deleting the vertices in the position of $C$ and let $\lambda''$ be the weight in $\Lambda_m^n$ obtained from $\lambda$ by exchanging the $\dn \, \up$ pair in $C$. Then 
\begin{align}
P_{\lambda, \mu}(q) = \begin{cases}
P_{\lambda', \mu'}(q) + q P_{\lambda'', \mu}(q) & \text{if $e_\mu$ contains the circle $C$}\\
q P_{\lambda'', \mu}(q)  & \text{otherwise.}
\end{cases}
\end{align}
\end{enumerate}
As a result, $P_{\lambda, \mu}(q)$ coincides with the Kazhdan--Lusztig polynomials in \cite[\S5]{brundanstroppel2}. 
\end{proposition}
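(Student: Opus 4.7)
The plan is to prove (1) directly from the definition of $P_{\lambda,\mu}(q)$, and (2) by classifying ascending irreducible paths from $\lambda$ to $\mu$ according to whether they exchange the $\dn\up$ pair of $C$, then applying Lemma \ref{lemma:irreduciblepathdescription}. Uniqueness will be automatic once the recursion is established, since each step of the recursion either reduces $m+n$ (the move $\lambda \rightsquigarrow \lambda'$) or strictly increases $\lvert\lambda\rvert$ (the move $\lambda \rightsquigarrow \lambda''$), eventually terminating in a base case of (1).

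For (1), every ascending exchange strictly increases the height (the change is $q - p > 0$ when the exchanged pair lies at positions $p < q$), so there are no ascending paths at all when $\lambda \neq \mu$ with $\lvert\lambda\rvert \geq \lvert\mu\rvert$; on the other hand, the trivial length-zero path is the unique ascending irreducible path from $\lambda$ to itself. For (2), fix $\lambda, \mu$ with $\lvert\lambda\rvert < \lvert\mu\rvert$ and let $C$ be the rightmost innermost circle in $e_\lambda$; since $C$ is innermost it occupies two adjacent positions, which I will call $p$ and $p+1$. The key geometric input is that no circle of $e_\lambda$ lies entirely at positions $\geq p+2$: any such circle would contain an innermost sub-circle also at positions $\geq p+2$, contradicting that $C$ is rightmost innermost. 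Classify ascending irreducible paths $\bar y_{\lambda_1}^{\lambda_2} \dotsb \bar y_{\lambda_k}^{\lambda_{k+1}}$ from $\lambda$ to $\mu$ by whether any step exchanges the $\dn\up$ pair at $(p, p+1)$; call such a step a \emph{$C$-type exchange}.

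The central claim is that a $C$-type exchange, if present, must occur at step $1$. Suppose for contradiction that it first occurs at step $j \geq 2$. Then positions $p, p+1$ carry $\dn\up$ throughout $\lambda_1, \dotsc, \lambda_j$, so $C_j = C$ appears in each $e_{\lambda_l}$ for $l \leq j-1$ and $h_j = j-1 \geq 1$ in the notation of Definition \ref{definitionhi}. Applying Lemma \ref{lemma:irreduciblepathdescription} at $i = j$ with $j' = j-1$ (so that $\lambda_{i-j'} = \lambda_1 = \lambda$) forces $C$ to lie on the left of $C_1$ or to enclose $C_1$ in $e_\lambda$. But $C$ encloses nothing (being innermost), and $C_1$ cannot be entirely at positions $\geq p+2$ (by the preceding geometric observation), giving a contradiction.

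With this in hand, irreducible paths starting with a $C$-type exchange correspond bijectively to irreducible paths from $\lambda''$ to $\mu$ by removing the first arrow: the extra irreducibility conditions that the prepended $C_1 = C$ imposes on later circles $C_i$ appearing in $e_\lambda$ are automatic, because every circle of $e_\lambda$ other than $C$ itself either lies to the left of $C$ or encloses $C$. This yields the contribution $q P_{\lambda'', \mu}(q)$. Irreducible paths with no $C$-type exchange keep positions $(p, p+1)$ fixed as $\dn\up$, so in particular $e_\mu$ contains $C$, and deletion of positions $p, p+1$ from every intermediate weight gives a bijection with irreducible paths from $\lambda'$ to $\mu'$ in $\QQ_{m-1}^{n-1}$: removing or adjoining an innermost adjacent $\dn\up$ pair preserves the ``lies on the left of'' and ``encloses'' relations among the remaining circles, so Lemma \ref{lemma:irreduciblepathdescription} transfers irreducibility in both directions. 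This contributes $P_{\lambda', \mu'}(q)$ when $e_\mu$ contains $C$, and $0$ otherwise (in which case $qP_{\lambda'', \mu}(q)$ also vanishes, since $\lambda''$ has $\up\dn$ at positions $p, p+1$ and ascending paths cannot restore $\dn\up$ there). Summing the two contributions produces the stated recursion, which together with (1) determines $P_{\lambda, \mu}(q)$ uniquely; since this is exactly the recursion characterizing the Kazhdan--Lusztig polynomials for Grassmannians recalled in \cite[\S 5]{brundanstroppel2}, the identification follows. The main obstacle is the central claim that a $C$-type exchange must occur at step $1$: it requires selecting the correct instance of Lemma \ref{lemma:irreduciblepathdescription} (reaching all the way back to $\lambda_1$) and combining it with the geometry of $C$ being rightmost innermost in $e_\lambda$; the transfer of irreducibility across the two bijections is more routine but relies on the same observation that an innermost adjacent $\dn\up$ pair does not interact with the relative positioning of other circles.
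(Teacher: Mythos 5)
Your proof follows essentially the same strategy as the paper: deal with (1) by noting ascending moves strictly increase height, and prove (2) by partitioning ascending irreducible paths according to whether the pair at positions $(p,p+1)$ is exchanged, using Lemma \ref{lemma:irreduciblepathdescription} together with the geometric fact that every circle of $e_\lambda$ other than $C$ either lies to its left or encloses it. Your ``central claim'' (a $C$-type exchange, if present, must be the first step) is a reformulation of the paper's claim that $C$ cannot disappear at a step $j_0 > 1$, and your argument for it --- reaching all the way back to $\lambda_1$ via $h_{j_0}=j_0-1$ and invoking the lemma --- is the same as the paper's. The bijections with $\Lambda_{m-1}^{n-1}$ (deletion of $(p,p+1)$) and with paths from $\lambda''$ (prepending/removing the first arrow) are likewise handled as in the paper, and with the same level of terseness in verifying that irreducibility transfers.

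There is, however, one genuine mistake in the parenthetical at the end of your case analysis: the claim that when $e_\mu$ does \emph{not} contain $C$, the term $qP_{\lambda'',\mu}(q)$ ``also vanishes.'' This is false and would, if true, collapse the recursion to $P_{\lambda,\mu}(q)=0$ in that case. Concretely, in $\K_2^2$ take $\lambda=\dn\up\dn\up$, so $C=(3,4)$ and $\lambda''=\dn\up\up\dn$; take $\mu=\up\dn\up\dn$, so $e_\mu$ does not contain $C$, yet there is a length-one irreducible path from $\lambda''$ to $\mu$ (exchange the circle at $(1,2)$), giving $P_{\lambda'',\mu}(q)=q\neq 0$, and indeed $P_{\lambda,\mu}(q)=q^2=qP_{\lambda'',\mu}(q)$. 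The reasoning you give for the erroneous claim (``$\lambda''$ has $\up\dn$ at $(p,p+1)$ and ascending paths cannot restore $\dn\up$ there'') is beside the point: since $\mu$ does not carry $\dn\up$ at $(p,p+1)$, no restoration is needed to reach $\mu$ from $\lambda''$. The parenthetical should simply be deleted --- your earlier statement that $C$-type paths contribute $qP_{\lambda'',\mu}(q)$ is correct as stated and applies regardless of whether $e_\mu$ contains $C$, and summing the two contributions then yields the displayed recursion in both cases.
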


\begin{proof}
The first assertion is clear since if $\lvert \lambda \rvert \geq \lvert \mu \rvert$ then there are no ascending paths from $\lambda$ to $\mu$, except for the ``lazy path'' of length $0$ at $\lambda = \mu$.

Let us prove the second assertion. Assume that $e_\mu$ contains the circle $C$. Let $p=\bar y_{\lambda_1}^{\lambda_2} \bar y_{\lambda_2}^{\lambda_3} \dotsb \bar y_{\lambda_{k}}^{\lambda_{k+1}}$ be an ascending irreducible path from $\lambda$ to $\mu$ such that $\lambda_{i+1}$ be obtained from $\lambda_{i}$ by exchanging the $\dn \dotsb \up$ pair lying in a  circle $C_{i}$ of $e_{\lambda_{i}}$ (denote $\lambda_1 = \lambda$ and $\lambda_{k+1} = \mu$). Then the path is of one of the following two forms:

In the first one, the vertex $e_{\lambda_2}$ contains the circle $C$. Then we claim that $e_{\lambda_j}$ must also contain the circle $C$ for all $1 \leq j \leq k+1$. Indeed, if $j_0>1$ is the smallest integer so that $e_{\lambda_{j_0+1}}$ does not contain $C$ then $\lambda_{j_0+1}$ is obtained from $\lambda_{j_0}$ by exchanging the $\dn \, \up$ pair in $C$, whence $C_{j_0} = C$ and $h_{j_0} = j_0-1\neq 0$. Since $C$ is the rightmost circle in $e_{\lambda_1}$ and in particular  $C_{j_0}$  is on the right of  $C_1$, it follows by Lemma \ref{lemma:irreduciblepathdescription} that the path $p$ is not irreducible. This proves the claim. Thus, the ascending irreducible paths of this form bijectively correspond to the ascending irreducible paths from $\lambda'$ to $\mu'$, since removing $C$ from all $\lambda_i$ the irreducible path $p$ induces an irreducible path in $\KK_{m-1}^{n-1}$ by Lemma \ref{lemma:irreduciblepathdescription} again. 

In the second one, $e_{\lambda_2}$ does not contain $C$ (i.e.\ $\lambda_2$ is obtained from $\lambda_1$ by exchanging the $\dn \, \up$ pair in $C$) then $\lambda_2 = \lambda''$. Since $C$ is the rightmost circle, by Lemma \ref{lemma:irreduciblepathdescription}  the irreducible paths of this form bijectively correspond to the ascending irreducible paths from $\lambda''$ to $\mu$ by composing with $\bar y_{\lambda_1}^{\lambda_2}$.

So putting these two forms together we have  $P_{\lambda, \mu} (q) = P_{\lambda', \mu'}(q) + q P_{\lambda'', \mu}(q)$. 

Assume that $e_{\mu}$ does not contain $C$. Then we claim that $\lambda_2 = \lambda''$. Indeed, if $\lambda_2 \neq \lambda''$ then $C$ appears in $e_{\lambda_2}$. Since $e_{\mu}$ does not contain $C$ there exists $j_0\leq k$ such that $C$ appears in $e_{\lambda_j}$ for each $j \leq j_0$ but does not appear in $e_{\lambda_{j_0+1}}$.  That is, $\lambda_{j_0+1}$ is obtained from $\lambda_{j_0}$ by exchanging the $\dn \dotsb \up$ pair lying in $C$, so $C_{j_0} = C$ and $h_{j_0}=j_0-1$.  Since $C$ is the rightmost circle in $e_{\lambda_1}$ it follows that $C_{j_0}$ is on the right of $C_1$ in $e_{\lambda_1}$ whence $p$ is not irreducible by Lemma \ref{lemma:irreduciblepathdescription}. This proves the claim.  Note that ascending irreducible paths from $\lambda''$ to $\mu$ bijectively induce ascending irreducible paths from $\lambda$ to $\mu$ by composing with $\bar y_\lambda^{\lambda_1}$.  Therefore, we have $P_{\lambda, \mu}(q) = q P_{\lambda'', \mu}(q)$. 

Note that the recursive formulae for $P_{\lambda, \mu}(q)$ proved above coincide with the ones for the (combinatorial) Kazhdan--Lusztig polynomials (see \cite[Lem.~5.2]{brundanstroppel2}).
\end{proof}

\begin{lemma}\label{lemma:irreduciblepath}
The number of irreducible paths from $\lambda$ to $\mu$ is equal to 
$$
\sum_{\kappa\in \Lambda_m^n} P_{\kappa, \lambda}(1) P_{\kappa, \mu}(1).$$
\end{lemma}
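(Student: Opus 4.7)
The plan is to show that every irreducible path from $\lambda$ to $\mu$ decomposes uniquely as a descending irreducible path from $\lambda$ to some intermediate weight $\kappa$, followed by an ascending irreducible path from $\kappa$ to $\mu$. Inspecting the reduction system $\RR_m^n$ in Definition \ref{definition:reductionsystem}, every element of $\SS_m^n$ is either an ``ascending--descending'' length $2$ path (type (I), forming a peak at the middle vertex) or a path of length $\geq 2$ all of whose arrows point in the same direction (types (II)--(IV)). In particular, no element of $\SS_m^n$ is a ``descending--ascending'' pattern (a valley). This is the key structural observation.

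From this, let $p = a_1 a_2 \dotsb a_k$ be any irreducible path from $\lambda$ to $\mu$. Because $p$ contains no peak (no subpath of type (I)), the heights of the vertices visited along $p$ must first decrease and then increase. Consequently, there is a unique index $i$ at which $p$ attains its minimum height; call this vertex $\kappa$. Writing $p = p_- p_+$, where $p_- = a_1 \dotsb a_i$ is descending from $\lambda$ to $\kappa$ and $p_+ = a_{i+1} \dotsb a_k$ is ascending from $\kappa$ to $\mu$, both $p_-$ and $p_+$ are themselves irreducible: any reducible subpath of $p_\pm$ would already be a reducible subpath of $p$. Conversely, given any descending irreducible path $q_-$ from $\lambda$ to some $\kappa$ and any ascending irreducible path $q_+$ from $\kappa$ to $\mu$, the concatenation $q_- q_+$ is irreducible, because the only subpath that could possibly be reducible and is not already inside $q_-$ or $q_+$ would have to straddle the valley $\kappa$, hence begin with a descending arrow followed by an ascending arrow, and no such pattern occurs in $\SS_m^n$.

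Therefore the set of irreducible paths from $\lambda$ to $\mu$ is in bijection with
\[
\coprod_{\kappa \in \Lambda_m^n} \{\text{desc.\ irred.\ from $\lambda$ to $\kappa$}\} \times \{\text{asc.\ irred.\ from $\kappa$ to $\mu$}\}.
\]
By definition $P_{\kappa, \mu}(1)$ counts the ascending irreducible paths from $\kappa$ to $\mu$. The involution $(-)^*$ on $\KK_m^n$ of Remark \ref{remark:involutionmapkosuzldaul} interchanges $\bar x$ and $\bar y$ and preserves $\SS_m^n$ (indeed, the description of $\SS_m^n$ is manifestly invariant under $(-)^*$), so it induces a bijection between ascending irreducible paths from $\kappa$ to $\lambda$ and descending irreducible paths from $\lambda$ to $\kappa$; hence the latter are also counted by $P_{\kappa, \lambda}(1)$. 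Summing over $\kappa$ yields $\sum_{\kappa \in \Lambda_m^n} P_{\kappa, \lambda}(1)\,P_{\kappa, \mu}(1)$, as claimed. The only subtle point in the argument is the stability of irreducibility under concatenation at the valley, which is precisely the observation that no reducible pattern in $\SS_m^n$ has the shape ``descending then ascending''.
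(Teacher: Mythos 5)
Your proof is correct and follows essentially the same approach as the paper's one-sentence proof: both decompose an irreducible path uniquely into a descending irreducible part from $\lambda$ to some $\kappa$ followed by an ascending irreducible part from $\kappa$ to $\mu$, using the fact that the reducible patterns in $\SS_m^n$ are precisely peaks (type~(I)) or monotone runs (types~(II)--(IV)), never valleys. Your version simply spells out the bijection and the stability of irreducibility under concatenation, details the paper leaves implicit.
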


\begin{proof}
Note that any irreducible path can be written as $\bar x_1 \bar x_2 \dotsb \bar x_k \bar y_l \dotsb \bar y_2 \bar y_1$ where $\bar x_1 \bar x_2 \dotsb \bar x_k$ (resp.\ $\bar y_l \dotsb \bar y_2 \bar y_1$) is a descending (resp.\ ascending)  irreducible path from $\lambda$ to $\kappa$ (resp.\ from $\kappa$ to $\mu$) for some weight $\kappa$.
\end{proof}

\begin{theorem}\label{theorem:irreduciblediamond}
The irreducible paths form a basis of $\KK_m^n$. As a result, the reduction system $\RR_m^n$ is reduction-unique and therefore satisfies the diamond condition. 
\end{theorem}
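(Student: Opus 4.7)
The plan is to apply Bergman's Diamond Lemma (Theorem \ref{theorem:diamond}) in the direction (iii)$\Rightarrow$(i): since we have already established that $\RR_m^n$ is reduction-finite, it suffices to show that the images of the irreducible paths $\mathrm{Irr}_{\SS_m^n}$ under the projection $\Bbbk\QQ_m^n \twoheadrightarrow \KK_m^n$ form a $\Bbbk$-basis. Spanning is automatic from reduction-finiteness together with the observation that the two-sided ideal generated by $\{s-\varphi_s\}_{s \in \SS_m^n}$ equals $\II_m^n$: the types (I)--(III) cover all quadratic generators of $\II_m^n$ from Proposition \ref{proposition:relationdual}, while the type (IV) entries, being of length $\geq 3$, express relations that are already consequences of the quadratic ones by Lemma \ref{lemma:cubicrelations} (so they do not enlarge the ideal). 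Consequently every element of $\KK_m^n$ is expressible as a $\Bbbk$-linear combination of images of irreducible paths.

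For linear independence the strategy is to compare dimensions. By Lemma \ref{lemma:irreduciblepath}, the total number of irreducible paths from $\lambda$ to $\mu$ equals $\sum_{\kappa \in \Lambda_m^n} P_{\kappa,\lambda}(1) P_{\kappa,\mu}(1)$. On the Koszul dual side, we have
\[
\dim_\Bbbk e_\lambda \KK_m^n e_\mu \;=\; \sum_{i \geq 0} \dim_\Bbbk \Ext^i_{\K_m^n}(L_\mu, L_\lambda),
\]
where $\{L_\nu\}_{\nu \in \Lambda_m^n}$ denotes the set of simple $\K_m^n$-modules. By Proposition \ref{prop:KL} the polynomials $P_{\lambda,\mu}(q)$ coincide with the combinatorial Kazhdan--Lusztig polynomials of the Grassmannian of \cite[\S 5]{brundanstroppel2}, and by the BGG--Koszul theory of \cite{beilinsonginzburgsoergel} applied to parabolic category $\mathcal O$ for $\mathfrak{gl}_m(\mathbb C)\oplus\mathfrak{gl}_n(\mathbb C) \subset \mathfrak{gl}_{m+n}(\mathbb C)$, the graded Ext-dimensions factorise as
\[
\sum_{i\geq 0} \dim_\Bbbk \Ext^i_{\K_m^n}(L_\mu, L_\lambda)\,q^i \;=\; \sum_{\kappa \in \Lambda_m^n} P_{\kappa,\lambda}(q)\, P_{\kappa,\mu}(q),
\]
and specialisation at $q = 1$ yields the desired identity.

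Having the two cardinalities agree, the spanning set $\mathrm{Irr}_{\SS_m^n}$ must in fact be linearly independent and hence a $\Bbbk$-basis of $\KK_m^n$. Theorem \ref{theorem:diamond} then delivers the reduction-uniqueness of $\RR_m^n$ and therefore the diamond condition for the ideal $\II_m^n$.

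The main obstacle I anticipate is the identification of $\dim_\Bbbk e_\lambda \KK_m^n e_\mu$ with $\sum_\kappa P_{\kappa,\lambda}(1) P_{\kappa,\mu}(1)$: while conceptually standard via Koszul duality and Kazhdan--Lusztig--Vogan theory, it requires careful matching of conventions between \cite{beilinsonginzburgsoergel}, the KL polynomials of Brundan--Stroppel, and the combinatorial polynomials of Proposition \ref{prop:KL}. As a concrete alternative one can instead argue directly on the quiver side by induction on $m+n$: the recursion of Proposition \ref{prop:KL} for $P_{\lambda,\mu}(q)$ obtained from deleting the rightmost innermost circle $C$ of $e_\lambda$ induces a matching decomposition of the graded component $\bigoplus_{\mu} e_\lambda \KK_m^n e_\mu$, reducing the dimension identity to the Koszul duality statements for $\K_{m-1}^{n-1}$ and to the recursive structure of $\SS_m^n$, both of which are available by induction.
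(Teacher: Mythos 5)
Your proposal is correct and follows essentially the same route as the paper: count irreducible paths via Lemma \ref{lemma:irreduciblepath}, match this against $\dim_\Bbbk e_\lambda \KK_m^n e_\mu$ given by the Kazhdan--Lusztig polynomial formula, and invoke the Diamond Lemma in the direction (iii)$\Rightarrow$(i). The only cosmetic difference is that the paper cites \cite[Cor.~5.9]{brundanstroppel1} directly for the dimension identity $\dim_\Bbbk e_\lambda \KK_m^n e_\mu = \sum_\kappa P_{\kappa,\lambda}(1) P_{\kappa,\mu}(1)$, whereas you re-derive it from BGS Koszul duality for parabolic category $\mathcal O$ --- which is essentially what that corollary packages --- and flag the convention-matching as the delicate point, which is precisely what the Brundan--Stroppel citation disposes of.
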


\begin{proof}
By \cite[Cor.~5.9]{brundanstroppel1} and Proposition \ref{prop:KL} we have 
$$
\dim_\Bbbk e_\lambda \KK_m^n e_\mu = \sum_{\kappa \in \Lambda_m^n} P_{\kappa, \lambda}(1) P_{\kappa, \mu}(1).
$$
By Lemma \ref{lemma:irreduciblepath} the right-hand side is equal to the number of irreducible paths from $\lambda$ to $\mu$, whence the irreducible paths form a basis of $\KK_m^n$. By the Diamond Lemma (Theorem \ref{theorem:diamond}) it follows that $\RR_m^n$ is reduction-unique. 
\end{proof}

\begin{remark}\label{remark:irreduciblegrading}
Denote by $(\KK_m^n)_i$ the image of the length $i$ paths under the projection $\Bbbk \QQ_m^n \tikzto \KK_m^n$ so that $\KK_m^n = \bigoplus_i (\KK_m^n)_i$. Let $\lambda, \mu \in \Lambda_m^n$ be any two weights. Then $e_\lambda (\KK_m^n)_i e_\mu $ is one dimensional if $i = \lvert \lambda \rvert + \lvert \mu \rvert$ and is zero if $i > \lvert \lambda \rvert + \lvert \mu \rvert$. In particular,  $(\KK_m^n)_{i} = 0 $ if either $i< 0$ or $i> 2mn$ and $(\KK_m^n)_{2mn}$ is of dimension $1$ spanned by the irreducible path $\bar x_0 \bar x_1 \dotsb \bar x_{mn-1} \bar y_{mn-1} \dotsb \bar y_1 \bar y_0$ in Fig.~\ref{figure:counterexample} which is parallel to the vertex corresponding to the highest weight.  

The following observation is useful. Let $p$ be any path of length $\lvert \lambda \rvert +\lvert \mu \rvert$ from $e_\lambda$ to $e_\mu$. Then $\bar y p = 0$ in $\KK_m^n$ for any ascending arrow $\bar y$, since  $\bar y p \in e_\kappa (\KK_m^n)_{\lvert \lambda \rvert +1}  e_{\mu} = 0$, where $e_\kappa$ is the starting vertex of $\bar y$ and note that $\lvert \kappa \rvert < \lvert \lambda \rvert$. Similarly, we have $p \bar x = 0$ in $\KK_m^n$ for any descending arrow $\bar x$. 
\end{remark}

\begin{remark}
\label{remark:koszuldiagrammatic}
It would be very interesting to provide a geometric or diagrammatic description of the Koszul dual $\KK_m^n$, analogous to the diagrammatic description of $\K_m^n$. We refer to Webster \cite[Thm.~3.7]{webster} for an explicit construction of diagrammatic algebras which are Morita equivalent to $\KK_m^n$.
\end{remark}

\subsection{Some useful relations in the Koszul dual}

Since the multiplication in $\KK_m^n$ has no straightforward diagrammatic description, computing the multiplication of arbitrary elements in $\KK_m^n$ is rather complicated. In this section we record some formulae for ``long'' irreducible paths in $\QQ_m^n$ which are used in the proof of our main theorem (Theorem \ref{thm:maintheorem}).

By \cite[Proof of Cor.~1.5]{maksmith} we have $\K_m^n \simeq (\K_n^m)^{\mathrm{op}}$ as graded algebras and thus $\KK_m^n \simeq (\KK_n^m)^{\mathrm{op}}$. In the remainder of this section let us therefore assume that $m \geq n$. 

\begin{notation}
Denote by $\bar x_0 \bar x_1 \dotsb \bar x_{mn-1} \bar y_{mn-1} \dotsb \bar y_1 \bar y_0$ the longest irreducible path in $\KK_m^n$ (see Fig.~\ref{figure:counterexample}). For simplicity, we shall often use the short-hand notation 
\[
\bar x_{0 \ldots mn-1} = \bar x_0 \bar x_1 \dotsb \bar x_{mn-1} \quad \text{ and } \quad \bar y_{mn-1 \ldots 0} = \bar y_{mn-1} \dotsb \bar y_1 \bar y_0
\]
and similar for other sequences of descending or ascending arrows in Fig.~\ref{figure:counterexample}.
\end{notation}

We have the following relations at vertices (see Proposition \ref{proposition:relationdual})
\begin{align}\label{align:relationsatvertex}
\begin{aligned}
\bar y_{ni+j} \bar x_{ni+j} + \bar x_{ni+j+1} \bar y_{ni+j+1} & = 0\\
 \bar y_{ni+n-1} \bar x_{ni+n-1} &= 0 \\
 \bar y_{ni} \bar x_{ni} + \bar x_{ni+1} \bar y_{ni+1} + \bar  x_{ni+1}^{1'} \bar y_{ni+1}^{1'} & = 0\\
 \bar y_{n(m-1)} \bar x_{n(m-1)} +\bar x_{n(m-1)+1} \bar y_{n(m-1)+1} & = 0
\end{aligned}
\end{align}
where $0 \leq i \leq m-1, \ 1 \leq j \leq n-2$ and  $ \bar  x_{ni+1}^{1'},  \bar y_{ni+1}^{1'} $ are depicted in Fig.~\ref{figure:counterexample1}.

\begin{lemma} \label{lemma:higherrelations1}
\begin{enumerate}
\item \label{lemma:higherrelations30} For each $0 \leq i \leq m-1$ and $1 \leq j \leq k \leq n-1$ we have
\begin{align*}
&\bar y_{ni+j} \bar x_{ni+j \ldots ni+k} = \begin{cases}
0 & \text{if $k = n-1$}\\
(-1)^{k-j+1}\bar x_{ni+j+1 \ldots ni+k+1} \bar y_{ni+k+1} & \text{otherwise.}
\end{cases}
\end{align*}
\item \label{lemma:higherrelations31} For each $ 1\leq k \leq n$ we have 
\begin{align*}
\bar y_{n(m-1)} \bar x_{n(m-1) \ldots nm-k} ={} &  \begin{cases} 0 & \text{if $k = 1$}\\
 (-1)^{n-k+1} \bar x_{n(m-1)+1 \ldots nm-k+1} \bar y_{nm-k+1} & \text{if $k > 1$}.
 \end{cases}
\end{align*}
\end{enumerate}
\end{lemma}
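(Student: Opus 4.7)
The plan is to prove both parts by induction using only the vertex relations recorded in \eqref{align:relationsatvertex}; no deeper machinery (neither the reduction system $\RR_m^n$ nor the degree considerations of Remark \ref{remark:irreduciblegrading}) is needed.

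For part \ref{lemma:higherrelations30} I would induct on $k-j$. In the base case $k=j$: if $j=n-1$, the monomial relation $\bar y_{ni+n-1}\bar x_{ni+n-1}=0$ gives the result instantly; if $j<n-1$, the first relation in \eqref{align:relationsatvertex} gives $\bar y_{ni+j}\bar x_{ni+j}=-\bar x_{ni+j+1}\bar y_{ni+j+1}$, which is exactly the claimed formula with sign $(-1)^{k-j+1}=-1$. For the inductive step, note that $1\leq j\leq k-1\leq n-2$ so the first relation in \eqref{align:relationsatvertex} applies at the leading peak and gives
\[
\bar y_{ni+j}\,\bar x_{ni+j \ldots ni+k}
=-\bar x_{ni+j+1}\bigl(\bar y_{ni+j+1}\,\bar x_{ni+j+1 \ldots ni+k}\bigr).
\]
Now apply the inductive hypothesis to the parenthesised factor. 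If $k=n-1$ it vanishes, so the whole expression is zero; if $k<n-1$ it equals $(-1)^{k-j}\bar x_{ni+j+2 \ldots ni+k+1}\bar y_{ni+k+1}$, and combining with the leading $-\bar x_{ni+j+1}$ produces the asserted $(-1)^{k-j+1}\bar x_{ni+j+1 \ldots ni+k+1}\bar y_{ni+k+1}$.

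For part \ref{lemma:higherrelations31} I would directly reduce to part \ref{lemma:higherrelations30}. The fourth relation in \eqref{align:relationsatvertex}, which is special to the bottom row $i=m-1$ in that it has no $\bar x^{1'}\bar y^{1'}$ correction term, yields $\bar y_{n(m-1)}\bar x_{n(m-1)}=-\bar x_{n(m-1)+1}\bar y_{n(m-1)+1}$. Multiplying on the right by $\bar x_{n(m-1)+1 \ldots nm-k}$ gives
\[
\bar y_{n(m-1)}\,\bar x_{n(m-1) \ldots nm-k}
=-\bar x_{n(m-1)+1}\bigl(\bar y_{n(m-1)+1}\,\bar x_{n(m-1)+1 \ldots nm-k}\bigr),
\]
and the inner factor now falls under part \ref{lemma:higherrelations30} with parameters $i=m-1$, $j=1$ and upper index $n-k$ (since $nm-k=n(m-1)+(n-k)$). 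For $k=1$ the upper index is $n-1$ and the inner factor is zero, giving the claimed vanishing; for $k>1$ the upper index is $<n-1$, so part \ref{lemma:higherrelations30} produces $(-1)^{n-k}\bar x_{n(m-1)+2 \ldots nm-k+1}\bar y_{nm-k+1}$, and combining with the leading $-\bar x_{n(m-1)+1}$ yields the desired $(-1)^{n-k+1}\bar x_{n(m-1)+1 \ldots nm-k+1}\bar y_{nm-k+1}$.

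The only genuine care point will be the bookkeeping of signs and indices, and the verification at each step that the relation being applied is the one without the $\bar x^{1'}\bar y^{1'}$ correction term. The latter is automatic: in part \ref{lemma:higherrelations30} we always peel off a peak $\bar y_{ni+j}\bar x_{ni+j}$ with $1\leq j\leq n-2$, covered by the first relation in \eqref{align:relationsatvertex}; in part \ref{lemma:higherrelations31} we peel off the peak at the bottom-row index $n(m-1)$, covered by the fourth relation in \eqref{align:relationsatvertex}. In neither induction do we need to invoke the third relation, which is precisely why these product formulae admit such clean recursive descriptions.
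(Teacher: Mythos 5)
Your proof is correct and follows essentially the same route as the paper's: part \ref{lemma:higherrelations30} by induction (the paper phrases it as downward induction on $j$, which is the same as your induction on $k-j$), peeling off a peak with the first vertex relation and terminating with the monomial relation $\bar y_{ni+n-1}\bar x_{ni+n-1}=0$; and part \ref{lemma:higherrelations31} by one application of the fourth vertex relation followed by part \ref{lemma:higherrelations30}. The only quibble is the boundary case $k=n$ in part \ref{lemma:higherrelations31}, where the ``upper index'' $n-k=0$ lies outside the hypotheses of part \ref{lemma:higherrelations30} and the identity is instead immediate from the fourth vertex relation alone, but the paper is equally informal on this point.
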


\begin{proof}
Let us prove the first assertion. 
By \eqref{align:relationsatvertex} the desired equation holds if $j = k$. The general case follows by induction on $j$ since for $j < k$ we have
\begin{flalign*}
&& \bar y_{ni+j} \bar x_{ni+j \ldots ni+k} &= - \bar x_{ni+j+1} \bar y_{ni+j+1} \bar x_{ni+j+1 \ldots ni+k}. &&
\end{flalign*}
For the second assertion, by \eqref{align:relationsatvertex} we have 
\[
\bar y_{n(m-1)} \bar x_{n(m-1) \ldots nm-k}= -\bar x_{n(m-1)+1} \bar y_{n(m-1)+1} \bar x_{n(m-1)+1 \ldots nm-k}.
\]
Then the desired identity follows from \ref{lemma:higherrelations30}.
\end{proof}

\begin{lemma}\label{lemma:higherrelations3}
For each $0 \leq i \leq m-2$ we have 
\begin{align*}
\bar y_{ni} \bar x_{ni \ldots nm-3} &= -(-1)^{n+m-i} \bar x_{ni+1 \ldots nm-2} \bar y_{nm-2} \\
&\quad\ - \bar x_{ni+3 \ldots n(i+1)+1}^2 \bar x_{n(i+1)+2 \ldots nm-1} \bar y_{nm-1} \bar y_{nm-2}.
\end{align*}
As a result, we have that $\bar y_{ni} \bar x_{ni \ldots nm-1}  = 0$ and 
\begin{align*}
\bar y_{ni} \bar x_{ni \ldots nm-2} =(-1)^{n+m-i} \bar x_{ni+1 \ldots nm-1} \bar y_{nm-1}
\end{align*}
\end{lemma}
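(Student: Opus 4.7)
The plan is to push the ascending arrow $\bar y_{ni}$ to the right through the descending path $\bar x_{ni\ldots nm-3}$ using the three-term vertex relation at $e_{ni}$ together with the within-block anticommutativity recorded in Lemma \ref{lemma:higherrelations1}. First I would apply the three-term relation $\bar y_{ni}\bar x_{ni} = -\bar x_{ni+1}\bar y_{ni+1} - \bar x^{1'}_{ni+1}\bar y^{1'}_{ni+1}$ from \eqref{align:relationsatvertex} to split
\[
\bar y_{ni}\bar x_{ni\ldots nm-3} = -\bar x_{ni+1}\bar y_{ni+1}\bar x_{ni+1\ldots nm-3} - \bar x^{1'}_{ni+1}\bar y^{1'}_{ni+1}\bar x_{ni+1\ldots nm-3},
\]
and then treat each summand separately.

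For the first summand I would propagate $\bar y_{ni+1}$ across block $i$ using Lemma \ref{lemma:higherrelations1}\ref{lemma:higherrelations30}, which contributes a sign $(-1)^{k-j+1}$ at each step, hop to block $i+1$ via the three-term relation at $e_{n(i+1)}$, and iterate block by block. At the final block I use the two-term vertex relation $\bar y_{n(m-1)}\bar x_{n(m-1)} = -\bar x_{n(m-1)+1}\bar y_{n(m-1)+1}$ and Lemma \ref{lemma:higherrelations1}\ref{lemma:higherrelations31} to land on $-(-1)^{n+m-i}\bar x_{ni+1\ldots nm-2}\bar y_{nm-2}$. Each interior three-term relation produces an extra $\bar x^{1'}\bar y^{1'}$ branch; the key combinatorial point is that all of these branches except one vanish by the monomial relations of types (II)–(III) of Proposition \ref{proposition:relationdual}, because the associated length-$2$ subpaths end up among those identified as zero in Fig.~\ref{figure:single} (c) or (d). The sole surviving cross term, once combined with the analogous reduction of the second summand $-\bar x^{1'}_{ni+1}\bar y^{1'}_{ni+1}\bar x_{ni+1\ldots nm-3}$ via the type-IV cubic relations of Lemma \ref{lemma:cubicrelations}, produces exactly the correction $-\bar x^2_{ni+3\ldots n(i+1)+1}\bar x_{n(i+1)+2\ldots nm-1}\bar y_{nm-1}\bar y_{nm-2}$.

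The two corollary identities follow by right-multiplying the main identity by $\bar x_{nm-2}$, respectively $\bar x_{nm-2}\bar x_{nm-1}$, and simplifying via $\bar y_{nm-2}\bar x_{nm-2} = -\bar x_{nm-1}\bar y_{nm-1}$ and the monomial relation $\bar y_{nm-1}\bar x_{nm-1}=0$ (the case $j=n-1$ of \eqref{align:relationsatvertex}). In both cases the correction summand vanishes; for the longer product the first summand also collapses to zero, whereas for $\bar y_{ni}\bar x_{ni\ldots nm-2}$ it becomes $(-1)^{n+m-i}\bar x_{ni+1\ldots nm-1}\bar y_{nm-1}$.

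The hard part will be the bookkeeping: tracking the cumulative sign produced by $O(nm)$ within-block shifts and block-boundary vertex relations, and verifying systematically that every cross term generated by the three-term relations at interior block boundaries is killed by a monomial relation, except for the single branch that feeds the $\bar x^2$ correction term. Once the recursive structure is made explicit — perhaps by inducting on $m-i$ using the case $i=m-2$ as base and Lemma \ref{lemma:higherrelations1} as the inductive engine — the argument is routine, but the multi-block cascade makes a direct calculation error-prone.
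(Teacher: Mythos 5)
Your opening move agrees with the paper — split via the three--term vertex relation
$\bar y_{ni}\bar x_{ni} = -\bar x_{ni+1}\bar y_{ni+1}-\bar x^{1'}_{ni+1}\bar y^{1'}_{ni+1}$
from \eqref{align:relationsatvertex} — but the description of where the two output terms come from is wrong, and this is not a cosmetic issue. You claim to propagate $\bar y_{ni+1}$ across block $i$, hop to block $i+1$ via the three--term relation at $e_{n(i+1)}$, and so on, with the main term $-(-1)^{n+m-i}\bar x_{ni+1\ldots nm-2}\bar y_{nm-2}$ emerging from the cascade in the first summand. That cascade never happens: within a block, the vertex relations are the \emph{two--term} ones from the first line of \eqref{align:relationsatvertex} (no $\bar x^{1'}\bar y^{1'}$ branches), and the propagation terminates at the last position of block $i$ because $\bar y_{n(i+1)-1}\bar x_{n(i+1)-1}=0$ (second line of \eqref{align:relationsatvertex}). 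Concretely, $\bar y_{ni+1}\bar x_{ni+1\ldots n(i+1)-1}=0$ by Lemma~\ref{lemma:higherrelations1}\ref{lemma:higherrelations30}, so the entire first summand $-\bar x_{ni+1}\bar y_{ni+1}\bar x_{ni+1\ldots nm-3}$ vanishes \emph{inside block $i$} and never reaches $e_{n(i+1)}$. You therefore cannot get the main term from it.

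In the paper's proof \emph{both} terms come from the second summand $-\bar x^{1'}_{ni+1}\bar y^{1'}_{ni+1}\bar x_{ni+1\ldots nm-3}$: one pushes $\bar y^{1'}$ through the block using the square anticommutativity relations $\bar y^{1'}_{ni+j}\bar x_{ni+j}=-\bar x^1_{ni+j+1}\bar y^{1'}_{ni+j+1}$ (these are anticommutativity across squares, not vertex relations), then applies the single Plücker--type relation $\bar x^{1'}_{ni+1}\bar x^1_{ni+2}=-\bar x_{ni+1}\bar x^{1'}_{ni+2}-\bar x^2_{ni+3}\bar y^{2'}_{ni+3}$, and then finishes by downward induction on $i$ (base case $i=m-2$ via Lemma~\ref{lemma:higherrelations1}\ref{lemma:higherrelations31}). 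Your invocation of the ``type--IV cubic relations of Lemma~\ref{lemma:cubicrelations}'' is not what is needed here and is a symptom of the same misdiagnosis. Your derivation of the two corollary identities (right--multiplying by $\bar x_{nm-2}$ and by $\bar x_{nm-2}\bar x_{nm-1}$ and using $\bar y_{nm-2}\bar x_{nm-2}=-\bar x_{nm-1}\bar y_{nm-1}$, $\bar y_{nm-1}\bar x_{nm-1}=0$) is, by contrast, correct.
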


\begin{proof}
By the third equality in \eqref{align:relationsatvertex} we obtain
\begin{align}\label{align:lemmahigherrelations3}
\begin{aligned}
\bar y_{ni} \bar x_{ni \ldots nm-3} &= -\bar x_{ni+1} \bar y_{ni+1} \bar x_{ni+1 \ldots nm-3} - \bar  x_{ni+1}^{1'} \bar y_{ni+1}^{1'} \bar x_{ni+1 \ldots nm-3} \\
&= -\bar x_{ni+1}^{1'} \bar y_{ni+1}^{1'} \bar x_{ni+1 \ldots nm-3} \\
&= (-1)^{n} \bar x_{ni+1}^{1'} \bar x_{ni+2 \ldots n(i+1)}^{1} \bar y_{n(i+1)} \bar x_{n(i+1) \ldots nm-3}
\end{aligned}
\end{align}
where the second equality follows from Lemma \ref{lemma:higherrelations1} \ref{lemma:higherrelations30} and the third one uses the anticommutativity relations $\bar y_{ni+j}^{1'} \bar x_{ni+j} = -\bar x_{ni+j+1}^1 \bar y_{ni+j+1}^{1'}$ for $1 \leq j \leq n-1$ (see Fig.~\ref{figure:counterexample1}).  Here we denote  $\bar y_{n(i+1)}^{1'}=\bar y_{n(i+1)}$ when $j = n-1$. 
Since $\bar x_{ni+1}^{1'} \bar x_{ni+2}^{1}$ lies in a square as in Fig.~\ref{figure:squares} (c), it satisfies the Plücker-type relation
$\bar x_{ni+1}^{1'} \bar x_{ni+2}^1 = - \bar x_{ni+1} \bar x_{ni+2}^{1'} - \bar x_{ni+3}^2 \bar y_{ni+3}^{2'}$. Substituting this into \eqref{align:lemmahigherrelations3} we obtain 
\begin{align}\label{align:lemmahigherrelations4}
\begin{aligned}
\bar y_{ni} \bar x_{ni \ldots nm-3} &= (-1)^{n+1} \bar x_{ni+1} \bar x_{ni+2}^{1'}\bar x_{ni+3 \ldots n(i+1)}^{1}\bar y_{n(i+1)} \bar x_{n(i+1) \ldots nm-3} \\
&\quad\ + (-1)^{n+1} \bar x_{ni+3}^2 \bar y_{ni+3}^{2'} \bar x_{ni+3 \ldots n(i+1)}^{1} \bar y_{n(i+1)} \bar x_{n(i+1) \ldots nm-3} \\
&= -\bar x_{ni+1 \ldots n(i+1)} \bar y_{n(i+1)} \bar x_{n(i+1) \ldots nm-3} \\
&\quad\ - \bar x_{ni+3 \ldots n(i+1)+1}^2 \bar y_{n(i+1)+1} \bar y_{n(i+1)} \bar x_{n(i+1) \ldots nm-3}
\end{aligned}
\end{align}
where the second equality uses the anticommutativity relations (see Fig.~\ref{figure:counterexample1})
$$
\bar x_{ni+j}^{1'} \bar x_{ni+j+1}^1=-\bar x_{ni+j} \bar x_{ni+j+1}^{1'} \quad \text{ and } \quad \bar y_{ni+j+1}^{2'} \bar x_{ni+j+1}^1=-\bar x_{ni+j+2}^{2} \bar y_{ni+j+2}^{2'}
$$
where $2\leq j \leq n-1$ and denote  $\bar x_{n(i+1)}^{1'} = \bar x_{n(i+1)}$ and $\bar y_{n(i+1)+1}^{2'} = \bar y_{n(i+1)+1}$. 

Take $i = m-2$ in \eqref{align:lemmahigherrelations4} and apply the identity in Lemma \ref{lemma:higherrelations1} \ref{lemma:higherrelations31} to obtain
\begin{multline*}
\bar y_{n(m-2)} \bar x_{n(m-2) \ldots nm-3} \\
\begin{aligned}
&= (-1)^{n+1} \bar x_{n(m-2)+1 \ldots nm-2} \bar y_{nm-2} \\
&\quad\ + (-1)^{n+1} \bar x_{n(m-2)+3 \ldots n(m-1)+1}^2 \bar y_{n(m-1)+1} \bar x_{n(m-1)+1 \ldots nm-2} \bar y_{nm-2}.
\end{aligned}
\end{multline*}
Then applying Lemma \ref{lemma:higherrelations1} \ref{lemma:higherrelations30} to the term $\bar y_{n(m-1)+1} \bar x_{n(m-1)+1 \ldots nm-2}$ we obtain the desired identity. 
  
For $i < m-2$ from \eqref{align:lemmahigherrelations4} again we have 
\begin{multline*}
\bar y_{ni} \bar x_{ni \ldots nm-3} \\
\begin{aligned}
&= -(-1)^{n+m-i} \bar x_{ni+1 \ldots nm-2} \bar y_{nm-2} \\
&\quad\ + \bar x_{ni+1 \ldots n(i+1)} \bar x_{n(i+1)+3 \ldots n(i+2)+1}^2 \bar x_{n(i+2)+2 \ldots nm-1} \bar y_{nm-1} \bar y_{nm-2} \\
&\quad\ - (-1)^{n+m-i} \bar x_{ni+3 \ldots n(i+1)+1}^2 \bar y_{n(i+1)+1} \bar x_{n(i+1)+1 \ldots nm-2} \bar y_{nm-2} \\
&\quad\ + \bar x_{ni+3 \ldots n(i+1)+1}^2 \bar y_{n(i+1)+1} \bar x_{n(i+1)+3 \ldots n(i+2)+1}^2 \bar x_{n(i+2)+2 \ldots nm-1} \bar y_{nm-1} \bar y_{nm-2}\\
&= -(-1)^{n+m-i} \bar x_{ni+1 \ldots nm-2} \bar y_{nm-2} \\
&\quad\ - \bar x_{ni+3 \ldots n(i+1)+1}^2 \bar x_{n(i+1)+2 \ldots nm-1} \bar y_{nm-1} \bar y_{nm-2}
\end{aligned}
\end{multline*} 
where the first equality uses the induction (we assume it holds for $i+1$ and then prove it for $i$). Let us explain the second equality. Using  $\bar x_{n(i+1)} \bar x_{n(i+1)+3}^2 =0$ and  Lemma \ref{lemma:higherrelations1} \ref{lemma:higherrelations30} we see that the second and third summands vanish. For the  fourth summand, we are abusing notation since $\bar x^2_{n(i+1)+3}$ denotes the arrow which has the same starting vertex as $\bar x_{n(i+1)+1}$ (just similar to the arrows $\bar x^2_{ni+3}$ and $\bar x_{ni+1}$).  For this summand, we use the following identity (see Fig.~\ref{figure:counterexample1} where we shift $i$ by $i+1$)
\begin{align}\label{align:equationinlemma:higherrelations3}
\begin{aligned}
 \bar y_{n(i+1)+1} \bar x_{n(i+1)+3 \ldots n(i+2)+1}^2 \bar x_{n(i+2)+2 \ldots nm-1} = -  \bar x_{n(i+1)+2 \ldots nm-1}
 \end{aligned}
 \end{align}
 which may be obtained by using the anticommutativity relations iteratively ($2n-4$ times) and the Plücker-type relation $\bar x_{n(i+1)+3}^{2'} \bar x_{n(i+1)+4}^2 = \bar x_{n(i+1)+3}^1\bar x_{n(i+1)+4}^{2'} + \bar x'\bar y''$. Note that the summand corresponding to the term $\bar x' \bar y''$ vanishes since $\bar y'' \bar x^2_{n(i+1)+5 \ldots n(i+2)+1} \bar x_{n(i+2)+2 \ldots nm-1} = 0$ by the observation in Remark \ref{remark:irreduciblegrading}.
\end{proof}

\begin{lemma} \label{lemma:higherrelations4}
For each $0 \leq i \leq m-2$ we have (see Fig.~\ref{figure:counterexample})
\begin{align*}
 \bar y_{ni} \bar x_{ni \ldots n(m-1)-2} \bar x_{n(m-1)-1 \ldots nm-3}^0  
= \bar x_{ni+1 \ldots nm-2} \bar y_{nm-2}'.
\end{align*}
As a result we have 
\begin{multline*}
\bar y_{ni} \bar x_{ni \ldots n(m-1)-2} \bar x_{n(m-1)-1 \ldots nm-3}^0 \bar y_{nm-3 \ldots n(m-1)-1}^0\bar y_{n(m-1)-2 \ldots n(m-2)}  \\
= - (-1)^{n} \bar x_{ni+1 \ldots nm-2} \bar y_{nm-2 \ldots n(m-2)}.
\end{multline*}
\end{lemma}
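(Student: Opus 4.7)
The plan is to follow the strategy of the proof of Lemma~\ref{lemma:higherrelations3} quite closely: I would iteratively apply the vertex relations \eqref{align:relationsatvertex}, the anticommutativity relations across the squares of Fig.~\ref{figure:squares}, and the Plücker-type relations from Proposition~\ref{proposition:relationdual}, using the length-grading observation of Remark~\ref{remark:irreduciblegrading} to rule out spurious summands at each stage.

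More concretely, first I would open up $\bar y_{ni}\bar x_{ni}$ at the leftmost vertex via
\[
\bar y_{ni}\bar x_{ni} = -\bar x_{ni+1}\bar y_{ni+1} - \bar x_{ni+1}^{1'}\bar y_{ni+1}^{1'}.
\]
The first summand, multiplied by the remaining descending tail $\bar x_{ni+1 \ldots n(m-1)-2}\bar x_{n(m-1)-1 \ldots nm-3}^{0}$, can be reduced by iterating Lemma~\ref{lemma:higherrelations1}\,(i) along the main row, moving $\bar y_{ni+1}$ stepwise to the right. The second summand is rewritten, exactly as in \eqref{align:lemmahigherrelations3}--\eqref{align:lemmahigherrelations4}, by first pushing $\bar y_{ni+1}^{1'}$ along the parallel row and then invoking the Plücker-type relation $\bar x_{ni+1}^{1'}\bar x_{ni+2}^{1} = -\bar x_{ni+1}\bar x_{ni+2}^{1'} - \bar x_{ni+3}^{2}\bar y_{ni+3}^{2'}$ to produce two further families of summands. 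Because the tail ends in the $\bar x^{0}$-segment rather than continuing in the main row, several of the resulting products have length strictly greater than the maximal length of irreducible paths between their endpoints, and hence vanish by Remark~\ref{remark:irreduciblegrading}. After these cancellations, I expect exactly one term, $\bar x_{ni+1 \ldots nm-2}\bar y_{nm-2}'$, to survive.

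The main obstacle, as I see it, will be managing the junction at position $n(m-1)-1$ where the descending path turns from the main row onto the $\bar x^{0}$-segment. I anticipate needing an identity of the same flavor as \eqref{align:equationinlemma:higherrelations3}, obtained by an iterated use of anticommutativity combined with a single Plücker-type relation, in order to transport the surviving $\bar y$-arrow cleanly past the $\bar x^{0}$-segment and deposit it at the designated endpoint as $\bar y_{nm-2}'$. Keeping careful track of the signs accumulated during these swaps is the bookkeeping step most likely to go wrong, and it is also where the asymmetry with Lemma~\ref{lemma:higherrelations3} (the tail ending on $\bar x^0$ rather than on the main row) genuinely enters.

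For the consequence, I would simply right-multiply both sides of the first identity by $\bar y_{nm-3 \ldots n(m-1)-1}^{0}\bar y_{n(m-1)-2 \ldots n(m-2)}$ and reduce the resulting composite $\bar y_{nm-2}'\bar y_{nm-3 \ldots n(m-1)-1}^{0}\bar y_{n(m-1)-2 \ldots n(m-2)}$. This is a pure anticommutativity computation along $n-1$ ascending squares in Fig.~\ref{figure:squares}, each contributing a sign flip, which converts the primed/superscripted ascending string into the unprimed string $\bar y_{nm-2 \ldots n(m-2)}$ with the overall prefactor $-(-1)^{n}$ as stated; any stray Plücker summands produced along the way will again vanish by the length-grading observation of Remark~\ref{remark:irreduciblegrading}, since each such summand ends up feeding an ascending arrow into a top-degree element of $\KK_m^n$.
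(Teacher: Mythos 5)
Your strategy for the first identity essentially matches the paper's: open up $\bar y_{ni}\bar x_{ni}$ via the vertex relation, push the resulting $\bar y$'s to the right using Lemma~\ref{lemma:higherrelations1} \ref{lemma:higherrelations30} and anticommutativity, and for $i<m-2$ invoke the same Plücker-type rewrite as in the proof of Lemma~\ref{lemma:higherrelations3} followed by induction. One caveat: you attribute the vanishing of spurious summands to Remark~\ref{remark:irreduciblegrading}, but the decisive cancellations here are algebraic. In the base case $i=m-2$ the second summand dies because $\bar y_{n(m-1)-1}^0\bar x_{n(m-1)-1 \ldots nm-3}^0 = 0$, shown by pushing $\bar y$ along the $\bar x^0$-segment via $\bar y_{n(m-1)+j-1}^0\bar x_{n(m-1)+j-1}^0 = -\bar x_{n(m-1)+j}^0\bar y_{n(m-1)+j}^0$ until one hits the monomial relation $\bar y_{nm-3}^0\bar x_{nm-3}^0 = 0$; for $i<m-2$ the second Plücker summand dies via the inductive hypothesis together with Lemma~\ref{lemma:higherrelations1} \ref{lemma:higherrelations30}. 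No junction Plücker relation analogous to \eqref{align:equationinlemma:higherrelations3} is actually needed in the base case --- transporting the surviving $\bar y$ past the $\bar x^0$-segment is pure anticommutativity across squares.

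The genuine gap is in your treatment of the consequence. You assert that reducing $\bar y_{nm-2}'\bar y_{nm-3 \ldots n(m-1)-1}^0\bar y_{n(m-1)-2 \ldots n(m-2)}$ is a pure anticommutativity computation and that any stray Plücker summands are killed by Remark~\ref{remark:irreduciblegrading}. Neither holds. The very first step is the Plücker-type relation
\[
\bar y_{nm-2}'\,\bar y_{nm-3}^0 = -\bar y_{nm-2}\,\bar y'_{nm-3} - \bar x_{nm-1}\,\bar y''',
\]
and the third term does \emph{not} have excessive Adams degree: in fact $\bar y'''\bar y_{nm-4 \ldots n(m-1)-1}^0\bar y_{n(m-1)-2}$ is itself a type-(IV) relation path in $\SS_m^n$ (Definition~\ref{definition:reductionsystem}), so this term must be reduced, not dropped. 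The paper kills it only after iterating Lemma~\ref{lemma:cubicrelations} ($n-2$ times, for $k=n,n-1,\dotsc,3$) and finally invoking the monomial relation $\bar y_{n(m-2)+3}^2\bar y_{n(m-2)} = 0$. This algebraic cascade is the essential content of the second half of the lemma; Remark~\ref{remark:irreduciblegrading} is silent here, and without the cascade your argument leaves the extra Plücker term unaccounted for.
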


\begin{proof}
Let us first consider the case of $i = m-2$. We have 
\begin{multline*}
\bar y_{n(m-2)} \bar x_{n(m-2) \ldots n(m-1)-2} \bar x_{n(m-1)-1 \ldots nm-3}^0 \\
\begin{aligned}
&= - \bar x_{n(m-2)+1} \bar y_{n(m-2)+1} \bar x_{n(m-2)+1 \ldots n(m-1)-2} \bar x_{n(m-1)-1 \ldots nm-3}^0 \\
&\quad\ - \bar x_{n(m-2)+1}^{1'}\bar y_{n(m-2)+1}^{1'}  \bar x_{n(m-2)+1 \ldots n(m-1)-2} \bar x_{n(m-1)-1 \ldots nm-3}^0 \\
&= (-1)^{n-1} \bar x_{n(m-2)+1 \ldots n(m-1)-1} \bar y_{n(m-1)-1} \bar x_{n(m-1)-1 \ldots nm-3}^0 \\
&\quad\ + (-1)^{n-1} \bar x_{n(m-2)+1}^{1'} \bar x_{n(m-2)+2 \ldots n(m-1)-1}^{1} \bar y_{n(m-1)-1}^0 \bar x_{n(m-1)-1 \ldots nm-3}^0 \\ 
&= \bar x_{n(m-2)+1} \bar x_{n(m-2)+2 \ldots nm-2} \bar y_{nm-2}'
\end{aligned}
\end{multline*}
where the first equality follows from \eqref{align:relationsatvertex} and the second one from Lemma \ref{lemma:higherrelations1} \ref{lemma:higherrelations30} and $\bar y_{n(m-2)+j}^{1'} \bar x_{n(m-2)+j}=-\bar x_{n(m-2)+j+1}^1 \bar y_{n(m-2)+j+1}^{1'}$ for $1\leq j \leq n-2$ (denote $\bar y_{n(m-1)-1}^{1'}= \bar y_{n(m-1)-1}^0$ for $j = n-2$), and the third one  from the anticommutativity relations involving $\bar y_{n(m-1)-j}\bar x^0_{n(m-1)-j}$ ($n-3$ times) and the following identity
\begin{align*}
\bar y_{n(m-1)-1}^0  \bar x_{n(m-1)-1 \ldots nm-3}^0  = (-1)^{n-2}\bar x_{n(m-1) \ldots nm-3}^0 \bar y_{nm-3}^0 \bar x_{nm-3}^0 = 0
\end{align*}
which uses $\bar y_{n(m-1)+j-1}^0 \bar x_{n(m-1)+j-1}^0 = - \bar x_{n(m-1)+j}^0 \bar y_{n(m-1)+j}^0$ for $0 \leq j \leq  n-3$ and $\bar y_{nm-3}^0 \bar x_{nm-3}^0  = 0$. 

For $i < m-2$ we may proceed by induction since similar to \eqref{align:lemmahigherrelations4} we have
\begin{multline*}
\bar y_{ni} \bar x_{ni \ldots n(m-1)-2} \bar x_{n(m-1)-1 \ldots nm-3}^0 \\
\begin{aligned}
&= - \bar x_{ni+1 \ldots n(i+1)} \bar y_{n(i+1)} \bar x_{n(i+1) \ldots n(m-1)-2} \bar x_{n(m-1)-1 \ldots nm-3}^0\\
&\quad\ -\bar x_{ni+3 \ldots n(i+1)+1}^2 \bar y_{n(i+1)+1} \bar y_{n(i+1)} \bar x_{n(i+1) \ldots n(m-1)-2} \bar x_{n(m-1)-1 \ldots nm-3}^0
\end{aligned}
\end{multline*}
where we also need to use Lemma \ref{lemma:higherrelations1} \ref{lemma:higherrelations30} to show that the second summand vanishes. This proves the first desired identity. 

To prove the second desired identity, it suffices to prove the following one 
\begin{align*}
\bar y_{nm-2 \ldots n(m-1)-1}^0 \bar y_{n(m-1)-2 \ldots n(m-2)} = (-1)^{n-1}  \bar y_{nm-2 \ldots n(m-2)} .
\end{align*}
For this, by the Plücker-type relation $\bar y_{nm-2}'  \bar y_{nm-3}^0 =- \bar y_{nm-2}  \bar y'_{nm-3}- \bar x_{nm-1} \bar y'''$, where $\bar y'''$ is the missing arrow connecting the bottom vertex with the ending vertex of $\bar y_{nm-3}^0$ in Fig.~\ref{figure:counterexample}, it follows that 
\begin{multline*}
\bar y_{nm-2}' \bar y_{nm-3 \ldots n(m-1)-1}^0 \bar y_{n(m-1)-2 \ldots n(m-2)} \\
\begin{aligned}
&= -\bar y_{nm-2} \bar y'_{nm-3} \bar y_{nm-4 \ldots n(m-1)-1}^0 \bar y_{n(m-1)-2 \ldots n(m-2)} \\
&\quad\ - \bar x_{nm-1} \bar y''' \bar y_{nm-4 \ldots n(m-1)-1}^0 \bar y_{n(m-1)-2 \ldots n(m-2)}\\
&= (-1)^{n-1} \bar y_{nm-2 \ldots n(m-2)}
\end{aligned}
\end{multline*}
where the second equality repeatedly uses the anticommutativity relations involving $\bar y'_{nm-i}\bar y_{nm-i-1}^0$ for $3 \leq i \leq n$. Note that $\bar y'''  \bar y_{nm-4 \ldots n(m-1)-1}^0 \bar y_{n(m-1)-2} $ is in $\SS_m^n$ and using Lemma \ref{lemma:cubicrelations} iteratively ($n-2$ times for $k = n, n-1, \dotsc, 3$) we obtain 
\begin{multline*}
\bar x_{nm-1} \bar y''' \bar y_{nm-4 \ldots n(m-1)-1}^0 \bar y_{n(m-1)-2 \ldots n(m-2)} \\
= (-1)^{\frac{(n-2)(n+1)}{2}} \bar x_{nm-1} \bar y_{nm-1} \bar y_{nm-2}' \dotsb \bar y_{n(m-2)+3}^2 \bar y_{n(m-2)} = 0
\end{multline*}
where we do not need the explicit arrows appearing in $\dotsb$ since the second equality already follows from $\bar y_{n(m-2)+3}^2 \bar y_{n(m-2)} = 0$.
\end{proof}

\begin{figure} 
\begin{tikzpicture}[x=4em,y=4em,decoration={markings,mark=at position 0.99 with {\arrow[black]{Stealth[length=3pt]}}}]
\begin{scope}[scale=1, shift={(0,0)}]
\begin{scope}[shift={(0,0)}]
\node[shape=ellipse,minimum height=2.2em,minimum width=2.8em] (a0) at (0,0) {};
\node[shape=ellipse,minimum height=2.5em,minimum width=3.2em] (a1) at (0,-1) {};
\node[shape=ellipse,minimum height=2.8em,minimum width=3.4em] (a2) at (0,-2) {};
\node[shape=ellipse,minimum height=3.1em,minimum width=3.7em] (a3) at (0,-3) {};
\path[-, line width=.5pt, line cap=round] (a0) edge node[right=-.4ex,font=\scriptsize] {$\bar y_{ni}$} (a1);
\path[-, line width=.5pt, line cap=round] (a1) edge node[right=-.4ex,font=\scriptsize] {$\bar y_{ni+1}$} (a2);
\path[-, line width=.5pt, line cap=round] (a2) edge node[right=-.4ex,font=\scriptsize] {$\bar y_{ni+2}$} (a3);
\node[shape=ellipse,minimum height=2.2em,minimum width=3.4em] (b1) at (2,-2) {};
\node[shape=ellipse,minimum height=2.2em,minimum width=3.5em] (b2) at (2,-3) {};
\node[shape=ellipse,minimum height=2.8em,minimum width=3.7em] (b3) at (2,-4) {};
\path[-, line width=.5pt, line cap=round] (b1) edge node[below=-.4ex,font=\scriptsize,pos=.4] {$\bar y_{ni+1}^{1'}$} (a1);
\path[-, line width=.5pt, line cap=round] (b2) edge node[below=-.4ex,font=\scriptsize,pos=.4] {$\bar y_{ni+2}^{1'}$} (a2);
\path[-, line width=.5pt, line cap=round] (b3) edge node[below=-.4ex,font=\scriptsize,pos=.4] {$\bar y_{ni+3}^{1'}$} (a3);
\node[font=\scriptsize] at (3.85,-2.85) {$\bar y_{ni+3}^2$};
\path[-, line width=.5pt, line cap=round] (b1) edge node[right=-.4ex,font=\scriptsize] {$\bar y_{ni+2}^1$} (b2);
\path[-, line width=.5pt, line cap=round] (b2) edge node[right=-.35ex,font=\scriptsize] {$\bar y_{ni+3}^1$} (b3);
\node[shape=ellipse,minimum height=2.4em,minimum width=3.3em] (c2) at (4,-4) {};
\node[shape=ellipse,minimum height=3em,minimum width=3.8em] (c3) at (4,-5) {};
\path[-, line width=.5pt, line cap=round] (c2) edge node[below=-.4ex,font=\scriptsize] {$\bar y_{ni+3}^{2'}$} (b2);
\path[-, line width=.5pt, line cap=round] (c3) edge node[below=-.4ex,font=\scriptsize] {$\bar y_{ni+4}^{2'}$} (b3);
\path[-, line width=.5pt, line cap=round] (c2) edge node[right=-.4ex,font=\scriptsize] {$\bar y_{ni+4}^2$} (c3);
\path[-,line width=.5pt, line cap=round,in=-10,out=110,looseness=1.1] (c2) edge (a1.-20);
%a0
\begin{scope}[shift={(-.945em,0)},x=.27em,y=.45em]
\DN{0} \DN{1} \DDOT{2} \DN{3} \UP{4} \DDOT{5} \UP{6}  \DN{7}
\MRAY{0} \SCIRCLES{1}{2.5} \CIRCLE{3}  \MRAY{7}
\node[font=\scriptsize] at (9,0) {$=$};
\DN{11}  \DN{14} \DN{15} \DDOT{16} \DN{17} \UP{18} \DDOT{19}  \UP{20} \DN{21}  \DN{24}
\node[font=\scriptsize] at (12.5,0) {.\hspace{-.3pt}.\hspace{-.3pt}.};
\node[font=\scriptsize] at (22.5,0) {.\hspace{-.3pt}.\hspace{-.3pt}.};
\RAY{11} \RAY{14} \SCIRCLES{15}{2.5} \CIRCLE{17} \RAY{21}\RAY{24}
\node[font=\small] at (22.5,1.8) {\rotatebox{-90}{$\{$}};
\node[font=\scriptsize] at (22.5,3.1) {\strut$_{m-i}$};
\end{scope}
%a1
\begin{scope}[shift={(-1.215em,-1)},x=.27em,y=.45em]
\DN{0} \DN{1} \DDOT{2} \DN{3} \UP{4} \DDOT{5} \UP{6}  \DN{7} \UP{8} \DN{9}
\MRAY{0} \SCIRCLES{1}{2.5} \CIRCLE{3} \CIRCLE{7} \MRAY{9}
\begin{scope}[xshift=3]
\node[font=\scriptsize] at (10,0) {$=$};
\DN{12}  \DN{15} \DN{16} \DDOT{17} \DN{18} \UP{19} \DDOT{20}  \UP{21} \DN{22}  \UP{23} \DN{24} \DN{27}
\node[font=\scriptsize] at (13.5,0) {.\hspace{-.3pt}.\hspace{-.3pt}.};
\node[font=\scriptsize] at (25.5,0) {.\hspace{-.3pt}.\hspace{-.3pt}.};
\RAY{12} \RAY{15} \SCIRCLES{16}{2.5} \CIRCLE{18} \CIRCLE{22} \RAY{24}\RAY{27} 
\node[font=\small] at (25.5,1.8) {\rotatebox{-90}{$\{$}};
\node[font=\scriptsize] at (25.5,3.1) {\strut$_{m-i-1}$};
\end{scope}
\end{scope}
%a2
\begin{scope}[shift={(-1.485em,-2)},x=.27em,y=.45em]
\DN{0} \DN{1} \DN{2} \DDOT{3}\DN{4} \UP{5} \DDOT{6} \UP{7} \DN{8} \UP{9} \UP{10} \DN{11} 
\MRAY{0} \SCIRCLES{1}{4.5} \SCIRCLES{2}{2.5} \CIRCLE{4} \CIRCLE{8} \MRAY{11} 
\end{scope}
%a3
\begin{scope}[shift={(-1.775em, -3)},x=.27em,y=.45em]
\DN{0} \DN{1} \DN{2} \DN{3} \DDOT{4} \DN{5} \UP{6} \DDOT{7} \UP{8} \DN{9} \UP{10} \UP{11} \UP{12}\DN{13}
\MRAY{0} \SCIRCLES{1}{5.5} \SCIRCLES{2}{4.5} \SCIRCLES{3}{2.5} \CIRCLE{5} \CIRCLE{9} \MRAY{13}
\end{scope}
%b1
\begin{scope}[shift={(6.65em,-2)},x=.27em,y=.45em]
\DN{0} \DN{1} \DDOT{2} \DN{3} \UP{4} \DDOT{5} \UP{6}  \DN{7} \DN{8} \UP{9} \DN{10}
\MRAY{0} \SCIRCLES{1}{2.5} \CIRCLE{3}\RAY{7} \CIRCLE{8} \MRAY{10}
\end{scope}
%b2
\begin{scope}[shift={(6.515em,-3)},x=.27em,y=.45em]
\DN{0} \DN{1} \DDOT{2} \DN{3} \UP{4} \DDOT{5} \UP{6}  \DN{7} \UP{8} \DN{9} \UP{10} \DN{11}
\MRAY{0} \SCIRCLES{1}{2.5} \CIRCLE{3}\CIRCLE{7} \CIRCLE{9} \MRAY{11}
\end{scope}
%b3
\begin{scope}[shift={(6.245em,-4)},x=.27em,y=.45em]
\DN{0} \DN{1} \DN{2} \DDOT{3} \DN{4} \UP{5} \DDOT{6} \UP{7} \DN{8} \UP{9} \UP{10} \DN{11} \UP{12} \DN{13}
\MRAY{0} \SCIRCLES{1}{4.5} \SCIRCLES{2}{2.5} \CIRCLE{4} \CIRCLE{8} \CIRCLE{11} \MRAY{13}
\end{scope}
%c2
\begin{scope}[shift={(14.515em,-4)},x=.27em,y=.45em]
\DN{0} \DN{1} \DDOT{2} \DN{3} \UP{4} \DDOT{5} \UP{6} \DN{7} \DN{8} \UP{9} \UP{10} \DN{11}
\MRAY{0} \SCIRCLES{1}{2.5} \CIRCLE{3} \SCCIRCLE{7} \CIRCLE{8} \MRAY{11}
\end{scope}
%c3
\begin{scope}[shift={(14.245em,-5)},x=.27em,y=.45em]
\DN{0} \DN{1} \DN{2}  \DDOT{3} \DN{4} \UP{5} \DDOT{6} \UP{7} \DN{8} \UP{9} \DN{10} \UP{11} \UP{12} \DN{13}
\MRAY{0} \SCIRCLES{1}{5.5} \SCIRCLES{2}{2.5} \CIRCLE{4} \CIRCLE{8}\CIRCLE{10} \MRAY{13}
\end{scope}
\end{scope}

\begin{scope}[shift={(0,-4.33)}]
\node[shape=ellipse,minimum height=2.6em,minimum width=3.4em] (e0) at (0,0) {};
\node[shape=ellipse,minimum height=2.2em,minimum width=2.6em] (e1) at (0,-1) {};
\node[shape=ellipse,minimum height=2.2em,minimum width=3.1em] (e2) at (0,-2) {};
\node[shape=ellipse,minimum height=2.6em,minimum width=3.8em] (e3) at (0,-3) {};
\path[-, line width=.5pt, line cap=round] (e0) edge node[right=-.4ex,font=\scriptsize,pos=.6] {$\bar y_{n(i+1)-1}$} (e1);
\path[-, line width=.5pt, line cap=round] (e1) edge node[right=-.4ex,font=\scriptsize] {$\bar y_{n(i+1)}$} (e2);
\path[-, line width=.5pt, line cap=round] (e2) edge node[right=-.4ex,font=\scriptsize] {$\bar y_{n(i+1)+1}$} (e3);
\node[shape=ellipse,minimum height=2.9em,minimum width=3.7em] (f0) at (2, -1) {};
\node[shape=ellipse,minimum height=3.2em,minimum width=4.8em] (g0) at (4, -2) {};
\path[-, line width=.5pt, line cap=round] (f0) edge (e0);
\path[-, line width=.5pt, line cap=round] (f0) edge (g0);
\path[-, line width=.5pt, line cap=round] (g0) edge node[below=.4ex,font=\scriptsize] {$\bar y_{n(i+1)+1}^2$} (e3);
\path[-, line width=.5pt, line cap=round] (f0) edge node[below=.5ex,font=\scriptsize,pos=.3] {$\bar y_{n(i+1)}^1$} (e2);
\draw[dotted, line width=.6pt] (a3) -- (e0);
\draw[dotted, line width=.6pt] (f0) -- (b3);
\draw[dotted, line width=.6pt] (g0) -- (c3);
%e0
\begin{scope}[shift={(-1.485em,0)},x=.27em,y=.45em]
\DN{0} \DN{1} \DDOT{2} \DN{3} \DN{4} \UP{5} \DN{6} \UP{7} \UP{8} \DDOT{9} \UP{10} \DN{11}
\MRAY{0} \SCIRCLES{1}{4.5} \SCIRCLES{3}{2.5} \CIRCLE{4} \CIRCLE{6} \MRAY{11}
\end{scope}
%a1
\begin{scope}[shift={(-.945em,-1)},x=.27em,y=.45em]
\DN{0} \DN{1} \DDOT{2}  \DN{3} \UP{4} \DDOT{5} \UP{6} \DN{7}
\MRAY{0} \SCIRCLES{1}{2.5} \CIRCLE{3} \MRAY{7}
\end{scope}
%a2
\begin{scope}[shift={(-1.215em,-2)},x=.27em,y=.45em]
\DN{0} \DN{1} \DDOT{2} \DN{3}\UP{4} \DDOT{5} \UP{6} \DN{7} \UP{8} \DN{9}
\MRAY{0} \SCIRCLES{1}{2.5} \CIRCLE{3} \CIRCLE{7} \MRAY{9}
\end{scope}
%a3
\begin{scope}[shift={(-1.485em,-3)},x=.27em,y=.45em]
\DN{0} \DN{1} \DN{2} \DDOT{3} \DN{4}\UP{5} \DDOT{6} \UP{7} \DN{8} \UP{9} \UP{10} \DN{11}
\MRAY{0} \SCIRCLES{1}{4.5} \SCIRCLES{2}{2.5} \CIRCLE{4} \CIRCLE{8} \MRAY{11}
\end{scope}
%b0
\begin{scope}[shift={(6.245em,-1)},x=.27em,y=.45em]
\DN{0} \DN{1} \DDOT{2} \DN{3} \DN{4} \UP{5} \DN{6} \UP{7} \UP{8} \DDOT{9} \UP{10}  \DN{11}\UP{12} \DN{13}
\MRAY{0} \SCIRCLES{1}{4.5}\SCIRCLES{3}{2.5} \CIRCLE{4} \CIRCLE{6} \CIRCLE{11} \MRAY{13}
\end{scope}
%c0
\begin{scope}[shift={(13.975em,-2)},x=.27em,y=.45em]
\DN{0} \DN{1} \DN{2} \DDOT{3} \DN{4} \DN{5}\UP{6}\DN{7} \UP{8} \UP{9} \DDOT{10} \UP{11} \DN{12}\UP{13} \UP{14}\DN{15}
\MRAY{0} \SCIRCLES{1}{6.5} \SCIRCLES{2}{4.5} \SCIRCLES{4}{2.5} \CIRCLE{12} \CIRCLE{5}\CIRCLE{7}\MRAY{15}
\end{scope}
\end{scope}
\end{scope}
\end{tikzpicture}
\caption{$n-1\leq i \leq m-2$}
\label{figure:counterexample1}
\end{figure}
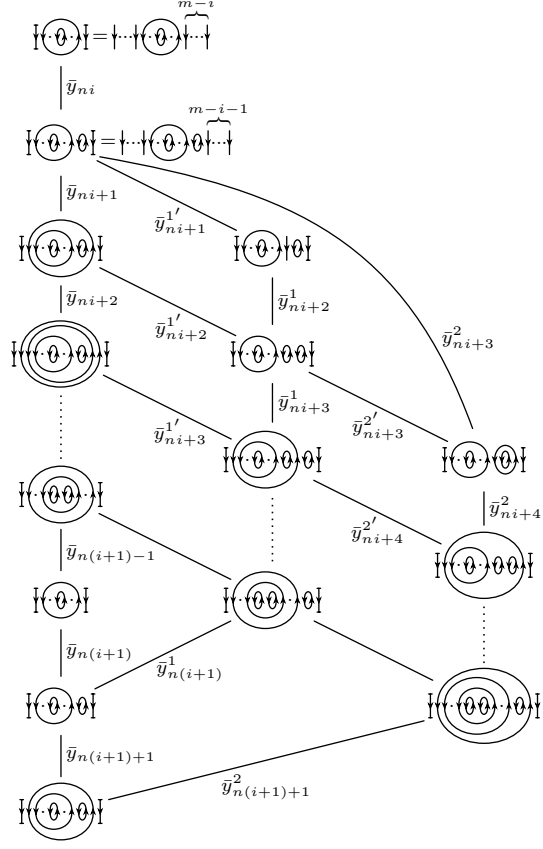

\section{\texorpdfstring{%
Hochschild cohomology and deformations for Koszul-dual algebras}{%
Hochschild cohomology and deformations for Koszul-dual algebras}}
\label{section:hochschild}

Let $A$ be a graded algebra. The Hochschild cochain complex $\mathrm C^\hdot(A, A)$ is the product total complex of the double complex 
\begin{align}
A \tikztoarg{\delta^0} \Hom(A, A) \tikzto \dotsb \tikzto \Hom(A^{\otimes p}, A) \tikztoarg{\delta^p} \Hom(A^{\otimes p+1}, A)  \tikzto\dotsb
\end{align}
where the horizontal differential $\delta^p$ is the Hochschild differential and each term $\Hom (A^{\otimes i}, A) = \bigoplus_j \Hom (A^{\otimes i}, A)^j$ is viewed as a graded vector space with trivial (vertical) differential, where $\Hom (A^{\otimes i}, A)^j$ is the set of graded $\Bbbk$-linear maps of degree $j$ from $A^{\otimes i}$ to $A$ so that we have $\mathrm C^k (A, A) = \prod_{i} \Hom(A^{\otimes i}, A)^{k-i}$.

\subsection{Bigraded Hochschild cohomology and Koszul duality} 

Let $A = \Bbbk Q/ I$ be a Koszul algebra and $A^! = \Bbbk \overline Q / (I_2^{\perp})$ be its Koszul dual. In this subsection we recall from Keller \cite{keller3} the isomorphism between the (bigraded) Hochschild cohomology of $A$ and $A^!$.   

\subsubsection{Bigraded vector spaces} 

Consider bigraded vector spaces $
V = \bigoplus_{p, q\in \mathbb Z} V^p_q $
where the superscript $^p$ denotes the (cohomological) {\it differential grading} and the subscript $_q$ denotes the {\it Adams grading} which is viewed as an additional grading. Let $(V, d)$ be a {\it differential bigraded vector space}, where $d \colon V \tikzto V$ is a differential of bidegree $(1, 0)$ giving for each fixed $q \in \mathbb Z$ a cochain complex $V^\hdot_q$
\[
\dotsb \tikzto V_q^p \tikztoarg{d} V_q^{p+1} \tikztoarg{d} V_q^{p+2}\tikzto \dotsb
\]

For any two differential bigraded vector spaces $(U, d_U), (V, d_V)$, consider the differential bigraded Hom-space $\Hom (U, V) = \bigoplus_{p, q \in \mathbb Z} \Hom (U, V)^p_q$ where
$$
\Hom (U, V)^p_q := \prod_{i, j\in \mathbb Z} \Hom(U^i_j, V^{p+i}_{q+j})
$$
with differential 
$\delta(f) = d_V\circ f - (-1)^p f \circ d_U$ for any $f \in  \Hom(U, V)^p_q$. 
We may also consider the bigraded tensor product $U \otimes V= \bigoplus_{p, q \in \mathbb Z} (U \otimes V)^p_q$ where 
$$
(U \otimes V)^p_q: = \bigoplus_{i, j \in \mathbb Z} U^i_j \otimes V^{p-i}_{q-j}
$$
with differential $d(u \otimes v) = d_U(u) \otimes v + (-1)^{k} u \otimes d_V(v)$ for any $u\otimes v \in U^k_l \otimes V^m_n$.

Let $A = \bigoplus_{k \in \mathbb Z} A^k$ be a graded algebra with additional Adams grading so that $A^k = \bigoplus_{l \in \mathbb Z} A^k_l$. We denote the Hochschild cochain complex with this additional grading by $\mathrm C^\hdot_\ldot (A, A)$ where $\mathrm C^p_q (A, A) = \prod_{i \geq 0} \Hom(A^{\otimes i}, A)^{p-i}_q$ with the induced differential. For each $q$ let $\HH_q^\hdot(A, A)$ denote the cohomology of the complex $\mathrm C^\hdot_q (A, A)$. The B$_\infty$ structure on $\mathrm C^\hdot (A, A)$ can be restricted to the subspace
$$
\bigoplus_{q \in \mathbb Z} \mathrm C^\hdot_q (A, A) \subset \mathrm C^\hdot(A, A)
$$
but the inclusion is not necessarily a (quasi-)isomorphism of complexes, even if $A$ is finite-dimensional (see Remark \ref{remark:example} below). 

\subsubsection{Hochschild cohomology under Koszul duality}

Let $A = \Bbbk Q/I$ be a Koszul algebra. We view $A$ as bigraded by assigning any arrow $a \in Q$ the bidegree $(0,1)$. Recall that the Koszul dual $A^! $ is isomorphic to $\Bbbk \overline Q / (I_2^{\perp})$. It also admits a bigrading by assigning any arrow $\bar a \in \overline Q$ the bidegree $(1,-1)$.

\begin{theorem}[{\cite[\S3.5]{keller3}}]\label{theorem:keller}
There is an isomorphism in the homotopy category of (bigraded) B$_\infty$ algebras
\[
\mathrm C^\hdot_\ldot(A, A) \tikzto \mathrm C^\hdot_\ldot (A^!, A^!).
\]
In particular, we have $\HH^p_q (A^!, A^!) \simeq \HH^p_q (A, A)$ for all $p, q$.
\end{theorem}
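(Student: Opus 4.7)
The proof strategy I would follow goes through derived Morita invariance of the Hochschild cochain complex, with the Koszul dual $A^!$ realised as a derived endomorphism algebra of the simples. More precisely, set $S = \Bbbk Q_0$ and consider the DG algebra $E = \RHom_A(S, S)$, computed using the Koszul resolution of $S$. Koszulity of $A$ means exactly that $E$ is formal with $\H^\hdot(E) = \Ext^\hdot_A(S, S) = A^!$, so the A$_\infty$ minimal model of $E$ is $A^!$ with vanishing higher products (i.e.\ concentrated in $m_2$). The Adams grading enters because $E$ inherits an internal grading from the Adams grading on $A$ (making the Koszul resolution bigraded), and under formality this internal grading is precisely the one giving each arrow of $\overline Q$ bidegree $(1,-1)$.

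The plan then has three steps. First, invoke Keller's invariance theorem: the Hochschild cochain complex, as a B$_\infty$ algebra, is preserved (up to quasi-isomorphism) under passing from a DG algebra to any A$_\infty$-quasi-isomorphic one. This immediately yields a B$_\infty$ quasi-isomorphism $\mathrm C^\hdot_\ldot(E, E) \simeq \mathrm C^\hdot_\ldot(A^!, A^!)$, where on the right we use that $A^!$ is the minimal model of $E$. Second, apply the derived Morita invariance of the Hochschild complex to compare $\mathrm C^\hdot_\ldot(A, A)$ with $\mathrm C^\hdot_\ldot(E, E)$: since $S$ is a classical generator of the bounded (graded) derived category of finite-dimensional $A$-modules, the B$_\infty$ algebra of Hochschild cochains of $A$ is quasi-isomorphic to that of the derived endomorphism algebra $E = \RHom_A(S,S)$. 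Composing these two quasi-isomorphisms produces the desired zigzag $\mathrm C^\hdot_\ldot(A, A) \simeq \mathrm C^\hdot_\ldot(E, E) \simeq \mathrm C^\hdot_\ldot(A^!, A^!)$ in the homotopy category of B$_\infty$ algebras, and passing to cohomology gives the stated isomorphism $\HH^p_q(A, A) \simeq \HH^p_q(A^!, A^!)$.

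The main obstacle, and the point where one must be careful, is carrying the Adams grading through both steps. The Hochschild complex of a graded algebra decomposes as a product over Adams degrees $q$, and the derived Morita/Koszul comparisons must be refined to respect this decomposition. Concretely, one has to ensure that the Koszul bimodule resolution used to compute $\RHom_A(S,S)$ and the Morita quasi-isomorphism of Hochschild complexes both preserve the internal grading, so that the composite quasi-isomorphism lies in the homotopy category of \emph{bigraded} B$_\infty$ algebras rather than only the singly-graded one. This is the content of Keller's refinement in \cite[\S 3.5]{keller3}, which is precisely what is being cited; the remainder of the argument is a formal consequence of derived Morita invariance applied to the generator $S$ together with Koszul formality of $E$.
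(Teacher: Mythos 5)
The paper states this result as a citation to Keller \cite[\S3.5]{keller3} without reproducing the proof, so the real question is whether your sketch is a correct reconstruction. The overall skeleton you give --- realise $A^! $ as the minimal model of $E = \RHom_A(\Bbbk Q_0, \Bbbk Q_0)$, use Koszul formality to reduce to comparing Hochschild complexes of $A$ and $E$, and invoke B$_\infty$ invariance under A$_\infty$ quasi-isomorphism for the $E \leftrightarrow A^!$ half --- is indeed the right frame and matches Keller's.

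The gap is in the second leg, the comparison $\mathrm C^\hdot_\ldot(A,A) \simeq \mathrm C^\hdot_\ldot(E,E)$. You write that this follows from derived Morita invariance ``since $S$ is a classical generator of the bounded (graded) derived category of finite-dimensional $A$-modules,'' and you treat the Adams grading as something to be ``carried through.'' But Morita invariance of the Hochschild complex needs $S$ to be a compact generator of $\D(A)$ (equivalently a generator of $\mathrm{perf}(A)$), and this simply fails for a general Koszul $A$: for $A = \Bbbk[x]$ take $M = \Bbbk[x,x^{-1}]$, then $\RHom_A(\Bbbk, M) = 0$ although $M \neq 0$, so $\Bbbk$ does not generate. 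Consistently, the ungraded version of the theorem is \emph{false} in this example --- the paper's own Remark~\ref{remark:example} records $\HH^0(\Bbbk[x]) = \Bbbk[z]$ versus $\HH^0(\Bbbk[y]/(y^2)) = \Bbbk\llbracket z\rrbracket$. So the Adams grading is not a refinement bolted onto a Morita argument that already works; it is what makes a Morita-type comparison available at all, by replacing $\mathrm{perf}(A)$ with the Adams-graded category in which $\Bbbk Q_0$ and its internal twists do generate. The phrase ``the remainder of the argument is a formal consequence of derived Morita invariance'' therefore overstates: the thing being cited from Keller is precisely the statement that a graded version of Morita invariance holds in this setting, and that is where the mathematical content lies, not in the formal assembly of the zigzag. (For the specific algebras used in this paper, $A = \KK_m^n$ and $A^!=\K_m^n$ are both finite-dimensional with $A_0$ semisimple, so $\Bbbk Q_0$ \emph{is} a compact generator of $\D(A)$ and your ungraded Morita argument would go through; but the theorem as stated is for arbitrary Koszul $A$, and the paper's Remark~\ref{remark:example} is there precisely to warn that the ungraded statement is wrong.)
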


\begin{remark}\label{remark:example}
The additional Adams grading is crucial in Theorem \ref{theorem:keller}, as otherwise there may be some issues with completions as discussed in \cite{keller4,positselski}. For instance, let $A = \Bbbk[x]$ be the polynomial algebra. Its Koszul dual $A^! = \Bbbk[y]/(y^2)$  is the $2$-dimensional algebra of dual numbers. Take the differential gradings $\lvert x \rvert = 0$ and $\lvert y \rvert = 1$. Then $\HH^0 (A, A) \simeq \Bbbk[z]$ whereas $\HH^0(A^!, A^!) \simeq \Bbbk \llbracket z\rrbracket$ which are not isomorphic (see \cite[\S1]{keller4}). However, the Adams grading gives isomorphisms $\HH^0_q(A, A) \simeq \Bbbk z^q \simeq \HH^0_q(A^!, A^!)$ for each $q \geq 0$. In fact, we have
$$
\HH^0(A, A) = \bigoplus_{q \in \mathbb Z} \HH^0_q (A, A) \quad\text{ and } \quad \HH^0 (A^!, A^!) \simeq \prod_{q \in \mathbb Z} \HH^0_q (A^!, A^!).
$$
\end{remark}

\begin{remark}
As mentioned in \cite{keller3} the isomorphism between Hochschild cohomology of $A$ and $A^!$ as graded algebras was announced by Buchweitz \cite{buchweitz}. See \cite{chenyangzhou} for the isomorphism preserving the Batalin--Vilkovisky structures for a Koszul Calabi--Yau algebra $A$.
\end{remark}

\subsection{\texorpdfstring{% fix compiling error
Associative and A$_\infty$ deformations under Koszul duality}{%
Associative and A-infinity deformations under Koszul duality}}
\label{subsection:duality}

For a graded algebra we may consider the following two different types of deformations. 

\begin{definition}
\label{definition:deformation}
Let $A = (\bigoplus_{k \geq 0} A^k, \mu)$ be a (non-negatively) graded algebra. 
\begin{enumerate}
\item A {\it filtered deformation} of $A$ is an associative algebra $(A, \widetilde \mu)$ such that 
\begin{alignat*}{2}
\widetilde \mu (a \otimes b) - \mu (a \otimes b) &= 0                      &&\text{if $a \in A^0$ or $b \in A^0$} \\
\widetilde \mu (a \otimes b) - \mu (a \otimes b) &\in A^{>(\lvert a \rvert + \lvert b \rvert)} \quad &&\text{for homogeneous $a, b \in A^{> 0}$}.
\end{alignat*}
\item An {\it A$_\infty$ deformation} of $A$ is an A$_\infty$ algebra $(A, m_1, m_2, m_3, \dotsc)$ such that $m_1 = 0$ and $m_2=\mu$.
\end{enumerate}
\end{definition}

\begin{remark}\label{remark:firstorderdeformationhh}
The two types of deformations in Definition \ref{definition:deformation} are inherently very different: a filtered deformation is strictly associative, whereas an A$_\infty$ deformation is generally only associative up to higher homotopies. However, from the point of view of bigraded algebras, these two types of deformations naturally appear as two sides of the same coin, the two sides being related by Koszul duality. For a graded algebra $A=(\bigoplus_{k \geq 0} A^k, \mu)$, we may equip $A$ with two different natural bigradings. The first bigrading assigns each element $a \in A^k$ the bidegree $(0, k)$. Then the cocycles of $\HH_q^2(A, A)$ lie in
$$
\mathrm C^2_q (A, A) = \prod_{i \geq 0} \Hom(A^{\otimes i}, A)^{2-i}_q = \Hom(A^{\otimes 2}, A)^0_{q}
$$ 
where $\Hom(A^{\otimes i}, A)^{2-i}_q = 0$ for $i\neq 2$ (since $A$ is trivially graded with respect to the differential grading whence the same is true for $\Hom(A^{\otimes i}, A)$). Therefore, if $A^0$ is semisimple then $\HH_q^2(A, A)$ corresponds to (equivalence classes of) first-order filtered deformations $(A [t] / (t^2), \widetilde \mu)$ of $A$ such that $\widetilde \mu(a \otimes b) - \mu(a \otimes b) \in A^{\lvert a \rvert + \lvert b \rvert + q} t$ for any homogeneous elements $a, b \in A^{>0}$.

The second bigrading assigns each element $a \in A^k$ the bidegree $(k, -k)$. The cocycles of $\HH_q^2 (A, A)$ then lie in 
$$
\mathrm C^2_q (A, A) = \prod_{i \geq 0} \Hom(A^{\otimes i}, A)^{2-i}_q =  \Hom(A^{\otimes q+2}, A)^{-q}_q
$$
where $\Hom(A^{\otimes i}, A)^{2-i}_q = 0$ if $i \neq q+2$ (since the total degree of $A$ is zero whence the same is true for $\Hom(A^{\otimes i}, A)$). Therefore, $\HH_q^2 (A, A)$ corresponds to first-order A$_\infty$ deformations of $A$ with the only nontrivial higher product being $m_{q+2}$. Moreover, it is known from \cite[Cor.~4]{kadeishvili2} and \cite[Thm.~4.7]{seidelthomas} that if $\HH^2_q(A, A) =0$\footnote{this is denoted by $\HH^{2+q, -q}(A, A)$ in \cite{kadeishvili2} and \cite{seidelthomas}.} for each $q > 0$ then $A$ is {\it intrinsically formal}, i.e.\ any A$_\infty$ deformation of $A$ is A$_\infty$-quasi-isomorphic to the underlying graded algebra $A$. 
\end{remark}

Let $A = \Bbbk Q / I$ be an algebra such that $I \subset \Bbbk Q_{\geq 2}$. By \cite{kadeishvili} the graded space $\Ext^\hdot_A (\Bbbk Q_0, \Bbbk Q_0)$ admits a natural minimal A$_\infty$ algebra structure, which is the {\it minimal model} of the derived endomorphism algebra of $\Bbbk Q_0$. Moreover, the minimal model is formal (i.e.\ quasi-isomorphic to the underlying graded associative algebra which is $\Ext^\hdot_A (\Bbbk Q_0, \Bbbk Q_0)$ with the cup product) if and only if $A$ is Koszul (see \cite[\S 2.2]{keller1} and \cite[Cor.~V.0.6]{conner}). 

Now let $A$ be Koszul and let $A^!=\Bbbk \overline Q / (I_2^{\perp})$ be its Koszul dual. As before, assign any arrow $a \in Q$ the bidegree $(0, 1)$ and any arrow $\bar a \in \overline Q$ the bidegree $(1, -1)$. Then by Remark \ref{remark:firstorderdeformationhh} we may consider filtered deformations of $A$ and A$_\infty$ deformations of $A^!$. 

Theorem \ref{theorem:keller} and Remark \ref{remark:firstorderdeformationhh} motivate the following result. 

\begin{proposition}\label{prop:minimalmodel}
Let $A = \Bbbk Q / I$ be a Koszul algebra and let $A^!=\Bbbk \overline Q / (I_2^{\perp})$ be its Koszul dual. View $A$ as an ungraded algebra and $A^!$ as a graded algebra. Then for any nontrivial filtered deformation $B = (A, \widetilde \mu)$ of $A$, the minimal model of the derived endomorphism algebra of $\Bbbk Q_0$ is a nontrivial A$_\infty$ deformation of $A^!$.
\end{proposition}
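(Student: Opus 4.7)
The plan is as follows. Let $B = (A, \widetilde{\mu})$ be a nontrivial filtered deformation of $A$, and equip $B$ with the ascending filtration $F^{\geq k} B := A^{\geq k}$, so that the associated graded $\mathrm{gr}_F B \simeq A$ as graded algebras. First I would set up a bigraded framework by combining the cohomological degree (from Ext) with an Adams grading inherited from this filtration. Choose a projective $B$-resolution of $\Bbbk Q_0$ refining the Koszul resolution of $A$ (for instance, lift the Koszul resolution using a PBW-type basis, as produced by the reduction-system machinery of \cite{barmeierwang1}); this makes $\RHom_B (\Bbbk Q_0, \Bbbk Q_0)$ into a bigraded dg algebra whose Adams-graded pieces recover $\RHom_A (\Bbbk Q_0, \Bbbk Q_0)$. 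By Koszulity of $A$ the latter is formal with cohomology equal to $A^!$, placed on the diagonal with bigrading $(p,-p)$ on $(A^!)^p$.

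Next I would apply Kadeishvili's theorem to transfer the bigraded dg algebra structure on $\RHom_B (\Bbbk Q_0, \Bbbk Q_0)$ to a minimal A$_\infty$ structure on $\Ext^\hdot_B (\Bbbk Q_0, \Bbbk Q_0)$ compatible with the bigrading. Because the Adams-filtration spectral sequence degenerates (its $E_1$-page lives on the single diagonal), one obtains an identification of bigraded vector spaces $\Ext^\hdot_B (\Bbbk Q_0, \Bbbk Q_0) \simeq A^!$ under which $m_2$ agrees modulo the filtration with the multiplication of $A^!$; after fixing such an identification, the higher operations $m_k$ for $k \geq 3$ strictly raise the Adams degree, as in Remark \ref{remark:firstorderdeformationhh}. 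The resulting A$_\infty$ structure on $A^!$ thus qualifies as an A$_\infty$ deformation in the sense of Definition \ref{definition:deformation}\,(ii).

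For nontriviality I would argue by contradiction: suppose every higher product $m_k$ with $k \geq 3$ vanishes. Then the minimal model of $\RHom_B (\Bbbk Q_0, \Bbbk Q_0)$ is formal and A$_\infty$-quasi-isomorphic to $A^!$ viewed as a bigraded dg algebra with zero differential, i.e.\ to the minimal model of $\RHom_A (\Bbbk Q_0, \Bbbk Q_0)$. Applying (bigraded) Koszul duality, which gives an equivalence between suitable homotopy categories of augmented algebras and of minimal A$_\infty$ algebras and therefore reconstructs $B$ up to filtered isomorphism from the A$_\infty$ structure on $\Ext^\hdot_B (\Bbbk Q_0, \Bbbk Q_0)$, one concludes $B \simeq A$ as filtered algebras, contradicting the nontriviality of the deformation.

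The main obstacle will be the second paragraph: making the Adams filtration interact cleanly with Kadeishvili transfer, and in particular choosing the identification $\Ext^\hdot_B (\Bbbk Q_0, \Bbbk Q_0) \simeq A^!$ so that $m_2$ really reduces to the multiplication of $A^!$. This amounts to producing a genuinely deformed Koszul resolution of $\Bbbk Q_0$ over $B$, which is precisely what the combinatorial framework of reduction systems satisfying the diamond condition (as recalled in \S\ref{subsection:reduction}) is designed to do, and all the other ingredients (Kadeishvili's theorem, the Koszulity--formality criterion, the reconstruction direction of Koszul duality) are standard once this is in place.
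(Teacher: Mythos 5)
Your outline is plausible but differs from the paper's proof at exactly the step you yourself identify as "the main obstacle." You propose to lift the Koszul resolution of $\Bbbk Q_0$ from $A$ to a genuine projective $B$-resolution and then run Kadeishvili transfer together with an Adams-filtration spectral sequence. The paper never constructs such a lifted resolution. Instead, it starts from a deformation retract between the Koszul bimodule complex $K_\ldot(A)$ and the bar resolution $\Bar_\ldot(A)$ of $A$ (built so that the maps and homotopy preserve path length), applies $\Hom_{A^{\op}}(\Bbbk Q_0 \otimes_A -, \Bbbk Q_0)$ to obtain a retract $(A^!, 0) \shortleftrightarrow (\Hom(\bar A^{\otimes\hdot}, \Bbbk Q_0), \partial)$, and then observes that passing from $A$ to $B$ changes only the differential on $\Hom(\bar B^{\otimes\hdot}, \Bbbk Q_0) = \Hom(\bar A^{\otimes\hdot}, \Bbbk Q_0)$ by a perturbation $\delta = \widetilde\partial - \partial$. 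Since $\delta$ strictly increases the path-length grading, it is locally small and the homological perturbation lemma applies; a short degree count shows $\delta G^\hdot = 0$, hence $d_\infty = 0$ and the identification with $A^!$ is for free, with $m_2$ equal to the $A^!$-product because the cup product in $\Hom(\bar A^{\otimes\hdot},\Bbbk Q_0)$ does not depend on the product of $A$. This sidesteps both of your technical worries at once: no lifted resolution is needed, and the "$E_1$ degenerates on a diagonal" argument becomes a one-line path-length computation. Your route could presumably be pushed through (it is in the spirit of Braverman--Gaitsgory), but it carries genuine overhead — constructing the lifted resolution, proving degeneration and convergence of the Adams spectral sequence, and choosing a splitting compatible with the higher products — which the paper's HPL argument avoids entirely.

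Your nontriviality step coincides with the paper's, up to a small imprecision: the hypothesis for the contradiction should be that the minimal model is A$_\infty$-quasi-isomorphic to the graded algebra $A^!$ (i.e.\ formal), not that all $m_k$ for $k \geq 3$ literally vanish. Given formality, $B$ is Koszul with $B^! \simeq A^!$, so $B \simeq B^{!!} \simeq A^{!!} \simeq A$, contradicting nontriviality. That is exactly the argument in the paper.
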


\begin{proof}
Let us first show that the A$_\infty$ structure on $\Ext^\hdot_{B} (\Bbbk Q_0, \Bbbk Q_0)$, obtained as the minimal model of the derived endomorphism algebra of $\Bbbk Q_0$, is indeed an A$_\infty$ deformation of $A^!$, i.e.\ the underlying graded algebra is isomorphic to $A^!$. 

Recall the $A$-$A$-bimodule Koszul resolution $K_\ldot(A)$ of $A$ where $K_0(A) = A \otimes A$, $K_{-1}(A) = A \otimes \Bbbk Q_1 \otimes A$ and 
$$
K_{-n}(A) = A \otimes \bigl( \textstyle\bigcap\limits_{i=0}^n \Bbbk Q_i \otimes I_2 \otimes \Bbbk Q_{n-2-i} \bigr) \otimes A \qquad \text{for $n \geq 2$}
$$
where we write $\otimes = \otimes_{\Bbbk Q_0}$. Its differential is given by 
$$
d(1 \otimes \textstyle\sum\limits_i (a^i_1\otimes \dotsb \otimes a^i_n) \otimes 1) = \sum\limits_i a^i_1 \otimes a^i_2 \otimes \dotsb \otimes a^i_n \otimes 1 - \sum\limits_i 1 \otimes a^i_1 \otimes \dotsb \otimes a^i_{n-1} \otimes a^i_n
$$
for any $\sum_i a^i_1\otimes \dotsb \otimes a^i_n \in \bigcap_{k=0}^n \Bbbk Q_k \otimes I_2 \otimes \Bbbk Q_{n-2-k}$ with $a^i_1, \dotsc, a^i_n \in Q_1$. Since $K_\ldot(A)$ is exact in negative degrees we may choose a homotopy 
$\rho \colon K_{-n}(A) \tikzto K_{-n-1}(A)$ so that $\rho$ is a left $A$-module morphism which preserves the path length. We may also assume that $\rho(a \otimes \sum_i (a^i_1\otimes \dotsb \otimes a^i_n) \otimes b) = 0$ if $b = 1$. 

Let $\Bar_\ldot(A)$ denote the normalised bar resolution of $A$, i.e.\ $\Bar_{-n}(A) = A \otimes \bar{A}^{\otimes n} \otimes A$ where $\bar{A} = A / (\Bbbk Q_0)$ is the quotient $\Bbbk Q_0$-$\Bbbk Q_0$-bimodule. Using the same construction as in \cite[\S 5.1]{barmeierwang1} we obtain a deformation retract of $A$-$A$-bimodules
\begin{align}\label{align:deformationtretract}
\begin{tikzpicture}[baseline=-2.6pt,description/.style={fill=white,inner sep=1pt,outer sep=0}]
\matrix (m) [matrix of math nodes, row sep=0em, text height=1.5ex, column sep=3em, text depth=0.25ex, ampersand replacement=\&, inner sep=2.5pt]
{
K_\ldot(A) \& \Bar_\ldot(A) \\
};
\path[->,line width=.4pt,font=\scriptsize, transform canvas={yshift=.4ex}]
(m-1-1) edge node[above=-.4ex] {$F_\ldot$} (m-1-2)
;
\path[->,line width=.4pt,font=\scriptsize, transform canvas={yshift=-.4ex}]
(m-1-2) edge node[below=-.4ex] {$G_\ldot$} (m-1-1)
;
\path[<-,line width=.4pt,font=\scriptsize, looseness=8, in=30, out=330]
(m-1-2.east) ++(0,-.7ex) edge node[right=-.4ex] {$h_\ldot$} ++(0,1.4ex)
;
\end{tikzpicture}
\end{align}
namely $G_n F_n = \id$ and $F_n G_n - \id = h_{n-1} d_n + d_{n+1} h_n$. In particular, $F_\ldot$ is the natural embedding  
$$
F_n(1 \otimes \textstyle\sum\limits_i (a^i_1\otimes \dotsb \otimes a^i_n) \otimes 1) = 1 \otimes \sum\limits_i (a^i_1\otimes \dotsb \otimes a^i_n) \otimes 1.
$$
Since $\rho$ preserves the path length so do the maps $G_\ldot$ and $h_\ldot$ by construction. Applying $\Hom_{A^{\mathrm{op}}}(\Bbbk Q_0 \otimes_A -, \Bbbk Q_0)$ to  \eqref{align:deformationtretract} we obtain a new deformation retract
\begin{align}\label{align:deformation1}
\begin{tikzpicture}[baseline=-2.6pt,description/.style={fill=white,inner sep=1pt,outer sep=0}]
\matrix (m) [matrix of math nodes, row sep=0em, text height=1.5ex, column sep=3em, text depth=0.25ex, ampersand replacement=\&, inner sep=2.5pt]
{
(A^!, 0) \& (\Hom(\bar A^{\otimes\hdot}, \Bbbk Q_0), \partial) \\
};
\path[<-,line width=.4pt,font=\scriptsize, transform canvas={yshift=.4ex}]
(m-1-1) edge node[above=-.4ex] {$F^\hdot$} (m-1-2)
;
\path[<-,line width=.4pt,font=\scriptsize, transform canvas={yshift=-.5ex}]
(m-1-2) edge node[below=-.4ex] {$G^\hdot$} (m-1-1)
;
\path[->,line width=.4pt,font=\scriptsize, looseness=8, in=30, out=330]
(m-1-2.east) ++(0,-.7ex) edge node[right=-.4ex] {$h^\hdot$} ++(0,1.4ex)
;
\end{tikzpicture}
\end{align}
Here we use the natural isomorphisms $\Hom_{A^{\mathrm{op}}}(\Bbbk Q_0 \otimes_A K_\ldot(A), \Bbbk Q_0) \simeq A^!$ and $\Hom_{A^{\mathrm{op}}}(\Bbbk Q_0 \otimes_A \Bar_n(A), \Bbbk Q_0)\simeq \Hom(\bar A^{\otimes\hdot}, \Bbbk Q_0)$. Note that $\Hom(\bar A^{\otimes\hdot}, \Bbbk Q_0)$ is a DG algebra whose cohomology is isomorphic to the graded algebra $A^!$,   see  \cite[\S7.1]{chenwang}. 

Let $B = (A, \widetilde \mu)$ be a filtered deformation of $A$. Clearly, the DG algebra  $\Hom(\bar B^{\otimes\hdot}, \Bbbk Q_0)$, which computes $\Ext_{B}^\hdot(\Bbbk Q_0, \Bbbk Q_0)$, has the same underlying graded space as  $\Hom(\bar A^{\otimes\hdot}, \Bbbk Q_0)$. The differential $\widetilde \partial$ in the former may be viewed as a perturbed differential by the perturbation $\delta = \widetilde \partial - \partial$. Note that $\Hom(\bar A^{\otimes\hdot}, \Bbbk Q_0)$ has an additional grading (filtration) induced by the path length, i.e.\ the dual of a path $p$ is of degree $-\lvert p \rvert$, so that $\Hom(\bar A^{\otimes\hdot}, \Bbbk Q_0)$ is in negative degrees.  Note that $\delta$ strictly increases this grading and it follows that for any $f \in \Hom(\bar A^{\otimes n}, \Bbbk Q_0)$ we have 
$(\delta h^\hdot)^i(f) = 0$ for $i > n$, i.e.\ $\delta$ is (locally) small. Applying the homological perturbation lemma (see e.g.\ \cite[\S1]{huebschmannkadeishvili}) to $\delta$ we have that the homotopy deformation retract \eqref{align:deformation2} induces a new one 
\begin{align}\label{align:deformation2}
\begin{tikzpicture}[baseline=-2.6pt,description/.style={fill=white,inner sep=1pt,outer sep=0}]
\matrix (m) [matrix of math nodes, row sep=0em, text height=1.5ex, column sep=3em, text depth=0.25ex, ampersand replacement=\&, inner sep=2.5pt]
{
(A^!, d_\infty) \& (\Hom(\bar A^{\otimes\hdot}, \Bbbk Q_0), \widetilde \partial) \\
};
\path[<-,line width=.4pt,font=\scriptsize, transform canvas={yshift=.4ex}]
(m-1-1) edge node[above=-.4ex] {$F^\hdot_\infty$} (m-1-2)
;
\path[<-,line width=.4pt,font=\scriptsize, transform canvas={yshift=-.5ex}]
(m-1-2) edge node[below=-.4ex] {$G^\hdot_\infty$} (m-1-1)
;
\path[->,line width=.4pt,font=\scriptsize, looseness=8, in=30, out=330]
(m-1-2.east) ++(0,-.7ex) edge node[right=-.4ex] {$h^\hdot_\infty$} ++(0,1.4ex)
;
\end{tikzpicture}
\end{align}
where 
$$
d_\infty = \sum_{n \geq 0} F^\hdot (\delta h^\hdot)^n \delta G^\hdot,\quad F_\infty^\hdot = \sum_{n \geq 0}  F^\hdot (\delta h^\hdot)^n,\quad G_\infty^\hdot =G^\hdot +  \sum_{n \geq 0}  h^\hdot ( \delta h^\hdot )^n  \delta G^\hdot.
$$
Since $\delta$ is locally small the above sums are well-defined. We claim that $\delta G^\hdot = 0$. Indeed, for any $f \in \Hom_{A^{\mathrm{op}}}(\Bbbk Q_0 \otimes_A K_n(A), \Bbbk Q_0)$ we have 
\begin{align*}
\delta G^\hdot(f) ( \bar a_1 \otimes \dotsb \otimes \bar a_{n+1}) = \sum_{i=1}^n (-1)^i f G_\ldot ( 1 \otimes  \bar a_1 \otimes \dotsb \otimes \widetilde a_{i,i+1} \otimes \dotsb \otimes  \bar a_{n+1}\otimes 1)
\end{align*}
where $\bar a_j \in \bar A$ for $1\leq j \leq n+1$ and $\widetilde a_{i,i+1} := \widetilde \mu(a_i \otimes a_{i+1}) - a_ia_{i+1}$. Since the path length of $\widetilde a_{i,i+1}$ is strictly bigger than $2$, the one of $p:=a_1 \otimes \dotsb \otimes \widetilde a_{i,i+1} \otimes \dotsb \otimes a_{n+1}$ is bigger than $n+1$ so is the one of $G_\ldot(p)$. It follows that the right-hand side of the equality vanishes since the path length of elements in $\Bbbk Q_0 \otimes_A K_n(A)$ is $n$. This proves the claim and shows that $d_\infty = 0$ and $G^\hdot_\infty = G^\hdot$. 

Applying the homotopy transfer theorem we obtain an A$_\infty$ algebra $(A^!, m_2, m_3, \dotsc)$ which is the minimal model $\Ext^\hdot_{B} (\Bbbk Q_0, \Bbbk Q_0)$. Using the path length  as above, we may show that  $m_2=F_\infty^\hdot\circ m_2'\circ (G^\hdot \otimes G^\hdot) = F^\hdot \circ m_2' \circ (G^\hdot \otimes G^\hdot)$ and  the latter coincides with the underlying product of $A^!$, since the product $m_2'$ in $\Hom(\bar A^{\otimes\hdot}, \Bbbk Q_0)$ does not depend on the product of $A$, see \cite[(7.1)]{chenwang}.

If  $\Ext^\hdot_{B} (\Bbbk Q_0, \Bbbk Q_0)$ is A$_\infty$-quasi-isomorphic to the underlying graded algebra $A^!$, then $B$ is Koszul so that  $B^! \simeq  A^!$. Thus, we have  $B \simeq B^{!!} \simeq  A^{!!} \simeq A$, i.e.\ $B$ is a trivial associative deformation of $A$.  
\end{proof}

\begin{remark}
Proposition \ref{prop:minimalmodel} should be well-known to experts as it may be viewed as a deformation-theoretical interpretation of Theorem \ref{theorem:keller}. Note that Braverman and Gaitsgory \cite{bravermangaitsgory} gave a deformation-theoretical construction of PBW deformations of quadratic Koszul algebras, which are filtered deformations in the ``opposite'' sense, using shorter instead of longer paths, and Fløystad and Vatne \cite[Thm.~2.1]{floystadvatne} gave a correspondence between PBW deformations of an $N$-Koszul algebra and certain A$_\infty$ deformations of its Koszul dual.
\end{remark}

\subsection{Associative deformations via reduction systems}
\label{subsection:reductionsystem}

By the results of \S\ref{subsection:duality}, A$_\infty$ deformations of a Koszul algebra correspond to (filtered) associative deformations of its Koszul dual. In \cite{barmeierwang1} we show how to describe the latter concretely as deformations of a reduction system (cf.\ \S\ref{subsection:reduction}). The main result of \cite{barmeierwang1} is the following.

\begin{theorem}[{\cite[Thm.~7.1]{barmeierwang1}}]
\label{theorem:equivalence}
Let $Q$ be any finite quiver, let $I \subset \Bbbk Q$ be any two-sided ideal and let $R$ be any reduction system satisfying the diamond condition for $I$. Then there is an equivalence of deformation problems between
\begin{enumerate}
\item deformations of the associative algebra $A = \Bbbk Q / I$
\item deformations of the ideal $I$
\item deformations of the reduction system $R$.
\end{enumerate}
\end{theorem}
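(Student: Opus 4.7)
The plan is to set up the three deformation functors concretely, construct explicit maps between them, and verify that these maps are mutually inverse after quotienting by the respective notions of gauge equivalence. A (formal) deformation of $A$ is an associative $\Bbbk\llbracket t\rrbracket$-bilinear product $\star$ on $A\llbracket t\rrbracket$ reducing mod $t$ to the given product; a deformation of the ideal $I$ is a $\Bbbk\llbracket t\rrbracket$-submodule $I_t\subset \Bbbk Q\llbracket t\rrbracket$ with $I_t\bmod t=I$ and such that $\Bbbk Q\llbracket t\rrbracket/I_t$ is $\Bbbk\llbracket t\rrbracket$-flat; and a deformation of $R$ is a family $\widetilde R=\{(s,\widetilde\varphi_s)\}_{s\in S}$ with $\widetilde\varphi_s\in \Bbbk\mathrm{Irr}_S\llbracket t\rrbracket$ lifting $\varphi_s$ and satisfying a $t$-adic diamond condition (all overlap ambiguities of $S$ resolve modulo the deformed rewriting rules, order by order in $t$).

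The direction $(3)\Rightarrow(2)\Rightarrow(1)$ is the most transparent. Given $\widetilde R$, set $I_t$ to be the two-sided ideal of $\Bbbk Q\llbracket t\rrbracket$ generated by $\{s-\widetilde\varphi_s\}_{s\in S}$. The key point is flatness: I would prove a $t$-adic refinement of Bergman's Diamond Lemma showing that the projection $\Bbbk\mathrm{Irr}_S\llbracket t\rrbracket\to \Bbbk Q\llbracket t\rrbracket/I_t$ is a $\Bbbk\llbracket t\rrbracket$-module isomorphism whenever all overlap ambiguities of $\widetilde R$ are resolvable. The argument proceeds by induction on the order in $t$ and on the path-length filtration: at each order, the deformed basic reductions $\mathbf r^{\widetilde R}$ are confluent thanks to resolvability of overlap ambiguities, so every element of $\Bbbk Q\llbracket t\rrbracket$ reduces to a unique normal form in $\Bbbk\mathrm{Irr}_S\llbracket t\rrbracket$. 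This immediately produces $A_t=\Bbbk Q\llbracket t\rrbracket/I_t$ as a flat deformation of $A$, giving $(2)\Rightarrow(1)$ by taking the induced product.

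For the reverse direction $(1)\Rightarrow(3)$, I would exploit the $\Bbbk$-basis $\mathrm{Irr}_S$ of $A$ to transport any associative deformation $(A\llbracket t\rrbracket,\star)$ into a deformed rewriting system. Concretely, for each $s\in S$ the element $s\in \Bbbk Q$ maps to $\varphi_s\in \Bbbk\mathrm{Irr}_S$ in $A$; choosing compatible lifts and expressing the $\star$-product of the letters making up $s$ in the basis $\mathrm{Irr}_S\llbracket t\rrbracket$ yields $\widetilde\varphi_s\in \Bbbk\mathrm{Irr}_S\llbracket t\rrbracket$, and one checks that associativity of $\star$ is precisely the statement that the overlap ambiguities of the resulting $\widetilde R$ are resolvable order by order. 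The map $(2)\Rightarrow(3)$ is similar: any $I_t$ satisfying flatness admits a unique system of generators of the form $s-\widetilde\varphi_s$ with $\widetilde\varphi_s\in \Bbbk\mathrm{Irr}_S\llbracket t\rrbracket$, because the cosets $\{s\bmod I_t\}_{s\in S}$ must be expressible uniquely in the normal-form basis by flatness.

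The main technical obstacle is precisely the diamond-condition-based proof of flatness in the $t$-adic setting, because one has to run Bergman's argument for reduction systems whose right-hand sides now involve formal power series and live in an infinite-dimensional Hom-space. One must check that the reduction procedure remains well-defined and confluent at each order in $t$, and that lifting obstructions from order $n$ to order $n+1$ for the three functors coincide under the bijections constructed above — this matches the Maurer--Cartan/Gerstenhaber obstruction theory on the $A$-side with the iterated resolvability of overlap ambiguities on the $R$-side. Once this is in place, one verifies that gauge equivalences (inner automorphisms on the $A$-side, adapted changes of generators on the $R$-side) correspond, yielding the claimed equivalence of deformation problems.
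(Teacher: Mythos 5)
Your approach is genuinely different from the one in \cite{barmeierwang1}: the paper cited as the source of this theorem proves it homologically, by replacing the bar resolution with the Bardzell--Chouhy--Solotar bimodule resolution attached to $R$, endowing the resulting (much smaller) cochain complex with an $\mathrm{L}_\infty$ structure, and establishing an $\mathrm{L}_\infty$-quasi-isomorphism with the Hochschild complex; the equivalence of deformation problems is then read off as an equivalence of the associated Maurer--Cartan moduli. You instead try to construct the bijections on the nose via a $t$-adic Diamond Lemma, which would be a more elementary route if it could be carried out.

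The direction $(3)\Rightarrow(2)\Rightarrow(1)$ in your proposal is essentially sound: the $t$-adic refinement of Bergman's Diamond Lemma is a real but manageable lemma, and flatness of $\Bbbk Q\llbracket t\rrbracket/I_t$ follows once one shows that $\mathrm{Irr}_S\llbracket t\rrbracket$ is still a system of normal forms. The genuine gap is in $(1)\Rightarrow(3)$. You propose to set $\widetilde\varphi_s := a_1\star\dotsb\star a_n$ (expressed in the basis $\mathrm{Irr}_S$) for $s=a_1\dotsb a_n\in S$, and assert that associativity of $\star$ is precisely resolvability of overlap ambiguities. For this to produce a reduction system whose rewriting recovers $(A\llbracket t\rrbracket,\star)$, reducing an arbitrary path $p=b_1\dotsb b_m$ via $\widetilde R$ must compute $b_1\star\dotsb\star b_m$; but for $p\in\mathrm{Irr}_S$ the reduction is the identity, whereas $b_1\star\dotsb\star b_m = p + O(t)$ with no reason for the $O(t)$-terms to vanish. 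So the recipe does not produce a deformed reduction system compatible with $\star$ unless one first gauge-transforms $\star$ so that $\Bbbk Q_0$ sits as a subalgebra and the $\star$-product of the letters of each irreducible path returns that path exactly --- a nontrivial normalisation which you gesture at as ``choosing compatible lifts'' but do not carry out. Without this step, the claimed identification of associativity with overlap-resolvability breaks down, and the map $(1)\Rightarrow(3)$ is ill-defined. The homological approach of \cite{barmeierwang1} sidesteps this by never constructing $(1)\Rightarrow(3)$ directly: the abstract $\mathrm{L}_\infty$-quasi-isomorphism gives the equivalence and manages all gauge transformations automatically. If you want to salvage the direct route, you should isolate the normalisation as an explicit lemma (every deformation is gauge-equivalent to one for which $\mathrm{Irr}_S\llbracket t\rrbracket$ is a ``PBW basis'' with honest letterwise multiplication) before defining $\widetilde\varphi_s$.
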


This result can be obtained by ``replacing'' the bar resolution of $A$, used in the classical formulation of associative deformations in terms of the Hochschild cochain complex equipped with the Gerstenhaber bracket \cite{gerstenhaber}, by the Bardzell--Chouhy--Solotar resolution \cite{bardzell,chouhysolotar} associated to a reduction system and studying the deformation theory on the resulting smaller cochain complex. Theorem \ref{theorem:equivalence} gives a practicable description of the full formal deformation theory of $A = \Bbbk Q / I$. A special case, corresponding to the case of first-order deformations, is the following result.

\begin{corollary}[{\cite[Cor.~7.44]{barmeierwang1}}]
\label{corollary:firstorder}
Let $A = \Bbbk Q / I$ and $R$ be any reduction system satisfying the diamond condition for $I$. Then $\HH^2 (A, A)$ is isomorphic to the space of first-order deformations of $R$ modulo equivalence.
\end{corollary}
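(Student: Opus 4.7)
The plan is to derive the corollary from Theorem~\ref{theorem:equivalence} by restricting the equivalence of deformation problems to the infinitesimal level and then invoking Gerstenhaber's classical result.

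First I would recall Gerstenhaber's theorem that, for any associative $\Bbbk$-algebra $A$, the set of isomorphism classes of $\Bbbk[t]/(t^2)$-algebra structures on $A \otimes_\Bbbk \Bbbk[t]/(t^2)$ deforming $A$ is in natural bijection with $\HH^2(A,A)$: to a first-order deformation $\mu + t\mu_1$ one associates $\mu_1 \in \Hom(A^{\otimes 2}, A)$, which is a Hochschild $2$-cocycle by associativity modulo $t^2$, and whose class modulo coboundaries is unchanged by first-order gauge transformations of the form $\id + tf$. This sets up the left-hand side of the desired isomorphism as the space of equivalence classes of first-order deformations of $A$.

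Second, Theorem~\ref{theorem:equivalence} provides an equivalence of the full deformation functors from Artinian local $\Bbbk$-algebras to sets associated with (i) associative deformations of $A = \Bbbk Q / I$ and (iii) deformations of the reduction system $R$. Evaluating this equivalence at the base ring $\Bbbk[t]/(t^2)$ yields a bijection between equivalence classes of first-order associative deformations of $A$ and equivalence classes of first-order deformations of $R$, and this bijection is visibly $\Bbbk$-linear in the infinitesimal parameter $t$, so it is an isomorphism of vector spaces. Composing with Gerstenhaber's bijection gives the claimed isomorphism.

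To make the identification concrete, I would then unfold what a first-order deformation of $R$ is: a family $\{(s,\, \varphi_s + t\psi_s)\}_{s \in S}$ with each $\psi_s \in \Bbbk\,\mathrm{Irr}_S$ parallel to $s$, subject to the condition that every overlap ambiguity $pqr$ with $pq,\, qr \in S$ is resolvable modulo $t^2$; equivalence is given by first-order changes of the generators of the ideal supported on $\Bbbk\,\mathrm{Irr}_S$. Under the equivalence of Theorem~\ref{theorem:equivalence}, the Hochschild $2$-cocycle $\mu_1$ corresponds to the datum $\{\psi_s\}_{s \in S}$ via the quasi-isomorphism from the bar complex of $A$ to the smaller Bardzell--Chouhy--Solotar-type complex built from $R$; this quasi-isomorphism is the computational engine behind Theorem~\ref{theorem:equivalence}.

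The only real obstacle is the construction in Theorem~\ref{theorem:equivalence} itself, where one must exhibit a bijection between deformations of $A$ and deformations of $R$ that respects gauge equivalence order by order --- this amounts to matching the Maurer--Cartan equations on the two sides. Given Theorem~\ref{theorem:equivalence}, the corollary is a formal consequence of restricting the equivalence to the dual numbers and of Gerstenhaber's identification.
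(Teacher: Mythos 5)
Your proposal is correct and follows essentially the route that the paper (and the cited source [barmeierwang1, Cor.~7.44]) takes: restrict the equivalence of deformation problems in Theorem~\ref{theorem:equivalence} to the base $\Bbbk[t]/(t^2)$, identify first-order associative deformations of $A$ modulo gauge with $\HH^2(A,A)$ via Gerstenhaber, and match this to first-order deformations of $R$ modulo equivalence. The paper's surrounding discussion just makes the final matching explicit by identifying $\Hom_{\Bbbk Q_0^\e}(\Bbbk S, \Bbbk\mathrm{Irr}_S)$ with the $2$-cochains of the Bardzell--Chouhy--Solotar complex (a projective bimodule resolution, hence computing $\HH^\hdot(A,A)$), first-order deformations with its $2$-cocycles, and equivalences with coboundaries. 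One detail you could make explicit: since $\Bbbk Q_0$ is a separable $\Bbbk$-algebra, the relative Hochschild cohomology computed by the Bardzell--Chouhy--Solotar complex agrees with the absolute one appearing in Gerstenhaber's theorem, so the two identifications are compatible.
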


Here a {\it first-order deformation} (over $\Bbbk [t] / (t^2)$) of a reduction system $R = \{ (s, \varphi_s) \}_{s \in S}$ is given by
\[
\widetilde R = \{ (s, \varphi_s + \widetilde\varphi_s t) \}_{s \in S}
\]
where $\widetilde \varphi_s \in \Bbbk \mathrm{Irr}_S$ such that $\widetilde R$ is reduction-unique when viewed as a reduction system for $\Bbbk Q \otimes_\Bbbk \Bbbk [t] / (t^2)$. Here, $\widetilde \varphi_s$ can be viewed as the image of $s$ under a map $\widetilde \varphi \in \Hom_{\Bbbk Q_0^\e}(\Bbbk S, \Bbbk \mathrm{Irr}_S)$ sending $s \tikzmapsto \widetilde \varphi_s$. Note that $\Hom_{\Bbbk Q_0^\e} (\Bbbk S, \Bbbk \mathrm{Irr}_S)$ is the space of $2$-cochains in the cochain complex associated to the Bardzell--Chouhy--Solotar resolution associated to $R$ \cite{chouhysolotar} and $\widetilde \varphi$ defines a first-order deformation of $R$ if and only if it is a $2$-cocycle in this complex \cite[\S 7.A]{barmeierwang1}.

Two first-order deformations $\widetilde R$ and $\widetilde R' = \{ (s, \varphi_s + \widetilde\varphi_s' t) \}_{s \in S}$ of $R$ are called {\it equivalent} if there exists $\psi \in \Hom_{\Bbbk Q_0^\e}( \Bbbk Q_1 , \Bbbk \mathrm{Irr}_S) $ such that 
$\delta(\psi)_s = \widetilde \varphi_s - \widetilde \varphi_s'$ 
for any $s = s_1 \cdots s_m \in S$ with $s_i \in Q_1$. Here $\delta$ is the map
\begin{align}\label{align:gaugeequivalent}
\delta \colon \Hom_{\Bbbk Q_0^\e} (\Bbbk Q_1, \Bbbk \mathrm{Irr}_S) \tikzto \Hom_{\Bbbk Q_0^\e}(\Bbbk S, \Bbbk \mathrm{Irr}_S) 
\end{align}
defined by $\delta(\psi)(s) = T(\psi)(s - \varphi_s)$ 
and $T(\psi)\colon \Bbbk Q \tikzto \Bbbk \mathrm{Irr}_S$ is the $\Bbbk$-linear map
\begin{align*}
T(\psi)(a_1 a_2 \dotsb a_m) =\sum_{i=1}^m \sigma \pi( a_1 \dotsb a_{i-1} \psi(a_i) a_{i+1} \dotsb a_m)
\end{align*}
where $a_1, \dotsc, a_m$ are arrows in $Q$, $\pi \colon \Bbbk Q \tikzto \Bbbk Q/I$ is the natural projection and $\sigma \colon \Bbbk Q/I \tikztosim \Bbbk\mathrm{Irr}_S$ is the inverse of the restriction $\pi\vert_{\Bbbk \mathrm{Irr}_S}$. In other words, for any path $p$ in $Q$, $\sigma \pi (p)$ may be obtained by performing reductions (with respect to $R$) on $p$ until all elements are irreducible. Here $\delta$ coincides with the differential of the cochain complex associated to the Bardzell--Chouhy--Solotar resolution and two first-order deformations of $R$ are equivalent if and only if the corresponding $2$-cocycles $\widetilde \varphi$ and $\widetilde \varphi'$ are cohomologous \cite[\S 7.A]{barmeierwang1}.

Corollary \ref{corollary:firstorder} gives a straightforward method to compute $\HH^2 (A, A)$ and in \S\ref{section:conjecture} we will apply this to the algebra $\KK_m^n = \Bbbk \QQ_m^n / \II_m^n$ with the reduction system $\RR_m^n$.

\section{A$_\infty$ deformations and Stroppel's conjecture}
\label{section:conjecture}

We now apply the general theory in \S\ref{section:hochschild} to the extended Khovanov arc algebras, viewing $\K_m^n \simeq \Bbbk \Q_m^n / \I_m^n$ and its Koszul dual $\KK_m^n \simeq \Bbbk \QQ_m^n / \II_m^n$ as bigraded algebras by assigning any arrow $a \in \Q_m^n$ the bidegree $(1,-1)$ and any arrow $\bar a \in \QQ_m^n$ the bidegree $(0, 1)$.\footnote{In the notation of \S\ref{section:hochschild}, we are considering filtered associative deformations of $A = \KK_m^n$ (so that $Q = \QQ_m^n$) and A$_\infty$ deformations of $A^! = \K_m^n$.} Theorem \ref{theorem:keller} gives for each $q \in \mathbb Z$ an isomorphism
\begin{align}
\label{align:Koszuldualisomorphims}
\HH^\hdot_q (\K_m^n, \K_m^n) \simeq \HH^\hdot_q (\KK_m^n, \KK_m^n)
\end{align}
whence Stroppel's Conjecture \ref{conjecture:stroppel} is equivalent to
$$
\HH_{i-2}^2 (\KK_m^n, \KK_m^n) = 0 \qquad \text{if $i \neq 0$}.
$$
By \S\ref{subsection:reductionsystem} these cohomology groups may be computed as certain equivalence classes of first-order deformations of the reduction system $\RR_m^n$.

The following vanishing result holds for degree reasons. 

\begin{lemma}\label{lemma:vanishing}
If either $i$ is odd, $i <0$ or $i> 2mn-2$ then 
$$
\HH^2_{i-2} (\K_m^n, \K_m^n) \simeq \HH_{i-2}^2(\KK_m^n, \KK_m^n) = 0.
$$
\end{lemma}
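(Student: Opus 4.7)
The plan is to use Koszul duality (Theorem~\ref{theorem:keller}) to work on whichever side gives the cleanest argument in each of the three ranges, and in each case to show that the $2$-cochain space itself already vanishes. Setting $q := i-2$, I will use the Adams-graded form of Corollary~\ref{corollary:firstorder} on the $\KK_m^n$ side: a representative $2$-cochain of Adams degree $q$ assigns to each $s \in \SS_m^n$ an element $\widetilde\varphi_s \in e_\lambda (\KK_m^n)_{\ell(s)+q} e_\mu$, where $\lambda, \mu$ are the endpoints of $s$ and $\ell(s)$ is its length.

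For $i$ odd (hence $q$ odd) I invoke Proposition~\ref{proposition:bipartite}: paths in $\Gamma_m^n$ between two fixed weights all have length of a single parity, which is necessarily $\ell(s) \bmod 2$. Hence $\ell(s) + q$ has the wrong parity and $\widetilde\varphi_s$ is forced to be zero for every $s$. For $i > 2mn-2$ (hence $q \geq 2mn - 1$) I use the finite-dimensionality of $\KK_m^n$: since every $s \in \SS_m^n$ satisfies $\ell(s) \geq 2$, the target $(\KK_m^n)_{\ell(s)+q}$ has internal degree at least $2mn+1$, so it vanishes by Remark~\ref{remark:irreduciblegrading}.

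The lower-bound case $i < 0$ (hence $q \leq -3$) is most cleanly handled by passing back through Theorem~\ref{theorem:keller} to $\K_m^n$ in the second bigrading of Remark~\ref{remark:firstorderdeformationhh}, where
\[
\mathrm C^2_q(\K_m^n, \K_m^n) = \Hom((\K_m^n)^{\otimes(q+2)}, \K_m^n)^{-q}_q
\]
is the zero space as soon as $q+2 < 0$. The only real point to get right is this asymmetry: Koszul duality interchanges the tensor-arity bound that kills the lower extreme on the $\K_m^n$ side with the path-length bound that kills the upper extreme on the $\KK_m^n$ side, and I do not see a clean single-sided argument covering both simultaneously. Beyond this there is no computation to carry out.
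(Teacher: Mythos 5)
Your approach is structurally sound (and your decision to treat $i<0$ by crossing back to the $\K_m^n$ side is in fact forced on you because you work with the reduction-system model, where elements of $\SS_m^n$ have unbounded length, rather than with the Koszul resolution that the paper uses, where every relation has length exactly $2$). Your parity argument for $i$ odd and your $i<0$ argument are both correct.

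There is, however, a gap in the $i>2mn-2$ case, masked by an arithmetic slip. From $i>2mn-2$ you conclude $q\geq 2mn-1$; in fact $q=i-2$ gives only $q\geq 2mn-3$, and hence $\ell(s)+q\geq 2mn-1$, not $\geq 2mn+1$. Remark~\ref{remark:irreduciblegrading} kills internal degree $>2mn$, so it disposes of $i>2mn$ but says nothing about the critical boundary case $i=2mn$ ($q=2mn-2$), where for a type~(I) relation $s$ of length $2$ the target is $(\KK_m^n)_{2mn}$, which is \emph{one-dimensional}, not zero. To close the gap one needs the observation the paper makes: the unique nonzero component of $(\KK_m^n)_{2mn}$ is the space $e_\lambda(\KK_m^n)_{2mn}e_\lambda$ for $\lambda$ the highest weight, and no element of $\SS_m^n$ (equivalently, no quadratic relation in $(\II_m^n)_2$) is parallel to $e_\lambda$, since $e_\lambda$ is a source for the descending arrows of $\QQ_m^n$ and admits no length-$2$ loop. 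Without that extra step, your proof does not cover $i=2mn$.
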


\begin{proof}
Using the Koszul resolution of $\KK_m^n$ (see the proof of Proposition \ref{prop:minimalmodel}), the $2$-cocycles of $\HH_{i-2}^2(\KK_m^n, \KK_m^n)$ lie in $\Hom_{\Bbbk Q_0^\e}((\II_m^n)_2, \KK_m^n)^0_{i-2}= \Hom_{\Bbbk Q_0^\e}((\II_m^n)_2, (\KK_m^n)_i)$ (see Remark \ref{remark:firstorderdeformationhh}). By Proposition \ref{proposition:bipartite} parallel paths have the same parity, so if $i$ is odd then $\Hom_{\Bbbk Q_0^\e}((\II_m^n)_2, (\KK_m^n)_i) = 0$ whence $\HH_{i-2}^2(\KK_m^n, \KK_m^n)=0$. 

Since $(\KK_m^n)_i = 0$ if $i < 0$ or $i > 2mn$ by Remark \ref{remark:irreduciblegrading}, we obtain  $\HH_{i-2}^2(\KK_m^n, \KK_m^n)=0$ in this case. Let us consider $i = 2mn$. Note that the longest irreducible path  is of length $2mn$ and parallel to the vertex $e_\lambda$ where $\lambda$ is the highest weight and elements in $(\II_m^n)_2$ are not parallel to $e_\lambda$, i.e.\ there are no quadratic relations at $e_{\lambda}$, so that $\Hom_{\Bbbk Q_0^\e}((\II_m^n)_2, (\KK_m^n)_{2mn}) = 0$ whence $\HH_{2mn-2}^2(\KK_m^n, \KK_m^n)=0$.
\end{proof}

The next vanishing result holds by direct computation.

\begin{lemma}\label{lemma:vanishing2mn-4}
$\HH^2_{2mn-4} (\K_m^n, \K_m^n) \simeq \HH_{2mn-4}^2(\KK_m^n, \KK_m^n) = 0$.
\end{lemma}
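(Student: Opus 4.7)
The plan is to use the Koszul duality isomorphism \eqref{align:Koszuldualisomorphims} and compute $\HH^2_{2mn-4}(\KK_m^n,\KK_m^n)$ via the Koszul bimodule resolution of $\KK_m^n$ (equivalently, through Corollary \ref{corollary:firstorder}). The relevant $2$-cochains are the elements of $\Hom_{\Bbbk Q_0^\e}((\II_m^n)_2,(\KK_m^n)_{2mn-2})$, with coboundaries drawn from $\Hom_{\Bbbk Q_0^\e}(\Bbbk(\QQ_m^n)_1,(\KK_m^n)_{2mn-3})$. I first verify that the Hochschild differential $\delta^2$ vanishes identically in this Adams degree: its image lies in $(\KK_m^n)_{2mn-1}$, which by Remark \ref{remark:irreduciblegrading} is concentrated at the pairs $(\lambda_0,\lambda_1)$ and $(\lambda_1,\lambda_0)$, but the triple-overlap space has trivial component at such pairs since no quadratic relation of $\KK_m^n$ begins or ends at the highest weight $\lambda_0$. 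Hence every $2$-cochain is automatically a cocycle, and it remains to mod out by the coboundaries.

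Next, I use Remark \ref{remark:irreduciblegrading}: $e_\lambda(\KK_m^n)_{2mn-2}e_\mu$ is one-dimensional when $\lvert\lambda\rvert+\lvert\mu\rvert=2mn-2$ and zero otherwise. The admissible pairs are $(\lambda_1,\lambda_1)$ and $(\lambda_0,\mu_\kappa)$, $(\mu_\kappa,\lambda_0)$ where $\mu_\kappa$ ranges over the weights of height $mn-2$ adjacent to $\lambda_1$ in $\Gamma_m^n$. Since the descending arrows out of $\lambda_0$ go to vertices of pairwise distinct heights $mn-1,mn-3,\dotsc$ (one per nested circle of $e_{\lambda_0}$) and $\lambda_0$ has no ascending arrow, no square of Fig.~\ref{figure:squares} can have $\lambda_0$ at a corner, ruling out any type (III) relation involving $\lambda_0$; similarly, a direct inspection of Fig.~\ref{figure:single}(c) shows that the top/bottom vertex has a non-nested local arc structure incompatible with the fully nested cups of $\lambda_0=\up^n\dn^m$, ruling out type (II) relations with $\lambda_0$ at an endpoint. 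The only surviving quadratic element of $(\II_m^n)_2$ is thus the type (I) vertex relation at $\lambda_1$ from Proposition \ref{proposition:relationdual}, $r_1:=\bar y^{\lambda_0}_{\lambda_1}\bar x^{\lambda_0}_{\lambda_1}+\sum_\kappa c^{\lambda_0}_\kappa(\lambda_1)\,\bar x^{\lambda_1}_\kappa\bar y^{\lambda_1}_\kappa$, and any cocycle reduces to a single scalar $g(r_1)\in\Bbbk\cdot\omega$ with $\omega$ the generator of the one-dimensional space $e_{\lambda_1}(\KK_m^n)_{2mn-2}e_{\lambda_1}$.

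The decisive step is to realise every such scalar as $\delta\psi(r_1)$ for a suitable $1$-cochain $\psi$. Each arrow appearing in $r_1$ --- namely $\bar y^{\lambda_0}_{\lambda_1}$, $\bar x^{\lambda_0}_{\lambda_1}$, and the $\bar y^{\lambda_1}_{\mu_\kappa}$, $\bar x^{\lambda_1}_{\mu_\kappa}$ --- has source-target heights summing to either $2mn-1$ or $2mn-3$, so the value $\psi(\bar a)$ is a scalar in the one-dimensional piece $e_{\mathrm{source}(\bar a)}(\KK_m^n)_{2mn-3}e_{\mathrm{target}(\bar a)}$ provided by Remark \ref{remark:irreduciblegrading}. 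Expanding
$\delta\psi(r_1)=\bar y^{\lambda_0}_{\lambda_1}\,\psi(\bar x^{\lambda_0}_{\lambda_1})-\psi(\bar y^{\lambda_0}_{\lambda_1})\,\bar x^{\lambda_0}_{\lambda_1}+\sum_\kappa c^{\lambda_0}_\kappa(\lambda_1)\bigl(\bar x^{\lambda_1}_\kappa\,\psi(\bar y^{\lambda_1}_\kappa)-\psi(\bar x^{\lambda_1}_\kappa)\,\bar y^{\lambda_1}_\kappa\bigr)$
and computing each product in the irreducible basis via the identities of Lemmas \ref{lemma:higherrelations1}--\ref{lemma:higherrelations4} will show that the resulting linear map from arrow-scalars to the coefficient of $\omega$ is surjective --- indeed, setting $\psi$ to vanish on all but $\bar x^{\lambda_0}_{\lambda_1}$ and taking $\psi(\bar x^{\lambda_0}_{\lambda_1})$ to be the generator of $e_{\lambda_0}(\KK_m^n)_{2mn-3}e_{\lambda_1}$ should already produce a nonzero multiple of $\omega$. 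Any prescribed $g(r_1)$ is then a coboundary, giving $\HH^2_{2mn-4}(\KK_m^n,\KK_m^n)=0$. The main obstacle is this nondegeneracy check, which amounts to verifying that the multiplication $\bar y^{\lambda_0}_{\lambda_1}\cdot\pi$ of an ascending arrow against the (unique up to scalar) length-$(2mn-3)$ irreducible path $\pi$ from $\lambda_0$ to $\lambda_1$ does not collapse to zero after fully reducing in $\KK_m^n$, a direct calculation using the identities collected in \S\ref{subsection:reductionsystem}.
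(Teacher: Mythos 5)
Your approach mirrors the paper's: identify that the only nonvanishing component of the $2$-cochain sits on the $2$-cycle $\bar y_0\bar x_0$ at the height-$(mn-1)$ weight, and kill it by a $1$-cochain supported on the single descending arrow $\bar x_0$; the final nondegeneracy you flag as the ``main obstacle'' is exactly what Lemma~\ref{lemma:higherrelations3} (with $i=0$) supplies, so the proof goes through. One small slip: you invoke ``nested circles of $e_{\lambda_0}$'' and ``fully nested cups of $\lambda_0$,'' but the degree-$0$ arc diagram of the highest weight $\lambda_0 = \up^n\dn^m$ consists entirely of half-lines (it has defect $0$, hence no circles or cups); the arguments you want are still available, because $\lambda_0$ has a unique neighbour $\lambda_1$ in $\Gamma_m^n$ and so has degree $1$, which already rules out $\lambda_0$ being a corner of any square and shows the only length-$2$ path out of $\lambda_0$ is forced to pass through $\lambda_1$.
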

\begin{proof}
We apply Corollary \ref{corollary:firstorder}. Note that an element $\widetilde \varphi$ in $\Hom_{\Bbbk Q_0^\e}(\Bbbk \SS_m^n, \KK_m^n)_{2mn-2}$ has the following general form
$$
\widetilde \varphi_{\bar y_0 \bar x_0} =\alpha \bar x_{1 \ldots mn-1} \bar y_{mn-1 \ldots 1}
$$
for $\alpha \in \Bbbk$ and $\widetilde \varphi_s = 0$ for all other $s\in \SS_m^n$. Denote $\widetilde R = \{ (s, \varphi_s + \widetilde\varphi_s t) \}_{s \in \SS_m^n}$. Observe that the irreducible paths parallel to each overlap are of length smaller than $2mn-2$, so all overlaps have to reduce to zero.  It follows that all overlaps are resolvable in $\widetilde R$, i.e.\ $\widetilde R$ is a first-order deformation of $\RR_m^n$. 

We claim that $\widetilde R$ is equivalent to the trivial deformation. Indeed, consider $\psi \in \Hom_{\Bbbk Q_0^\e}(\Bbbk (\QQ_m^n)_1, \KK_m^n)_{2mn-2}$ given by
$$
\psi_{\bar x_0} = (-1)^{n-m} \alpha \bar x_{0 \ldots mn-2} \bar y_{mn-2 \ldots 1}
$$
and $\psi_{\bar x} = 0 = \psi_{\bar y}$ for all the other arrows $\bar x$ and $\bar y$. Then we have 
\begin{align*}
\delta(\psi)_{\bar y_0 \bar x_0} = \bar y_0 \psi_{\bar x_0} &= (-1)^{n-m} \alpha\bar y_0 \bar x_{0 \ldots mn-2} \bar y_{mn-2 \ldots 1} \\
&= \alpha \bar x_{1 \ldots nm-1} \bar y_{nm-1 \ldots 1}
\end{align*}
where the last equality follows from Lemma \ref{lemma:higherrelations3}.
\end{proof}

The following theorem shows that, contrary to Conjecture \ref{conjecture:stroppel}, $\HH^2$ also has nonvanishing components $\HH^2_i$ for $i \neq 0$.

\begin{theorem}\label{thm:maintheorem}
Let $m, n \geq 2$. Then
\[
\dim_\Bbbk \HH^2_{2mn-6} (\K_m^n, \K_m^n) \simeq \dim_\Bbbk \HH_{2mn-6}^2(\KK_m^n, \KK_m^n) = 1.
\]
As a result, $\K_m^n$ is not intrinsically formal. 
\end{theorem}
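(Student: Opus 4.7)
The plan is to use Koszul duality to translate the statement into an explicit deformation-theoretic computation. The isomorphism \eqref{align:Koszuldualisomorphims} reduces the problem to showing $\dim \HH^2_{2mn-6}(\KK_m^n, \KK_m^n) = 1$. Since the reduction system $\RR_m^n$ satisfies the diamond condition (Theorem \ref{theorem:irreduciblediamond}), Corollary \ref{corollary:firstorder} identifies this cohomology with first-order deformations of $\RR_m^n$ of Adams degree $2mn-4$ modulo equivalence. Remark \ref{remark:irreduciblegrading} drastically constrains the cochain space: each $e_\lambda (\KK_m^n)_{2mn-4} e_\mu$ is at most one-dimensional and vanishes unless $\lvert \lambda \rvert + \lvert \mu \rvert = 2mn-4$, so only finitely many $s \in \SS_m^n$ --- all parallel to pairs of weights whose heights are near the highest weight --- can carry a nonzero value of $\widetilde\varphi$. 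The $\K_2^2$ case in \S\ref{subsection:K22} should serve as the blueprint.

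Next I would construct an explicit nontrivial first-order deformation. Following the spirit of Lemma \ref{lemma:vanishing2mn-4}, the idea is to prescribe a nonzero value on a single canonical overlap $s_0 \in \SS_m^n$ parallel to high-weight vertices, with $\widetilde\varphi_{s_0}$ set equal to the unique irreducible length-$(2mn-4)$ path parallel to $s_0$, and $\widetilde\varphi_s = 0$ for the remaining $s$. Verifying the cocycle condition reduces to resolving the overlap ambiguities of $\SS_m^n$ containing $s_0$ as a factor; as in Lemma \ref{lemma:vanishing2mn-4}, path-length and Adams-degree considerations arising from Remark \ref{remark:irreduciblegrading} force both sides of each resolvability equation to reduce to zero in $\KK_m^n \otimes \Bbbk[t]/(t^2)$, making the construction a cocycle essentially for free.

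The main obstacle is to prove nontriviality and uniqueness. A coboundary arises from $\psi \in \Hom_{\Bbbk Q_0^\e}(\Bbbk (\QQ_m^n)_1, (\KK_m^n)_{2mn-3})$ via $\delta(\psi)_{s} = T(\psi)(s - \varphi_s)$, and reducing each image to irreducible form produces compositions of the form $\bar y \cdot (\text{long descending chain})$ or $(\text{long ascending chain}) \cdot \bar x$. Lemmas \ref{lemma:higherrelations1} and \ref{lemma:higherrelations3} show that all such terms collapse onto a single ``principal'' long irreducible path through the top vertex, while Lemma \ref{lemma:higherrelations4} produces a second, linearly independent irreducible path via the ``alternate route'' depicted in Fig.~\ref{figure:counterexample1}. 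Choosing $\widetilde\varphi_{s_0}$ to be (a scalar multiple of) this alternate path then guarantees it cannot lie in the image of $\delta$, so the cocycle is nontrivial. A dimension count of the cochain space --- using the one-dimensionality from Remark \ref{remark:irreduciblegrading} together with the parity constraint from Proposition \ref{proposition:bipartite} --- upper-bounds the cohomology by one, giving $\dim \HH^2_{2mn-6}(\K_m^n, \K_m^n) = 1$.

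The non-intrinsic-formality statement then follows from the discussion in Remark \ref{remark:firstorderdeformationhh}: since $2mn-6 > 0$ for all $m, n \geq 2$, a nonzero class in $\HH^2_{2mn-6}(\K_m^n, \K_m^n)$ corresponds to a first-order A$_\infty$ deformation of $\K_m^n$ with nonvanishing higher product $m_{2mn-4}$, which cannot be A$_\infty$-quasi-isomorphic to the underlying graded algebra. The computationally hardest step will be the reduction analysis in paragraph three, i.e.\ showing that the gauge action is confined to the principal cycle and does not touch the alternate irreducible path --- this is precisely where the long-path identities of Lemmas \ref{lemma:higherrelations3} and \ref{lemma:higherrelations4} must be invoked in full.
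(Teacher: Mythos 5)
Your overall framework — pass to $\KK_m^n$ via Koszul duality, apply Corollary~\ref{corollary:firstorder}, observe that all overlaps are vacuously resolvable at this Adams degree so every $2$-cochain is a cocycle, and then argue about coboundaries — is the same route the paper takes. But the core of your argument rests on a misreading of Remark~\ref{remark:irreduciblegrading} that makes the dimension count collapse incorrectly.

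The remark states that $e_\lambda (\KK_m^n)_i e_\mu$ is one-dimensional when $i = \lvert\lambda\rvert + \lvert\mu\rvert$ and vanishes when $i > \lvert\lambda\rvert + \lvert\mu\rvert$; it says nothing about the case $i < \lvert\lambda\rvert + \lvert\mu\rvert$, where the space can be several-dimensional. You assert that $e_\lambda (\KK_m^n)_{2mn-4} e_\mu$ is ``at most one-dimensional and vanishes unless $\lvert\lambda\rvert + \lvert\mu\rvert = 2mn-4$''; this is false, and because of it your claimed upper bound of $1$ on the cohomology does not follow. In fact the relevant cochain space $\Hom_{\Bbbk Q_0^\e}(\Bbbk\SS_m^n, \KK_m^n)_{2mn-6}$ is $11$-dimensional — already $\widetilde\varphi_{\bar Y_0\bar X_0}$ carries a $3$-dimensional space of possible values, since the $2$-cycle $\bar Y_0\bar X_0$ sits at the vertex of height $mn-1$, where $\lvert\lambda\rvert + \lvert\mu\rvert = 2mn-2 > 2mn-4$. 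The proof therefore cannot avoid computing the coboundary space, which is a full rank computation: one parametrises the $1$-cochains (a $30$-dimensional space), reduces each $\delta(\psi)_s$ to irreducible form, and shows the image has dimension exactly $10$, leaving a one-dimensional quotient. Your sketch of nontriviality — that Lemmas~\ref{lemma:higherrelations1} and~\ref{lemma:higherrelations3} collapse everything onto a ``principal'' path while Lemma~\ref{lemma:higherrelations4} yields an ``alternate'' path outside the image — does not survive inspection: the explicit coboundary formulae (Lemma~\ref{lemma:appendix}) show that many $\psi$-coefficients do contribute to the coefficient $\alpha_2$ of the ``alternate'' path, so it is not untouched by $\delta$. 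Nontriviality comes only from the single linear relation \eqref{align:constraint} that the eleven $\alpha_i$ must satisfy to be a coboundary, and extracting that relation requires the full computation in Proposition~\ref{proposition:coboundary} and the appendix. The conclusion about non-intrinsic-formality via Remark~\ref{remark:firstorderdeformationhh} is fine once the dimension count is established, but as written your proposal skips the hard part.
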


In the rest of this section we give a proof of this theorem by using the results of \S\ref{subsection:reductionsystem} to calculate the Hochschild cohomology $\HH_{2mn-6}^2 (\KK_m^n, \KK_m^n)$ as equivalence classes of certain first-order deformations of the reduction system $\RR_m^n$. From now on we assume that $m \geq n$ since by \cite[Proof of Cor.~1.5]{maksmith} we have $\K_m^n \simeq (\K_n^m)^{\mathrm{op}}$, so that it follows from \cite[Prop.~6.4]{chenliwang} (cf.\ \cite[E.2.1.4]{loday}) that $\K_m^n$ and $\K_n^m$ have the same (bigraded) Hochschild cohomology and the same deformation theory.

\subsection{\texorpdfstring{The case $\K_2^2$}{The case K22}}
\label{subsection:K22}

We first give a proof of Theorem \ref{thm:maintheorem} for the case $(m, n) = (2, 2)$ which also serves as a blueprint for the general case $m, n \geq 2$. An illustration of the degree $0$ and $1$ arc diagrams of $\K_2^2$ and the vertices and arrows of the Koszul dual $\KK_2^2$ are given in Fig.~\ref{figure:22}.

%K22
\begin{figure}
%\centering
\begin{tikzpicture}[x=.825em,y=1.1em,decoration={markings,mark=at position 0.99 with {\arrow[black]{Stealth[length=3pt]}}}] %x=1.1em y=1.1em
\begin{scope}
\node[font=\small] at (0,14) {$\Q_2^2$};
% 1100
\node[shape=ellipse,minimum height=2em,minimum width=2.4em] (3-1) at (0,12) {};
\begin{scope}[shift={(-.405em,12)},x=.27em,y=.45em]
\UP{0} \UP{1} \DN{2} \DN{3} 
\RAY{0} \RAY{1} \RAY{2} \RAY{3}
\end{scope}
% 1010
\node[shape=ellipse,minimum height=2em,minimum width=2.4em] (3-2) at (0,8) {};
\begin{scope}[shift={(-.405em,8)},x=.27em,y=.45em]
\UP{0} \DN{1} \UP{2} \DN{3}
\RAY{0} \CIRCLE{1} \RAY{3}
\end{scope}
% 0110
\node[shape=ellipse,minimum height=2em,minimum width=2.4em] (3-3a) at (-4,4) {};
\begin{scope}[shift={(-3.705em,4)},x=.27em,y=.45em]
\DN{0} \UP{1} \UP{2} \DN{3} 
\CIRCLE{0} \RAY{2} \RAY{3} 
\end{scope}
% 1001
\node[shape=ellipse,minimum height=2em,minimum width=2.4em] (3-3b) at (4,4) {};
\begin{scope}[shift={(2.895em,4)},x=.27em,y=.45em]
\UP{0} \DN{1} \DN{2} \UP{3}
\RAY{0} \RAY{1} \CIRCLE{2} 
\end{scope}
% 0101
\node[shape=ellipse,minimum height=2em,minimum width=2.4em] (3-4a) at (0,0) {};
\begin{scope}[shift={(-.405em,0)},x=.27em,y=.45em]
\DN{0} \UP{1} \DN{2} \UP{3} 
\CIRCLE{0} \CIRCLE{2} 
\end{scope}
% 0011
\node[shape=ellipse,minimum height=2em,minimum width=2.4em] (3-4b) at (0,-4) {};
\begin{scope}[shift={(-.405em,-4)},x=.27em,y=.45em]
\DN{0}  \DN{1} \UP{2} \UP{3}
\SCCIRCLE{0} \CIRCLE{1}
\end{scope}
\path[<-, line width=.5pt] (3-1) edge[transform canvas={xshift=-.5ex}] (3-2);
\path[->, line width=.5pt] (3-1) edge[transform canvas={xshift=.5ex}] (3-2);
\path[->, line width=.5pt] (3-4b) edge[transform canvas={xshift=-.5ex}] (3-4a);
\path[<-, line width=.5pt] (3-4b) edge[transform canvas={xshift=.5ex}] (3-4a);
\path[->, line width=.5pt] (3-4a) edge[transform canvas={shift={(225:.5ex)}}] (3-3a);
\path[<-, line width=.5pt] (3-4a) edge[transform canvas={shift={(45:.5ex)}}] (3-3a);
\path[->, line width=.5pt] (3-4a) edge[transform canvas={shift={(135:.5ex)}}] (3-3b);
\path[<-, line width=.5pt] (3-4a) edge[transform canvas={shift={(-45:.5ex)}}] (3-3b);
\path[->, line width=.5pt] (3-3b) edge[transform canvas={shift={(225:.5ex)}}] (3-2);
\path[<-, line width=.5pt] (3-3b) edge[transform canvas={shift={(45:.5ex)}}] (3-2);
\path[->, line width=.5pt] (3-3a) edge[transform canvas={shift={(135:.5ex)}}] (3-2);
\path[<-, line width=.5pt] (3-3a) edge[transform canvas={shift={(-45:.5ex)}}] (3-2);
\draw[<-,line width=.5pt,line cap=round] (3-2.180) ++(90:.5ex) arc[start angle=100, end angle=261, radius=16ex];
\draw[->,line width=.5pt,line cap=round] (3-2.180) ++(90:-.5ex) arc[start angle=100, end angle=261, radius=15ex];
% arrows
% (3-1) to (3-2)
\begin{scope}[shift={(-1.47em,10)},x=.27em,y=.45em]
\UP{0} \UP{1} \DN{2} \DN{3} 
\RAY{0} \RAYUP{1} \RAYUP{2} \CUP{1} \RAY{3} 
\end{scope}
\begin{scope}[shift={(.66em,10)},x=.27em,y=.45em]
\UP{0} \UP{1} \DN{2} \DN{3} 
\RAY{0} \RAYDN{1} \RAYDN{2} \CAP{1} \RAY{3} 
\end{scope}
% (3-2) to (3-3a)
\begin{scope}[shift={(-2.83em,7.19em)},x=.27em,y=.45em]
\UP{0} \DN{1} \UP{2} \DN{3} 
\RAYUP{0} \CUP{0} \CONNECT{1} \CAP{1} \RAYDN{2} \RAY{3} 
\end{scope}
\begin{scope}[shift={(-1.58em,5.55em)},x=.27em,y=.45em]
\UP{0} \DN{1} \UP{2} \DN{3} 
\RAYDN{0} \CAP{0} \CONNECT{1} \CUP{1} \RAYUP{2} \RAY{3}
\end{scope}
% (3-2) to (3-3b)
\begin{scope}[shift={(1.72em,7.65em)},x=.27em,y=.45em]
\UP{0} \DN{1} \UP{2} \DN{3} 
\RAY{0} \RAYUP{1} \CUP{1} \CONNECT{2} \CAP{2} \RAYDN{3} 
\end{scope}
\begin{scope}[shift={(.77em,5.55em)},x=.27em,y=.45em]
\UP{0} \DN{1} \UP{2} \DN{3}
\RAY{0} \RAYDN{1} \CAP{1} \CONNECT{2} \CUP{2} \RAYUP{3} 
\end{scope}
% (3-3a) to (3-4a)
\begin{scope}[shift={(-1.58em,3.25em)},x=.27em,y=.45em]
\DN{0} \UP{1} \UP{2} \DN{3} 
\CIRCLE{0} \RAYDN{2} \CAP{2} \RAYDN{3} 
\end{scope}
\begin{scope}[shift={(-2.53em,1.15em)},x=.27em,y=.45em]
\DN{0} \UP{1} \UP{2} \DN{3} 
\CIRCLE{0} \RAYUP{2} \CUP{2} \RAYUP{3} 
\end{scope}
% (3-2) to (3-4b)
\begin{scope}[shift={(-8.03em,2.2em)},x=.27em,y=.45em]
\UP{0} \DN{1} \UP{2} \DN{3} 
\RRAYUP{0} \SCCUP{0} \CIRCLE{1} \RRAYUP{3}
\end{scope}
\begin{scope}[shift={(-6.2em,2.2em)},x=.27em,y=.45em]
\UP{0} \DN{1} \UP{2} \DN{3} 
\RRAYDN{0} \SCCAP{0} \RRAYDN{3} \CIRCLE{1}
\end{scope}
% (3-3b) to (3-4a)
\begin{scope}[shift={(.77em,3.2em)},x=.27em,y=.45em]
\UP{0} \DN{1} \DN{2} \UP{3} 
\RAYUP{0} \CUP{0} \RAYUP{1} \CIRCLE{2} 
\end{scope}
\begin{scope}[shift={(1.72em,1.2em)},x=.27em,y=.45em]
\UP{0} \DN{1} \DN{2} \UP{3} 
\RAYDN{0} \CAP{0} \RAYDN{1} \CIRCLE{2}
\end{scope}
% (3-4b) to (3-4a)
\begin{scope}[shift={(-1.47em,-2)},x=.27em,y=.45em]
\DN{0} \UP{1} \DN{2} \UP{3} 
\SCCUP{0}  \CUP{1} \CAP{0} \CAP{2}
\CONNECT{0}  \CONNECT{1} \CONNECT{2}  \CONNECT{3} 
\end{scope}
\begin{scope}[shift={(.66em,-2)},x=.27em,y=.45em]
\DN{0} \UP{1} \DN{2} \UP{3} 
\CUP{0} \CUP{2} \SCCAP{0} \CAP{1}
\CONNECT{0}  \CONNECT{1} \CONNECT{2}  \CONNECT{3} 
\end{scope}

\end{scope}

\begin{scope}[xshift=17em]
\node[font=\small] at (0,14) {$\QQ_2^2$};
% 1100
\node[shape=ellipse,minimum height=2em,minimum width=2.4em] (3-1) at (0,12) {};
\begin{scope}[shift={(-.405em,12)},x=.27em,y=.45em]
\UP{0} \UP{1} \DN{2} \DN{3} 
\RAY{0} \RAY{1} \RAY{2} \RAY{3}
\end{scope}
% 1010
\node[shape=ellipse,minimum height=2em,minimum width=2.4em] (3-2) at (0,8) {};
\begin{scope}[shift={(-.405em,8)},x=.27em,y=.45em]
\UP{0} \DN{1} \UP{2} \DN{3}
\RAY{0} \CIRCLE{1} \RAY{3}
\end{scope}
% 0110
\node[shape=ellipse,minimum height=2em,minimum width=2.4em] (3-3a) at (-4,4) {};
\begin{scope}[shift={(-3.705em,4)},x=.27em,y=.45em]
\DN{0} \UP{1} \UP{2} \DN{3} 
\CIRCLE{0} \RAY{2} \RAY{3} 
\end{scope}
% 1001
\node[shape=ellipse,minimum height=2em,minimum width=2.4em] (3-3b) at (4,4) {};
\begin{scope}[shift={(2.895em,4)},x=.27em,y=.45em]
\UP{0} \DN{1} \DN{2} \UP{3}
\RAY{0} \RAY{1} \CIRCLE{2} 
\end{scope}
% 0101
\node[shape=ellipse,minimum height=2em,minimum width=2.4em] (3-4a) at (0,0) {};
\begin{scope}[shift={(-.405em,0)},x=.27em,y=.45em]
\DN{0} \UP{1} \DN{2} \UP{3} 
\CIRCLE{0} \CIRCLE{2} 
\end{scope}
% 0011
\node[shape=ellipse,minimum height=2em,minimum width=2.4em] (3-4b) at (0,-4) {};
\begin{scope}[shift={(-.405em,-4)},x=.27em,y=.45em]
\DN{0}  \DN{1} \UP{2} \UP{3}
\SCCIRCLE{0} \CIRCLE{1}
\end{scope}
\path[->, line width=.5pt] (3-1) edge[transform canvas={xshift=-.5ex}] node[font=\scriptsize,left=-.5ex] {$\bar x_{11}$} (3-2);
\path[<-, line width=.5pt] (3-1) edge[transform canvas={xshift=.5ex}] node[font=\scriptsize,right=-.3ex] {$\bar y_{11}$} (3-2);
\path[<-, line width=.5pt] (3-4b) edge[transform canvas={xshift=-.5ex}]node[font=\scriptsize,left=-.5ex] {$\bar x_{32}$}  (3-4a);
\path[->, line width=.5pt] (3-4b) edge[transform canvas={xshift=.5ex}] node[font=\scriptsize,right=-.3ex] {$\bar y_{32}$} (3-4a);
\path[<-, line width=.5pt] (3-4a) edge[transform canvas={shift={(225:.5ex)}}] node[font=\scriptsize,left=-.3ex,pos=.4] {$\bar x_{22}$} (3-3a);
\path[->, line width=.5pt] (3-4a) edge[transform canvas={shift={(45:.5ex)}}] node[font=\scriptsize,right=-.2ex,pos=.6] {$\bar y_{22}$} (3-3a);
\path[<-, line width=.5pt] (3-4a) edge[transform canvas={shift={(135:.5ex)}}] node[font=\scriptsize,left=-.3ex,pos=.6] {$\bar x_{31}$}(3-3b);
\path[->, line width=.5pt] (3-4a) edge[transform canvas={shift={(-45:.5ex)}}] node[font=\scriptsize,right=-.2ex,pos=.4] {$\bar y_{31}$}(3-3b);
\path[<-, line width=.5pt] (3-3b) edge[transform canvas={shift={(225:.5ex)}}] node[font=\scriptsize, left=-.3ex,pos=.4] {$\bar x_{12}$}(3-2);
\path[->, line width=.5pt] (3-3b) edge[transform canvas={shift={(45:.5ex)}}] node[font=\scriptsize,right=-.1ex,pos=.6] {$\bar y_{12}$}(3-2);
\path[<-, line width=.5pt] (3-3a) edge[transform canvas={shift={(135:.5ex)}}] node[font=\scriptsize,left=-.2ex,pos=.6] {$\bar x_{21}$} (3-2);
\path[->, line width=.5pt] (3-3a) edge[transform canvas={shift={(-45:.5ex)}}] node[font=\scriptsize,right=-.1ex,pos=.4] {$\bar y_{21}$} (3-2);
\draw[->,line width=.5pt,line cap=round] (3-2.180) ++(90:.5ex) arc[start angle=100, end angle=261, radius=16ex];
\draw[<-,line width=.5pt,line cap=round] (3-2.180) ++(90:-.5ex) arc[start angle=100, end angle=261, radius=15ex];

\begin{scope}[shift={(-7.55em,2.2em)}]
\node[font=\scriptsize] at (0,0) {\strut$\bar x_2$};
\end{scope}
\begin{scope}[shift={(-5.9em,2.2em)}]
\node[font=\scriptsize] at (0,0) {\strut$\bar y_2$};
\end{scope}
\end{scope}
\end{tikzpicture}
\caption{The quiver $\Q_2^2$ for $\K_2^2$ with all arc diagrams of degrees $0$ (vertices) and $1$ (arrows) and the opposite quiver $\QQ_2^2$ for $\KK_2^2$}
\label{figure:22}
\end{figure}
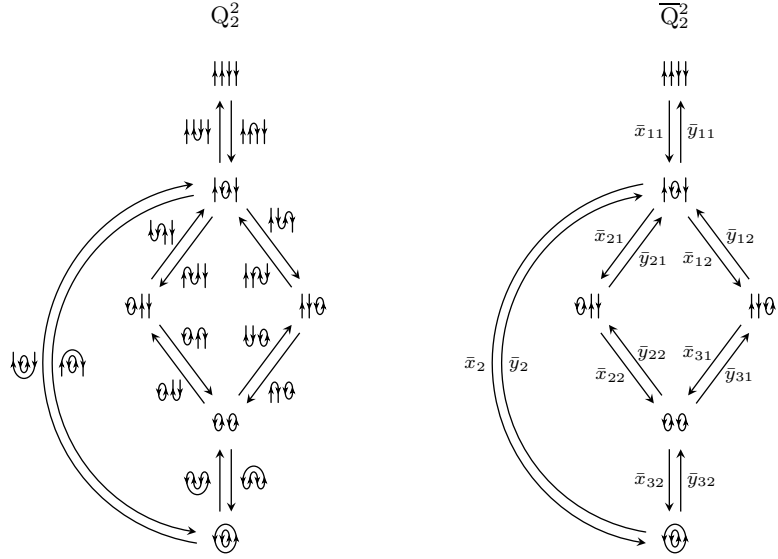

The reduction system $\RR_2^2$ (see \S\ref{subsection:irreduciblepaths}) for $\KK_2^2$ is given by the following set of pairs
\begin{flalign*}
&& (\bar y_2 \bar x_2&, 0)  & (\bar y_{11} \bar x_{11}&, - \bar x_{21} \bar y_{21} - \bar x_{12} \bar y_{12})  &  (\bar y_2 \bar x_{21}&, - \bar y_{32} \bar y_{22}) &  (\bar y_{22}\bar x_{22}&, - \bar x_{32} \bar y_{32}) && \\
&& (\bar y_2 \bar y_{11}&, 0) & (\bar y_{31} \bar y_{12}&, -\bar y_{22} \bar y_{21} - \bar x_{32} \bar y_2) & (\bar y_{21} \bar x_{12}&, -\bar x_{22} \bar y_{31}) & (\bar y_{12} \bar x_2&, - \bar x_{31} \bar x_{32}) && \\
&& (\bar x_{11} \bar x_2&, 0) & (\bar x_{12} \bar x_{31}&, -\bar x_{21} \bar x_{22} - \bar x_2 \bar y_{32}) & (\bar y_{12} \bar x_{21}&, -\bar x_{31} \bar y_{22}) & (\bar y_2 \bar x_{12}&, - \bar y_{32} \bar y_{31}) &&\\
&&  (\bar y_{21} \bar x_{21}&, 0)  & (\bar y_{12} \bar x_{12}&, 0)\quad   (\bar y_{32} \bar x_{32}, 0) & (\bar y_{31}\bar x_{31}&, - \bar x_{32} \bar y_{32}) & (\bar y_{21} \bar x_2&, - \bar x_{22} \bar x_{32})&&
\end{flalign*}
so that 
\begin{multline*}
\SS_2^2 = \{ \bar y_2 \bar x_2, \bar y_{11} \bar x_{11}, \bar y_2 \bar x_{21}, \bar y_{22} \bar x_{22}, \bar y_2 \bar y_{11}, \bar y_{31} \bar y_{12}, \bar y_{21} \bar x_{12}, \bar y_{12} \bar x_2, \\
\bar x_{11}\bar x_2,  \bar x_{12} \bar x_{31}, \bar y_{12} \bar x_{21}, \bar y_2 \bar x_{12}, \bar y_{21} \bar x_{21}, \bar y_{12} \bar x_{12}, \bar y_{32} \bar x_{32}, \bar y_{31}\bar x_{31}, \bar y_{21} \bar x_2
\}.
\end{multline*}
We thus have eight overlaps
\begin{alignat*}{4}
\bar y_2 \bar x_{12} \bar x_{31},&&\quad \bar y_{12} \bar x_{12} \bar x_{31},&&\quad \bar y_{21} \bar x_{12} \bar x_{31},&&\quad \bar y_{11} \bar x_{11} \bar x_2,& \\
\bar y_{31} \bar y_{12} \bar x_2,&&\quad \bar y_{31} \bar y_{12} \bar x_{12},&&\quad \bar y_{31} \bar y_{12} \bar x_{21},&&\quad \bar y_2 \bar y_{11} \bar x_{11}.&
\end{alignat*}

\begin{proposition}\label{proposition:k22}
$\dim_\Bbbk \HH_{2}^2(\K_2^2, \K_2^2) = \dim_\Bbbk \HH_{2}^2(\KK_2^2, \KK_2^2)  = 1$. 
\end{proposition}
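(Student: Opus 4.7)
The plan is to combine Theorem~\ref{theorem:keller}, which gives $\HH^2_2(\K_2^2, \K_2^2) \simeq \HH^2_2(\KK_2^2, \KK_2^2)$, with Corollary~\ref{corollary:firstorder} applied to the reduction system $\RR_2^2$ described above. This reduces the claim to computing the space of first-order deformations of $\RR_2^2$ modulo equivalence in the correct Adams degree and showing it is one-dimensional. Under the bigrading in which each arrow of $\QQ_2^2$ has bidegree $(0,1)$, a class in $\HH^2_2$ is represented by a $\Bbbk \Q_0^\e$-linear map $\widetilde\varphi\colon \Bbbk\SS_2^2 \to (\KK_2^2)_4$ sending each length-$2$ relation $s$ to a linear combination of irreducible paths of length $4$ parallel to $s$.

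First I would tabulate, for each of the $17$ elements $s \in \SS_2^2$, the irreducible paths of length $4$ from the source to the target of $s$, using the combinatorial description in Lemma~\ref{lemma:irreduciblepathdescription} together with Remark~\ref{remark:irreduciblegrading} (the latter constrains things via $\lvert\lambda\rvert + \lvert\mu\rvert \geq 4$ and explicitly rules out longer paths). This yields a finite-dimensional parameter space for candidate cochains $\widetilde\varphi$. Next I would impose the cocycle condition: by Theorem~\ref{theorem:diamond}, $\widetilde{R} = \{(s,\varphi_s + \widetilde\varphi_s t)\}$ is a first-order deformation precisely when each of the eight overlap ambiguities $\bar y_2\bar x_{12}\bar x_{31}$, $\bar y_{12}\bar x_{12}\bar x_{31}$, $\bar y_{21}\bar x_{12}\bar x_{31}$, $\bar y_{11}\bar x_{11}\bar x_2$, $\bar y_{31}\bar y_{12}\bar x_2$, $\bar y_{31}\bar y_{12}\bar x_{12}$, $\bar y_{31}\bar y_{12}\bar x_{21}$, $\bar y_2\bar y_{11}\bar x_{11}$ is resolvable to first order in $t$. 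Each overlap yields a linear equation in the coefficients of $\widetilde\varphi$, obtained by reducing both sides $p\varphi_{qr} + p\widetilde\varphi_{qr}t$ and $\varphi_{pq}r + \widetilde\varphi_{pq}r\,t$ to the irreducible basis modulo $t^2$ and equating. Collecting these constraints carves out the subspace of $2$-cocycles.

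Finally I would quotient by gauge equivalence~\eqref{align:gaugeequivalent}, parametrised by $\psi \in \Hom_{\Bbbk \Q_0^\e}(\Bbbk(\QQ_2^2)_1, (\KK_2^2)_3)$, whose effect $\delta(\psi)_s = T(\psi)(s - \varphi_s)$ is computed by expanding along each arrow of $s$ and reducing via $\sigma\pi$ into the irreducible basis. A direct linear-algebra calculation should then exhibit the quotient as one-dimensional. The natural candidate for a generator is a cocycle supported essentially on the relation $\bar y_{11}\bar x_{11}$ and valued on an irreducible $4$-cycle at the vertex of weight $\up\dn\up\dn$ such as $\bar x_{12}\bar x_{31}\bar y_{31}\bar y_{12}$, with the values on the remaining relations forced by the overlap equations; its non-triviality would be established by showing that no $\psi$ in the allowed degree produces such a combination as $\delta(\psi)$. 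The main obstacle is the bookkeeping: while every individual overlap and coboundary computation is elementary, coordinating all $17$ relation coefficients, $8$ overlap equations and the image of $\delta$ simultaneously is intricate, and one has to be careful with the signs propagated by the anticommutativity and Plücker-type relations (compare Lemma~\ref{lemma:cubicrelations}).
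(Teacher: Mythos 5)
Your approach coincides with the paper's: reduce via Theorem~\ref{theorem:keller} to $\HH^2_2(\KK_2^2,\KK_2^2)$ and compute it through Corollary~\ref{corollary:firstorder} as first-order deformations of $\RR_2^2$ modulo the coboundaries~\eqref{align:gaugeequivalent}. Two small points. First, the step where you propose to ``impose the cocycle condition'' by analysing the eight overlaps is vacuous for degree reasons: an overlap has length $3$, the first-order correction in $t$ produces paths of length $5$, and for $\KK_2^2$ no overlap has any parallel irreducible path of length $\geq 5$, so every cochain $\widetilde\varphi \in \Hom_{\Bbbk Q_0^\e}(\Bbbk\SS_2^2,(\KK_2^2)_4)$ is automatically a cocycle; the whole content of the computation is in the coboundary rank. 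Second, your candidate generator $\bar x_{12}\bar x_{31}\bar y_{31}\bar y_{12}$ is not an irreducible path, because $\bar x_{12}\bar x_{31}\in\SS_2^2$; the actual nontrivial class chosen in the paper is supported on $\widetilde\varphi_{\bar y_{11}\bar x_{11}}=\bar x_{21}\bar x_{22}\bar x_{32}\bar y_2$ (the coefficient $\alpha_2$ in the paper's parametrisation), and that the remaining $\alpha_i$ can be set to zero is exactly what the rank-$10$ coboundary computation justifies.
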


\begin{proof}
Let $\{ (s, \varphi_s + \widetilde\varphi_s t) \}_{s \in \SS_2^2}$ be a first-order deformation of $\RR_2^2$ corresponding to a cocycle in  $\HH_{2}^2(\KK_2^2, \KK_2^2)$ as in \S\ref{subsection:reductionsystem}. Here, $\widetilde\varphi_s$ is the image of $s$ under the cochain $\widetilde\varphi \in \Hom_{\Bbbk Q_0^\e}(\Bbbk \SS_2^2, \KK_2^2)_2$. Note that $\widetilde \varphi_{s}$ has the following form 
\begin{equation}
\label{cochain22}
\begin{alignedat}{2}
\widetilde\varphi_{\bar y_{11} \bar x_{11}} &= \alpha_1 \bar x_{21} \bar x_{22} \bar y_{22} \bar y_{21} + \alpha_2 & \bar x_{21} \bar x_{22} \bar x_{32} &\bar y_2 + \alpha_3 \bar x_2 \bar y_{32} \bar y_{22} \bar y_{21} \\[.5em]
\widetilde\varphi_{\bar y_{21} \bar x_{21}} &= \alpha_4  \bar x_{22} \bar x_{32} \bar y_{32} \bar y_{22} &  \widetilde\varphi_{\bar y_{12} \bar x_{12}} &= \alpha_5  \bar x_{31} \bar x_{32} \bar y_{32} \bar y_{31} \\
\widetilde\varphi_{\bar x_{11} \bar x_2} &= \alpha_6 \bar x_{11} \bar x_{21} \bar x_{22} \bar x_{32} &  \widetilde\varphi_{\bar y_2 \bar y_{11}} &= \alpha_7 \bar y_{32} \bar y_{22} \bar y_{21} \bar y_{11} \\
\widetilde\varphi_{\bar y_{21} \bar x_{12}} &= \alpha_8 \bar x_{22} \bar x_{32} \bar y_{32} \bar y_{31} & \widetilde\varphi_{\bar y_{12} \bar x_{21}} &= \alpha_9 \bar x_{31} \bar x_{32} \bar y_{32} \bar y_{22} \\
\widetilde\varphi_{\bar x_{12} \bar x_{31}} &= \alpha_{10} \bar x_{21} \bar x_{22} \bar x_{32} \bar y_{32} &  \widetilde\varphi_{\bar y_{31} \bar y_{12}} &= \alpha_{11} \bar x_{32} \bar y_{32} \bar y_{22} \bar y_{21}
\end{alignedat}
\end{equation}
for some coefficients $\alpha_1, \dotsc, \alpha_{11} \in \Bbbk$ and $\widetilde \varphi_{s} = 0$ for all other $s \in \SS_2^2$. Note that none of the overlaps has a parallel irreducible path of length $\geq 5$, whence they are resolvable in the first-order deformation, i.e.\ any cochain $\widetilde\varphi$ is a cocycle.

Next we compute the coboundaries. An element $\psi \in \Hom_{\Bbbk Q_0^\e}(\Bbbk (\QQ_2^2)_1, \KK_2^2)_2$ is of the form 
\begin{align*}
\psi_{\bar x_{11}} &= \mu_1 \bar x_{11} \bar x_{21} \bar y_{21} + \mu_2 \bar x_{11} \bar x_{12} \bar y_{12} \quad & \psi_{\bar y_{11}} &= \nu_1 \bar x_{21} \bar y_{21} \bar y_{11} + \nu_2 \bar x_{12} \bar y_{12} \bar y_{11}\\
\psi_{\bar x_{21}} &= \mu_3 \bar x_{21} \bar x_{22} \bar y_{22} + \mu_4 \bar x_2 \bar y_{32}\bar y_{22} \quad & \psi_{\bar y_{21}} &= \nu_3 \bar x_{22} \bar y_{22} \bar y_{21} + \nu_4 \bar x_{22} \bar x_{32} \bar y_2\\
\psi_{\bar x_{12}} &= \mu_5 \bar x_{21} \bar x_{22} \bar y_{31} + \mu_6 \bar x_2 \bar y_{32} \bar y_{31} \quad & \psi_{\bar y_{12}} &= \nu_5 \bar x_{31} \bar y_{22} \bar y_{21} + \nu_6 \bar x_{31} \bar x_{32} \bar y_2\\
\psi_{\bar x_{22}} &= \mu_7 \bar x_{22} \bar x_{32} \bar y_{32} \quad & \psi_{\bar y_{22}} &= \nu_7 \bar x_{32} \bar y_{32} \bar y_{22}\\ 
\psi_{\bar x_{31}} &= \mu_8 \bar x_{31} \bar x_{32} \bar y_{32} \quad & \psi_{\bar y_{31}} &= \nu_8 \bar x_{32} \bar y_{32} \bar y_{31}\\
\psi_{\bar x_2} &= \mu_{9} \bar x_{21} \bar x_{22} \bar x_{32} \quad & \psi_{\bar y_2} &= \nu_{9} \bar y_{32} \bar y_{22} \bar y_{21}
\end{align*} 
and $\psi_{\bar x_{32}} = 0 = \psi_{\bar y_{32}}$, where $\mu_1, \dotsc,  \mu_{9}, \nu_1, \dotsc, \nu_{9} \in \Bbbk$. 
If $\delta(\psi)_s = \widetilde \varphi_s$ for each $s \in \SS_m^n$ then using \eqref{align:gaugeequivalent} we obtain the following eleven equations
\[
\begin{aligned}
&& \mathllap{-\mu_1 - \nu_1 -\mu_2-\nu_2 + \mu_3 + \nu_3 - \mu_5 - \nu_5} &= \alpha_1 \\
 -\nu_1 - \mu_2 + \nu_4 - \mu_5 - \nu_6 &= \alpha_2 \quad & -\mu_1 - \nu_2 + \mu_4 - \nu_5 - \mu_6 &= \alpha_3 \\
 -\nu_4 - \mu_4 &= \alpha_4 \quad & \nu_5 + \mu_5 - \nu_6 - \mu_6 &= \alpha_5 \\
 -\mu_1 + \mu_2 + \mu_{9} &= \alpha_6  \quad &  -\nu_1 + \nu_2 + \nu_{9} &= \alpha_7\\
 \nu_3 - \nu_4 - \mu_6 + \mu_7 + \nu_8 &= \alpha_8 \quad & \mu_3 - \mu_4 - \nu_6 + \nu_7 + \mu_8 &= \alpha_9\\ 
 -\mu_3 - \mu_5 + \mu_7 - \mu_8 + \mu_{9} &= \alpha_{10} \quad &  -\nu_3 - \nu_5 + \nu_7 - \nu_8 + \nu_{9} &= \alpha_{11}.
 \end{aligned}
\]
Let us check the first three equations. For this, we have
\begin{align*}
\delta(\psi)_{\bar y_{11} \bar x_{11}} ={} & \psi_{\bar y_{11}} \bar x_{11} + \bar y_{11} \psi_{\bar x_{11}} + \psi_{\bar x_{21}} \bar y_{21} + \bar x_{21} \psi_{\bar y_{21}} + \psi_{\bar x_{12}} \bar y_{12} + \bar x_{12} \psi_{\bar y_{12}}\\
={}&  \nu_1 \bar x_{21} \bar y_{21} \bar y_{11}\bar x_{11} + \nu_2 \bar x_{12} \bar y_{12} \bar y_{11}\bar x_{11} + \mu_1 \bar y_{11} \bar x_{11} \bar x_{21} \bar y_{21} + \mu_2 \bar y_{11}\bar x_{11} \bar x_{12} \bar y_{12}\\
& + \mu_3 \bar x_{21} \bar x_{22} \bar y_{22}\bar y_{21} + \mu_4 \bar x_2 \bar y_{32}\bar y_{22}\bar y_{21} +   \nu_3  \bar x_{21} \bar x_{22} \bar y_{22} \bar y_{21} + \nu_4  \bar x_{21}\bar x_{22} \bar x_{32} \bar y_2\\
& +\mu_5 \bar x_{21} \bar x_{22} \bar y_{31}\bar y_{12} + \mu_6 \bar x_2 \bar y_{32} \bar y_{31} \bar y_{12} + \nu_5 \bar x_{12}  \bar x_{31} \bar y_{22} \bar y_{21} + \nu_6\bar x_{12}  \bar x_{31} \bar x_{32} \bar y_2\\
={}&    \beta_1 \bar x_{21} \bar x_{22} \bar y_{22} \bar y_{21} +   \beta_2 \bar x_{21} \bar x_{22} \bar x_{32} \bar y_2 + \beta_3 \bar x_2 \bar y_{32} \bar y_{22} \bar y_{21}
\end{align*}
where $\beta_1 := -\mu_1 - \nu_1 -\mu_2-\nu_2 + \mu_3 + \nu_3 - \mu_5 - \nu_5$, $\beta_2 :=-\nu_1 - \mu_2 + \nu_4 - \mu_5 - \nu_6$ and $\beta_3:=-\mu_1 - \nu_2 + \mu_4 - \nu_5 - \mu_6$, and the third equality follows by performing reductions (with respect to $\RR_2^2$) on each summand. For instance, we have 
\begin{align*}
\bar x_{21} \bar y_{21} \bar y_{11} \bar x_{11} = -\bar x_{21} \bar y_{21} \bar x_{21} \bar y_{21} - \bar x_{21} \bar y_{21} \bar x_{12} \bar y_{12} &= \bar x_{21}\bar x_{22}\bar y_{31} \bar y_{12} \\
&= -\bar x_{21} \bar x_{22} \bar y_{22} \bar y_{21} - \bar x_{21} \bar x_{22} \bar x_{32} \bar y_2
\end{align*}
where the first equality follows by performing reductions on $ \bar y_{11}\bar x_{11}$, the second one on $ \bar y_{21} \bar x_{21}$ and $\bar y_{21} \bar x_{12}$, and the third one on $\bar y_{31} \bar y_{12}$. Then by $\delta(\psi)_{\bar y_{11} \bar x_{11}} = \varphi_{\bar y_{11} \bar x_{11}}$ we obtain the first three equations. Similarly we may verify the other equations. 

From the above eleven equations we observe that
\begin{align}
\label{align:alpha}
- \alpha_2 + \alpha_3 - \alpha_6  + \alpha_7 - \alpha_8 + \alpha_9 + \alpha_{10} - \alpha_{11} = 0.
\end{align}
Moreover, writing the coefficients as an $(18 \times 11)$-matrix, the rank of this matrix is seen to be $10$, i.e.\ \eqref{align:alpha} turns out to be the unique constraint for $\widetilde \varphi$ to be a coboundary. Thus, $\HH_{2}^2(\KK_2^2, \KK_2^2)$ is $1$-dimensional. By \eqref{align:alpha} an explicit nontrivial cocycle can be given by setting $\alpha_2 = 1$ and $\alpha_i = 0$ for $i \neq 2$. 
\end{proof}

\begin{corollary}
\label{corollary:Ainfinitydeformation22}
We have that $\dim_\Bbbk \bigl( \bigoplus_{i \neq 0, 2} \HH^2_{i-2} (\K_2^2, \K_2^2) \bigr) = 1$. Moreover, $\K_2^2$ admits a unique A$_\infty$ deformation (up to A$_\infty$ isomorphism) given on arrows by setting
\begin{align*}
m_4 (y_2 \otimes x_{32} \otimes x_{22} \otimes x_{21}) = x_{11} y_{11}
\end{align*} 
and $m_4 = 0$ when restricting to other arrows and setting $m_i = 0$ for $i \neq 2, 4$ for all arrows.
\end{corollary}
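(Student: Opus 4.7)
The first assertion follows directly from the preceding results. By Lemma \ref{lemma:vanishing}, $\HH^2_{i-2}(\K_2^2, \K_2^2)$ vanishes whenever $i$ is odd, negative, or exceeds $2mn - 2 = 6$, restricting attention to $i \in \{0, 2, 4, 6\}$. Excluding $i = 0$ and $i = 2$ from the direct sum leaves only the summands at $i = 4$ and $i = 6$: Lemma \ref{lemma:vanishing2mn-4} handles $i = 2mn-4 = 6$, and Proposition \ref{proposition:k22} supplies the one-dimensional summand at $i = 4$, giving the stated dimension.

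For the second assertion, the plan is to apply Proposition \ref{prop:minimalmodel} to the explicit nontrivial filtered deformation $B = (\KK_2^2, \widetilde\mu)$ of $\KK_2^2$ constructed at the end of the proof of Proposition \ref{proposition:k22} by setting $\alpha_2 = 1$ and $\alpha_i = 0$ for $i \neq 2$ in \eqref{cochain22}. The minimal model of $\Ext^\hdot_B(\Bbbk Q_0, \Bbbk Q_0)$ is then an A$_\infty$ deformation of $\K_2^2$. Combining the dimension count with Theorem \ref{theorem:keller} and Remark \ref{remark:firstorderdeformationhh}, the only nonzero $\HH^2_q(\K_2^2, \K_2^2)$ with $q > 0$ occurs at $q = 2$ (corresponding to $m_4$), so all higher products $m_k$ with $k \notin \{2,4\}$ are gauge-equivalent to zero and may be set strictly zero by an A$_\infty$ automorphism.

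To extract the explicit formula for $m_4$, I would trace the homological perturbation procedure used in the proof of Proposition \ref{prop:minimalmodel} through the deformation retract \eqref{align:deformation1} for $A = \KK_2^2$. The perturbation $\delta = \widetilde\partial - \partial$ on $\Hom(\bar A^{\otimes \hdot}, \Bbbk Q_0)$ is controlled by the single nonvanishing term $\widetilde\varphi_{\bar y_{11} \bar x_{11}} = \bar x_{21} \bar x_{22} \bar x_{32} \bar y_2$. Under the Koszul-duality pairing of \S\ref{section:hochschild}, this input word pairs with the $\K_2^2$-tuple $y_2 \otimes x_{32} \otimes x_{22} \otimes x_{21}$ (the order reverses and each $\bar x, \bar y$ is replaced by its dual generator $x, y$), while the output relation $\bar y_{11} \bar x_{11}$ pairs with the length-two path $x_{11} y_{11}$ in $\K_2^2$. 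The path-length filtration on $\Hom(\bar A^{\otimes \hdot}, \Bbbk Q_0)$ makes $\delta$ strictly increasing, so all higher iterates $F^\hdot (\delta h^\hdot)^n \delta G^\hdot$ with $n \geq 1$ vanish on this input, yielding $m_4(y_2 \otimes x_{32} \otimes x_{22} \otimes x_{21}) = x_{11} y_{11}$ and $m_4 = 0$ on every other composable 4-tuple of arrows.

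Uniqueness up to A$_\infty$ isomorphism follows from the one-dimensionality of $\bigoplus_{q > 0} \HH^2_q(\K_2^2, \K_2^2)$: any two nontrivial A$_\infty$ deformations have proportional first-order classes, and the proportionality scalar can be absorbed into an A$_\infty$ automorphism of $\K_2^2$. The main obstacle I anticipate is executing the perturbation-lemma computation that produces the precise $m_4$; as a cross-check, one can directly verify that the stated data (with $m_2$ the original associative product, the prescribed $m_4$, and $m_k = 0$ for $k \notin \{2, 4\}$) satisfies the Stasheff relations. By bidegree considerations only the arity-five Stasheff identity is nontrivial, and it reduces to a finite check using the arc-diagrammatic multiplication of Definition \ref{definition:Kmn}.
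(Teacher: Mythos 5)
Your proposal is correct and follows essentially the same route as the paper: the first assertion combines Lemmas \ref{lemma:vanishing}, \ref{lemma:vanishing2mn-4} and Proposition \ref{proposition:k22}, and the second assertion constructs the filtered deformation $B$ of $\KK_2^2$ from the cocycle with $\alpha_2 = 1$ and invokes Proposition \ref{prop:minimalmodel} to pass to an A$_\infty$ deformation of $\K_2^2$. One small slip: you write ``Lemma \ref{lemma:vanishing2mn-4} handles $i = 2mn-4 = 6$'', but for $m = n = 2$ one has $2mn - 4 = 4$, which is the \emph{subscript} $q = i - 2$; the relevant $i$ is $2mn - 2 = 6$. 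This is purely notational and does not affect the argument.

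Where you go beyond the paper is in sketching \emph{why} the specific formula $m_4(y_2 \otimes x_{32} \otimes x_{22} \otimes x_{21}) = x_{11} y_{11}$ falls out: the paper simply cites Proposition \ref{prop:minimalmodel} and leaves the extraction of $m_4$ implicit. Your Koszul-duality pairing heuristic (reverse the word $\bar x_{21} \bar x_{22} \bar x_{32} \bar y_2$ and replace each $\bar a$ by $a$, matching the output $\bar y_{11}\bar x_{11}$ with $x_{11}y_{11}$) is the right intuition and matches the corollary's statement; but note it is not the same as the relation $\bar y_{11}\bar x_{11} + \bar x_{21}\bar y_{21} + \bar x_{12}\bar y_{12} = \bar x_2 \bar y_{32} \bar y_{22} \bar y_{21}$ written in the paper's proof (which is the $\alpha_3$ term of \eqref{cochain22}, not the $\alpha_2$ term). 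The two choices differ only by a sign in cohomology because of \eqref{align:alpha}, so both yield isomorphic A$_\infty$ deformations, but for consistency with the claimed $m_4$ formula the $\alpha_2 = 1$ cocycle is the correct input, which is what you use. As you yourself flag, the perturbation-lemma sketch would need to be carried through the full homotopy-transfer tree formula to be a complete derivation of $m_4$ (the paper does not do this either); your proposed cross-check via the arity-five Stasheff identity using the arc-diagrammatic product in Definition \ref{definition:Kmn} is a sensible alternative verification.
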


\begin{proof}
The first assertion follows from Proposition \ref{proposition:k22} and Lemmas \ref{lemma:vanishing} and \ref{lemma:vanishing2mn-4}. 

For the second assertion, there is a nontrivial cocycle $\widetilde \varphi$ in $\HH_{2}^2(\KK_2^2, \KK_2^2)$ given by $\alpha_2 = 1$ and $\alpha_i = 0$ for $i \neq 2$, as in the proof of Proposition \ref{proposition:k22}. Note that $\{ (s, \varphi_s + \widetilde\varphi_s) \}_{s \in \SS_2^2}$ is an actual deformation of $\RR_2^2$, i.e.\ it is reduction-finite and reduction-unique so that $\KK_2^2$ admits a filtered deformation by changing the relation $\bar y_{11} \bar x_{11}+ \bar x_{21} \bar y_{21} + \bar x_{12} \bar y_{12} = 0$ into 
$$
\bar y_{11} \bar x_{11}+ \bar x_{21} \bar y_{21} + \bar x_{12} \bar y_{12} = \bar x_2 \bar y_{32} \bar y_{22} \bar y_{21}.
$$
By Proposition \ref{prop:minimalmodel}, the minimal model of the derived endomorphism algebra of $\Bbbk (\QQ_m^n)_0$ is an A$_\infty$ deformation of $\K_2^2$. 
\end{proof}

\begin{figure}
\begin{tikzpicture}[x=.75em,y=.75em,decoration={markings,mark=at position 0.99 with {\arrow[black]{Stealth[length=4.8pt]}}}]
\draw[->, line width=.5pt] (4,-3) -- ++(2,0);
\draw[->, line width=.5pt] (11,-3) -- ++(2,0);
\draw[->, line width=.5pt] (18,-3) -- ++(2,0);
\draw[->, line width=.5pt] (25,-3) -- ++(2,0);
\node[font=\small] at (25.9,-2.5) {$\ast$};
\draw[->, line width=.5pt] (32,-3) -- ++(2,0);
\begin{scope} % top
\node[font=\small,left] at (-.4,0) {$x_{21}$};
\UP{0} \DN{1} \UP{2} \DN{3}
\RAYUP{0} \CUP{0} \CONNECT{1} \CAP{1} \RAYDN{2} \RAY{3} 
\end{scope}
\begin{scope}[shift={(0,-2)}] % upper
\node[font=\small,left] at (-.4,0) {$x_{22}$};
\DN{0} \UP{1} \UP{2} \DN{3}
\CIRCLE{0} \RAYUP{2} \CUP{2} \RAYUP{3}
\end{scope}
\begin{scope}[shift={(0,-4)}] % lower
\node[font=\small,left] at (-.4,0) {$x_{32}$};
\DN{0} \UP{1} \DN{2} \UP{3}
\CCCUP{0}{1.5} \CONNECT{0} \CAP{0} \CONNECT{1} \CUP{1} \CONNECT{2} \CAP{2} \CONNECT{3}
\end{scope}
\begin{scope}[shift={(0,-7)}] % bottom
\node[font=\small,left] at (-.4,0) {$y_2$};
\UP{0} \DN{1} \UP{2} \DN{3}
\RAYDN{0} \CCCAP{0}{1.5} \CIRCLE{1} \RAYDN{3}
\end{scope}
\begin{scope}[shift={(7,0)}]
\begin{scope} % top
\UP{0} \DN{1} \UP{2} \DN{3}
\RAYUP{0} \CUP{0} \CONNECT{1} \CAP{1} \RAYDN{2} \RAY{3} 
\end{scope}
\begin{scope}[shift={(0,-2)}] % upper
\DN{0} \UP{1} \UP{2} \DN{3}
\CAP{0} \RAYDN{0} \RAYDN{1} \RAYUP{2} \CUP{2} \RAYUP{3}
\end{scope}
\begin{scope}[shift={(0,-4)}] % lower
\DN{0} \UP{1} \DN{2} \UP{3}
\CCCUP{0}{1.5} \RAYUP{0} \RAYUP{1} \CUP{1} \CONNECT{2} \CAP{2} \CONNECT{3}
\end{scope}
\begin{scope}[shift={(0,-7)}] % bottom
\UP{0} \DN{1} \UP{2} \DN{3}
\RAYDN{0} \CCCAP{0}{1.5} \CIRCLE{1} \RAYDN{3}
\end{scope}
\end{scope}
\begin{scope}[shift={(14,0)}]
\begin{scope}[shift={(0,-1)}] % top
\UP{0} \DN{1} \UP{2} \DN{3}
\RAY{0} \RAYDN{1} \CAP{1} \CONNECT{2} \CUP{2} \RAYUP{3}
\end{scope}
\begin{scope}[shift={(0,-3)}] % middle
\UP{0} \DN{1} \UP{2} \DN{3}
\RAYUP{0} \CCCUP{0}{1.5} \RAYUP{1} \CUP{1} \CONNECT{2} \CAP{2} \CONNECT{3}
\end{scope}
\begin{scope}[shift={(0,-6)}] % bottom
\UP{0} \DN{1} \UP{2} \DN{3}
\RAYDN{0} \CCCAP{0}{1.5} \CIRCLE{1} \RAYDN{3}
\end{scope}
\end{scope}
\begin{scope}[shift={(21,0)}]
\begin{scope}[shift={(0,-1.5)}] % top
\UP{0} \UP{1} \DN{2} \DN{3}
\RAYUP{0} \CCCUP{0}{1.5} \CIRCLE{1} \RAYUP{3}
\end{scope}
\begin{scope}[shift={(0,-4.5)}] % bottom
\UP{0} \DN{1} \UP{2} \DN{3}
\RAYDN{0} \CCCAP{0}{1.5} \CIRCLE{1} \RAYDN{3}
\end{scope}
\end{scope}
\begin{scope}[shift={(28,0)}]
\begin{scope}[shift={(0,-2)}] % top
\UP{0} \UP{1} \DN{2} \DN{3}
\RAY{0} \CIRCLE{1} \RAY{3}
\end{scope}
\begin{scope}[shift={(0,-4)}] % bottom
\UP{0} \DN{1} \UP{2} \DN{3}
\RAY{0} \CIRCLE{1} \RAY{3}
\end{scope}
\end{scope}
\begin{scope}[shift={(35,0)}]
\node[font=\small] at (1.5,-5.5) {$x_{11} y_{11}$};
\begin{scope}[shift={(0,-3)}] % upper
\UP{0} \UP{1} \DN{2} \DN{3}
\RAY{0} \CIRCLE{1} \RAY{3}
\end{scope}
\end{scope}
\end{tikzpicture}
\caption{A diagrammatic illustration of the unique A$_\infty$ deformation of $\K_2^2$ given by $m_4 (y_2 x_{32} x_{22} x_{21}) = x_{11} y_{11}$ where the arrow marked with $\ast$ is of degree $-2$ and breaks the usual diagrammatic multiplication rule of $\K_2^2$}
\label{figure:diagramdeformation}
\end{figure}
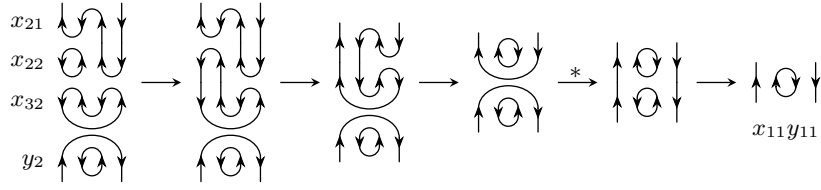

See Fig.~\ref{figure:diagramdeformation} for a diagrammatic interpretation of the A$_\infty$ deformation of $\K_2^2$ given in Corollary \ref{corollary:Ainfinitydeformation22}. (A diagrammatic description of the arrows in $\K_2^2$ is given in Fig.~\ref{figure:22}.) This A$_\infty$ deformation appears to correspond to using the usual surgery/reorientation rules for $\K_m^n$, but breaking the rule $\zeta \otimes \zeta \tikzmapsto 0$ exactly once by using $\zeta \otimes \zeta \tikzmapsto \zeta \otimes \zeta$ of degree $-2$ of the form
\[
\begin{tikzpicture}[x=.6em,y=.6em,decoration={markings,mark=at position 0.99 with {\arrow[black]{Stealth[length=3.8pt]}}}]
\begin{scope} % top
\DDOTS{0} \UP{1} \DDOTS{2} \DN{3} \DDOTS{4}
\draw[line width=.5pt,line cap=round] (1,.5) -- (1,0) arc[start angle=-180, end angle=0, radius=1] -- ++(0,.5);
\draw[dash pattern=on 0pt off 1.3pt, line width=.5pt, line cap=round] (1,.35) -- ++(0,.7);
\draw[dash pattern=on 0pt off 1.3pt, line width=.5pt, line cap=round] (3,.35) -- ++(0,.7);
\end{scope}
\begin{scope}[shift={(0,-3)}] % bottom
\DDOTS{0} \UP{1} \DDOTS{2} \DN{3} \DDOTS{4}
\draw[line width=.5pt,line cap=round] (1,-.5) -- (1,0) arc[start angle=180, end angle=0, radius=1] -- ++(0,-.5);
\draw[dash pattern=on 0pt off 1.3pt, line width=.5pt, line cap=round] (1,-.55) -- ++(0,-.7);
\draw[dash pattern=on 0pt off 1.3pt, line width=.5pt, line cap=round] (3,-.55) -- ++(0,-.7);
\end{scope}
\draw[->, line width=.5pt] (6,-1.5) -- ++(2,0);
\begin{scope}[shift={(10,0)}]
\begin{scope}[shift={(0,-3)}] % bottom
\DDOTS{0} \UP{1} \DDOTS{2} \DN{3} \DDOTS{4}
\draw[dash pattern=on 0pt off 1.3pt, line width=.5pt, line cap=round] (1,-.55) -- ++(0,-.7);
\draw[dash pattern=on 0pt off 1.3pt, line width=.5pt, line cap=round] (3,-.55) -- ++(0,-.7);
\end{scope}
\begin{scope} % top
\DDOTS{0} \UP{1} \DDOTS{2} \DN{3} \DDOTS{4}
\draw[line width=.5pt,line cap=round] (1,.5) -- (1,-3.5);
\draw[line width=.5pt,line cap=round] (3,.5) -- (3,-3.5);
\draw[dash pattern=on 0pt off 1.3pt, line width=.5pt, line cap=round] (1,.55) -- ++(0,.7);
\draw[dash pattern=on 0pt off 1.3pt, line width=.5pt, line cap=round] (3,.55) -- ++(0,.7);
\end{scope}
\end{scope}
\end{tikzpicture}
\]
(For higher $m, n$ also other rules appear to be broken, but we do not give a full description here.) It would also be interesting to compare this to a diagrammatic interpretation of the deformations of $\KK_m^n$ (cf.\ Remark \ref{remark:koszuldiagrammatic}).

\begin{remark}\label{remark:k22}
We also have $\HH_{0}^2(\K_2^2, \K_2^2) \neq 0$. For this, we show that there is a nontrivial cocycle given by $\widetilde \varphi \in \Hom_{\Bbbk Q_0^\e}(\Bbbk\SS_2^2, \KK_2^2)_0 = \Hom_{\Bbbk Q_0^\e}(\Bbbk\SS_2^2, (\KK_2^2)_{2})$ such that 
\begin{align*}
\widetilde\varphi_{\bar y_{11} \bar x_{11}} = \bar x_2 \bar y_2
\end{align*}
and $\widetilde\varphi_s = 0$ for all the other $s \in \SS_2^2$. The overlaps are resolvable in the first-order deformation $\{ (s, \varphi_s + \widetilde\varphi_s t) \}_{s \in \SS_2^2}$ as follows. The involved overlaps are 
$ \bar y_{11} \bar x_{11} \bar x_2$ and $\bar y_2 \bar y_{11} \bar x_{11}$. Let us verify the claim for $\bar y_{11} \bar x_{11} \bar x_2$. We have
\begin{align*}
 \bar y_{11} \bar x_{11} \bar x_2 &\tikzmapsto - \bar x_{21}\bar y_{21} \bar x_2 - \bar x_{12} \bar y_{12} \bar x_2 +\bar x_2 \bar y_2 \bar x_2 t \tikzmapsto 0\\
 \bar y_{11} \bar x_{11} \bar x_2 &\tikzmapsto 0
 \end{align*}
 where we use $\bar y_2 \bar x_2 = 0$ and $\bar x_{11} \bar x_2 = 0$.  Let us show that $\widetilde\varphi$ is not a coboundary. Note that an element $\psi \in \Hom_{\Bbbk Q_0^\e}(\Bbbk (\QQ_2^2)_1, (\KK_2^2)_1)$ has the following general form 
 $\psi(a) =\beta_a a$ for each arrow $a \in \QQ_2^2$ where $\beta_a \in \Bbbk$. Since $\bar x_2 \bar y_2$ does not appear in $\RR_2^2$ it follows that $\delta(\psi)_{\bar y_{11} \bar x_{11}}$ cannot equal $\bar x_2 \bar y_2.$ This shows that $\K_2^2$ also admits a nontrivial (graded) associative deformation.
\end{remark}

\subsection{The general case $\K_m^n$}

The case $(m,n) = (2,2)$ was proved in Proposition \ref{proposition:k22}. We thus assume that $(m, n) \neq (2, 2)$. Although $\KK_m^n$ is generally much more complicated than $\KK_2^2$ (cf.\ Fig.~\ref{figure:graphs}), we may exploit the fact that the long irreducible paths in $\KK_m^n$ have a structure quite similar to those of $\KK_2^2$. (By Lemmas \ref{lemma:vanishing} and \ref{lemma:vanishing2mn-4}, $\HH^2_{2mn-6}$ appearing in Theorem \ref{thm:maintheorem} is the top nonvanishing component of $\HH^2$ with respect to the lower index corresponding to the Adams grading.) The computation of $\HH^2_{2mn-6} (\KK_m^n, \KK_m^n)$ for general $(m, n)$ then parallels the proof for $(m, n) = (2, 2)$ and is for the most part an exercise in labelling the arrows appearing in long paths and keeping track of the indices which is why we relegate the technical computations to the appendix. The arrows appearing in the computation are illustrated in Fig.~\ref{figure:counterexample}.

Note that any element $\widetilde \varphi  \in \Hom_{\Bbbk Q_0^\e}(\Bbbk\SS_m^n, \KK_m^n)_{2mn-6}$ (cf.\ \eqref{cochain22}) is of the form 
\begin{align}\label{align:generalformawidetildephimn}
\begin{aligned}
\widetilde \varphi_{\bar Y_0 \bar X_0} &= \alpha_1 \bar x_{1 \ldots nm-2} \bar y_{nm-2 \ldots 1} + \alpha_2\bar x_{1 \ldots nm-1} \bar y_{nm-1 \ldots n+2} \bar y_{n+1 \ldots 3}^2 \\
&\quad\ + \alpha_3 \bar x_{3 \ldots n+1}^2 \bar x_{n+2 \ldots nm-1} \bar y_{nm-1 \ldots 1} \\
\widetilde \varphi_{\bar Y_1 \bar X_1} &= \alpha_4 \bar x_{2 \ldots nm-1} \bar y_{nm-1 \ldots 2} \\
\widetilde \varphi_{\bar Y_3 \bar X_3} &= \alpha_5 \bar x_{2 \ldots n}^1 \bar x_{n+1 \ldots nm-1} \bar y_{nm-1 \ldots n+1} \bar y_{n \ldots 2}^1 \\
\widetilde \varphi_{\bar X_0 \bar X_7} &= \alpha_6 \bar x_{0 \ldots nm-1} \bar y_{nm-1 \ldots n+2} \bar y_{n+1 \ldots 4}^2 \\
\widetilde \varphi_{\bar Y_7 \bar Y_0} &= \alpha_7 \bar x_{4 \ldots n+1}^2 \bar x_{n+2 \ldots nm-1} \bar y_{nm-1 \ldots 0} \\
\widetilde \varphi_{\bar Y_1 \bar X_3} &= \alpha_8 \bar x_{2 \ldots nm-1} \bar y_{nm-1 \ldots n+1} \bar y_{n \ldots 2}^1 \\
\widetilde \varphi_{\bar Y_2 \bar X_1} &= \alpha_9\bar x_{2 \ldots n}^1 \bar x_{n+1 \ldots nm-1} \bar y_{nm-1 \ldots 2} \\
\widetilde \varphi_{\bar X_3 \bar X_5} &= \alpha_{10} \bar x_{1 \ldots nm-1} \bar y_{nm-1 \ldots n+1} \bar y_{n \ldots 3}^1\\
\widetilde \varphi_{\bar Y_5 \bar Y_3} &= \alpha_{11} \bar x_{3 \ldots n}^1 \bar x_{n+1 \ldots nm-1} \bar y_{nm-1 \ldots 1}
\end{aligned}
\end{align}
and $\widetilde \varphi_s = 0$ for all the other $s \in \SS_m^n$, where $\alpha_1, \dotsc, \alpha_{11} \in \Bbbk$. The arrows appearing in \eqref{align:generalformawidetildephimn} are illustrated in Fig.~\ref{figure:counterexample}. The space $\Hom_{\Bbbk Q_0^\e} (\Bbbk\SS_m^n, \KK_m^n)_{2mn-6}$ is thus of dimension $11$. 

Let $w$ be any overlap of $\SS_m^n$ starting from $e_\lambda$ and ending at $e_\mu$ for some weights $\lambda, \mu$. Observe that $\lvert \lambda \rvert +  \lvert \mu \rvert \leq 2mn-4$ (for instance, the equality only holds for $\bar y_0 \bar x_0 \bar x_3^2$ and $\bar y_3^2 \bar y_0 \bar x_0$). By Remark \ref{remark:irreduciblegrading}, there are no irreducible paths of length $\geq 2mn-3$ parallel to $w$. So all overlaps will reduce to zero after using $\widetilde \varphi_s$, whence $\widetilde R = \{ (s, \varphi_s + \widetilde\varphi_s t) \}_{s \in \SS_m^n}$ is vacuously reduction-unique --- even when evaluating $t$ to any constant in $\Bbbk$.

\begin{proposition}\label{proposition:coboundary}
The element $\widetilde \varphi$ in \eqref{align:generalformawidetildephimn} induces a trivial first-order deformation of $R$ if and only if the coefficients $\alpha_1, \dotsc, \alpha_{11}$ satisfy
\begin{align}\label{align:constraint}
-(-1)^n\alpha_2 + (-1)^n \alpha_3 - (-1)^n \alpha_6 + (-1)^n  \alpha_7 - \alpha_8 + \alpha_9 + \alpha_{10} - \alpha_{11} = 0.
\end{align}
\end{proposition}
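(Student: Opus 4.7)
The plan is to mirror the strategy of Proposition \ref{proposition:k22} but now applied to the full quiver $\QQ_m^n$, leveraging the fact that only the ``long'' arrows around the longest irreducible path (Fig.~\ref{figure:counterexample}) can contribute in Adams degree $2mn-6$. First, I would parametrise an arbitrary $\psi \in \Hom_{\Bbbk Q_0^\e}(\Bbbk (\QQ_m^n)_1, \KK_m^n)_{2mn-6}$. For each arrow $a$, the space of irreducible paths of Adams degree $2mn-5$ parallel to $a$ is finite-dimensional, and by Remark \ref{remark:irreduciblegrading} only the arrows $\bar x_i, \bar y_i, \bar x_i^j, \bar y_i^j$ (and their primed analogues) from Fig.~\ref{figure:counterexample} and Fig.~\ref{figure:counterexample1} are relevant, since an arrow in ``low position'' admits no parallel irreducible path of high enough length. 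This gives $\psi_a = \sum \mu_a^{(k)} p_a^{(k)}$ for certain coefficients $\mu_a^{(k)} \in \Bbbk$ and a finite list of irreducible paths $p_a^{(k)}$ parallel to $a$.

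Next, for each of the eleven elements $s \in \SS_m^n$ appearing with a nonzero $\widetilde \varphi_s$ in \eqref{align:generalformawidetildephimn}, I would compute $\delta(\psi)_s = T(\psi)(s - \varphi_s)$ using the formula from \eqref{align:gaugeequivalent}, reducing the resulting expressions to $\Bbbk \mathrm{Irr}_{\SS_m^n}$. The necessary reductions are precisely those encoded in Lemma \ref{lemma:higherrelations1}, Lemma \ref{lemma:higherrelations3} and Lemma \ref{lemma:higherrelations4}: these allow me to turn any product $\bar y_i \bar x_{i \ldots j}$ or $\bar y_i \bar x_{i \ldots nm-2} \bar x_{n(m-1)-1 \ldots nm-3}^0 \bar y \cdots$ into a single irreducible path of the form $\bar x_{i+1 \ldots nm-1} \bar y_{nm-1 \ldots k}$, up to the sign tracked in those lemmas. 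Performing this reduction for each $\delta(\psi)_s$ produces a linear combination in the basis of irreducible paths parallel to $s$ of length $2mn-4$, matching the three monomials in $\widetilde \varphi_{\bar Y_0 \bar X_0}$ and the single monomial in each other $\widetilde \varphi_s$.

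Setting $\delta(\psi)_s = \widetilde \varphi_s$ then yields a system of eleven linear equations in the parameters $\mu_a^{(k)}$ (one equation for each $\alpha_i$, with three equations coming from $\widetilde \varphi_{\bar Y_0 \bar X_0}$ since it contains three irreducible summands). Based on the $(m,n)=(2,2)$ model case in Proposition \ref{proposition:k22}, I expect the coefficient matrix of this system to have rank exactly $10$, and the single relation amongst the rows to be the alternating sum \eqref{align:constraint} with signs $(-1)^n$ in the slots involving the cubic relations of Lemma \ref{lemma:cubicrelations} (these arise from the $(-1)^{k-1}$ in that lemma applied with $k=n$). Concretely, I would verify \eqref{align:constraint} by taking a specific alternating sum of the eleven equations and checking that all $\mu$- and $\nu$-contributions cancel; the sign pattern $(-1)^n$ in front of $\alpha_2, \alpha_3, \alpha_6, \alpha_7$ is explained by Lemma \ref{lemma:higherrelations3}, while the remaining signs come from the anticommutativity relations across squares (Proposition \ref{proposition:relations}).

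Finally, to establish that \eqref{align:constraint} is \emph{sufficient}, I would exhibit, for each of ten independent choices of $(\alpha_1, \dotsc, \alpha_{11})$ satisfying \eqref{align:constraint}, an explicit $\psi$ realising $\delta(\psi) = \widetilde \varphi$; the simplest such choices set one $\alpha_i$ equal to $\pm 1$, another to $\mp 1$ as dictated by \eqref{align:constraint}, and all remaining $\alpha_j$ to zero, whereupon $\psi$ can be taken supported on a single arrow. The main obstacle will not be conceptual but bookkeeping: tracking the signs from Lemmas \ref{lemma:higherrelations1}, \ref{lemma:higherrelations3} and \ref{lemma:higherrelations4} through the long reductions, and verifying that the Plücker-type relations appearing when $\psi_{\bar x_i^j}$ or $\psi_{\bar y_i^j}$ interact with $s \in \SS_m^n$ of type (IV) do not introduce additional independent constraints beyond \eqref{align:constraint}. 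Once these sign computations are performed carefully, the argument runs exactly parallel to the proof of Proposition \ref{proposition:k22} and yields that the quotient of the $11$-dimensional cochain space by the coboundaries is precisely cut out by the single equation \eqref{align:constraint}.
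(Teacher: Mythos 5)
Your proposal follows essentially the same route as the paper: parametrise $\psi$ supported on the arrows of Fig.~\ref{figure:counterexample}, reduce each $\delta(\psi)_s$ to the irreducible basis via Lemmas \ref{lemma:higherrelations1}, \ref{lemma:higherrelations3}, \ref{lemma:higherrelations4} (invoking Observation \ref{keyobservation} to kill the spurious Plücker terms), and read off a rank-$10$ linear system whose unique dependency is \eqref{align:constraint}. The paper's appendix makes this precise with a $30$-dimensional parameter space for $\psi$ (coefficients $\mu_1,\dotsc,\mu_{15},\nu_1,\dotsc,\nu_{15}$) and explicitly records the eleven equations in Lemma \ref{lemma:appendix}, which is exactly the bookkeeping you correctly identify as the remaining work.
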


The proof, which relies on some long and technical computations, can be found in the appendix. Proposition \ref{proposition:coboundary} directly yields our main result Theorem \ref{thm:maintheorem}. 
    
\begin{proof}[Proof of Theorem \ref{thm:maintheorem}]
It follows from Proposition \ref{proposition:coboundary} that the coboundary space is of dimension $10$. Thus, $\HH_{2mn-6}^2(\KK_m^n, \KK_m^n)$ is $1$-dimensional.
\end{proof}
 
\begin{corollary}\label{corollary:Ainfinitydeformation}
For $m, n\geq 2$, the algebra $\K_m^n$ admits a unique A$_\infty$ deformation such that the only nontrivial higher product restricted to arrows is given by 
\begin{align*}
m_{2mn-4}(y_1 \otimes \dotsb \otimes y_{nm-1} \otimes x_{nm-1} \otimes \dotsb \otimes x_{n+2} \otimes x_{n+1}^2 \otimes \dotsb \otimes x_3^2) = x_0 y_0
\end{align*}
where the arrows in this formula are illustrated in Fig.~\ref{figure:counterexample}.
\end{corollary}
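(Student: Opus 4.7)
The plan is to derive this from Theorem \ref{thm:maintheorem} together with the Koszul-dual construction of Proposition \ref{prop:minimalmodel}. By Theorem \ref{theorem:keller} and Remark \ref{remark:firstorderdeformationhh}, the group $\HH^2_{2mn-6}(\K_m^n, \K_m^n) \simeq \HH^2_{2mn-6}(\KK_m^n, \KK_m^n)$ classifies (up to equivalence) first-order A$_\infty$ deformations of $\K_m^n$ whose only nontrivial higher product is $m_{2mn-4}$; Theorem \ref{thm:maintheorem} shows this group is one-dimensional. Combined with the vanishing $\HH^2_q(\K_m^n, \K_m^n) = 0$ for $q > 2mn-6$ provided by Lemmas \ref{lemma:vanishing} and \ref{lemma:vanishing2mn-4}, this already yields uniqueness of the claimed deformation up to A$_\infty$-equivalence.

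For existence I would exhibit an explicit representative on the Koszul-dual side. Take the cocycle $\widetilde \varphi$ from \eqref{align:generalformawidetildephimn} with $\alpha_2 = 1$ and all other $\alpha_i = 0$, so that $\widetilde\varphi_{\bar y_0 \bar x_0}$ is the single length-$(2mn-4)$ irreducible word displayed in \eqref{align:generalformawidetildephimn}. This cocycle represents a nontrivial class, since \eqref{align:constraint} is violated. Crucially, the observation in the paragraph preceding Proposition \ref{proposition:coboundary} shows that all overlap ambiguities of the perturbed reduction system $\{(s, \varphi_s + \widetilde\varphi_s t)\}_{s \in \SS_m^n}$ are vacuously resolvable for any value of $t$, since no parallel irreducible paths of the requisite length exist. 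Bergman's Diamond Lemma (Theorem \ref{theorem:diamond}) therefore upgrades this first-order deformation to a genuine filtered associative deformation $\widetilde{\KK_m^n}$ of $\KK_m^n$ (e.g.\ by specialising $t=1$), and Proposition \ref{prop:minimalmodel} produces a nontrivial A$_\infty$ deformation of $\K_m^n$ as the minimal model of $\Ext^\ldot_{\widetilde{\KK_m^n}}(\Bbbk (\QQ_m^n)_0, \Bbbk (\QQ_m^n)_0)$.

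The explicit formula for $m_{2mn-4}$ on arrows is then extracted by tracing the homotopy transfer in the proof of Proposition \ref{prop:minimalmodel}. The perturbation $\delta = \widetilde \partial - \partial$ is supported on the single relation $\bar y_0 \bar x_0 - \varphi_{\bar y_0 \bar x_0}$ of the Koszul resolution and sends it to $\widetilde \varphi_{\bar y_0 \bar x_0}$, which by Remark \ref{remark:irreduciblegrading} already lies in the top Adams-degree piece of $\KK_m^n$. Hence $h^\ldot \delta G^\ldot$ vanishes on it and the geometric series in the homological perturbation lemma terminates at the first power of $\delta$, so the transferred minimal model carries a single nontrivial higher product on arrows. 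By the nondegenerate pairing $\langle a_1 \dotsb a_k, \bar b_k \dotsb \bar b_1\rangle = \prod_i \delta_{a_i, b_i}$, the input of this product is precisely the reversal of the arrow sequence of $\widetilde\varphi_{\bar y_0 \bar x_0}$ (with $\bar x \leftrightarrow x$, $\bar y \leftrightarrow y$) and the output is $x_0 y_0$ (the dual of $\bar y_0 \bar x_0$), yielding the formula in the statement.

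The hard part will be the last bookkeeping step: verifying that the homotopy transfer really produces no additional correction terms and that the reversed arrow sequence matches the one displayed in the statement under the labelling of Fig.~\ref{figure:counterexample}. This reduces to a careful but essentially mechanical examination of the Koszul pairing applied to the specific irreducible word $\widetilde\varphi_{\bar y_0 \bar x_0}$, together with the Adams-degree argument that rules out all higher $m_k$ on arrows.
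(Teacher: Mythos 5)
Your proposal mirrors the paper's own argument precisely: you take the cocycle $\widetilde\varphi$ from \eqref{align:generalformawidetildephimn} with $\alpha_2=1$ and all other $\alpha_i=0$, observe via the degree bound in Remark~\ref{remark:irreduciblegrading} that the perturbed reduction system is vacuously reduction-unique so that (specialising $t$) one obtains a genuine filtered deformation $B$ of $\KK_m^n$, and then invoke Proposition~\ref{prop:minimalmodel} to produce the A$_\infty$ deformation of $\K_m^n$ as $\Ext^\hdot_B(\Bbbk(\QQ_m^n)_0,\Bbbk(\QQ_m^n)_0)$. The uniqueness argument via $\dim\HH^2_{2mn-6}=1$ and the vanishing in Lemmas~\ref{lemma:vanishing}, \ref{lemma:vanishing2mn-4} is what the paper intends as well.

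There is one factual slip in your elaboration of the homotopy-transfer step, and one place where you should be more careful. The slip: $\widetilde\varphi_{\bar y_0\bar x_0}$ has Adams degree $2mn-4$, whereas by Remark~\ref{remark:irreduciblegrading} the top Adams-degree piece of $e_\lambda\KK_m^n e_\lambda$ at the highest weight $\lambda$ is $2mn$ (spanned by the longest irreducible path). So $\widetilde\varphi_{\bar y_0\bar x_0}$ does \emph{not} lie at the top, and you cannot conclude $h^\hdot\delta G^\hdot=0$ for that reason. The fact $\delta G^\hdot=0$, proved in the body of Proposition~\ref{prop:minimalmodel}, holds for all filtered deformations by a path-length count, and should be cited directly. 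The place to be careful: $\delta G^\hdot=0$ gives $d_\infty=0$, but the transferred higher products $m_k$ are not given by $d_\infty$ — they come from the tree formulas of the A$_\infty$ homotopy transfer theorem — so the claim that ``the geometric series terminates at the first power of $\delta$'' does not by itself show that $m_k=0$ on arrows for $k\neq 2, 2mn-4$. That final bookkeeping (pinning down that no other $m_k$ is nontrivial on arrows and matching the reversed word to the arrow labels of Fig.~\ref{figure:counterexample}) is genuinely left implicit in the paper's proof as well, and you correctly flag it as the remaining work; the intended argument is an Adams-degree count on the transfer trees, since each occurrence of $\delta$ raises path length by exactly $2mn-6$ while the homotopy $h^\hdot$ preserves it.
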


\begin{proof}
Consider  $\widetilde \varphi$ in \eqref{align:generalformawidetildephimn} and take $\alpha_2 = 1$ and $\alpha_i = 0$ for $i \neq 2$. By Proposition \ref{proposition:coboundary} this $\widetilde \varphi$ gives a nontrivial cocycle of $\HH_{2mn-6}^2(\KK_m^n, \KK_m^n)$. As in the proof of Corollary \ref{corollary:Ainfinitydeformation22} this induces a filtered deformation of $\KK_m^n$, namely the algebra 
$$
B:=\Bbbk \QQ_m^n / (s-\varphi_s - \widetilde \varphi_s)_{s \in \SS_m^n}
$$
is a filtered deformation of $\KK_m^n$. By Proposition \ref{prop:minimalmodel} we obtain that the minimal model $\Ext_B^\hdot (\Bbbk Q_0, \Bbbk Q_0)$ is an A$_\infty$ deformation of $\K_m^n$. 
\end{proof}

See Fig.~\ref{figure:diagramdeformation} for a diagrammatic interpretation of this deformation for the case $m = n = 2$.

\begin{remark}[The case $m\geq n = 1$]
If $m \geq n =1$ then by \cite[Lem.~4.21]{seidelthomas} one has
$$
\HH_{i-2}^2(\K_m^1, \K_m^1) \simeq \HH_{i-2}^2(\KK_m^1, \KK_m^1) = 0 \qquad \text{for $i > 0$}
$$
i.e.\ Stroppel's Conjecture \ref{conjecture:stroppel} holds. Using the language of reduction systems, this can be seen as follows. The reduction system $\RR_m^1$ for $\KK_m^1$ (Definition \ref{definition:reductionsystem}) is
\[
\{(\bar y_k \bar x_k, -\bar x_{k+1} \bar y_{k+1}) \}_{0 \leq k \leq m-2} \cup \{ (\bar y_{m-1} \bar x_{m-1}, 0) \}.
\]
By Lemma \ref{lemma:vanishing} it suffices to prove  $\HH_{2i-2}^2(\KK_m^1, \KK_m^1) = 0$ for $1\leq i \leq m-1$.    For any fixed $i$, an element $\widetilde \varphi\in \Hom_{\Bbbk Q_0^\e}(\Bbbk\SS_m^1, \KK_m^1)_{2i-2}$ has the following general form 
$$
\widetilde \varphi_{\bar y_k \bar x_k} = 
\begin{cases}
\alpha_k \bar x_{k+1 \ldots k+i} \bar y_{k+i \ldots k+1} & \text{if } 0\leq k < m-i \\
0 & \text{otherwise}.
\end{cases}
$$
Since there are no overlaps, any such $\widetilde\varphi$ is a cocycle. We claim that it is also a coboundary. Indeed, consider $\psi \in \Hom_{\Bbbk Q_0^\e} (\Bbbk (\QQ_m^1)_1, \KK_m^1)_{2i-2}$ given by 
$$
\psi_{\bar x_k} = \beta_k \bar x_{k \ldots k+i-1} \bar y_{k+i-1 \ldots k+1}
$$
for each $0 \leq k< m-i$, where $\beta_k := \sum_{j=0}^{m-i-1-k} (-1)^{(i+1)j} \alpha_{k+j}$. Then we have 
\[
\delta (\psi)_{\bar y_k \bar x_k} = \bar y_k \psi_{\bar x_k} + \psi_{\bar x_{k+1}} \bar y_{k+1} = ((-1)^i \beta_k + \beta_{k+1}) \bar x_{k+1 \ldots k+i} \bar y_{k+i \ldots k+1}.
\]
Noting that $(-1)^i \beta_k + \beta_{k+1} = \alpha_k$ this shows that $\delta (\psi) = \widetilde \varphi$.
\end{remark}

\begin{figure} 
\begin{tikzpicture}[x=2em,y=3.4em,decoration={markings,mark=at position 0.99 with {\arrow[black]{Stealth[length=3pt]}}}]
\begin{scope}[scale=0.9]
\begin{scope}[shift={(0,0)}]
\foreach \x in {0,1,...,4}
\node[shape=ellipse,minimum height=2.4em,minimum width=2.8em] (a\x) at (0, -\x) {};
\path[-, line width=.5pt, line cap=round] (a0) edge node[left=-.4ex, yshift=-.35em,font=\tiny,pos=.1] {$\bar Y_0=\bar y_0$} (a1);
\path[-, line width=.5pt, line cap=round] (a1) edge node[left=-.4ex, yshift=-.35em,font=\tiny,pos=.1] {$\bar Y_1=\bar y_1$} (a2);
\path[-, line width=.5pt, line cap=round] (a2) edge node[left=-.4ex, yshift=-.35em,font=\tiny,pos=.1] {$\bar Y_2=\bar y_2$} (a3);
\path[-, line width=.5pt, line cap=round] (a3) edge node[left=-.4ex, yshift=-.35em,font=\tiny,pos=.1] {$\bar y_3$} (a4);
\foreach \x in {1,2,...,4}
{
\node[shape=ellipse,minimum height=2.4em,minimum width=2.8em](b\x) at (2, -1-\x) {};
}
\path[-, line width=.5pt, line cap=round] (b1) edge node[left=-.4ex, xshift=-.16em, yshift=-0em,font=\tiny,pos=.1] {$\bar Y_3$}(a1);
\path[-, line width=.5pt, line cap=round] (b2) edge node[right=.4ex, xshift=-1.4em, yshift=1.2em,font=\tiny,pos=.1] {$\bar y_4^3$} node[left=-.4ex, xshift=-.16em, yshift=-0em,font=\tiny,pos=.1] {$\bar Y_4$}(a2);
\path[-, line width=.5pt, line cap=round] (b3) edge node[right=.4ex, xshift=-1.4em, yshift=1.2em,font=\tiny,pos=.1] {$\bar y_3^{1'}$}(a3);
\path[-, line width=.5pt, line cap=round] (b4) edge node[right=.4ex, xshift=-1.4em, yshift=1.2em,font=\tiny,pos=.1]{}(a4);

\path[-, line width=.5pt, line cap=round] (b1) edge node[right=-.4ex, yshift=-.35em,font=\tiny,pos=.1] {$\bar Y_5=\bar y_2^1$} (b2);
\path[-, line width=.5pt, line cap=round] (b2) edge node[right=-.4ex, yshift=-.35em,font=\tiny,pos=.1] {$\bar y_3^1$} (b3);
\path[-, line width=.5pt, line cap=round] (b3) edge node[right=-.4ex, yshift=-.35em,font=\tiny,pos=.1] {$\bar y_4^1$} (b4);
\foreach \x in {2,3,4}
{
\node[shape=ellipse,minimum height=2.4em,minimum width=2.8em](c\x) at (4, -2-\x) {};
}
\path[-, line width=.5pt, line cap=round] (c2) edge node[right=.4ex, xshift=-.95em, yshift=.55em,font=\tiny,pos=.1] {$\bar y_3^{2'}$} (b2);
\path[-, line width=.5pt, line cap=round] (c3) edge node[right=.4ex, xshift=-1em, yshift=.8em,font=\tiny,pos=.1] {$\bar y_4^{2'}$} (b3);
\path[-, line width=.5pt, line cap=round] (c4) edge node[left=-.4ex, yshift=-.35em,font=\tiny,pos=.1] {} (b4);

\path[-, line width=.5pt, line cap=round] (c2) edge node[right=-.4ex, yshift=-.35em,font=\tiny,pos=.1] {$\bar y_4^2$} (c3);
\path[-, line width=.5pt, line cap=round] (c3) edge node[right=-.4ex, yshift=-.35em,font=\tiny,pos=.1] {$\bar y_5^2$} (c4);
\foreach \x in {3,4}
\node[shape=ellipse,minimum height=2.4em,minimum width=2.8em](d\x) at (6, -3-\x) {};
\path[-, line width=.5pt, line cap=round] (c3) edge node[right=.4ex, xshift=.55em, yshift=-.65em,font=\tiny,pos=.1] {$\bar y_5^{3'}$} (d3);
\path[-, line width=.5pt, line cap=round] (c4) edge node[left=-.4ex,  yshift=-.4em,font=\tiny,pos=.1] {} (d4);
\path[-, line width=.5pt, line cap=round] (d3) edge node[right=-.4ex, yshift=-.35em,font=\tiny,pos=.1] {$\bar y_6^3$} (d4);
\foreach \x in {1,2,...,4} {%
\node[shape=ellipse,minimum height=2.4em,minimum width=2.8em](e\x) at (12, -2-\x) {};
}
\foreach \x in {3,4}
\path[-, line width=.5pt, line cap=round] (e\x) edge node[left=-.4ex, yshift=-.35em,font=\tiny,pos=.1] {} (b\x);
\path[-, line width=.5pt, line cap=round] (e1) edge node[right=.4ex, xshift=-7em, yshift=1.5em,font=\tiny,pos=.1] {$\bar Y_6$} (b1);
\path[-, line width=.5pt, line cap=round] (e2) edge node[right=.4ex, xshift=-7em, yshift=1.8em,font=\tiny,pos=.1] {$\bar y_3^{4'}$} (b2);

\foreach \x in {2, 3, 4} {
\node[shape=ellipse,minimum height=2.4em,minimum width=2.8em](f\x) at (15, -3-\x) {};
}
\path[dotted, line width=.3pt] (f2) edge node[right=.4ex, xshift=-7.5em, yshift=1.55em,font=\tiny,pos=.1] {$\bar y_{4}^{7'}$} (c2);
\path[dotted, line width=.3pt] (f3) edge node[left=-.4ex, yshift=1.35em,font=\tiny,pos=.1] {} (c3);
\path[dotted, line width=.3pt] (f4) edge node[left=-.4ex, yshift=-.35em,font=\tiny,pos=.1] {} (c4);
\path[dotted, line width=.3pt] (f2) edge node[right=.4ex, yshift=-.35em,font=\tiny,pos=.1] {$\bar y_5^7$} (f3);
\path[dotted, line width=.3pt] (f3) edge node[right=.4ex, yshift=-.35em,font=\tiny,pos=.1] {$\bar y_6^7$} (f4);
\path[-, line width=.5pt, line cap=round] (e1) edge node[right=-.4ex, yshift=-.35em,font=\tiny,pos=.1] {$\bar y_3^4$} (e2);
\path[-, line width=.5pt, line cap=round] (e2) edge node[right=-.4ex, yshift=-.35em,font=\tiny,pos=.1] {$\bar y_4^4$} (e3);
\path[-, line width=.5pt, line cap=round] (e3) edge node[right=-.4ex, yshift=-.35em,font=\tiny,pos=.1] {$\bar y_5^4$} (e4);
\draw[-,line width=.5pt, line cap=round] (c2) .. controls (5, -2).. (a1);
\draw[-,line width=.5pt, line cap=round] (b2) .. controls (7, -4).. (d3);
\begin{scope}[shift={(4.5,-1.6)}]
\node[font=\scriptsize] at (0,0) {$\bar Y_7=\bar y_3^2$};
\end{scope}
\begin{scope}[shift={(6.5,-3.85)}]
\node[font=\scriptsize] at (0,0) {\strut$\bar y_5^3$};
\end{scope}

%a0
\begin{scope}[shift={(-.5em,0)},x=.27em,y=.45em]
\UP{0} \DN{1}
\MRAY{0} \MRAY{1}
\node[font=\scriptsize] at (3,0) {$=$};
\UP{5}
\node[font=\scriptsize] at (6.5,0) {.\hspace{-.3pt}.\hspace{-.3pt}.};
\node[font=\scriptsize] at (10.5,0) {.\hspace{-.3pt}.\hspace{-.3pt}.};
\UP{8} \DN{9} \DN{12}
\RAY{5} \RAY{8} \RAY{9} \RAY{12}
\node[font=\small] at (6.5,1.8) {\rotatebox{-90}{$\{$}};
\node[font=\scriptsize] at (6.5,3.1) {\strut$_{n}$};
\node[font=\small] at (10.5,1.8) {\rotatebox{-90}{$\{$}};
\node[font=\scriptsize] at (10.5,3.1) {\strut$_{m}$};
\end{scope}
%a1
\begin{scope}[shift={(-.5em,-1)},x=.27em,y=.45em]
\UP{0} \DN{1} \UP{2} \DN{3}
\MRAY{0} \CIRCLE{1} \MRAY{3}
\node[font=\scriptsize] at (5,0) {$=$};
\UP{7} \UP{10} \DN{11} \UP{12} \DN{13} \DN{16}
\node[font=\scriptsize] at (8.5,0) {.\hspace{-.3pt}.\hspace{-.3pt}.};
\node[font=\scriptsize] at (14.5,0) {.\hspace{-.3pt}.\hspace{-.3pt}.};
\RAY{7} \RAY{10} \CIRCLE{11} \RAY{13} \RAY{16}
\node[font=\small] at (8.5,1.8) {\rotatebox{-90}{$\{$}};
\node[font=\scriptsize] at (8.5,3.1) {\strut$_{n-1}$};
\node[font=\small] at (14.5,1.8) {\rotatebox{-90}{$\{$}};
\node[font=\scriptsize] at (14.5,3.1) {\strut$_{m-1}$};
\end{scope}
%a2
\begin{scope}[shift={(-.5em,-2)},x=.27em,y=.45em]
\UP{0} \DN{1} \UP{2} \UP{3} \DN{4}
\MRAY{0} \CIRCLE{1} \RAY{3} \MRAY{4}
\end{scope}
%a3
\begin{scope}[shift={(-.6em, -3)},x=.27em,y=.45em]
\UP{0} \DN{1} \UP{2} \UP{3} \UP{4} \DN{5}
\MRAY{0} \CIRCLE{1} \RAY{3} \RAY{4} \MRAY{5}
\end{scope}
%a4
\begin{scope}[shift={(-.5em, -4)},x=.27em,y=.45em]
\UP{0} \DN{1} \UP{2} \UP{3} \UP{4} \UP{5} \DN{6}
\MRAY{0} \CIRCLE{1} \RAY{3} \RAY{4} \RAY{5}  \MRAY{6}
\end{scope}
%b1
\begin{scope}[shift={(1.8,-2)},x=.27em,y=.45em]
\UP{0} \DN{1} \DN{2} \UP{3} \DN{4}
\MRAY{0} \RAY{1} \CIRCLE{2} \MRAY{4}
\end{scope}
%b2
\begin{scope}[shift={(1.78,-3)},x=.27em,y=.45em]
\UP{0} \DN{1} \UP{2} \DN{3} \UP{4} \DN{5	}
\MRAY{0}  \CIRCLE{1} \CIRCLE{3}  \MRAY{5}
\end{scope}
%b3
\begin{scope}[shift={(1.70,-4)},x=.27em,y=.45em]
\UP{0} \DN{1} \UP{2} \UP{3} \DN{4} \UP{5} \DN{6}
\MRAY{0} \CIRCLE{1} \RAY{3} \CIRCLE{4} \MRAY{6}
\end{scope}
%b4
\begin{scope}[shift={(1.65,-5)},x=.27em,y=.45em]
\UP{0} \DN{1} \UP{2} \UP{3} \UP{4} \DN{5} \UP{6} \DN{7}
\MRAY{0} \CIRCLE{1} \RAY{3} \RAY{4}  \CIRCLE{5} \MRAY{7}
\end{scope}
%c2
\begin{scope}[shift={(3.8,-4)},x=.27em,y=.45em]
\UP{0} \DN{1} \DN{2} \UP{3} \UP{4} \DN{5}
\MRAY{0} \SCCIRCLE{1}  \CIRCLE{2} \MRAY{5}
\end{scope}
%c3
\begin{scope}[shift={(3.8,-5)},x=.27em,y=.45em]
\UP{0} \DN{1} \UP{2} \DN{3} \UP{4} \UP{5} \DN{6}
\MRAY{0} \CIRCLE{1}  \CIRCLE{3} \RAY{5} \MRAY{6}
\end{scope}
%c4
\begin{scope}[shift={(3.8,-6)},x=.27em,y=.45em]
\UP{0} \DN{1} \UP{2} \UP{3} \DN{4} \UP{5} \UP{6} \DN{7}
\MRAY{0} \CIRCLE{1}  \RAY{3} \CIRCLE{4} \RAY{6} \MRAY{7}
\end{scope}
%d3
\begin{scope}[shift={(5.6,-6)},x=.27em,y=.45em]
\UP{0} \DN{1} \DN{2} \UP{3} \UP{4} \UP{5} \UP{6} 
\MRAY{0} \SCCIRCLE{1}   \CIRCLE{2} \RAY{5} \MRAY{6}
\end{scope}
%d4
\begin{scope}[shift={(5.6,-7)},x=.27em,y=.45em]
\UP{0} \DN{1} \UP{2} \DN{3} \UP{4} \UP{5} \UP{6}  \DN{7}
\MRAY{0} \CIRCLE{1}   \CIRCLE{3} \RAY{5} \RAY{6} \MRAY{7}
\end{scope}
%e0
\begin{scope}[shift={(11.5,-3)},x=.27em,y=.45em]
\UP{0} \DN{1} \DN{2} \DN{3} \UP{4} \DN{5} 
\MRAY{0} \RAY{1} \RAY{2} \CIRCLE{3} \MRAY{5}
\end{scope}
%e1
\begin{scope}[shift={(11.5,-4)},x=.27em,y=.45em]
\UP{0} \DN{1} \UP{2} \DN{3} \DN{4} \UP{5}  \DN{6}
\MRAY{0} \CIRCLE{1} \RAY{3} \CIRCLE{4} \MRAY{6}
\end{scope}
%e2
\begin{scope}[shift={(11.5,-5)},x=.27em,y=.45em]
\UP{0} \DN{1} \UP{2} \UP{3} \DN{4} \DN{5}  \UP{6} \DN{7}
\MRAY{0} \CIRCLE{1} \RAY{3} \RAY{4} \CIRCLE{5} \MRAY{7}
\end{scope}
%e3
\begin{scope}[shift={(11.5,-6)},x=.27em,y=.45em]
\UP{0} \DN{1} \UP{2} \UP{3} \UP{4}  \DN{5} \DN{6}  \UP{7} \DN{8}
\MRAY{0} \CIRCLE{1} \RAY{3} \RAY{4} \RAY{5} \CIRCLE{6} \MRAY{8}
\end{scope}
%f2
\begin{scope}[shift={(14.5,-5)},x=.27em,y=.45em]
\UP{0} \DN{1} \DN{2} \UP{3} \DN{4} \UP{5} \DN{6}
\MRAY{0} \RAY{1} \CIRCLE{2} \CIRCLE{4} \MRAY{6}
\end{scope}
%f3
\begin{scope}[shift={(14.5,-6)},x=.27em,y=.45em]
\UP{0} \DN{1} \UP{2} \DN{3} \UP{4} \DN{5} \UP{6} \DN{7}
\MRAY{0} \CIRCLE{1} \CIRCLE{3} \CIRCLE{5} \MRAY{7}
\end{scope}
%f4
\begin{scope}[shift={(14.5,-7)},x=.27em,y=.45em]
\UP{0} \DN{1} \UP{2} \UP{3} \DN{4} \UP{5} \DN{6} \UP{7} \DN{8}
\MRAY{0} \CIRCLE{1} \RAY{3} \CIRCLE{4} \CIRCLE{6} \MRAY{8}
\end{scope}

\end{scope}

\begin{scope}[shift={(0, -6)}]
\foreach \x in {0,1,...,5}
\node[shape=ellipse,minimum height=2.4em,minimum width=2.8em] (a\x) at (0, -\x) {};
\path[-, line width=.5pt, line cap=round] (a0) edge node[left=-.4ex, yshift=-.35em,font=\tiny,pos=.1] {$\bar y_{n-1}$} (a1);
\path[-, line width=.5pt, line cap=round] (a1) edge node[left=-.4ex, yshift=-.35em,font=\tiny,pos=.1] {$\bar y_{n}$} (a2);
\path[-, line width=.5pt, line cap=round] (a2) edge node[left=-.4ex, yshift=-.35em,font=\tiny,pos=.1] {$\bar y_{n+1}$} (a3);
\path[-, line width=.5pt, line cap=round] (a3) edge node[left=-.4ex, yshift=-.35em,font=\tiny,pos=.1] {$\bar y_{n+2}$} (a4);
\path[-, line width=.5pt, line cap=round] (a4) edge node[left=-.4ex, yshift=-.35em,font=\tiny,pos=.1] {$\bar y_{n+3}$} (a5);
\begin{scope}[shift={(9,-6)}]
\node[font=\scriptsize] at (0,0) {\strut$\bar y_{n+5}^6$};
\end{scope}
\begin{scope}[shift={(7.3,-3.35)}]
\node[font=\scriptsize] at (0,0) {\strut$\bar y_{n+3}^5$};
\end{scope}
\node[shape=ellipse,minimum height=2.4em,minimum width=2.8em](b0) at (2, -1) {};
\node[shape=ellipse,minimum height=2.4em,minimum width=2.8em](b1) at (2, -5) {};
\node[shape=ellipse,minimum height=2.4em,minimum width=2.8em](b2) at (2, -6) {};
\node[shape=ellipse,minimum height=2.4em,minimum width=2.8em](c0) at (4, -2) {};
\node[shape=ellipse,minimum height=2.4em,minimum width=2.8em](c1) at (4, -6) {};
\node[shape=ellipse,minimum height=2.4em,minimum width=2.8em](c2) at (4, -7) {};
\node[shape=ellipse,minimum height=2.4em,minimum width=2.8em](d0) at (6, -3) {};
\node[shape=ellipse,minimum height=2.4em,minimum width=2.8em](d1) at (6, -7) {};
\node[shape=ellipse,minimum height=2.4em,minimum width=2.8em](d2) at (6, -8) {};
\node[shape=ellipse,minimum height=2.4em,minimum width=2.8em](e0) at (12, -2) {};
\node[shape=ellipse,minimum height=2.4em,minimum width=2.8em](e1) at (12, -3) {};
\node[shape=ellipse,minimum height=2.4em,minimum width=2.8em](f0) at (7,-4) {};
\node[shape=ellipse,minimum height=2.4em,minimum width=2.8em](g0) at (9,-5) {};
\node[shape=ellipse,minimum height=2.4em,minimum width=2.8em](a-1) at (0,1) {};
\node[shape=ellipse,minimum height=2.4em,minimum width=2.8em](b-1) at (2,0) {};
\node[shape=ellipse,minimum height=2.4em,minimum width=2.8em](c-1) at (4,-1) {};
\node[shape=ellipse,minimum height=2.4em,minimum width=2.8em](d-1) at (6,-2) {};
\node[shape=ellipse,minimum height=2.4em,minimum width=2.8em](e-1) at (12,-1) {};
\node[shape=ellipse,minimum height=2.4em,minimum width=2.8em](f1) at (15,-2) {};
\node[shape=ellipse,minimum height=2.4em,minimum width=2.8em](f2) at (15,-3) {};
\path[dotted, line width=.3pt] (f1) edge node[left=-.4ex, yshift=-.35em,font=\tiny,pos=.1] {} (c-1);
\path[dotted, line width=.3pt] (f2) edge node[left=-.4ex, yshift=-.35em,font=\tiny,pos=.1] {} (c0);
\path[dotted, line width=.3pt] (f2) edge node[right=.4ex, xshift=-5em, yshift=-1.5em,font=\tiny,pos=.1] {$\bar y_{n+2}^7$} (f0);
\path[dotted, line width=.3pt] (f1) edge node[right=.3ex, yshift=-.35em,font=\tiny,pos=.1] {$\bar  y_{n+1}^7$ } (f2);
\path[-, line width=.5pt, line cap=round] (a-1) edge node[left=-.4ex, yshift=-.35em,font=\tiny,pos=.1] {$\bar y_{n-2}$} (a0);
\path[-, line width=.5pt, line cap=round] (b-1) edge node[right=-.4ex, yshift=-.35em,font=\tiny,pos=.1] {$\bar y_{n-1}^1$} (b0);
\path[-, line width=.5pt, line cap=round] (c-1) edge node[right=-.4ex, yshift=-.35em,font=\tiny,pos=.1] {$\bar y_n^2$} (c0);
\path[-, line width=.5pt, line cap=round] (d-1) edge node[right=-.4ex, yshift=-.35em,font=\tiny,pos=.1] {$\bar y_{n+1}^3$} (d0);
\path[-, line width=.5pt, line cap=round] (e-1) edge node[right=-.4ex, yshift=-.35em,font=\tiny,pos=.1] {$\bar y_{n}^4$} (e0);
\path[-, line width=.5pt, line cap=round] (a-1) edge node[left=-.4ex, yshift=-.35em,font=\tiny,pos=.1] {} (b-1);
\path[-, line width=.5pt, line cap=round] (b-1) edge node[left=-.4ex, yshift=-.35em,font=\tiny,pos=.1] {} (c-1);
\path[-, line width=.5pt, line cap=round] (c-1) edge node[left=-.4ex, yshift=-.35em,font=\tiny,pos=.1] {} (d-1);
\path[-, line width=.5pt, line cap=round] (b-1) edge node[left=-.4ex, yshift=-.35em,font=\tiny,pos=.1] {} (e-1);
\path[-, line width=.5pt, line cap=round] (b0) edge node[left=-.4ex, yshift=-.35em,font=\tiny,pos=.1] {} (a0);
\path[-, line width=.5pt, line cap=round] (b0) edge node[left=-.4ex, yshift=-.35em,font=\tiny,pos=.1] {} (c0);
\path[-, line width=.5pt, line cap=round] (c0) edge node[left=-.4ex, yshift=-.35em,font=\tiny,pos=.1] {} (d0);
\path[-, line width=.5pt, line cap=round] (c0) edge node[right=-.4ex, xshift=-3.5em, yshift=-1.8em,font=\tiny,pos=.1] {$\bar y_{n+1}^2$} (a3);
\path[-, line width=.5pt, line cap=round] (d0) edge node[right=-.4ex, xshift=-5.9em, yshift=-2.2em,font=\tiny,pos=.1] {$\bar y_{n+2}^3$} (a4);
\path[-, line width=.5pt, line cap=round] (e1) edge node[left=-.4ex, yshift=-.35em,font=\tiny,pos=.1] {} (a2);
\path[-, line width=.5pt, line cap=round] (e0) edge node[left=-.4ex, yshift=-.35em,font=\tiny,pos=.1] {} (b0);
\path[-, line width=.5pt, line cap=round] (e0) edge node[right=-.4ex, yshift=-.35em,font=\tiny,pos=.1] {$\bar y_{n+1}^4$} (e1);
\path[-, line width=.5pt, line cap=round] (b1) edge node[right=-.4ex, yshift=-.35em,font=\tiny,pos=.1] {$\bar y_{n+4}^4$} (b2);
\path[-, line width=.5pt, line cap=round] (c1) edge node[right=-.4ex, yshift=-.35em,font=\tiny,pos=.1] {$\bar y_{n+5}^5$} (c2);
\path[-, line width=.5pt, line cap=round] (d1) edge node[right=-.4ex, yshift=-.35em,font=\tiny,pos=.1] {$\bar y_{n+6}^6$} (d2);
\path[-, line width=.5pt, line cap=round] (b1) edge node[left=-.4ex, yshift=-.35em,font=\tiny,pos=.1] {} (a4);
\path[-, line width=.5pt, line cap=round] (b1) edge node[left=-.4ex, yshift=-.35em,font=\tiny,pos=.1] {} (c1);
\path[-, line width=.5pt, line cap=round] (c1) edge node[left=-.4ex, yshift=-.35em,font=\tiny,pos=.1] {} (d1);
\path[-, line width=.5pt, line cap=round] (a5) edge node[left=-.4ex, yshift=-.35em,font=\tiny,pos=.1] {} (b2);
\path[-, line width=.5pt, line cap=round] (b2) edge node[left=-.4ex, yshift=-.35em,font=\tiny,pos=.1] {} (c2);
\path[-, line width=.5pt, line cap=round] (c2) edge node[left=-.4ex, yshift=-.35em,font=\tiny,pos=.1] {} (d2);
\path[-, line width=.5pt, line cap=round] (f0) edge node[right=-.4ex, xshift=-2.55em, yshift=-1.55em,font=\tiny,pos=.1] {$\bar y_{n+3}^4$} (b1);
\path[-, line width=.5pt, line cap=round] (f0) edge node[left=-.4ex, xshift=3.5em, yshift=1.6em,font=\tiny,pos=.1] {$\bar y_{n+2}^4$} (e1);
\path[-, line width=.5pt, line cap=round] (f0) edge node[left=-.4ex, xshift=-.55em, yshift=-.15em,font=\tiny,pos=.1] {$\bar y_{n+4}^6$} (a3);
\path[-, line width=.5pt, line cap=round] (f0) edge node[left=-.4ex, xshift=.7em, yshift=-.7em,font=\tiny,pos=.1] {$\bar y_{n+3}^{5'}$} (g0);
\path[-, line width=.5pt, line cap=round] (g0) edge node[right=-.4ex, xshift=-2.5em, yshift=-1.35em,font=\tiny,pos=.1] {$\bar y_{n+4}^5$} (c1);
\path[-, line width=.5pt, line cap=round] (b0) edge node[right=-.4ex, xshift=-.8em,yshift=-.75em,font=\tiny,pos=.1] {$\bar y_n^1$} (a2);
\draw[-,line width=.5pt, line cap=round] (d1) .. controls (11.3, -4.7).. (f0);
\draw[-,line width=.5pt, line cap=round] (g0) .. controls (10.5, -4.3).. (a2);
\draw[dotted, line width=.6pt] (a-1) -- ++(0,.7);
\draw[dotted, line width=.6pt] (b-1) -- ++(0,.7);
\draw[dotted, line width=.6pt] (c-1) -- ++(0,.7);
\draw[dotted, line width=.6pt] (d-1) -- ++(0,.7);
\draw[dotted, line width=.6pt] (e-1) -- ++(0,.7);
\draw[dotted, line width=.6pt] (f1) -- ++(0,.7);
\draw[dotted, line width=.6pt] (a5) -- ++(0,-.7);
\draw[dotted, line width=.6pt] (b2) -- ++(0,-.7);
\draw[dotted, line width=.6pt] (c2) -- ++(0,-.7);
\draw[dotted, line width=.6pt] (d2) -- ++(0,-.7);
%a-1
\begin{scope}[shift={(-.5, 1)},x=.27em,y=.45em]
\UP{0} \UP{1} \DN{2} \UP{3} \UP{4} \DN{5}  
\RAY{0} \RAY{1} \CIRCLE{2}  \MRAY{4} \MRAY{5}
\end{scope}
%a0
\begin{scope}[shift={(-.5, 0)},x=.27em,y=.45em]
\UP{0} \DN{1} \UP{2} \UP{3} \DN{4} 
\RAY{0} \CIRCLE{1}  \MRAY{3} \MRAY{4}
\end{scope}
%a1
\begin{scope}[shift={(-.5, -1)},x=.27em,y=.45em]
\DN{0} \UP{1} \UP{2} \DN{3} 
\CIRCLE{0}  \MRAY{2} \MRAY{3}
\end{scope}
%a2
\begin{scope}[shift={(-.5, -2)},x=.27em,y=.45em]
\DN{0} \UP{1} \UP{2} \DN{3} \UP{4} \DN{5}
\CIRCLE{0}  \MRAY{2} \CIRCLE{3} \MRAY{5}
\end{scope}
%a3
\begin{scope}[shift={(-.5, -3)},x=.27em,y=.45em]
\DN{0} \UP{1} \UP{2} \DN{3} \UP{4} \UP{5} \DN{6}
\CIRCLE{0}  \MRAY{2} \CIRCLE{3} \RAY{5} \MRAY{6}
\end{scope}
%a4
\begin{scope}[shift={(-.5, -4)},x=.27em,y=.45em]
\DN{0} \UP{1} \UP{2} \DN{3} \UP{4} \UP{5} \UP{6} \DN{7}
\CIRCLE{0}  \MRAY{2} \CIRCLE{3} \RAY{5} \RAY{6} \MRAY{7}
\end{scope}
%a5
\begin{scope}[shift={(-.5, -5)},x=.27em,y=.45em]
\DN{0} \UP{1} \UP{2} \DN{3} \UP{4} \UP{5} \UP{6} \UP{7} \DN{8}
\CIRCLE{0}  \MRAY{2} \CIRCLE{3} \RAY{5} \RAY{6} \RAY{7} \MRAY{8}
\end{scope}
%b-1
\begin{scope}[shift={(1.5, 0)},x=.27em,y=.45em]
\UP{0} \UP{1} \DN{2} \UP{3} \UP{4} \DN{5} \UP{6} \DN{7} 
\RAY{0} \RAY{1} \CIRCLE{2}  \MRAY{4} \CIRCLE{5} \MRAY{7}
\end{scope}
%b0
\begin{scope}[shift={(1.5, -1)},x=.27em,y=.45em]
\UP{0} \DN{1} \UP{2} \UP{3} \DN{4} \UP{5} \DN{6}
\RAY{0} \CIRCLE{1}  \MRAY{3} \CIRCLE{4} \MRAY{6}
\end{scope}
%b1
\begin{scope}[shift={(1.5, -5)},x=.27em,y=.45em]
\DN{0} \UP{1} \UP{2} \DN{3} \UP{4} \UP{5} \DN{6} \UP{7} \DN{8}
\CIRCLE{0}  \MRAY{2} \CIRCLE{3} \RAY{5} \CIRCLE{6} \MRAY{8}
\end{scope}
%b2
\begin{scope}[shift={(1.5, -6)},x=.27em,y=.45em]
\DN{0} \UP{1} \UP{2} \DN{3} \UP{4} \UP{5} \UP{6} \DN{7} \UP{8} \DN{9}
\CIRCLE{0}  \MRAY{2} \CIRCLE{3} \RAY{5} \RAY{6} \CIRCLE{7} \MRAY{9}
\end{scope}
%c-1
\begin{scope}[shift={(3.5, -1)},x=.27em,y=.45em]
\UP{0} \UP{1}  \DN{2} \UP{3} \UP{4} \DN{5} \UP{6} \UP{7} \DN{8} 
\RAY{0} \RAY{1} \CIRCLE{2}  \MRAY{4} \CIRCLE{5} \RAY{7} \MRAY{8}
\end{scope}
%c0
\begin{scope}[shift={(3.5, -2)},x=.27em,y=.45em]
\UP{0}  \DN{1} \UP{2} \UP{3} \DN{4} \UP{5} \UP{6} \DN{7} 
\RAY{0} \CIRCLE{1}  \MRAY{3} \CIRCLE{4} \RAY{6} \MRAY{7}
\end{scope}
%c1
\begin{scope}[shift={(3.5, -6)},x=.27em,y=.45em]
\DN{0} \UP{1} \UP{2} \DN{3} \UP{4} \DN{5} \UP{6} \UP{7} \DN{8} 
\CIRCLE{0}  \MRAY{2} \CIRCLE{3} \CIRCLE{5} \RAY{7} \MRAY{8}
\end{scope}
%c2
\begin{scope}[shift={(3.5, -7)},x=.27em,y=.45em]
\DN{0} \UP{1} \UP{2} \DN{3} \UP{4} \UP{5} \DN{6} \UP{7} \UP{8} \DN{9}
\CIRCLE{0}  \MRAY{2} \CIRCLE{3} \RAY{5} \CIRCLE{6} \RAY{8} \MRAY{9}
\end{scope}
%d-1
\begin{scope}[shift={(5.5, -2)},x=.27em,y=.45em]
\UP{0} \UP{1}  \DN{2} \UP{3} \UP{4} \DN{5} \UP{6} \UP{7} \UP{8} \DN{9} 
\RAY{0} \RAY{1} \CIRCLE{2}  \MRAY{4} \CIRCLE{5} \RAY{7} \RAY{8} \MRAY{9}
\end{scope}
%d0
\begin{scope}[shift={(5.5, -3)},x=.27em,y=.45em]
\UP{0}  \DN{1} \UP{2} \UP{3} \DN{4} \UP{5} \UP{6} \UP{7} \DN{8} 
\RAY{0} \CIRCLE{1}  \MRAY{3} \CIRCLE{4} \RAY{6} \RAY{7} \MRAY{8}
\end{scope}
%d1
\begin{scope}[shift={(5.5, -7)},x=.27em,y=.45em]
\DN{0}  \UP{1} \UP{2} \DN{3} \DN{4} \UP{5} \UP{6} \UP{7} \DN{8} 
 \CIRCLE{0}  \MRAY{2} \SCCIRCLE{3} \CIRCLE{4} \RAY{7} \MRAY{8}
\end{scope}
%d2
\begin{scope}[shift={(5.5, -8)},x=.27em,y=.45em]
\DN{0} \UP{1} \UP{2} \DN{3} \UP{4} \DN{5} \UP{6} \UP{7} \UP{8} \DN{9}
\CIRCLE{0}  \MRAY{2} \CIRCLE{3} \CIRCLE{5} \RAY{7} \RAY{8} \MRAY{9}
\end{scope}
%e-1
\begin{scope}[shift={(11.5, -1)},x=.27em,y=.45em]
\UP{0} \UP{1} \DN{2} \UP{3} \UP{4} \DN{5} \DN{6} \UP{7} \DN{8}
\RAY{0} \RAY{1} \CIRCLE{2}  \MRAY{4} \RAY{5} \CIRCLE{6} \MRAY{8}
\end{scope}
%e0
\begin{scope}[shift={(11.5, -2)},x=.27em,y=.45em]
\UP{0} \DN{1} \UP{2} \UP{3} \DN{4} \DN{5} \UP{6} \DN{7}
\RAY{0} \CIRCLE{1}  \MRAY{3} \RAY{4} \CIRCLE{5} \MRAY{7}
\end{scope}
%e1
\begin{scope}[shift={(11.5, -3)},x=.27em,y=.45em]
 \DN{0} \UP{1} \UP{2} \DN{3} \DN{4} \UP{5} \DN{6}
 \CIRCLE{0}  \MRAY{2} \RAY{3} \CIRCLE{4} \MRAY{6}
\end{scope}
%f0
\begin{scope}[shift={(6.5,-4)},x=.27em,y=.45em]
 \DN{0} \UP{1} \UP{2} \DN{3} \UP{4} \DN{5} \UP{6} \DN{7}
 \CIRCLE{0}  \MRAY{2}  \CIRCLE{3}  \CIRCLE{5} \MRAY{7}
\end{scope}
%g0
\begin{scope}[shift={(8.5,-5)},x=.27em,y=.45em]
 \DN{0} \UP{1} \UP{2} \DN{3} \DN{4} \UP{5} \UP{6} \DN{7}
 \CIRCLE{0}  \MRAY{2}  \SCCIRCLE{3}  \CIRCLE{4} \MRAY{7}
\end{scope}
%f1
\begin{scope}[shift={(14.5,-2)},x=.27em,y=.45em]
 \UP{0} \UP{1} \DN{2}\UP{3} \UP{4} \DN{5} \UP{6} \DN{7} \UP{8} \DN{9}
 \RAY{0} \RAY{1} \CIRCLE{2} \MRAY{4} \CIRCLE{5} \CIRCLE{7} \MRAY{9}
\end{scope}
%f2
\begin{scope}[shift={(14.5,-3)},x=.27em,y=.45em]
\UP{0} \DN{1} \UP{2} \UP{3} \DN{4} \UP{5} \DN{6} \UP{7} \DN{8}
\RAY{0} \CIRCLE{1} \MRAY{3} \CIRCLE{4} \CIRCLE{6} \MRAY{8}
\end{scope}
\end{scope}

\begin{scope}[shift={(0, -12)}]
\foreach \x in {0,1,...,5}
\node[shape=ellipse,minimum height=2.4em,minimum width=2.8em] (a\x) at (0, -\x) {};
\path[-, line width=.5pt, line cap=round] (a0) edge node[left=-.4ex, yshift=-.35em,font=\tiny,pos=.1] {$\bar y_{2n-2}$} (a1);
\path[-, line width=.5pt, line cap=round] (a1) edge node[left=-.4ex, yshift=-.35em,font=\tiny,pos=.1] {$\bar y_{2n-1}$} (a2);
\path[-, line width=.5pt, line cap=round] (a2) edge node[left=-.4ex, yshift=-.35em,font=\tiny,pos=.1] {$\bar y_{2n}$} (a3);
\path[-, line width=.5pt, line cap=round] (a3) edge node[left=-.4ex, yshift=-.35em,font=\tiny,pos=.1] {$\bar y_{2n+1}$} (a4);
\path[-, line width=.5pt, line cap=round] (a4) edge node[left=-.4ex, yshift=-.35em,font=\tiny,pos=.1] {$\bar y_{2n+2}$} (a5);
\node[shape=ellipse,minimum height=2.4em,minimum width=2.8em](b0) at (2, -1) {};
\node[shape=ellipse,minimum height=2.4em,minimum width=2.8em](b1) at (2, -2) {};
\node[shape=ellipse,minimum height=2.4em,minimum width=2.8em](c0) at (4, -2) {};
\node[shape=ellipse,minimum height=2.4em,minimum width=2.8em](c1) at (4, -3) {};
\node[shape=ellipse,minimum height=2.4em,minimum width=2.8em](d0) at (6, -3) {};
\node[shape=ellipse,minimum height=2.4em,minimum width=2.8em](d1) at (6, -4) {};
\path[-, line width=.5pt, line cap=round] (a0) edge node[left=-.4ex, yshift=-.35em,font=\tiny,pos=.1] {} (b0);
\path[-, line width=.5pt, line cap=round] (b0) edge node[left=-.4ex, yshift=-.35em,font=\tiny,pos=.1] {} (c0);
\path[-, line width=.5pt, line cap=round] (d0) edge node[left=-.4ex, yshift=-.35em,font=\tiny,pos=.1] {} (c0);
\path[-, line width=.5pt, line cap=round] (b1) edge node[left=-.4ex, yshift=-.35em,font=\tiny,pos=.1] {} (a1);
\path[-, line width=.5pt, line cap=round] (b1) edge node[left=-.4ex, yshift=-.35em,font=\tiny,pos=.1] {} (c1);
\path[-, line width=.5pt, line cap=round] (c1) edge node[left=-.4ex, yshift=-.35em,font=\tiny,pos=.1] {} (d1);
\path[-, line width=.5pt, line cap=round] (b0) edge node[right=-.55ex, yshift=-.38em,font=\tiny,pos=.1] {$\bar y_{2n-1}^4$} (b1);
\path[-, line width=.5pt, line cap=round] (c0) edge node[right=-.4ex, yshift=-.35em,font=\tiny,pos=.1] {$\bar y_{2n}^5$} (c1);
\path[-, line width=.5pt, line cap=round] (d0) edge node[right=-.4ex, yshift=-.35em,font=\tiny,pos=.1] {$\bar y_{2n+1}^6$} (d1);
\path[-, line width=.5pt, line cap=round] (a3) edge node[right=.4ex, xshift=.3em, yshift=.15em,font=\tiny,pos=.1] {$\bar y_{2n}^4$} (b1);
\path[-, line width=.5pt, line cap=round] (a4) edge node[right=.4ex, xshift=1.35em, yshift=.2em,font=\tiny,pos=.1] {$\bar y_{2n+1}^5$} (c1);
\path[-, line width=.5pt, line cap=round] (a5) edge node[right=.4ex, xshift=1.95em, yshift=.15em,font=\tiny,pos=.1] {$\bar y_{2n+2}^6$} (d1);
%a0
\begin{scope}[shift={(-.5,0)},x=.27em,y=.45em]
 \DN{0} \UP{1} \UP{2} \DN{3} \UP{4} \UP{5} \DN{6} 
  \CIRCLE{0}  \RAY{2}  \CIRCLE{3}  \MRAY{5} \MRAY{6}
\end{scope}
%a1
\begin{scope}[shift={(-.5,-1)},x=.27em,y=.45em]
 \DN{0} \UP{1} \DN{2} \UP{3} \UP{4} \DN{5}
  \CIRCLE{0}  \CIRCLE{2}   \MRAY{4} \MRAY{5}
\end{scope}
%a2
\begin{scope}[shift={(-.5,-2)},x=.27em,y=.45em]
 \DN{0} \DN{1} \UP{2} \UP{3} \UP{4} \DN{5}
  \SCCIRCLE{0}  \CIRCLE{1}   \MRAY{4} \MRAY{5}
\end{scope}
%a3
\begin{scope}[shift={(-.5,-3)},x=.27em,y=.45em]
 \DN{0} \DN{1} \UP{2} \UP{3} \UP{4} \DN{5} \UP{6} \DN{7}
  \SCCIRCLE{0}  \CIRCLE{1}   \MRAY{4} \CIRCLE{5} \MRAY{7}
\end{scope}
%a4
\begin{scope}[shift={(-.7,-4)},x=.27em,y=.45em]
 \DN{0} \DN{1} \UP{2} \UP{3} \UP{4} \DN{5} \UP{6} \UP{7} \DN{8}
  \SCCIRCLE{0}  \CIRCLE{1}   \MRAY{4} \CIRCLE{5} \RAY{7} \MRAY{8}
\end{scope}
%a5
\begin{scope}[shift={(-.7,-5)},x=.27em,y=.45em]
 \DN{0} \DN{1} \UP{2} \UP{3} \UP{4} \DN{5} \UP{6} \UP{7} \UP{8} \DN{9}
  \SCCIRCLE{0}  \CIRCLE{1}   \MRAY{4} \CIRCLE{5} \RAY{7} \RAY{8} \MRAY{9}
\end{scope}
%b0
\begin{scope}[shift={(1.5,-1)},x=.27em,y=.45em]
 \DN{0} \UP{1} \UP{2} \DN{3} \UP{4} \UP{5} \DN{6} \UP{7} \DN{8}
  \CIRCLE{0}  \RAY{2}  \CIRCLE{3}   \MRAY{5} \CIRCLE{6} \MRAY{8}
\end{scope}
%b1
\begin{scope}[shift={(1.5,-2)},x=.27em,y=.45em]
 \DN{0} \UP{1} \DN{2} \UP{3} \UP{4} \DN{5} \UP{6} \DN{7} 
  \CIRCLE{0}    \CIRCLE{2}   \MRAY{4} \CIRCLE{5} \MRAY{7}
\end{scope}
%c0
\begin{scope}[shift={(3.5,-2)},x=.27em,y=.45em]
 \DN{0} \UP{1} \UP{2} \DN{3} \UP{4} \UP{5} \DN{6} \UP{7} \UP{8} \DN{9}
  \CIRCLE{0}  \RAY{2}  \CIRCLE{3}   \MRAY{5} \CIRCLE{6} \RAY{8} \MRAY{9}
\end{scope}
%c1
\begin{scope}[shift={(3.5,-3)},x=.27em,y=.45em]
 \DN{0} \UP{1} \DN{2} \UP{3} \UP{4} \DN{5} \UP{6} \UP{7} \DN{8}
  \CIRCLE{0}    \CIRCLE{2}   \MRAY{4} \CIRCLE{5} \RAY{7} \MRAY{8}
\end{scope}
%d0
\begin{scope}[shift={(5.5,-3)},x=.27em,y=.45em]
 \DN{0} \UP{1} \UP{2} \DN{3} \UP{4} \UP{5} \DN{6} \UP{7} \UP{8} \UP{9} \DN{10}
  \CIRCLE{0}  \RAY{2}  \CIRCLE{3}   \MRAY{5} \CIRCLE{6} \RAY{8} \RAY{9} \MRAY{10}
\end{scope}
%d1
\begin{scope}[shift={(5.5,-4)},x=.27em,y=.45em]
 \DN{0} \UP{1} \DN{2} \UP{3} \UP{4} \DN{5} \UP{6} \UP{7} \UP{8} \DN{9}
  \CIRCLE{0}    \CIRCLE{2}   \MRAY{4} \CIRCLE{5} \RAY{7} \RAY{8} \MRAY{9}
\end{scope}
\end{scope}

\begin{scope}[scale = 1.05, shift={(12.5, -10)}]
\foreach \x in {0,1,...,6}
\node[shape=ellipse,minimum height=2.4em,minimum width=2.8em] (a\x) at (0, -\x) {};
\path[-, line width=.5pt, line cap=round] (a0) edge node[left=-.4ex, yshift=-.35em,font=\tiny,pos=.1] {\strut$\bar y_{n(m-1)-1}$} (a1);
\path[-, line width=.5pt, line cap=round] (a1) edge node[left=-.4ex, yshift=-.35em,font=\tiny,pos=.1] {$\bar y_{n(m-1)}$} (a2);
\path[-, line width=.5pt, line cap=round] (a2) edge node[left=-.4ex, yshift=-.35em,font=\tiny,pos=.1] {$\bar y_{n(m-1)+1}$} (a3);
\draw[dotted, line width=.6pt] (a3) -- ++(a4);
\path[-, line width=.5pt, line cap=round] (a4) edge node[left=-.4ex, yshift=-.35em,font=\tiny,pos=.1] {$\bar y_{nm-2}$} (a5);
\path[-, line width=.5pt, line cap=round] (a5) edge node[left=-.4ex, yshift=-.35em,font=\tiny,pos=.1] {$\bar y_{nm-1}$} (a6);
\node[shape=ellipse,minimum height=2.4em,minimum width=2.8em](b0) at (2, -1) {};
\node[shape=ellipse,minimum height=2.4em,minimum width=2.8em](b1) at (2, -2) {};
\node[shape=ellipse,minimum height=2.4em,minimum width=2.8em](b2) at (2, -3) {};
\node[shape=ellipse,minimum height=2.4em,minimum width=2.8em](b3) at (2, -4) {};
\path[-, line width=.5pt, line cap=round] (a0) edge node[right=.4ex, xshift=-.1em, yshift=-.1em, font=\tiny,pos=.1] {$\bar y_{n(m-1)-1}^0$} (b0);
\path[-, line width=.5pt, line cap=round] (b0) edge node[right=3ex, yshift=-.6em,font=\tiny,pos=.1] {} (a2);
\path[-, line width=.5pt, line cap=round] (b0) edge node[right=.4ex, yshift=-.25em,font=\tiny,pos=.1] {\strut$\bar y_{n(m-1)}^0$} (b1);
\path[-, line width=.5pt, line cap=round] (b1) edge node[left=-.4ex, yshift=-.35em,font=\tiny,pos=.1] {} (a3);
\path[-, line width=.5pt, line cap=round] (b2) edge node[right=.1ex, xshift=-1.6em, yshift=-1.4em, font=\tiny,pos=.1] {$\bar y_{nm-3}'$} (a4);
\path[-, line width=.5pt, line cap=round] (b3) edge node[right=.1ex, xshift=-1.2em, yshift=-1em,font=\tiny,pos=.1] {$\bar y_{nm-2}'$} (a5);
\draw[dotted, line width=.6pt] (b1) -- (b2);
\path[-, line width=.5pt, line cap=round] (b2) edge node[right=2ex, xshift=-.5em, yshift=-.5em,font=\tiny,pos=.1] {$\bar y_{nm-3}^0$} (b3);
\draw[dotted, line width=.6pt] (a0) -- ++(0,.7);
%a0
\begin{scope}[shift={(-1,0)},x=.27em,y=.4em]
 \DN{0} \DN{1} \DDOT{2} \DN{3} \DN{4} \UP{5} \DN{6} \UP{7} \UP{8} \DDOT{9} \UP{10} \DN{11} 
  \MRAY{0}  \SCIRCLES{1}{4.5}  \SCIRCLES{3}{2.5}  \CIRCLE{4} \CIRCLE{6} \RAY{11}
\end{scope}
%a1
\begin{scope}[shift={(-1,-1)},x=.27em,y=.4em]
 \DN{0} \DN{1} \DDOT{2} \DN{3} \DN{4} \UP{5} \UP{6}\DDOT{7} \UP{8} \DN{9} 
  \MRAY{0}  \SCIRCLES{1}{3.5}  \SCCIRCLE{3}  \CIRCLE{4}  \RAY{9}
\end{scope}
%a2
\begin{scope}[shift={(-1,-2)},x=.27em,y=.4em]
 \DN{0} \DN{1} \DDOT{2} \DN{3} \DN{4} \UP{5} \UP{6}\DDOT{7} \UP{8} \DN{9} \UP{10}
  \MRAY{0}  \SCIRCLES{1}{3.5}  \SCCIRCLE{3}  \CIRCLE{4}  \CIRCLE{9}
\end{scope}
%a3
\begin{scope}[shift={(-1.2,-3)},x=.27em,y=.4em]
 \DN{0} \DN{1} \DDOT{2} \DN{3} \DN{4} \UP{5} \UP{6}\DDOT{7} \UP{8} \DN{9} \UP{10}
  \MRAY{0}  \SCIRCLES{1}{4.5}  \SCCIRCLE{3}  \CIRCLE{4}  \CIRCLE{8}
\end{scope}
%a4
\begin{scope}[shift={(-1.3,-4)},x=.27em,y=.4em]
 \DN{0} \DN{1} \DDOT{2} \DN{3} \DN{4} \DN{5} \UP{6} \UP{7} \DN{8} \UP{9} \UP{10} \DDOT{11} \UP{12} 
  \MRAY{0}  \SCIRCLES{1}{5.5}  \SCIRCLES{3}{3.5}  \SCCIRCLE{4}\CIRCLE{5} \CIRCLE{8}
\end{scope}
%a5
\begin{scope}[shift={(-1.2,-5)},x=.27em,y=.4em]
 \DN{0} \DN{1} \DDOT{2} \DN{3} \DN{4} \UP{5} \DN{6} \UP{7} \UP{8} \DDOT{9} \UP{10} 
  \MRAY{0}  \SCIRCLES{1}{4.5}  \SCIRCLES{3}{2.5}  \CIRCLE{4}\CIRCLE{6} 
\end{scope}
%a6
\begin{scope}[shift={(-1,-6)},x=.27em,y=.4em]
 \DN{0} \DN{1} \DDOT{2} \DN{3} \DN{4} \UP{5} \UP{6}\DDOT{7} \UP{8} 
  \MRAY{0}  \SCIRCLES{1}{3.5}  \SCCIRCLE{3}  \CIRCLE{4} 
\end{scope}
%b0
\begin{scope}[shift={(1.7,-.95)},x=.27em,y=.4em]
 \DN{0} \DN{1} \DDOT{2} \DN{3}  \DN{4} \UP{5} \DN{6} \UP{7} \UP{8}  \DDOT{9}  \UP{10} \DN{11} \UP{12}
  \MRAY{0}  \SCIRCLES{1}{4.5} \SCIRCLES{3}{2.5}   \CIRCLE{4} \CIRCLE{6}\CIRCLE{11} 
\end{scope}
%b1
\begin{scope}[shift={(1.7,-2.1)},x=.27em,y=.4em]
 \DN{0} \DN{1} \DDOT{2} \DN{3}  \DN{4} \UP{5} \DN{6} \UP{7} \UP{8}  \DDOT{9}  \DN{10} \UP{11} \UP{12}
  \MRAY{0}  \SCIRCLES{1}{5.5} \SCIRCLES{3}{2.5}   \CIRCLE{4} \CIRCLE{6}\CIRCLE{10} 
\end{scope}
%b2
\begin{scope}[shift={(1.7,-3)},x=.27em,y=.4em]
 \DN{0} \DN{1} \DDOT{2}\DN{3} \DN{4}  \UP{5} \DN{6} \UP{7} \DN{8} \UP{9} \UP{10} \DDOT{11}  \UP{12}
  \MRAY{0}  \SCIRCLES{1}{5.5} \SCIRCLES{3}{3.5}   \CIRCLE{4} \CIRCLE{6}\CIRCLE{8} 
\end{scope}
%b3
\begin{scope}[shift={(1.75,-4.2)},x=.27em,y=.4em]
 \DN{0} \DN{1} \DDOT{2} \DN{3}  \DN{4} \UP{5} \DN{6} \DN{7} \UP{8} \UP{9} \UP{10} \DDOT{11}  \UP{12}
  \MRAY{0}  \SCIRCLES{1}{5.5} \SCIRCLES{3}{3.5} \CIRCLE{4} \SCCIRCLE{6}\CIRCLE{7} 
\end{scope}
\end{scope}

\end{scope}

\end{tikzpicture}
\caption{A part of $\QQ_m^n$ used in the proof of Theorem \ref{thm:maintheorem} where for simplicity the pairs of opposite arrows are drawn by an unoriented edge and only the label of the ascending arrow is given}
\label{figure:counterexample}
\end{figure}
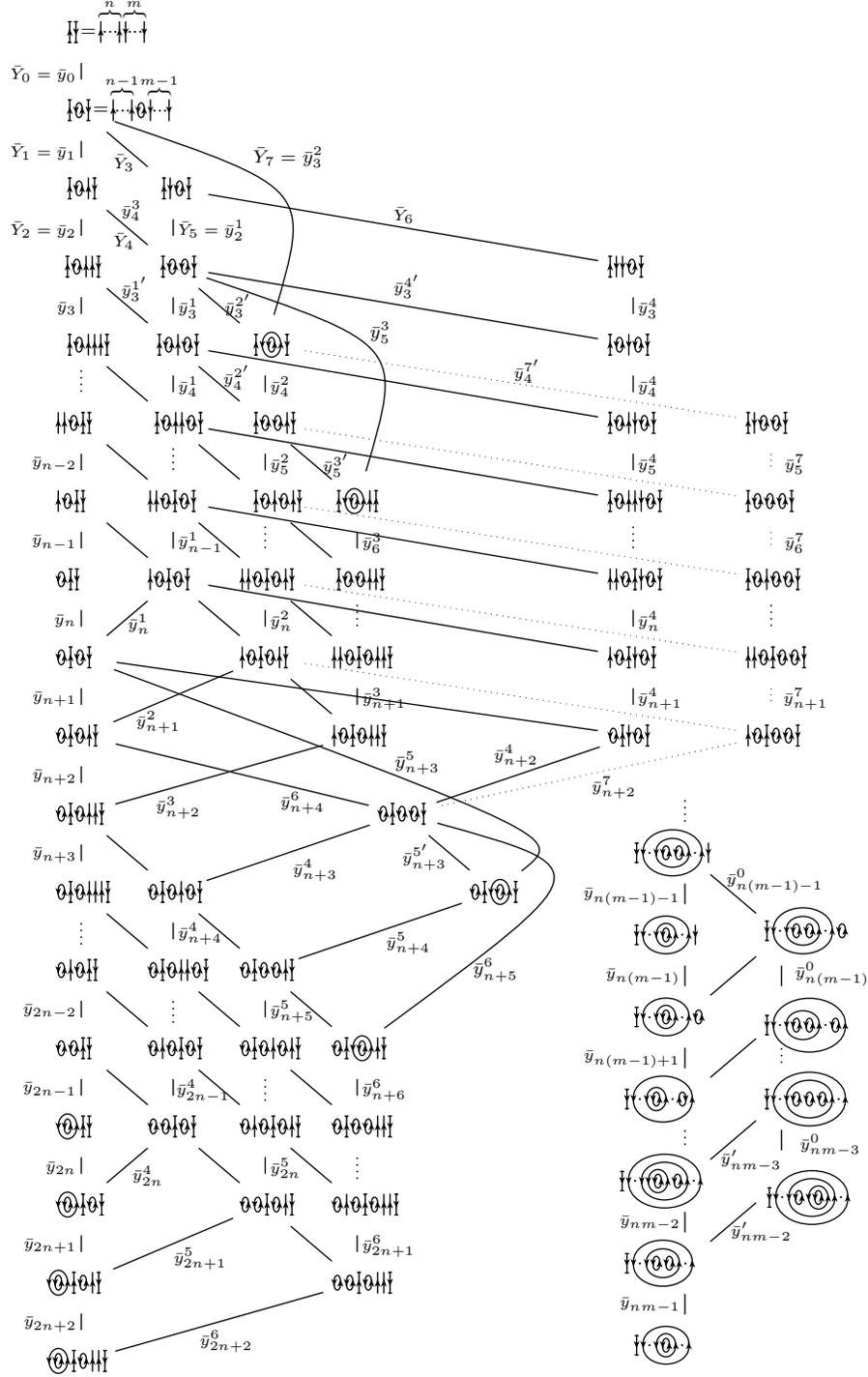

\begin{remark}
\label{remark:fukayaseidel}
For $\Bbbk = \mathbb C$ it follows from \cite{maksmith} that there is a quasi-equivalence between the DG category $\mathrm{perf}_{\mathrm{dg}} (\K_m^n)$ and the Fukaya--Seidel category $\mathcal{FS} (X_m^n, \pi_m^n)$ of a symplectic Lefschetz fibration
\[
\pi_m^n \colon X_m^n \tikzto \mathbb C
\]
where $X_m^n = \mathrm{Hilb}^m (A_{m+n-1}) \smallsetminus D$ is the affine complement of an ample divisor $D$ on the Hilbert scheme of $m$ points of the Milnor fibre of the surface singularity of type $A_{m+n-1}$. (See \cite{seidelsmith,manolescu,abouzaidsmith1,abouzaidsmith2} for a similar collection of results for the classical Khovanov arc algebras.) The A$_\infty$ deformations of $\K_m^n$ can thus be viewed as A$_\infty$ deformations of the Fukaya--Seidel category $\mathcal{FS} (X_m^n, \pi_m^n)$. That is, our main algebraic result (Theorem \ref{thm:maintheorem}) shows that $\mathcal{FS} (X_m^n, \pi_m^n)$ is not intrinsically formal. In light of Seidel's ICM 2002 address \cite{seidel1}, the algebraic deformations may correspond geometrically to partial compactifications of the affine variety $X_m^n = \mathrm{Hilb}^m (A_{m+n-1}) \smallsetminus D \subset \mathrm{Hilb}^m (A_{m+n-1})$. We leave a geometric interpretation of these deformations to future work.
\end{remark}

\section*{Acknowledgements}

We express our gratitude to Catharina Stroppel for introducing us to the extended Khovanov arc algebras and for her suggestion to apply the combinatorial method developed in \cite{barmeierwang1} to study their deformations. We would also like to thank Bernhard Keller, Steffen Koenig, Julian K\"ulshammer, Yanbo Li, Mikhail Khovanov, Ivan Smith, Jim Stasheff and Ben Webster for many helpful comments and discussions and the anonymous referee for their suggestions for improvements.

Part of this work was carried out during the Junior Trimester Program ``New Trends in Representation Theory'' at the Hausdorff Research Institute for Mathematics in Bonn, funded by the Deutsche Forschungsgemeinschaft (DFG, German Research Foundation) under Germany's Excellence Strategy -- EXC-2047/1 -- 390685813 whose support we gratefully acknowledge. The first-named author also warmly thanks the Institute of Algebra and Number Theory at the University of Stuttgart for the hospitality while working on this article. The second-named author was also supported by the Alexander von Humboldt Stiftung.

\appendix 
\section{Proof of Proposition \ref{proposition:coboundary}}
\begingroup
\allowdisplaybreaks

In this appendix we give a proof of Proposition \ref{proposition:coboundary} which is the key ingredient in the proof of our main result Theorem \ref{thm:maintheorem} for the case $m \geq n \geq 2$ and $(m, n) \neq (2,2)$.

An element $\psi \in \Hom_{\Bbbk Q_0^\e}(\Bbbk (\QQ_m^n)_1, (\KK_m^n)_{2mn-5})$ has the following general form 
\begin{align*}
\begin{split}
\psi_{\bar X_0} &= \mu_1 \bar x_{0 \dotsc nm-3}\bar y_{nm-3 \dotsc 1} \\
&\quad\ + \mu_2 \bar x_{0 \ldots  n(m-1)-2} \bar x_{n(m-1)-1 \ldots nm-3}^0  \bar y_{nm-3 \ldots n(m-1)-1}^0  \bar y_{n(m-1)-2 \ldots 1} \\
&\quad\ + \mu_3 \bar x_{0 \ldots nm-2} \bar y_{nm-2 \ldots n+2}  \bar y_{n+1 \ldots 3}^2 \\
&\quad\ + \mu_4 \bar x_{0 \ldots nm-1} \bar y_{nm-1 \ldots 2n+3} \bar y_{2n+2 \ldots n+4}^6  \bar y_{n+1 \ldots 3}^2
\end{split}
\\
\begin{split}
\psi_{\bar X_1} &= \mu_5 \bar x_{1 \ldots nm-2} \bar y_{nm-2 \ldots 2} + \mu_6 \bar x_{1 \ldots nm-1} \bar y_{nm-1 \ldots n+3} \bar y_{n+2 \ldots 4}^3 \\
&\quad\ + \mu_7 \bar x_{3 \ldots n+1}^2 \bar x_{n+2 \ldots nm-1} \bar y_{nm-1 \ldots 2}
\end{split}
\\
\psi_{\bar X_2} &= \mu_8 \bar x_{2 \ldots nm-1} \bar y_{nm-1 \ldots 3} \\
\begin{split}
\psi_{\bar X_3} &= \mu_9 \bar x_{1 \ldots nm-2} \bar y_{nm-2 \ldots n+1} \bar y_{n \ldots 2}^1 \\
&\quad\ + \mu_{10} \bar x_{1 \ldots nm-1} \bar y_{nm-1 \ldots 2n+2} \bar y_{2n+1 \ldots n+3}^5 \bar y_{n \ldots 2}^1 \\
&\quad\ + \mu_{11} \bar x_{3 \ldots n+1}^2  \bar x_{n+2 \ldots nm-1} \bar y_{nm-1 \ldots n+1} \bar y_{n \ldots 2}^1
\end{split}
\\
\psi_{\bar X_4} &= \mu_{12} \bar x_{2 \ldots nm-1} \bar y_{nm-1 \ldots n+1} \bar y_{n \ldots 3}^1 \\
\psi_{\bar X_5} &= \mu_{13} \bar x_{2 \ldots n}^1  \bar x_{n+1 \ldots nm-1} \bar y_{nm-1 \ldots n+1} \bar y_{n \ldots 3}^1 \\
\psi_{\bar X_6} &= \mu_{14} \bar x_{2 \ldots n}^1  x_{n+1 \ldots nm-1} \bar y_{nm-1 \ldots 2n+1} \bar y_{2n \ldots 3}^4 \\
\psi_{\bar X_7} &= \mu_{15} \bar x_{1 \ldots nm-1} \bar y_{nm-1 \ldots n+2} \bar y_{n+1 \ldots 4}^2
\end{align*}
where
$\psi_{\bar Y_i}$ is involutive to $\psi_{\bar X_i}$ with coefficients $\nu_1, \dotsc, \nu_{15}$. Note that $\psi_{\bar X_i} = 0 = \psi_{\bar Y_i} = 0$ for all other arrows. It follows that the dimension of the vector space $\Hom_{\Bbbk Q_0^\e} (\Bbbk (\QQ_m^n)_1, (\KK_m^n)_{2mn-5})$ is $30$. If $n = 2$, the terms with coefficient $\mu_4$ and $\nu_4$ do not exist, i.e.\ $\Hom_{\Bbbk Q_0^\e}(\Bbbk (\QQ_m^2)_1, (\KK_m^2)_{4m-5})$ is of dimension $28$. 

Recall that any element $\widetilde \varphi \in \Hom_{\Bbbk Q_0^\e} (\Bbbk\SS_m^n, \KK_m^n)_{2mn-4}$ has the form as in \eqref{align:generalformawidetildephimn}. Also recall the differential $\delta$ from \eqref{align:gaugeequivalent}.

\begin{lemma}\label{lemma:appendix}
$\delta(\psi) = \widetilde \varphi$ if and only if the following eleven equations hold
\begin{align*}
\alpha_1 &= -(-1)^{n+m} (\mu_1+\nu_1) - (-1)^{n} (\mu_2 + \nu_2) + (\mu_5+\nu_5) - (-1)^{n} (\mu_9+ \nu_9) \\
\alpha_2 &= (-1)^{n-m} \mu_3 - (-1)^{n} \mu_6 + (-1)^{n} \mu_{10} - \nu_1 + \nu_7 - (-1)^{n} \nu_{11} \\ 
\alpha_3 &= (-1)^{n-m} \nu_3 - (-1)^{n} \nu_6 + (-1)^{n} \nu_{10} -\mu_1 + \mu_7 - (-1)^{n} \mu_{11} \\
\alpha_4 &= -(\mu_7+\nu_7) + (\mu_8 + \nu_8) \\
\alpha_5 &= (-1)^{m} (\mu_9 + \nu_9) - (-1)^{n} (\mu_{11} + \nu_{11}) + (\mu_{14} + \nu_{14}) \\
\alpha_6 &= -\mu_1 - (-1)^{n-m} \mu_3 + \mu_{15} \\
\alpha_7 &= -\nu_1 - (-1)^{n-m} \nu_3 + \nu_{15} \\
\alpha_8 &= -\mu_{11} + \mu_{12} + (-1)^m \nu_5 - (-1)^{n} \nu_7 +(-1)^{n} \nu_{13} \\
\alpha_9 &= -\nu_{11} + \nu_{12} + (-1)^m \mu_5 - (-1)^{n} \mu_7+ (-1)^{n} \mu_{13} \\
\alpha_{10} &= -(-1)^{m} \mu_5 - \mu_6 + \mu_{10} + \mu_{12} - (-1)^{n} \mu_{13} + (-1)^n \mu_{15} \\
\alpha_{11} &= -(-1)^{m} \nu_5 - \nu_6 + \nu_{10} + \nu_{12} - (-1)^{n} \nu_{13} + (-1)^n \nu_{15}.
\end{align*}
\end{lemma}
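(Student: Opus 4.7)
The plan is a direct computation: for each of the eleven generators $s \in \SS_m^n$ for which $\widetilde \varphi_s \neq 0$ in \eqref{align:generalformawidetildephimn}, I would expand $\delta(\psi)_s$ using \eqref{align:gaugeequivalent} and then apply reductions (with respect to $\RR_m^n$) to express each summand as a $\Bbbk$-linear combination of the (very few) irreducible paths of length $2mn-4$ parallel to $s$. Matching coefficients against those of $\widetilde \varphi_s$ will produce the stated eleven equations. For all other $s \in \SS_m^n$, we have $\widetilde \varphi_s = 0$; since by Remark \ref{remark:irreduciblegrading} there are no parallel irreducible paths of the relevant length (as already observed for the $\K_2^2$ case in Proposition \ref{proposition:k22}), these $s$ impose no further constraints. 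This mirrors exactly the structure of the proof of Proposition \ref{proposition:k22}, but with $\bar x_{21}, \bar x_{22}, \bar x_{32}$ etc.\ replaced by the longer arrow strings depicted in Fig.~\ref{figure:counterexample}.

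More concretely, for $s = a_1 a_2$ of length $2$, we have $\delta(\psi)_s = \sigma\pi(\psi(a_1) a_2 + a_1 \psi(a_2) - T(\psi)(\varphi_s))$, where the term $T(\psi)(\varphi_s)$ vanishes for all eleven generators in question because $\psi$ is supported on arrows of ascending or descending ``long'' form whose composition with an arrow in any irreducible summand of $\varphi_s$ is killed by the observation in Remark \ref{remark:irreduciblegrading} (or by an immediate monomial relation from Fig.~\ref{figure:single}). So the task reduces to evaluating $\psi(a_1) a_2$ and $a_1 \psi(a_2)$ for each of the eleven $s$. For instance, for $s = \bar Y_0 \bar X_0$ the contribution is $\psi_{\bar Y_0} \bar X_0 + \bar Y_0 \psi_{\bar X_0}$, and one substitutes the four summands of $\psi_{\bar X_0}$ and the four summands of $\psi_{\bar Y_0}$ in turn.

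The key technical step is reducing each resulting monomial to the prescribed irreducible basis $\{\bar x_{1\ldots nm-2}\bar y_{nm-2\ldots 1},\ \bar x_{1\ldots nm-1}\bar y_{nm-1\ldots n+2}\bar y_{n+1\ldots 3}^2,\ \bar x_{3\ldots n+1}^2 \bar x_{n+2\ldots nm-1}\bar y_{nm-1\ldots 1}\}$ (and analogously for the other ten $s$). This is precisely where the formulae collected in Lemmas \ref{lemma:higherrelations1}, \ref{lemma:higherrelations3}, and \ref{lemma:higherrelations4} are used: they provide closed expressions for $\bar y_{ni}\bar x_{ni\ldots\bullet}$ and for the long composites passing through the ``extra'' arrows $\bar x^0, \bar y^0, \bar x^{1'}, \bar y^{1'}, \bar x^2, \bar y^2, \dotsc$ of Fig.~\ref{figure:counterexample1}, together with the signs $(-1)^{n+m-i}$, $(-1)^{n-1}$ etc.\ which are responsible for the parities appearing in the eleven equations. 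The involution $(-)^\ast$ from Remark \ref{remark:involutionmapkosuzldaul} halves the work, since the equations for $\alpha_3, \alpha_7, \alpha_9, \alpha_{11}$ follow from those for $\alpha_2, \alpha_6, \alpha_8, \alpha_{10}$ by swapping $\mu_i \leftrightarrow \nu_i$ throughout.

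The main obstacle is purely bookkeeping: tracking the signs and subscripts through the recursive application of the cubic relations in Lemma \ref{lemma:cubicrelations}, the Plücker-type relations across squares in Fig.~\ref{figure:squares} (c), and the identity \eqref{align:equationinlemma:higherrelations3} which enters at the third summand $\alpha_3$ of $\widetilde \varphi_{\bar Y_0 \bar X_0}$ and in the analogous descending cases. Once the reductions are carried out, matching coefficients of each irreducible basis vector yields exactly one equation per pair $(\alpha_i, \text{irreducible basis vector})$, for a total of eleven; writing them out gives the list in the statement. The ``only if'' direction is built into this computation, and the ``if'' direction follows by reading the same computation backwards. A final sanity check is to verify the special case $(m,n) = (2,2)$ already treated in Proposition \ref{proposition:k22}: the eleven equations displayed there are recovered by setting $n = m = 2$ (noting that the $\mu_4, \nu_4$ terms drop out when $n = 2$, as already remarked after the formula for $\psi$).
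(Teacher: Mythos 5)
Your outline follows the right strategy (expand each $\delta(\psi)_s$, reduce to irreducible paths, compare coefficients, use the involution to halve the work), but there is one genuine error that would make the computation come out wrong: you assert that $T(\psi)(\varphi_s)$ vanishes for each of the eleven generators in question and that ``the task reduces to evaluating $\psi(a_1)a_2$ and $a_1\psi(a_2)$.'' This is false, and it is not a minor bookkeeping issue.

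By definition $\delta(\psi)(s) = T(\psi)(s - \varphi_s)$, and for the generators with nontrivial $\varphi_s$ the second piece $T(\psi)(\varphi_s)$ contributes essentially as many terms as $T(\psi)(s)$ itself. The clearest example is $s = \bar Y_0\bar X_0$: the relation at the vertex says $\bar Y_0\bar X_0 - \varphi_{\bar Y_0\bar X_0} = \bar Y_0\bar X_0 + \bar X_1\bar Y_1 + \bar X_3\bar Y_3$, so the paper correctly expands
\begin{align*}
\delta(\psi)_{\bar Y_0\bar X_0} = \psi_{\bar Y_0}\bar X_0 + \bar Y_0\psi_{\bar X_0} + \psi_{\bar X_1}\bar Y_1 + \bar X_1\psi_{\bar Y_1} + \psi_{\bar X_3}\bar Y_3 + \bar X_3\psi_{\bar Y_3},
\end{align*}
and it is precisely the last four terms coming from $T(\psi)(\varphi_{\bar Y_0\bar X_0})$ that supply the contributions $\mu_5,\nu_5,\mu_6,\nu_6,\mu_7,\nu_7,\mu_9,\nu_9,\mu_{10},\nu_{10},\mu_{11},\nu_{11}$ visible in the stated formulae for $\alpha_1,\alpha_2,\alpha_3$. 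If one drops $T(\psi)(\varphi_s)$, those coefficients never appear and the equations in the Lemma cannot be recovered. The same happens for $\bar Y_1\bar X_1$ (the $\mu_8,\nu_8$ terms in $\alpha_4$ come from $\psi_{\bar X_2}\bar Y_2 + \bar X_2\psi_{\bar Y_2}$), for $\bar Y_3\bar X_3$ (the $\mu_{14},\nu_{14}$ terms come from $\psi_{\bar X_6}\bar Y_6 + \bar X_6\psi_{\bar Y_6}$), for $\bar Y_1\bar X_3$ (the $\psi_{\bar X_4}\bar Y_5 + \bar X_4\psi_{\bar Y_5}$ terms), and for $\bar X_3\bar X_5$ (the $\psi_{\bar X_1}\bar X_4 + \bar X_1\psi_{\bar X_4} + \psi_{\bar X_7}\bar y_3^{2'}$ terms). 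The only generators where $T(\psi)(\varphi_s)$ is genuinely trivial are those with monomial relations, $\varphi_s = 0$, namely $\bar X_0\bar X_7$ and $\bar Y_7\bar Y_0$; for the rest the Plücker-type or vertex relations produce nonzero $\varphi_s$ and the corresponding $T(\psi)(\varphi_s)$ must be computed term by term, as the paper does in the remainder of the appendix.

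Once this omission is repaired, the rest of your plan — applying Lemmas \ref{lemma:higherrelations1}, \ref{lemma:higherrelations3}, \ref{lemma:higherrelations4} and Observation \ref{keyobservation} to reduce, using the involution to deduce $\alpha_3,\alpha_7,\alpha_9,\alpha_{11}$ from $\alpha_2,\alpha_6,\alpha_8,\alpha_{10}$, and sanity-checking against $\KK_2^2$ — matches the paper's appendix.
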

 
This lemma immediately yields Proposition \ref{proposition:coboundary}. 

\begin{proof}[Proof of Proposition \ref{proposition:coboundary}]
From Lemma \ref{lemma:appendix} it follows that if $\widetilde \varphi$ is a coboundary then equation \eqref{align:constraint} holds. It not difficult to see that \eqref{align:constraint} is the only constraint since the rank of the coefficient matrix is $10$.
\end{proof}

The remaining part of this appendix is thus devoted to the proof of Lemma \ref{lemma:appendix}. Note that the equations involving $\alpha_3, \alpha_7, \alpha_9, \alpha_{11}$ may be obtained from those involving $\alpha_2, \alpha_6, \alpha_{8}, \alpha_{10}$ by exchanging $\mu_i$ and $\nu_i$, and the others involving $\alpha_1, \alpha_4, \alpha_5$ are invariant under this operation. (This is the case because the reduction system $\RR_m^n$ is invariant under the involution in Remark \ref{remark:involutionmapkosuzldaul}.)

The proof of Lemma \ref{lemma:appendix} is divided into several computations which are performed by ``chasing the diagram'' in Fig.~\ref{figure:counterexample}, i.e.\ by repeatedly using the anticommutativity and Plücker-type relations for $\KK_m^n$ given in Proposition \ref{proposition:relationdual} for the paths appearing in the components of $\delta (\psi)$. In this process, some of the terms appearing from applying the Plücker-type relations vanish due to the following observation.

\begin{observation}\label{keyobservation}
Let $p=\bar y_{\lambda_1}^{\lambda_2} \bar y_{\lambda_2}^{\lambda_3} \dotsb \bar y_{\lambda_k}^{\lambda_{k+1}}$ be any ascending path in $\QQ_m^n$ such that $\lambda_1$ is the lowest weight (i.e.\ $\lvert \lambda_1 \rvert = 0$) and $\lvert \lambda_{k+1} \rvert = k$. Assume that $\bar y_{\lambda_{i-1}}^{\lambda_i} \bar y_{\lambda_i}^{\lambda_{i+1}}$ lies in Fig.~\ref{figure:squares} (c) for some $1< i \leq k$ such that $\lambda_{i}$ is the right vertex in the figure. Denote by $\lambda_i'$ the left vertex. Then
$$
p = -\bar y_{\lambda_1}^{\lambda_2} \bar y_{\lambda_2}^{\lambda_3} \dotsb \bar y_{\lambda_{i-1}}^{\lambda_i'} \bar y_{\lambda_i'}^{\lambda_{i+1}} \dotsb \bar y_{\lambda_k}^{\lambda_{k+1}}.
$$
A similar statement holds for descending paths.
\end{observation}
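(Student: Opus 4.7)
The plan is to apply the three-term Plücker-type identity associated to Fig.~\ref{figure:squares}~(c) to rewrite the length-$2$ subpath $\bar y_{\lambda_{i-1}}^{\lambda_i} \bar y_{\lambda_i}^{\lambda_{i+1}}$ in terms of the ``left'' length-$2$ path through $\lambda_i'$ plus one additional ``bypass'' term, and then to show that the bypass term vanishes in $\KK_m^n$ by a top-degree argument from Remark \ref{remark:irreduciblegrading}.

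First I would record that since $\lvert \lambda_1 \rvert = 0$, $\lvert \lambda_{k+1} \rvert = k$ and each of the $k$ arrows of $p$ raises the height by an odd positive integer, every arrow must raise the height by exactly $1$; hence $\lvert \lambda_j \rvert = j-1$ for all $j$. Let $\kappa$ denote the bottom vertex of Fig.~\ref{figure:squares}~(c), so that $\lvert \kappa \rvert = \lvert \lambda_{i-1} \rvert - 1 = i-3$, and note that the arc edge in Fig.~\ref{figure:squares}~(c) is the height-$3$ edge connecting $\kappa$ and $\lambda_{i+1}$.

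Next I would invoke Proposition~\ref{proposition:edgedescriptions}~\ref{length3}, which gives exactly three parallel length-$2$ paths between $\lambda_{i-1}$ and $\lambda_{i+1}$: the two ascending paths through $\lambda_i$ and through $\lambda_i'$, together with a third ``bypass'' path $\lambda_{i-1} \to \kappa \to \lambda_{i+1}$ using the descending edge $\lambda_{i-1} \to \kappa$ and the long ascending arc $\kappa \to \lambda_{i+1}$. The Plücker-type relation of Proposition~\ref{proposition:relationdual} then forces
\[
\bar y_{\lambda_{i-1}}^{\lambda_i} \bar y_{\lambda_i}^{\lambda_{i+1}} + \bar y_{\lambda_{i-1}}^{\lambda_i'} \bar y_{\lambda_i'}^{\lambda_{i+1}} + \bar x_\kappa^{\lambda_{i-1}} \bar y_\kappa^{\lambda_{i+1}} = 0
\]
in $\KK_m^n$. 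Writing $p' := \bar y_{\lambda_1}^{\lambda_2} \dotsb \bar y_{\lambda_{i-2}}^{\lambda_{i-1}}$ and $p'' := \bar y_{\lambda_{i+1}}^{\lambda_{i+2}} \dotsb \bar y_{\lambda_k}^{\lambda_{k+1}}$, and multiplying the identity above on the left by $p'$ and on the right by $p''$, I would obtain
\[
p = -\,\bar y_{\lambda_1}^{\lambda_2} \dotsb \bar y_{\lambda_{i-1}}^{\lambda_i'} \bar y_{\lambda_i'}^{\lambda_{i+1}} \dotsb \bar y_{\lambda_k}^{\lambda_{k+1}} \;-\; p'\, \bar x_\kappa^{\lambda_{i-1}}\, \bar y_\kappa^{\lambda_{i+1}}\, p'',
\]
so the desired identity reduces to the vanishing of the bypass term $p'\, \bar x_\kappa^{\lambda_{i-1}}\, \bar y_\kappa^{\lambda_{i+1}}\, p''$.

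The final step is the degree argument. I would isolate the initial segment $p'\, \bar x_\kappa^{\lambda_{i-1}}$, which is a path of length $i-1$ from $e_{\lambda_1}$ to $e_\kappa$, and hence lies in $e_{\lambda_1} (\KK_m^n)_{i-1} e_\kappa$. Since
\[
i-1 \;>\; 0 + (i-3) \;=\; \lvert \lambda_1 \rvert + \lvert \kappa \rvert,
\]
Remark~\ref{remark:irreduciblegrading} forces this graded component to vanish, so $p'\, \bar x_\kappa^{\lambda_{i-1}} = 0$ in $\KK_m^n$ and the bypass term is zero. The analogous statement for descending paths follows by applying the involution of Remark~\ref{remark:involutionmapkosuzldaul}.

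The main conceptual obstacle is correctly identifying the three-term Plücker relation in Fig.~\ref{figure:squares}~(c) --- in particular, recognising that the ``bypass'' term $\bar x_\kappa^{\lambda_{i-1}} \bar y_\kappa^{\lambda_{i+1}}$ coming from the long arc through the bottom vertex $\kappa$ is exactly what appears in the relation --- and then spotting that concatenating this descending-then-ascending detour with the maximally ``tall'' ascending prefix $p'$ produces a path whose total length exceeds the top-degree bound $\lvert \lambda_1 \rvert + \lvert \kappa \rvert$ and therefore vanishes automatically. Once these two ingredients are aligned, the identity is immediate.
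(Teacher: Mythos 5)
Your proof is correct and takes exactly the same route as the paper: apply the three-term Pl\"ucker-type relation associated to Fig.~\ref{figure:squares}~(c) and then kill the bypass term $p'\,\bar x_\kappa^{\lambda_{i-1}}\,\bar y_\kappa^{\lambda_{i+1}}\,p''$ by the graded vanishing of Remark~\ref{remark:irreduciblegrading} applied to the prefix $p'\,\bar x_\kappa^{\lambda_{i-1}}$. One small imprecision: the edge from $\kappa$ to $\lambda_{i-1}$ in Fig.~\ref{figure:squares}~(c) may raise height by more than $1$ (when the dots between the two side-by-side circles in $e_{\lambda_{i-1}}$ contain nested circles), so one only has $\lvert\kappa\rvert \leq i-3$ rather than equality, but the inequality $i-1 > \lvert\lambda_1\rvert + \lvert\kappa\rvert$ you invoke holds regardless.
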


\begin{proof}
We have $\bar y_{\lambda_{i-1}}^{\lambda_i} \bar y_{\lambda_i}^{\lambda_{i+1}} = - \bar y_{\lambda_{i-1}}^{\lambda_i'} \bar y_{\lambda_i'}^{\lambda_{i+1}} - \bar x_{\kappa}^{\lambda_{i-1}} \bar y_\kappa^{\lambda_{i+1}}$ by the Plücker-type relation given in Proposition \ref{proposition:relationdual}, where $\kappa$ is the bottom vertex in Fig.~\ref{figure:squares} (c) whence we have
$$
p = -\bar y_{\lambda_1}^{\lambda_2} \bar y_{\lambda_2}^{\lambda_3} \dotsb \bar y_{\lambda_{i-1}}^{\lambda_i'} \bar y_{\lambda_i'}^{\lambda_{i+1}} \dotsb \bar y_{\lambda_k}^{\lambda_{k+1}} - \bar y_{\lambda_1}^{\lambda_2} \bar y_{\lambda_2}^{\lambda_3} \dotsb \bar x_{\kappa}^{\lambda_{i-1}} \bar y_\kappa^{\lambda_{i+1}} \dotsb \bar y_{\lambda_k}^{\lambda_{k+1}}.
$$
The desired equality then follows since $\bar y_{\lambda_1}^{\lambda_2} \bar y_{\lambda_2}^{\lambda_3} \dotsb \bar y_{\lambda_{i-2}}^{\lambda_{i-1}} \bar x_{\kappa}^{\lambda_{i-1}} =0$ by Remark \ref{remark:irreduciblegrading}. 
\end{proof}

\addtocontents{toc}{\protect\setcounter{tocdepth}{1}}
\subsection{The term \texorpdfstring{$\delta(\psi)_{\bar Y_0\bar X_0}$}{}}

The following lemma proves $\frac{3}{11}$ of Lemma \ref{lemma:appendix}. The computations for the remaining $\frac{8}{11}$ are very similar and are completed in the following subsections.

\begin{lemma}
$\delta(\psi)_{\bar Y_0\bar X_0} = \widetilde \varphi_{\bar Y_0\bar X_0}$ if and only if the first three equations in Lemma \ref{lemma:appendix} involving $\alpha_1, \alpha_2, \alpha_3$ hold.
\end{lemma}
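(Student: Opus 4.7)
The plan is to expand $\delta(\psi)_{\bar Y_0 \bar X_0}$ using \eqref{align:gaugeequivalent}, substitute the explicit values of $\psi$ on each arrow, reduce every resulting path of length $2mn-4$ to a linear combination of the three basis irreducible paths appearing in $\widetilde\varphi_{\bar Y_0\bar X_0}$, and read off the three equations in $\alpha_1,\alpha_2,\alpha_3$ by matching coefficients. By Proposition~\ref{proposition:relationdual} the relation at the highest-weight vertex gives $\varphi_{\bar Y_0\bar X_0} = -\sum_j c_j\,\bar X_j\bar Y_j$ for the $2$-cycles adjacent to the top, so
\[
\delta(\psi)_{\bar Y_0\bar X_0} = \sigma\pi\Bigl(\psi_{\bar Y_0}\bar X_0 + \bar Y_0\,\psi_{\bar X_0} + \textstyle\sum_j c_j\bigl(\psi_{\bar X_j}\bar Y_j + \bar X_j\,\psi_{\bar Y_j}\bigr)\Bigr),
\]
and inspecting the general form of $\psi$ shows that only the arrows $\bar X_0,\bar Y_0,\bar X_1,\bar Y_1,\bar X_3,\bar Y_3$ from Fig.~\ref{figure:counterexample} can contribute in the relevant Adams degree.

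Each resulting summand then has to be rewritten in the three basis irreducible paths
\[
\bar x_{1\ldots nm-2}\bar y_{nm-2\ldots 1},\quad \bar x_{1\ldots nm-1}\bar y_{nm-1\ldots n+2}\bar y_{n+1\ldots 3}^{\,2},\quad \bar x_{3\ldots n+1}^{\,2}\bar x_{n+2\ldots nm-1}\bar y_{nm-1\ldots 1}.
\]
This normalisation is carried out by iterated application of the anticommutativity relations across the squares of Fig.~\ref{figure:squares}, the Plücker-type relations at squares of type Fig.~\ref{figure:squares}(c), and the long-path identities already packaged in Lemmas \ref{lemma:higherrelations1}, \ref{lemma:higherrelations3} and \ref{lemma:higherrelations4}. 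The crucial labour-saving device is Observation~\ref{keyobservation} combined with Remark~\ref{remark:irreduciblegrading}: most intermediate terms produced by a Plücker expansion either factor through a graded piece of $\KK_m^n$ which already vanishes or admit a sign-rewriting and may be discarded or collapsed. Concretely, Lemma~\ref{lemma:higherrelations3} handles the $\mu_1$-summand (producing contributions to both the first and third basis paths with signs governed by $(-1)^{n+m}$), Lemma~\ref{lemma:higherrelations4} handles the $\mu_2$-summand (contributing to the first basis path with sign $-(-1)^n$), while the $\mu_5, \mu_9$-summands reduce directly into the first basis path, and the $\mu_3, \mu_6, \mu_{10}, \mu_{11}$-summands together with $\nu_1, \nu_7$ reduce into the second.

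After this reduction, matching coefficients of the three basis paths yields exactly the three equations claimed: the coefficient of $\bar x_{1\ldots nm-2}\bar y_{nm-2\ldots 1}$ gives the $\alpha_1$-equation, the coefficient of the path through $\bar y_{n+1\ldots 3}^{\,2}$ gives the $\alpha_2$-equation, and the $\alpha_3$-equation follows from the $\alpha_2$-equation by the symmetry $\mu_i\leftrightarrow \nu_i$ enforced by the involution of Remark~\ref{remark:involutionmapkosuzldaul}. The only genuine obstacle is sign bookkeeping: the precise signs $(-1)^{n+m}$, $(-1)^n$ and $(-1)^{n-m}$ appearing in the equations come from counting the anticommutativity swaps performed when moving a block of $n$ or $m$ arrows past another inside the applications of Lemmas~\ref{lemma:higherrelations3} and~\ref{lemma:higherrelations4}, and this is what makes the computation tedious, though entirely mechanical once those lemmas are in hand.
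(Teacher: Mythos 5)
Your plan is essentially the paper's own proof: use the vertex relation $\bar Y_0\bar X_0 + \bar X_1\bar Y_1 + \bar X_3\bar Y_3 = 0$ to split $\delta(\psi)_{\bar Y_0\bar X_0}$ into six summands of which only three need be computed by the involution symmetry, reduce each via Lemmas \ref{lemma:higherrelations1}, \ref{lemma:higherrelations3}, \ref{lemma:higherrelations4} and Observation \ref{keyobservation}, and match coefficients against the three basis irreducible paths. One small bookkeeping slip: in your list of summands contributing to the second basis path you write $\mu_{11}$, but in the $\alpha_2$-equation the relevant coefficient is $\nu_{11}$ (the term $\mu_{11}$ from $\psi_{\bar X_3}\bar Y_3$ lands on the third basis path $\bar x_{3\ldots n+1}^2\bar x_{n+2\ldots nm-1}\bar y_{nm-1\ldots 1}$ and therefore feeds the $\alpha_3$-equation; its involutive twin $\nu_{11}$ feeds $\alpha_2$). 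Otherwise the account of which lemma handles which summand, including the signs governed by $(-1)^{n+m}$ and $(-1)^n$, agrees with the paper.
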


\begin{proof}
Since $\bar Y_0 \bar X_0 + \bar X_1 \bar Y_1 + \bar X_3 \bar Y_3 = 0$, we have
\[
\delta(\psi)_{\bar Y_0\bar X_0}= \psi_{\bar Y_0} \bar X_0 + \bar Y_0 \psi_{\bar X_0} + \psi_{\bar X_1} \bar Y_1 + \bar X_1 \psi_{\bar Y_1} + \psi_{\bar X_3} \bar Y_3 + \bar X_3 \psi_{\bar Y_3}
\]
where $\psi_{\bar Y_0} \bar X_0$ is the product of $\psi_{\bar Y_0}$ and $\bar X_0$ in $\KK_m^n$ and similarly for the other terms. Note that $\psi_{\bar Y_0} \bar X_0, \bar X_1 \psi_{\bar Y_1}, \bar X_3 \psi_{\bar Y_3}$ may be obtained from $\bar Y_0 \psi_{\bar X_0}, \psi_{\bar X_1} \bar Y_1, \psi_{\bar X_3} \bar Y_3$, respectively, by taking the involution in Remark \ref{remark:involutionmapkosuzldaul} (and exchanging $\mu_i$ and $\nu_i$). Therefore, it suffices to compute the latter ones. We first claim 
\begin{equation}
\label{claim1}
\begin{split}
\bar Y_0 \psi_{\bar X_0} &= (-(-1)^{n+m}\mu_1-(-1)^{n} \mu_2) \bar x_{1 \ldots nm-2} \bar y_{nm-2 \ldots 1} \\
&\quad\ - \mu_1 \bar x_{3 \ldots n+2}^2 \bar x_{n+2 \ldots nm-1} \bar y_{nm-1 \ldots 1} \\
&\quad\ + (-1)^{n+m} \mu_3 \bar x_{1 \ldots nm-1} \bar y_{nm-1 \ldots n+2} \bar y_{n+1 \ldots 3}^2.
\end{split}
\end{equation}
To see this, note that $\bar Y_0 = \bar y_0$ (see Fig.~\ref{figure:counterexample}), whence by definition of $\psi_{\bar X_0}$ we have 
\begin{align*}
\begin{split}
\bar Y_0 \psi_{\bar X_0} &= \mu_1 \bar y_0\bar x_{0 \ldots nm-3}\bar y_{nm-3 \ldots 1} \\
&\quad\ + \mu_2 \bar y_0 \bar x_{0 \ldots n(m-1)-2} \bar x_{n(m-1)-1 \ldots nm-3}^0 \bar y_{nm-3 \ldots n(m-1)-1}^0 \bar y_{n(m-1)-2 \ldots 1} \\
&\quad\ + \mu_3 \bar y_0 \bar x_{0 \ldots nm-2} \bar y_{nm-2 \ldots n+2} \bar y_{n+1 \ldots 3}^2 \\
&\quad\ + \mu_4 \bar y_0 \bar x_{0 \ldots nm-1} \bar y_{nm-1 \ldots 2n+3} \bar y_{2n+2 \ldots n+4}^6 \bar y_{n+1 \ldots 3}^2.
\end{split}
\end{align*}
Applying Lemma \ref{lemma:higherrelations4} to the second summand and Lemma \ref{lemma:higherrelations3} (with $i = 0$) to the other three summands we obtain \eqref{claim1}. 

We also claim that
\begin{equation}
\label{claim2}
\begin{split}
\psi_{\bar X_1} \bar Y_1 &= \mu_5   \bar x_{1 \ldots nm-2} \bar y_{nm-2 \ldots 1} \\
&\quad\ + (-1)^{n-1} \mu_6 \bar x_{1 \ldots nm-1} \bar y_{nm-1 \ldots n+2} \bar y_{n+1 \ldots 3}^2 \\
&\quad\ + \mu_7 \bar x_{3 \ldots n+1}^2 \bar x_{n+2 \ldots nm-1} \bar y_{nm-1 \ldots 1}.
\end{split}
\end{equation}
Indeed, note that $\bar Y_1 = \bar y_1$ so that by definition of $\psi_{\bar X_1}$ we have
\begin{align*}
\psi_{\bar X_1} \bar Y_1 &= \mu_5 \bar x_{1 \ldots nm-2} \bar y_{nm-2 \ldots 1} + \mu_6 \bar x_{1 \ldots nm-1} \bar y_{nm-1 \ldots n+3} \bar y_{n+2 \ldots 4}^3\bar y_1 \\
&\quad\ + \mu_7 \bar x_{3 \ldots n+1}^2 \bar x_{n+2 \ldots nm-1} \bar y_{nm-1 \ldots 1}.
\end{align*}
It suffices to perform reductions on the second summand. For this, we have
\begin{align*}
\mu_6 \bar x_{1 \ldots nm-1} \bar y_{nm-1 \ldots n+3} \bar y_{n+2 \ldots 4}^3\bar y_1
&= \mu_6 \bar x_{1 \ldots nm-1} \bar y_{nm-1 \ldots n+3} \bar y_{n+2 \ldots 6}^3  \bar y_5^{3'} \bar y_4^2 \bar y_3^2\\
&= (-1)^{n-1} \mu_6 \bar x_{1 \ldots nm-1} \bar y_{nm-1 \ldots n+2} \bar y_{n+1 \ldots 3}^2
\end{align*}
where the first equality uses the relation $\bar y_5^3 \bar y_4^3 \bar y_1 = \bar y_5^{3'} \bar y_4^2 \bar y_3^2$ in Lemma \ref{lemma:cubicrelations}, the second one uses the Plücker-type relation $\bar y_6^3 \bar y_5^{3'}=-\bar y_6^{3'} \bar y_5^2-\bar x'\bar y''$ and the anticommutativity relations  $\bar y_{i+1}^3\bar y_{i}^{3'}=-\bar y_{i+1}^{3'} \bar y_i^2$ for $6 \leq i \leq n+1$ (denote $\bar y_{n+2}^{3'}=\bar y_{n+2}$). Here, Observation \ref{keyobservation} shows that the summand given by $\bar x'\bar y''$ vanishes. This gives \eqref{claim2}. 

Now we claim
\begin{equation}
\label{claim3}
\begin{split}
\psi_{\bar X_3} \bar Y_3 &= (-1)^{n-1} \mu_9  \bar x_{1 \ldots nm-2} \bar y_{nm-2 \ldots 1} \\
&\quad\ + (-1)^n \mu_{10} \bar x_{1 \ldots nm-1} \bar y_{nm-1 \ldots n+2} \bar y_{n+1 \ldots 3}^2 \\
&\quad\ -(-1)^{n} \mu_{11} \bar x_{3 \ldots n+1}^2 \bar x_{n+2 \ldots nm-1} \bar y_{nm-1 \ldots 1}.
\end{split}
\end{equation}
Indeed, we first note that
\begin{align}\label{align:equalityY3}
\bar y^1_{n \ldots 2} \bar Y_3 = (-1)^{n-1} \bar y_{n \ldots 1} + (-1)^{n-1} \bar x_{n+1} \bar y_{n+1 \ldots 3}^2
\end{align}
where we use the Plücker-type relation $\bar y_2^1 \bar Y_3 = -\bar y_4^3 \bar y_1 - \bar x_3^{2'} \bar y_3^2$ and  the anticommutativity relations $\bar y_i^1 \bar y_{i-1}^{1'} = - \bar y_{i}^{1'} \bar y_{i-1}$ and $\bar y_i^1 \bar x_{i}^{2'} = - \bar x_{i+1}^{2'} \bar y_{i+1}^2$ for $3 \leq i \leq n$ (where $\bar y_2^{1'} = \bar y_4^3$, $\bar y_n^{1'}=\bar y_n$ and $\bar x_{n+1}^{2'} = \bar x_{n+1}$). Then we have
\begin{align*}
\begin{split}
\psi_{\bar X_3} \bar Y_3 &= \mu_9 \bar x_{1 \ldots nm-2} \bar y_{nm-2 \ldots n+1} \bar y_{n \ldots 2}^1  \bar Y_3 \\
&\quad\ + \mu_{10} \bar x_{1 \ldots nm-1} \bar y_{nm-1 \ldots 2n+2} \bar y_{2n+1 \ldots n+3}^5 \bar y_{n \ldots 2}^1 \bar Y_3 \\
&\quad\ + \mu_{11} \bar x_{3 \ldots n+1}^2 \bar x_{n+2 \ldots nm-1} \bar y_{nm-1 \ldots n+1} \bar y_{n \ldots 2}^1 \bar Y_3
\end{split}
\\
\begin{split}
&= (-1)^{n-1} \mu_9 \bar x_{1 \ldots nm-2} \bar y_{nm-2 \ldots 1} \\
&\quad\ + (-1)^n \mu_{10} \bar x_{1 \ldots nm-1} \bar y_{nm-1 \ldots n+2}\bar y_{n+1 \ldots 3}^2 \\
&\quad\ + (-1)^{n-1} \mu_{11} \bar x_{3 \ldots n+1}^2 \bar x_{n+2 \ldots nm-1} \bar y_{nm-1 \ldots 1}
\end{split}
\end{align*} 
where in the second equality we first use \eqref{align:equalityY3} and then for the summands with coefficients $\mu_9$ and $\mu_{11}$ we use Lemma \ref{lemma:higherrelations1} \ref{lemma:higherrelations30}, and for the two summands with coefficient $\mu_{10}$ we use $\bar y_{n+3}^5 \bar y_{n} = 0$ and respectively 
\begin{align*}
\bar y_{nm-1 \ldots 2n+2} \bar y_{2n+1 \ldots n+3}^5 \bar x_{n+1} &= -\bar y_{nm-1 \ldots 2n+2}\bar y_{2n+1 \ldots n+4}^5 \bar y_{n+3}^{5'} \bar y_{n+4}^6 \\
&= - \bar y_{nm-1 \ldots n+2}.
\end{align*}
Here in the first equality, we use $\bar y_{n+3}^5 \bar x_{n+1} = \bar y_{n+3}^{5'} \bar y_{n+4}^6$ which follows from the anticommutativity relation across a square (cf.\ the eighth diagram in Remark \ref{remark:commutativity} viewed as a diagram in $\QQ_m^n$) which may be verified by using the Plücker-type relation $\bar y_{n+4}^5 \bar y_{n+3}^{5'} = -\bar y_{n+4}^{5'} \bar y_{n+3}^4 -\bar x_{n+5}^{6'} \bar y_{n+5}^6$ and then the anticommutativity relations $\bar y_{n+1+j}^5 \bar y_{n+j}^{5'}= -\bar y_{n+1+j}^{5'} \bar y_{n+j}^4$ for $4 \leq j \leq n+1$ (where $\bar y_{2n+1}^{5'} = \bar y_{2n+1}$). Here, the extra term $\bar x_{n+5}^{6'} \bar y_{n+5}^6$ in the Plücker-type relation vanishes by Observation \ref{keyobservation}. This proves \eqref{claim3}. 

Combining \eqref{claim1}, \eqref{claim2} and \eqref{claim3} and comparing the coefficients of each summand in $\delta(\psi)_{\bar Y_0\bar X_0} = \widetilde \varphi_{\bar Y_0\bar X_0}$ completes the proof. 
\end{proof}

\subsection{The term \texorpdfstring{$\delta(\psi)_{\bar Y_1 \bar X_1}$}{}}

\begin{lemma}
$\delta(\psi)_{\bar Y_1 \bar X_1} = \widetilde \varphi_{\bar Y_1 \bar X_1}$ if and only if the equation in Lemma \ref{lemma:appendix} involving $\alpha_4$ holds.
\end{lemma}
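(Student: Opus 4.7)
The computation is completely analogous to the preceding lemma but substantially shorter, because the vertex at which $\bar Y_1 \bar X_1$ forms a 2-cycle admits only a single ascending arrow starting from it. The relevant vertex relation comes from \eqref{align:relationsatvertex}: for $n \geq 3$, the first line with $(i, j) = (0, 1)$ gives $\bar Y_1 \bar X_1 + \bar X_2 \bar Y_2 = 0$, so $\varphi_{\bar Y_1 \bar X_1} = -\bar X_2 \bar Y_2$; for $n = 2$ (hence $m \geq 3$, since we assume $(m, n) \neq (2, 2)$), the second line with $i = 0$ gives $\bar Y_1 \bar X_1 = 0$, whence $\varphi_{\bar Y_1 \bar X_1} = 0$ and the $\psi_{\bar X_2}, \psi_{\bar Y_2}$ contributions below disappear automatically. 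In either case the plan is to expand
\[
\delta(\psi)_{\bar Y_1 \bar X_1} = \bar Y_1 \psi_{\bar X_1} + \psi_{\bar Y_1} \bar X_1 + \psi_{\bar X_2} \bar Y_2 + \bar X_2 \psi_{\bar Y_2}
\]
and reduce each summand modulo $\RR_m^n$.

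The last two summands reduce trivially: $\psi_{\bar X_2} \bar Y_2 = \mu_8 \bar x_{2 \ldots nm-1} \bar y_{nm-1 \ldots 2}$, simply by appending $\bar y_2$ to the existing $\bar y$-tail, and invoking the involution of Remark~\ref{remark:involutionmapkosuzldaul} yields $\bar X_2 \psi_{\bar Y_2} = \nu_8 \bar x_{2 \ldots nm-1} \bar y_{nm-1 \ldots 2}$. For $\bar Y_1 \psi_{\bar X_1}$, the two pieces of $\psi_{\bar X_1}$ beginning with $\bar x_1$ (those with coefficients $\mu_5$ and $\mu_6$) both vanish: left-multiplication by $\bar Y_1 = \bar y_1$ followed by iterating $\bar y_j \bar x_j = -\bar x_{j+1} \bar y_{j+1}$ for $j = 1, \ldots, n-2$ produces a factor $\bar y_{n-1} \bar x_{n-1}$, which is zero by \eqref{align:relationsatvertex}. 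Only the $\mu_7$-piece can contribute, and the plan is to establish
\[
\bar y_1 \cdot \bar x_{3 \ldots n+1}^2 \bar x_{n+2 \ldots nm-1} \bar y_{nm-1 \ldots 2} = -\bar x_{2 \ldots nm-1} \bar y_{nm-1 \ldots 2}
\]
by applying the anticommutativity relation across the square at $a_1$ (of the Fig.~\ref{figure:squares}~(c) type) to rewrite $\bar y_1 \bar x_3^2$ in terms of $\bar x_2 \bar y_3^{2'}$, iteratively passing the resulting $\bar y_j^{2'}$ through the remaining $\bar x_k^2$ via the secondary anticommutativity relations, and finally absorbing everything into the $\bar x$--$\bar y$ tail using Lemma~\ref{lemma:higherrelations1}\ref{lemma:higherrelations30}. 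The involutive computation then yields $\psi_{\bar Y_1} \bar X_1 = -\nu_7 \bar x_{2 \ldots nm-1} \bar y_{nm-1 \ldots 2}$.

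Summing the four contributions and equating with $\widetilde \varphi_{\bar Y_1 \bar X_1} = \alpha_4 \bar x_{2 \ldots nm-1} \bar y_{nm-1 \ldots 2}$ gives $\alpha_4 = -(\mu_7 + \nu_7) + (\mu_8 + \nu_8)$, which is exactly the equation in Lemma~\ref{lemma:appendix} involving $\alpha_4$. The main technical hurdle is the sign bookkeeping in the $\mu_7$-reduction: each of the Plücker-type substitutions along the secondary chain $\bar x_3^2 \bar x_4^2 \dotsb \bar x_{n+1}^2$ produces two anticommutativity terms plus a possibly-nonzero Plücker correction; the corrections vanish by Observation~\ref{keyobservation} because the excess paths are too short to extend to a nonzero irreducible path, so only the anticommutativity signs survive and these collapse to an overall factor of $-1$.
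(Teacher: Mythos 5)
You correctly identify the decomposition $\delta(\psi)_{\bar Y_1 \bar X_1} = \bar Y_1 \psi_{\bar X_1} + \psi_{\bar Y_1} \bar X_1 + \psi_{\bar X_2} \bar Y_2 + \bar X_2 \psi_{\bar Y_2}$, the vanishing of the $\mu_5$- and $\mu_6$-parts of $\bar Y_1 \psi_{\bar X_1}$ by iterating $\bar y_j \bar x_j = -\bar x_{j+1} \bar y_{j+1}$ down to $\bar y_{n-1} \bar x_{n-1} = 0$ (which is exactly the content of Lemma~\ref{lemma:higherrelations1}\ref{lemma:higherrelations30} that the paper invokes), and the fact that $\psi_{\bar X_2} \bar Y_2$ is already irreducible. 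The final coefficient equation $\alpha_4 = -(\mu_7 + \nu_7) + (\mu_8 + \nu_8)$ also matches, so the overall plan is the paper's.

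There is, however, a genuine gap in the $\mu_7$-reduction. You propose to rewrite $\bar y_1 \bar x_3^2$ as ``$\bar x_2 \bar y_3^{2'}$''. These arrows do not compose: $\bar x_2$ ends at the next vertex down the main chain (the source of $\bar y_2$), whereas $\bar y_3^{2'}$ starts at the target of $\bar x_3^2$, a strictly lower vertex on a secondary chain. The length-$2$ path parallel to the peak $\bar y_1 \bar x_3^2$ is the \emph{descending} path $\bar x_4^3 \bar x_3^{2'}$, and the type-(III) anticommutativity relation gives $\bar y_1 \bar x_3^2 = -\bar x_4^3 \bar x_3^{2'}$; this is the first equality of the paper's display \eqref{align:y1x3}. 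The paper then pushes $\bar x_3^{2'}$ through the remaining descending chain $\bar x_{4\ldots n+1}^2 \bar x_{n+2\ldots nm-1}$ using one Plücker-type relation (whose extra term dies by Observation~\ref{keyobservation}) and $2n-5$ anticommutativity relations -- it never threads an ascending $\bar y_j^{2'}$ through descending $\bar x_k^2$'s. Since the intermediate expression you write is not a path in $\QQ_m^n$, the argument as stated cannot start; it would need to be recast around $\bar x_4^3 \bar x_3^{2'}$ before the rest of the plan applies, at which point it becomes the paper's computation.

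One further remark: your observation that for $n = 2$ the second line of \eqref{align:relationsatvertex} with $i=0$ gives $\bar Y_1 \bar X_1 = 0$, hence $\varphi_{\bar Y_1 \bar X_1} = 0$ and the $\psi_{\bar X_2}$-, $\psi_{\bar Y_2}$-terms drop out, is correct and is a point the paper does not address. But you then still ``sum the four contributions'' to get $\alpha_4 = -(\mu_7+\nu_7) + (\mu_8+\nu_8)$, which does not follow from the case you just described; the tension is left unresolved.
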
 

\begin{proof}
We have $\delta(\psi)_{\bar Y_1 \bar X_1} = \psi_{\bar Y_1} X_1 + \bar Y_1 \psi_{\bar X_1} + \psi_{\bar X_2} \bar Y_2 + \bar X_2 \psi_{\bar Y_2},$ where $\psi_{\bar Y_1} X_1, \bar X_2 \psi_{\bar Y_2}$ are involutive to $\bar Y_1 \psi_{\bar X_1}, \psi_{\bar X_2} \bar Y_2$ by exchanging $\mu_i$ to $\nu_i$. Since $\bar Y_1 = \bar y_1$ we have
\begin{align*}
\begin{split}
\bar Y_1 \psi_{\bar X_1} &= \mu_5 \bar y_1  \bar x_{1 \ldots nm-2} \bar y_{nm-2 \ldots 2}+\mu_6 \bar y_1 \bar x_{1 \ldots nm-1} \bar y_{nm-1 \ldots n+3} \bar y_{n+2 \ldots 4}^3 \\
&\quad\ + \mu_7 \bar y_1 \bar x_{3 \ldots n+1}^2 \bar x_{n+2 \ldots nm-1} \bar y_{nm-1 \ldots 2}
\end{split}
\\
&= -\mu_7 \bar x_{2 \ldots nm-1} \bar y_{nm-1 \ldots 2}
\end{align*}
where by Lemma \ref{lemma:higherrelations1} \ref{lemma:higherrelations30} the first two summands vanish and the third one uses
\begin{align}\label{align:y1x3}
\bar y_1 \bar x_{3 \ldots n+1}^2  \bar x_{n+2 \ldots nm-1} = - \bar x_4^3 \bar x_3^{2'} \bar x_{4 \ldots n+1}^2 \bar x_{n+2 \ldots nm-1}  = - \bar x_{2 \ldots nm-1}.
\end{align}
Here, the first equality uses an anticommutativity relation across a square and the second equality uses the Plücker-type relation $\bar x_3^{2'} \bar x_4^2 = - \bar x_3^1 \bar x_4^{2'} - \bar x_5^3 \bar y_5^{3'}$ and then the anticommutativity relations ($2n-5$ times). Again, the summand given by $\bar x_5^{3'} \bar y_5^3$ vanishes by Observation \ref{keyobservation}.

Note that $\bar Y_2 = \bar y_2$. Then $\psi_{\bar X_2} \bar Y_2 ={}  \mu_8 \bar x_{2 \ldots nm-1} \bar y_{nm-1 \ldots 2}$ is already irreducible. Comparing the coefficients we complete the proof. 
\end{proof}
 
\subsection{The term \texorpdfstring{$\delta(\psi)_{\bar Y_3 \bar X_3}$}{} }
 
\begin{lemma}
$\delta(\psi)_{\bar Y_3 \bar X_3} = \widetilde \varphi_{\bar Y_3 \bar X_3}$ if and only if the equation in Lemma \ref{lemma:appendix} involving $\alpha_5$ holds.
\end{lemma}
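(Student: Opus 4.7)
The proof will mirror the two preceding lemmas. The first step is to expand
\[
\delta(\psi)_{\bar Y_3\bar X_3} = \psi_{\bar Y_3}\bar X_3 + \bar Y_3\psi_{\bar X_3} - T(\psi)\bigl(\varphi_{\bar Y_3\bar X_3}\bigr),
\]
where $\varphi_{\bar Y_3\bar X_3}$ is read off from the relation at the starting vertex of $\bar Y_3$ in Proposition \ref{proposition:relationdual}, and to observe that the target
\[
\widetilde\varphi_{\bar Y_3\bar X_3} = \alpha_5\,\bar x^1_{2\ldots n}\bar x_{n+1\ldots nm-1}\bar y_{nm-1\ldots n+1}\bar y^1_{n\ldots 2}
\]
is a multiple of a single irreducible path. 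As in the earlier computations, the involution of Remark \ref{remark:involutionmapkosuzldaul} implies that the contributions of the $\nu_i$ follow from the $\mu_i$-contributions by the symmetry $\mu_i\leftrightarrow\nu_i$, so it suffices to track the $\mu$-terms. By inspection of which $\psi_{\bar X_?}$ are parallel to a subpath capable of reducing into the target irreducible, only the $\mu_9$- and $\mu_{11}$-summands of $\psi_{\bar X_3}$ and the $\mu_{14}$-summand of $\psi_{\bar X_6}$ can survive; all other contributions will be shown to vanish using Remark \ref{remark:irreduciblegrading}.

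Each surviving product is reduced to irreducible form by a sequence of rewrites using: the anticommutativity and Plücker-type relations of Proposition \ref{proposition:relationdual}, the recursive tail formulae of Lemma \ref{lemma:higherrelations1}, the rewriting of $\bar y_{ni}\bar x_{ni\ldots nm-2}$ provided by Lemma \ref{lemma:higherrelations3}, and the cubic relations of Lemma \ref{lemma:cubicrelations} used to transpose $\bar y^1_{n\ldots 2}$ past $\bar Y_3$ (in analogy with the identity \eqref{align:equalityY3} that appeared in the $\bar Y_0\bar X_0$-case). Each time a Plücker-type relation is applied, the extra summand of the form $\bar x'\bar y''$ it produces is annihilated by either Observation \ref{keyobservation} or the length bound of Remark \ref{remark:irreduciblegrading}; this explains in particular why the $\mu_{10}$-summand of $\psi_{\bar X_3}$, after its Plücker reduction across a square of Fig.~\ref{figure:squares} (c), collapses into the basis element already accounted for by $\mu_9$ and therefore does not appear in the $\alpha_5$-identity.

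The main obstacle is the careful bookkeeping of the signs $(-1)^m$, $-(-1)^n$ and $+1$ that appear in front of $\mu_9+\nu_9$, $\mu_{11}+\nu_{11}$ and $\mu_{14}+\nu_{14}$, respectively: the $(-1)^m$ in front of $\mu_9$ will arise from chaining $m{-}1$ anticommutativity relations along the spine of $\QQ_m^n$ together with the $(-1)^{n-1}$ factor produced by transposing $\bar y^1_{n\ldots 2}$ past $\bar Y_3$; the $-(-1)^n$ in front of $\mu_{11}$ will come from commuting the block $\bar x^2_{3\ldots n+1}$ past the diagonal chain of Plücker relations; and the unit coefficient of $\mu_{14}$ reflects that the $\psi_{\bar X_6}$-contribution requires only a single Plücker step. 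Once these signs are correctly tabulated, extracting the coefficient of the unique target basis element yields the identity for $\alpha_5$.
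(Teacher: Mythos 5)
Your overall structure matches the paper's: the same expansion $\delta(\psi)_{\bar Y_3\bar X_3} = \psi_{\bar Y_3}\bar X_3 + \bar Y_3\psi_{\bar X_3} + \psi_{\bar X_6}\bar Y_6 + \bar X_6\psi_{\bar Y_6}$, the same use of the involution to reduce to the $\mu$-computations, and the same three surviving summands $\mu_9,\mu_{11},\mu_{14}$ with the correct signs $(-1)^m$, $-(-1)^n$, $+1$, reduced using the same auxiliary lemmas. But your explanation of why the $\mu_{10}$-summand of $\psi_{\bar X_3}$ drops out is wrong, and wrong in a way that would actually be self-defeating. You claim it ``after its Plücker reduction across a square of Fig.~\ref{figure:squares}~(c), collapses into the basis element already accounted for by $\mu_9$ and therefore does not appear in the $\alpha_5$-identity.'' If that were what happened, the $\mu_{10}$-term would contribute to the coefficient of the unique target irreducible and \emph{would} appear in the $\alpha_5$-equation alongside $\mu_9$ --- exactly the opposite of what you want to conclude. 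The correct reason is that the $\mu_{10}$-term is identically zero before any Plücker reduction is applied: it contains the factor $\bar Y_3\,\bar x_{1\ldots nm-1}$, and since $\bar x_{1\ldots nm-1}$ is a path of length $nm-1$ from the height-$(nm-1)$ vertex down to the lowest weight, $\bar Y_3\,\bar x_{1\ldots nm-1}=0$ by the length bound in Remark~\ref{remark:irreduciblegrading}. No collapse occurs; the term simply never reaches an irreducible form.

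A secondary, smaller imprecision: the unit coefficient of $\mu_{14}$ does not come from ``a single Plücker step''; the paper uses $2n-3$ anticommutativity steps (contributing $(-1)^{2n-3}=-1$) \emph{and} one Plücker step (contributing another $-1$), and the unit arises from their product. Your sign narratives for $(-1)^m$ (from ``$m-1$ anticommutativity relations'') and for $-(-1)^n$ are likewise heuristic rather than traceable to the actual reductions, which pass through Lemma~\ref{lemma:higherrelations3} and a mix of Plücker and anticommutativity relations. These would need to be made precise for the argument to stand on its own.
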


\begin{proof}
We have $\delta(\psi)_{\bar Y_3 \bar X_3} = \psi_{\bar Y_3} \bar X_3 + \bar Y_3 \psi_{\bar X_3} + \psi_{\bar X_6} \bar Y_6 + \bar X_6 \psi_{\bar Y_6}$, where $\psi_{\bar Y_3} \bar X_3 , \bar X_6 \psi_{\bar Y_6}$ are involutive to $ \bar Y_3 \psi_{\bar X_3}, \psi_{\bar X_6} \bar Y_6$ by exchanging $\mu_i$ to $\nu_i$. We claim that 
\begin{align*}
\bar Y_3 \psi_{\bar X_3} = ((-1)^m \mu_9 - (-1)^{n}  \mu_{11}) \bar x_{2 \ldots n}^1  \bar x_{n+1 \ldots nm-1} \bar y_{nm-1 \ldots n+1}  \bar y_{n \ldots 2}^1.
\end{align*}
Indeed, we have 
\begin{align*}
\begin{split}
\bar Y_3 \psi_{\bar X_3} &= \mu_9 \bar Y_3 \bar x_{1 \ldots nm-2} \bar y_{nm-2 \ldots n+1} \bar y_{n \ldots 2}^1 \\
&\quad\ + \mu_{10} \bar Y_3 \bar x_{1 \ldots nm-1} \bar y_{nm-1 \ldots 2n+2} \bar y_{2n+1 \ldots n+3}^5 \bar y_{n \ldots 2}^1 \\
&\quad\ + \mu_{11} \bar Y_3 \bar x_{3 \ldots n+1}^2 \bar x_{n+2 \ldots nm-1} \bar y_{nm-1 \ldots n+1} \bar y_{n \ldots 2}^1
\end{split}
\\
\begin{split}
&= (-1)^{m} \mu_9 \bar x_{2 \ldots n}^1 \bar x_{n+1 \ldots nm-1} \bar y_{nm-1 \ldots n+1} \bar y_{n \ldots 2}^1 \\
&\quad\ + (-1)^{n-1} \mu_{11} \bar x_{2 \ldots n}^1 \bar x_{n+1 \ldots nm-1} \bar y_{nm-1 \ldots n+1} \bar y_{n \ldots 2}^1
\end{split}
\end{align*}
where in the second equality we use $\bar Y_3 \bar x_{1 \ldots nm-1} = 0$ by Remark \ref{remark:irreduciblegrading} to obtain that the summand with $\mu_{10}$ vanishes, and we also use the following relations
\begin{align*}
\begin{split}
\bar Y_3 \bar x_{1 \ldots nm-2} &= (-1)^{n-1} \bar x_{2 \ldots n}^1  \bar y_n \bar x_{n \ldots nm-2} \\
&= (-1)^{m}  \bar x_{2 \ldots n}^1 \bar x_{n+1 \ldots nm-1}
\end{split}
\\
\begin{split}
\bar Y_3 \bar x_{3 \ldots n+1}^2 \bar x_{n+2 \ldots nm-1} &= -\bar x_2^1 \bar x_3^{2'}\bar x_{4 \ldots n+1}^2  \bar x_{n+2 \ldots nm-1} \\
&= (-1)^{n-1} \bar x_{2 \ldots n}^1  \bar x_{n+1 \ldots nm-1}
\end{split}
\end{align*}
which may be verified by using the anticommutativity and Plücker-type relations (and applying Observation \ref{keyobservation}). We have 
\begin{align*}
\psi_{\bar X_6} \bar Y_6 &= \mu_{14} \bar x_{2 \ldots n}^1 x_{n+1 \ldots nm-1} \bar y_{nm-1 \ldots 2n+1} \bar y_{2n \ldots 3}^4  \bar Y_6 \\
&= \mu_{14}  \bar x_{2 \ldots n}^1 \bar x_{n+1 \ldots nm-1} \bar y_{nm-1 \ldots n+1}  \bar y_{n \ldots 2}^1  \end{align*}
where similarly the second equality uses the anticommutativity relations ($2n-3$ times) and the Plücker-type relation once (also using Observation \ref{keyobservation}). 
\end{proof}

\subsection{The term \texorpdfstring{$\delta(\psi)_{\bar X_0 X_7}$}{} } 
  
  \begin{lemma}\label{lemma:A6}
$\delta(\psi)_{\bar X_0 X_7} = \widetilde \varphi_{\bar X_0 X_7}$ if and only if the equation in Lemma \ref{lemma:appendix} involving $\alpha_6$ holds.
\end{lemma}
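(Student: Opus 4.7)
Since $\bar X_0 \bar X_7$ is a type (II) monomial relation in $\SS_m^n$ (an ascending-then-descending pattern from Fig.~\ref{figure:single}~(c)), we have $\varphi_{\bar X_0 \bar X_7} = 0$, so that
\[
\delta(\psi)_{\bar X_0 \bar X_7} \;=\; \psi_{\bar X_0}\,\bar X_7 \;+\; \bar X_0\,\psi_{\bar X_7}.
\]
The plan is to expand each summand using the definitions of $\psi_{\bar X_0}$ and $\psi_{\bar X_7}$ given at the beginning of the appendix, reduce each resulting path to an irreducible combination with respect to $\RR_m^n$, and then match the coefficient against $\alpha_6 \bar x_{0 \ldots nm-1}\bar y_{nm-1 \ldots n+2}\bar y_{n+1 \ldots 4}^2$.

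For the first summand $\psi_{\bar X_0}\,\bar X_7$, the four terms defining $\psi_{\bar X_0}$ (with coefficients $\mu_1, \mu_2, \mu_3, \mu_4$) must be multiplied on the right by $\bar X_7 = \bar x_3^2$ and then reduced. I expect the $\mu_1$ contribution to produce, up to sign, one copy of $\bar x_{0 \ldots nm-1}\bar y_{nm-1 \ldots n+2}\bar y_{n+1 \ldots 4}^2$ by applying an anticommutativity relation across a square at the end of the descending string $\bar y_{nm-3 \ldots 1}\bar x_3^2$ (together with a Plücker-type cancellation, which kills an extraneous term via Observation~\ref{keyobservation}, much as in \eqref{align:y1x3}). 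The $\mu_3$ contribution will similarly collapse, via Lemma~\ref{lemma:higherrelations3} or its mirror, to the same irreducible path but with sign $(-1)^{n-m}$. The $\mu_2$ and $\mu_4$ terms are expected to vanish: the $\mu_2$ contribution passes through the \lq\lq detour'' arrow $\bar y_{nm-3}^0$ and, after rewriting with Lemma~\ref{lemma:cubicrelations}, ends in a path of length exceeding $2mn$, which is zero by Remark~\ref{remark:irreduciblegrading}; the $\mu_4$ contribution ends with $\bar y_{n+1 \ldots 3}^2\bar x_3^2$, whose last step is a type (I) relation reducing to $0$ for degree reasons (again by Remark~\ref{remark:irreduciblegrading}).

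For the second summand $\bar X_0\,\psi_{\bar X_7}$, only $\psi_{\bar X_7} = \mu_{15}\bar x_{1 \ldots nm-1}\bar y_{nm-1 \ldots n+2}\bar y_{n+1 \ldots 4}^2$ contributes, and left-multiplication by $\bar X_0 = \bar x_0$ yields $\mu_{15}\bar x_{0 \ldots nm-1}\bar y_{nm-1 \ldots n+2}\bar y_{n+1 \ldots 4}^2$ with no further reduction needed (this path is irreducible by Lemma~\ref{lemma:irreduciblepathdescription}). Summing the three surviving contributions gives
\[
\delta(\psi)_{\bar X_0 \bar X_7} \;=\; \bigl(-\mu_1 - (-1)^{n-m}\mu_3 + \mu_{15}\bigr)\,\bar x_{0 \ldots nm-1}\bar y_{nm-1 \ldots n+2}\bar y_{n+1 \ldots 4}^2,
\]
and comparison with $\widetilde \varphi_{\bar X_0 \bar X_7}$ yields the stated identity.

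The main obstacle is bookkeeping the precise signs in the $\mu_1$ reduction: passing $\bar x_3^2$ from the right through the long descending tail $\bar y_{nm-3 \ldots 1}$ requires repeatedly invoking the anticommutativity relations at each square in Fig.~\ref{figure:squares}, inserting one Plücker-type step analogous to the derivation of \eqref{align:y1x3}, and verifying at every Plücker substitution that the orthogonal term vanishes by Observation~\ref{keyobservation} (or by Remark~\ref{remark:irreduciblegrading}). The $\mu_2$ and $\mu_4$ vanishings, although morally \lq\lq obvious for degree reasons'', require one to verify that the combined path length indeed exceeds $2mn-2$ after each reduction so that Remark~\ref{remark:irreduciblegrading} applies.
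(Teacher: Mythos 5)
Your setup is correct and matches the paper: $\delta(\psi)_{\bar X_0 \bar X_7} = \psi_{\bar X_0}\bar X_7 + \bar X_0\psi_{\bar X_7}$, the second summand contributes $\mu_{15}$ times the target irreducible path without further reduction, and the first summand is expanded over the four defining terms of $\psi_{\bar X_0}$ with the surviving contributions being $-\mu_1$ and $-(-1)^{n-m}\mu_3$. Your final formula is right and you correctly identify which two terms vanish. However, the \emph{reasons} you give for the $\mu_2$ and $\mu_4$ vanishings do not hold up, and this is where the actual work of the lemma sits.

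For $\mu_2$: reductions are length-homogeneous, so the path length of $\psi_{\bar X_0}\bar X_7$ stays $2mn-4 < 2mn$ throughout and can never ``exceed $2mn$'' --- Remark~\ref{remark:irreduciblegrading} applied in that way is vacuous. The paper's argument is different: after applying the involutive version of Lemma~\ref{lemma:higherrelations4} to fold the detour $\bar x_{n(m-1)-1\ldots nm-3}^0\,\bar y_{nm-3\ldots n(m-1)-1}^0$ back into the main chain, one obtains $(-1)^n\mu_2\,\bar x_{0\ldots nm-2}\bar y_{nm-2\ldots n+1}\bar x_{n+1}\bar y_{n+1\ldots 4}^2$, and this vanishes by iterating the type (I) relations at vertices (Lemma~\ref{lemma:higherrelations1}~\ref{lemma:higherrelations30}) until the monomial relation $\bar y_{2n-1}\bar x_{2n-1}=0$ is reached.

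For $\mu_4$: the claim that $\bar y_3^2\bar x_3^2$ ``reduces to $0$ for degree reasons'' is false. The pair $\bar y_3^2\bar x_3^2$ is a type (I) relation whose reduction is a nonzero sum of two terms ($-\bar x_4^2\bar y_4^2 - \bar x_4^{7'}\bar y_4^{7'}$ and a further term absorbed elsewhere), and the vanishing of the full $\mu_4$ contribution is \emph{not} a degree accident. It is a genuine cancellation: the paper isolates this computation in Lemma~\ref{lemma:A9}, where one carries out a chain of Pl\"ucker-type and anticommutativity reductions and finds that the result is a \emph{difference} of two expressions (displayed across \eqref{align:A10} and \eqref{last}) that turn out to be equal, so their difference is zero. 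No length argument or single application of Remark~\ref{remark:irreduciblegrading} will produce this; you have to run the full reduction and witness the cancellation. Your proposal therefore omits the nontrivial core of the lemma, which is exactly the content of Lemma~\ref{lemma:A9}.
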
 
\begin{proof}
Note that  $\delta(\psi)_{\bar X_0 X_7} = \psi_{\bar X_0} X_7 + \bar X_0 \psi_{\bar X_7}$. We claim that 
\begin{align*}
\psi_{\bar X_0} X_7 = (- \mu_1-(-1)^{n-m} \mu_3) \bar x_{0 \ldots nm-1} \bar y_{nm-1 \ldots n+2} \bar y_{n+1 \ldots 4}^2.
 \end{align*}
Indeed, since  $X_7 = \bar x_3^2$ we have
\begin{align*}
\begin{split}
\psi_{\bar X_0} X_7 &= \mu_1 \bar x_{0 \ldots nm-3}\bar y_{nm-3 \ldots 1} \bar x_3^2 \\
&\quad\ + \mu_2 \bar x_{0 \ldots n(m-1)-2} \bar x_{n(m-1)-1 \ldots nm-3}^0 \bar y_{nm-3 \ldots n(m-1)-1}^0 \bar y_{n(m-1)-2 \ldots 1} \bar x_3^2 \\
&\quad\ + \mu_3 \bar x_{0 \ldots nm-2} \bar y_{nm-2 \ldots n+2}  \bar y_{n+1 \ldots 3}^2  \bar x_3^2 \\
&\quad\ + \mu_4 \bar x_{0 \ldots nm-1} \bar y_{nm-1 \ldots 2n+3} \bar y_{2n+2 \ldots n+4}^6 \bar y_{n+1 \ldots 3}^2 \bar x_3^2
\end{split}
\\
&= (- \mu_1- (-1)^{n-m }\mu_3) \bar x_{0 \ldots nm-1} \bar y_{nm-1 \ldots n+2}\bar y_{n+1 \ldots 4}^2 
 \end{align*}
where the summands with coefficients $\mu_3$ and $\mu_4$ are computed in Lemma \ref{lemma:A9} below. Let us compute the first two summands with coefficients $\mu_1$ and $\mu_2$. For this, we apply the relation $\bar y_{n-1 \ldots 1}\bar x_3^2 = -\bar x_n \bar x_{n+1} \bar y_{n+1 \ldots 4}^2$ (which uses the anticommutativity relations $2n-3$ times) to the two summands and obtain 
\begin{multline}\label{align:A5}
{-\mu_1} \bar x_{0 \ldots nm-3}\bar y_{nm-3 \ldots n} \bar x_n \bar x_{n+1} \bar y_{n+1 \ldots 4}^2 \\
+ (-1)^{n} \mu_2 \bar x_{0 \ldots nm-2} \bar y_{nm-2 \ldots n+1}  \bar x_{n+1} \bar y_{n+1 \ldots 4}^2
\end{multline}
where the second summand also uses the involutive version of Lemma \ref{lemma:higherrelations4}. The second summand of \eqref{align:A5} vanishes by Lemma \ref{lemma:higherrelations1} \ref{lemma:higherrelations30} and the first one equals 
\begin{multline*}
{-\mu_1} \bar x_{0 \ldots nm-3}\bar y_{nm-3 \ldots n} \bar x_n \bar x_{n+1} \bar y_{n+1 \ldots 4}^2 \\
\begin{split}
&= -(-1)^{n+m}\mu_1\bar x_{0 \ldots nm-2} \bar y_{nm-2 \ldots n+1} \bar x_{n+1}\bar y_{n+1 \ldots 4}^2 \\
&\quad\ + \mu_1 \bar x_{0 \ldots nm-1} \bar y_{nm-1 \ldots 2n+2} \bar y_{2n+1 \ldots n+3}^5 \bar x_{n+1} \bar y_{n+1 \ldots 4}^2 \\
&= -\mu_1 \bar x_{0 \ldots nm-1} \bar y_{nm-1 \ldots n+2}\bar y_{n+1 \ldots 4}^2
\end{split}
\end{multline*}
where the first equality follows from Lemma \ref{lemma:higherrelations3},  in the second equality, the first summand  vanishes  by Lemma \ref{lemma:higherrelations1} \ref{lemma:higherrelations30} and for the second summand we use the anticommutativity relations ($2n-4$ times) and the Plücker-type relation (involving $\bar y_{n+4}^5 \bar y_{n+3}^{5'}$) once (applying Observation \ref{keyobservation} to see the vanishing of the extra term). This proves the claim. 

Since $\bar X_0 = \bar x_0$ we have that $\bar X_0 \psi_{\bar X_7} = \mu_{15}\bar x_{0 \ldots nm-1} \bar y_{nm-1 \ldots n+2} y_{n+1 \ldots 4}^2$ is already irreducible. We complete the proof by comparing the coefficients.
 \end{proof}
 
\subsection{The term \texorpdfstring{$\delta(\psi)_{\bar Y_1 \bar X_3}$}{} } 

\begin{lemma}
$\delta(\psi)_{\bar Y_1 \bar X_3} = \widetilde \varphi_{\bar Y_1 \bar X_3}$ if and only if the equation in Lemma \ref{lemma:appendix} involving $\alpha_8$ holds.
\end{lemma}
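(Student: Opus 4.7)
The plan is to expand $\delta(\psi)_{\bar Y_1 \bar X_3}$ by the Leibniz rule associated to the reduction of $\bar Y_1 \bar X_3 \in \SS_m^n$, then reduce each resulting summand to the single irreducible path $\bar x_{2 \ldots nm-1} \bar y_{nm-1 \ldots n+1} \bar y_{n \ldots 2}^1$, and finally match coefficients. First I would identify the relation $\varphi_{\bar Y_1 \bar X_3}$: inspection of Fig.~\ref{figure:counterexample} near the arrows $\bar Y_1 = \bar y_1$ and $\bar X_3$ shows that these fit into either a square (Fig.~\ref{figure:squares}) or a $3$-vertex Plücker configuration (cf.\ Fig.~\ref{figure:squares}~(c)), yielding a relation of the form $\bar Y_1 \bar X_3 + \sum_k \bar X_{i_k} \bar Y_{j_k} = 0$ by Proposition \ref{proposition:relationdual}. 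Thus
\[
\delta(\psi)_{\bar Y_1 \bar X_3} = \psi_{\bar Y_1}\bar X_3 + \bar Y_1 \psi_{\bar X_3} + \sum_k \bigl( \psi_{\bar X_{i_k}} \bar Y_{j_k} + \bar X_{i_k} \psi_{\bar Y_{j_k}} \bigr),
\]
and by the involution of Remark \ref{remark:involutionmapkosuzldaul} it suffices to compute one member of each involutive pair (the $\nu$-summands follow from the $\mu$-computation by exchanging $\mu_i \leftrightarrow \nu_i$).

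Next I would compute $\bar Y_1 \psi_{\bar X_3}$: of the three summands of $\psi_{\bar X_3}$ (with coefficients $\mu_9, \mu_{10}, \mu_{11}$), the first two are killed after reduction by the observation from Remark \ref{remark:irreduciblegrading} that $\bar y_1 \bar x_{1 \ldots *}$ cannot survive in the relevant irreducible degree together with the kill rule Lemma \ref{lemma:higherrelations1}\,\ref{lemma:higherrelations30}; for the $\mu_{11}$-summand, applying the identity $\bar y_1 \bar x_{3 \ldots n+1}^2 \bar x_{n+2 \ldots nm-1} = -\bar x_{2 \ldots nm-1}$ from \eqref{align:y1x3} produces a contribution of $-\mu_{11}$ times the target path. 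The contribution from $\psi_{\bar X_{i_k}} \bar Y_{j_k}$ will involve the terms with coefficients $\mu_{12}$ (from $\psi_{\bar X_4}$, which is a single clean expression and simply reduces under Lemma \ref{lemma:higherrelations1}\,\ref{lemma:higherrelations30}) and $\mu_{13}$ (from $\psi_{\bar X_5}$, requiring an anticommutativity plus a Plücker-type substitution across a square of type Fig.~\ref{figure:squares}~(c), with one branch eliminated by Observation~\ref{keyobservation}). The involutive partners then yield the $\nu_5, \nu_7, \nu_{13}$ contributions, with the signs $(-1)^m$ and $(-1)^n$ coming respectively from commuting the descending string $\bar x_{2 \ldots n}^1$ past $\bar x_{n+1 \ldots nm-1}$ (an application of Lemma \ref{lemma:higherrelations3}-type relations with appropriate index shifts) and from the parity of steps in the Plücker/anticommutativity chain.

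Collecting all contributions to the coefficient of $\bar x_{2 \ldots nm-1} \bar y_{nm-1 \ldots n+1} \bar y_{n \ldots 2}^1$ and equating with $\alpha_8$ will then give the desired equation
\[
\alpha_8 = -\mu_{11} + \mu_{12} + (-1)^{m}\nu_5 - (-1)^{n}\nu_7 + (-1)^{n}\nu_{13}.
\]
The main obstacle is bookkeeping: both identifying which of the Plücker branchings vanish by Observation \ref{keyobservation} and tracking the interplay of the two $\pm$ sign conventions (the $(-1)^{m}$ from sliding an ascending $\bar y$-arrow through the long descending spine of length $nm-1$ via Lemmas \ref{lemma:higherrelations1}, \ref{lemma:higherrelations3} and the $(-1)^{n}$ from permuting past the shorter $\bar y^1$-column of length $n-1$). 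Once these two sign computations are isolated, the remaining terms follow by routine Leibniz expansion parallel to the proofs for $\delta(\psi)_{\bar Y_0 \bar X_0}$ and $\delta(\psi)_{\bar Y_3 \bar X_3}$.
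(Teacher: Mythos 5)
The end result in your proposal is the correct equation, but two of the intermediate steps would not actually produce it if you carried them out.

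The central problem is the claim that ``by the involution of Remark \ref{remark:involutionmapkosuzldaul} it suffices to compute one member of each involutive pair.'' This short-cut is available for the equations involving $\alpha_1,\alpha_4,\alpha_5$ precisely because the corresponding relations $\bar Y_0\bar X_0 + \bar X_1\bar Y_1 + \bar X_3\bar Y_3 = 0$, $\bar Y_1\bar X_1+\bar X_2\bar Y_2=0$, $\bar Y_3\bar X_3+\bar X_6\bar Y_6=0$ are \emph{invariant} under $(-)^*$ term by term (each monomial $\bar Y_i\bar X_i$ or $\bar X_i\bar Y_i$ is fixed by $(-)^*$), so $\psi_{\bar Y_i}\bar X_i$ and $\bar Y_i\psi_{\bar X_i}$ really are paired up inside the same expression $\delta(\psi)_s$. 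Here the relation is $\bar Y_1\bar X_3 + \bar X_4\bar Y_5 = 0$; applying $(-)^*$ to the overlap $\bar Y_1\bar X_3$ produces $\bar Y_3\bar X_1$, which is a \emph{different} element of $\SS_m^n$. In other words $(\psi_{\bar Y_1}\bar X_3)^* = \bar Y_3\,\psi_{\bar X_1}$, which lives in $\delta(\psi)_{\bar Y_2\bar X_1}$, not in $\delta(\psi)_{\bar Y_1\bar X_3}$. The involution relates the entire $\alpha_8$ equation to the $\alpha_9$ equation (as stated at the outset of the appendix); it does not pair up terms within either one. All four summands
\[
\delta(\psi)_{\bar Y_1\bar X_3} = \psi_{\bar Y_1}\bar X_3 + \bar Y_1\psi_{\bar X_3} + \psi_{\bar X_4}\bar Y_5 + \bar X_4\psi_{\bar Y_5}
\]
must therefore be computed directly. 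If one followed your plan literally --- computing $\bar Y_1\psi_{\bar X_3}$ and $\psi_{\bar X_4}\bar Y_5$, and then generating the ``involutive partners'' by swapping $\mu\leftrightarrow\nu$ --- one would obtain $-\nu_{11}+\nu_{12}$ rather than the actual $(-1)^m\nu_5 - (-1)^n\nu_7 + (-1)^n\nu_{13}$, and the lemma would fail.

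A second, related slip: you bring in a contribution ``$\mu_{13}$ (from $\psi_{\bar X_5}$),'' but $\psi_{\bar X_5}$ never appears in $\delta(\psi)_{\bar Y_1\bar X_3}$. The term that actually occurs is $\bar X_4\psi_{\bar Y_5}$, and since $\bar Y_5 = \bar y_2^1$ this contributes $(-1)^n\nu_{13}$. The coefficients $\nu_5$ and $\nu_7$ come from $\psi_{\bar Y_1}\bar X_3$ (the $\nu_6$ summand in $\psi_{\bar Y_1}$ being killed by Remark \ref{remark:irreduciblegrading}, while the $\nu_5$ summand acquires its $(-1)^m$ from Lemma \ref{lemma:higherrelations3} and the $\nu_7$ summand its $-(-1)^n$ from an anticommutativity/Plücker chain). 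Your computation of $\bar Y_1\psi_{\bar X_3}$ and of $\psi_{\bar X_4}\bar Y_5$ is essentially correct, but you need to supplement it with the two computations you delegated to the involution, which in this lemma do not come for free.
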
 

\begin{proof}
We have  $\delta(\psi)_{\bar Y_1 \bar X_3} = \psi_{\bar Y_1} \bar X_3 + \bar Y_1 \psi_{\bar X_3} + \psi_{\bar X_4} \bar Y_5 + \bar X_4 \psi_{\bar Y_5}$. We claim that 
\begin{align*}
\psi_{\bar Y_1} \bar X_3 = ((-1)^{m}\nu_5 -(-1)^{n} \nu_7) \bar x_{2 \ldots nm-1} \bar y_{nm-1 \ldots n+1} \bar y_{n \ldots 2}^1.
\end{align*}
Indeed, we have
\begin{align*}
\begin{split}
\psi_{\bar Y_1} \bar X_3 &= \nu_5 \bar x_{2 \ldots nm-2} \bar y_{nm-2 \ldots 1} \bar X_3 + \nu_6 \bar x_{4 \ldots n+2}^3 \bar x_{n+3 \ldots nm-1} \bar y_{nm-1 \ldots 1} \bar X_3 \\
&\quad\ + \nu_7 \bar x_{2 \ldots nm-1} \bar y_{nm-1 \ldots n+2} \bar y_{n+1 \ldots 3}^2 \bar X_3
\end{split}
\\
\begin{split}
&= -(-1)^{n} \nu_5 \bar x_{2 \ldots nm-2} \bar y_{nm-2 \ldots n} \bar x_n \bar y_{n \ldots 2}^1 \\
&\quad\ -(-1)^{n} \nu_7 \bar x_{2 \ldots nm-1} \bar y_{nm-1 \ldots n+1} y_{n \ldots 2}^1
\end{split}
\\
&= ((-1)^{m}\nu_5 -(-1)^{n} \nu_7) \bar x_{2 \ldots nm-1} \bar y_{nm-1 \ldots n+1} \bar y_{n \ldots 2}^1
\end{align*}
where in the second equality, the summand with coefficient $\nu_6$ vanishes by Remark \ref{remark:irreduciblegrading}, the summand with $\nu_5$ uses the anticommutativity relations ($n-1$ times), and the summand with $\nu_7$ uses the anticommutativity relations ($n-4$ times) and the Plücker-type relation $\bar x_3^{2'} \bar x_4^2 = - \bar x_3^1 \bar x_4^{2'} - \bar x_5^3 \bar y_5^{3'}$ once. The term $\bar x_5^3 \bar y_5^{3'}$ vanishes by Observation \ref{keyobservation}. The third equality follows from Lemma \ref{lemma:higherrelations3}. 
 
Since $\bar Y_1 = \bar y_1$, we have
\begin{align*}
\begin{split}
\bar Y_1 \psi_{\bar X_3} &= \mu_9  \bar y_1 \bar x_{1 \ldots nm-2} \bar y_{nm-2 \ldots n+1} \bar y_{n \ldots 2}^1 \\
&\quad\ + \mu_{10} \bar y_1 \bar x_{1 \ldots nm-1} \bar y_{nm-1 \ldots 2n+2} \bar y_{2n+1 \ldots n+3}^5 \bar y_{n \ldots 2}^1 \\
&\quad\ + \mu_{11} \bar y_1 \bar x_{3 \ldots n+1}^2 \bar x_{n+2 \ldots nm-1} \bar y_{nm-1 \ldots n+1} \bar y_{n \ldots 2}^1
\end{split}
\\
&= -\mu_{11} \bar x_{2 \ldots nm-1} \bar y_{nm-1 \ldots n+1} \bar y_{n \ldots 2}^1
\end{align*}
where the first two summands vanish by Lemma \ref{lemma:higherrelations1} \ref{lemma:higherrelations30} and the third one follows from  \eqref{align:y1x3}. Since $\bar X_4 = \bar x_4^3$ we  have
\begin{align*}
\bar X_4 \psi_{\bar Y_5} &= \nu_{13}  \bar x_4^3  \bar x_{3 \ldots n}^1 \bar x_{n+1 \ldots nm-1} \bar y_{nm-1 \ldots n+1} \bar y_{n \ldots 2}^1 \\
&= (-1)^{n} \nu_{13} \bar x_{2 \ldots nm-1} \bar y_{nm-1 \ldots n+1} \bar y_{n \ldots 2}^1
\end{align*}
where the second equality uses the anticommutativity relations ($n-2$ times). 

Since $\bar Y_5 = \bar y_2^1$, the term $\psi_{\bar X_4} \bar Y_5 =  \mu_{12} \bar x_{2 \ldots nm-1} \bar y_{nm-1 \ldots n+1} \bar y_{n \ldots 2}^1$
is already irreducible.  We complete the proof by comparing the coefficient of each summand. 
\end{proof}

\subsection{The term \texorpdfstring{$\delta(\psi)_{\bar X_3 \bar X_5}$}{}} 
 
\begin{lemma}
$\delta(\psi)_{\bar X_3 \bar X_5} = \widetilde \varphi_{\bar X_3 \bar X_5}$ if and only if the equation in Lemma \ref{lemma:appendix} involving $\alpha_{10}$ holds.
\end{lemma}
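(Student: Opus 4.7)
The plan is to compute $\delta(\psi)_{\bar X_3 \bar X_5}$ directly by expanding all six (or more) contributions appearing in $T(\psi)(\bar X_3 \bar X_5 - \varphi_{\bar X_3 \bar X_5})$ and reducing the resulting paths to the irreducible path $\bar x_{1 \ldots nm-1} \bar y_{nm-1 \ldots n+1} \bar y^1_{n \ldots 3}$ parallel to $\widetilde\varphi_{\bar X_3 \bar X_5}$, following the same template used in the lemmas for $\delta(\psi)_{\bar Y_0 \bar X_0}$ and $\delta(\psi)_{\bar Y_1 \bar X_3}$. Since the relation $\bar X_3 \bar X_5 \in \SS_m^n$ arises from the Plücker--type/anticommutativity relation in a square of Fig.~\ref{figure:squares} (c) whose vertices sit in the top portion of Fig.~\ref{figure:counterexample}, the differential will produce contributions of the form $\psi_{\bar X_3} \bar X_5$, $\bar X_3 \psi_{\bar X_5}$, together with the terms from the other arrows in the same Plücker relation (namely an $\bar X_1 \bar X_6$-type pair and an $\bar X_0 \bar X_7$-type pair, inherited from the definition of the reduction system).

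First I would write out $\psi_{\bar X_3}\bar X_5$ explicitly. Using the definition of $\psi_{\bar X_3}$ the three summands with coefficients $\mu_9, \mu_{10}, \mu_{11}$ each get multiplied on the right by the descending arrow $\bar X_5$; by the same reasoning that appeared in Lemma \ref{lemma:A6} (i.e.\ applying Lemma \ref{lemma:higherrelations3} and then one Plücker-type relation involving $\bar x_3^{2'}\bar x_4^2$, with Observation \ref{keyobservation} killing the spurious summand) each such term reduces to a scalar multiple of the target irreducible path. The symmetric term $\bar X_3 \psi_{\bar X_5}$ is handled analogously using Lemma \ref{lemma:higherrelations1}\,\ref{lemma:higherrelations30}. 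The remaining contributions coming from the other arrows in the Plücker relation are already irreducible (they contribute the coefficients $\mu_{12}$, $\mu_6$, $\mu_{15}$), or reduce via a single application of the anticommutativity relation around the squares involving $\bar y^5_{n+3}$ and $\bar y^{5'}_{n+3}$, exactly as in the computation of $\psi_{\bar X_3}\bar Y_3$ in the lemma for $\delta(\psi)_{\bar Y_0\bar X_0}$. Collecting the resulting coefficients and comparing with $\alpha_{10}$ yields the desired equation.

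The main obstacle is bookkeeping: one must correctly identify which elements of the reduction system involve $\bar X_3 \bar X_5$ and therefore contribute to $\delta(\psi)$, and one must chase signs through repeated uses of the Plücker-type relation. The signs $(-1)^m$, $(-1)^n$ and the combinations appearing on the right-hand side of the $\alpha_{10}$ equation arise from (i) an overall $(-1)^{n-1}$ from applying $(2n-4)$ anticommutativity swaps to move a descending arrow across ascending ones, and (ii) a sign $(-1)^m$ from iteratively applying Lemma \ref{lemma:higherrelations3} to slide through the $m-1$ ``columns'' of the staircase in Fig.~\ref{figure:counterexample}. Observation \ref{keyobservation} is used repeatedly to ensure that every Plücker-type extra term of the form $\bar x'\bar y''$ vanishes, since the resulting subpath lies in a grading component of $\KK_m^n$ of dimension $0$ by Remark \ref{remark:irreduciblegrading}. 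Once all terms are reduced to multiples of the single irreducible path parallel to $\widetilde\varphi_{\bar X_3 \bar X_5}$, comparing coefficients gives precisely the equation involving $\alpha_{10}$ stated in Lemma \ref{lemma:appendix}, and the equivalence $\delta(\psi)_{\bar X_3\bar X_5} = \widetilde\varphi_{\bar X_3\bar X_5} \Leftrightarrow$ (that equation) follows.
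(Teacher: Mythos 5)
Your proposal has the right meta-template (reduce every contribution of $\delta(\psi)_{\bar X_3 \bar X_5}$ to the single irreducible path $\bar x_{1 \ldots nm-1} \bar y_{nm-1 \ldots n+1} \bar y^1_{n \ldots 3}$ and compare coefficients), but there are several concrete errors that would derail the computation.

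First, you misidentify the Plücker-type relation for $\bar X_3 \bar X_5$. You claim the other members are ``an $\bar X_1 \bar X_6$-type pair and an $\bar X_0 \bar X_7$-type pair''; in fact the relation is $\bar X_3 \bar X_5 + \bar X_1 \bar X_4 + \bar X_7 \bar y_3^{2'} \in \II_m^n$, so that
\[
\delta(\psi)_{\bar X_3 \bar X_5} = \psi_{\bar X_3}\bar X_5 + \bar X_3\psi_{\bar X_5} + \psi_{\bar X_1}\bar X_4 + \bar X_1 \psi_{\bar X_4} + \psi_{\bar X_7}\bar y_3^{2'}
\]
where the missing sixth term $\bar X_7 \psi_{\bar y_3^{2'}}$ vanishes because $\psi$ is zero on $\bar y_3^{2'}$. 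Neither $\bar X_0$ nor $\bar X_6$ appears; since $\bar X_0\bar X_7$ and $\bar Y_7\bar Y_0$ are themselves different elements of $\SS_m^n$ (treated in separate lemmas), conflating them with the relation for $\bar X_3\bar X_5$ would produce the wrong set of contributing $\mu_i$'s and $\nu_i$'s.

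Second, you assert that each of the three summands of $\psi_{\bar X_3}\bar X_5$ (with coefficients $\mu_9,\mu_{10},\mu_{11}$) ``reduces to a scalar multiple of the target irreducible path.'' In the paper only the $\mu_{10}$ summand survives: the $\mu_{11}$ term is killed immediately by the grading argument of Remark \ref{remark:irreduciblegrading}, and the $\mu_9$ term vanishes via a more delicate analysis using the relation $\bar y_2^1\bar x_2^1 = -\bar x_3^1\bar y_3^1 - \bar x_3^{2'}\bar y_3^{2'} - \bar x''\bar y''$ and then separately showing each of the resulting branches is zero. Skipping this point would give a spurious $\mu_9$-term in the $\alpha_{10}$ equation.

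Third, your sign heuristic --- ``an overall $(-1)^{n-1}$ from anticommutativity swaps'' plus ``$(-1)^m$ from Lemma \ref{lemma:higherrelations3}'' --- does not reproduce the actual coefficients, which are $-(-1)^m\mu_5 - \mu_6 + \mu_{10} + \mu_{12} - (-1)^n\mu_{13} + (-1)^n\mu_{15}$; different summands pick up signs $+1,\ -(-1)^m,\ -(-1)^n,\ (-1)^n$ depending on how Lemma \ref{lemma:higherrelations1}, Lemma \ref{lemma:higherrelations3}, the Plücker-type relations and Observation \ref{keyobservation} interact for that particular summand. There is no uniform formula; each term needs its own chase through the staircase of Fig.~\ref{figure:counterexample}.
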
 

\begin{proof}
We have $\delta(\psi)_{\bar X_3 \bar X_5} = \psi_{\bar X_3} \bar X_5 + \bar X_3 \psi_{\bar X_5} + \psi_{\bar X_1} \bar X_4 + \bar X_1 \psi_{\bar X_4} + \psi_{\bar X_7} \bar y_{3}^{2'}$. Then we claim that 
\[
\psi_{\bar X_3} \bar X_5 = \mu_{10} \bar x_{1 \ldots nm-1} \bar y_{nm-1 \ldots n+1} \bar y_{n \ldots 3}^1.
\]
Indeed, since $\bar X_5 = \bar x_2^1$ we have
\begin{equation}\label{align:mu9}
\begin{aligned}
\psi_{\bar X_3} \bar X_5 &= \mu_9 \bar x_{1 \ldots nm-2} \bar y_{nm-2 \ldots n+1} \bar y_{n \ldots 2}^1\bar x_2^1 \\
&\quad\ + \mu_{10} \bar x_{1 \ldots nm-1} \bar y_{nm-1 \ldots 2n+2}  \bar y_{2n+1 \ldots n+3}^5 \bar y_{n \ldots 2}^1 \bar x_2^1 \\
&\quad\ + \mu_{11} \bar x_{3 \ldots n+1}^2 \bar x_{n+2 \ldots nm-1} \bar y_{nm-1 \ldots n+1} \bar y_{n \ldots 2}^1\bar x_2^1 \\
&= \mu_{10}  \bar x_{1 \ldots nm-1} \bar y_{nm-1 \ldots n+1} \bar y_{n \ldots 3}^1
\end{aligned}
\end{equation}
where the third summand vanishes since $\bar y_{nm-1 \ldots n+1} \bar y_{n \ldots 2}^1\bar x_2^1 = 0$ by Remark \ref{remark:irreduciblegrading}. 

We first show that the first summand involving $\mu_9$ in \eqref{align:mu9} vanishes. Note that $\bar y_2^1 \bar x_2^1 = - \bar x_3^1 \bar y_3^1 - \bar x_3^{2'} \bar y_3^{2'} - \bar x'' \bar y''$ where $\bar x''$ is  the other longer arrow (which is missing in Fig.~\ref{figure:counterexample}) with the same starting vertex as $\bar x_3^1$.  So we have 
\begin{multline*}
\mu_9 \bar x_{1 \ldots nm-2} \bar y_{nm-2 \ldots n+1} \bar y_{n \ldots 2}^1\bar x_2^1 \\
= -\mu_9 \bar x_{1 \ldots nm-2} (\bar y_{nm-2 \ldots n+1} \bar y_{n \ldots 3}^1\bar x_3^1 \bar y_3^1 +  \bar y_{nm-2 \ldots n+1} \bar y_{n \ldots 3}^1\bar x_3^{2'} \bar y_3^{2'}) = 0
\end{multline*}
where the first equality uses $\bar y_{nm-2 \ldots n+1} \bar y_{n \ldots 3}^1\bar x'' = 0$ by Remark \ref{remark:irreduciblegrading}. In the second equality, the first summand vanishes by using  $\bar y_i^1 \bar x_i^1 = -\bar x_{i+1}^1 \bar y_{i+1}^1$ for $3 \leq i \leq n-1$ and $\bar y_n^1 \bar x_n^1 = 0$ and the second summand vanishes since  we have 
$$ \bar y_{nm-2 \ldots n+1} \bar y_{n \ldots 3}^1\bar x_3^{2'} = (-1)^n \bar y_{nm-2 \ldots n+1} \bar x_{n+1} \bar y_{n+1 \ldots 4}^2  = 0.$$ Here, the first equality uses the anticommutativity relations and the second one follows from Lemma \ref{lemma:higherrelations1} \ref{lemma:higherrelations30}.

We now compute the summand with $\mu_{10}$ in \eqref{align:mu9}. We have
\begin{multline*}
\mu_{10} \bar x_{1 \ldots nm-1} \bar y_{nm-1 \ldots 2n+2}  \bar y_{2n+1 \ldots n+3}^5 \bar y_{n \ldots 2}^1 \bar x_2^1 \\
\begin{split}
&= (-1)^{n-1}\mu_{10}  \bar x_{1 \ldots nm-1}\bar y_{nm-1 \ldots 2n+2} \bar y_{2n+1 \ldots n+3}^5 \bar x_{n+1} \bar y_{n+1 \ldots 4}^2 \bar y_3^{2'} \\
&= \mu_{10}  \bar x_{1 \ldots nm-1} \bar y_{nm-1 \ldots n+1} \bar y_{n \ldots 3}^1
\end{split}
\end{multline*} 
where the first equality is similar to the above computation, and the second equality uses the Plücker-type relations twice (involving $\bar y_4^2 \bar y_3^{2'}$ and $\bar y_{n+4}^2 \bar y_{n+3}^{2'}$) and  the anticommutativity relations ($3n-7$ times). This proves the claim. 
 
Using  \eqref{align:equalityY3} and Lemma \ref{lemma:higherrelations1} \ref{lemma:higherrelations30} we obtain 
\begin{align*}
\bar X_3 \psi_{\bar X_5} &= \mu_{13}  \bar X_3 \bar x_{2 \ldots n}^1 \bar x_{n+1 \ldots nm-1} \bar y_{nm-1 \ldots n+1}  \bar y_{n \ldots 3}^1 \\
&= (-1)^{n-1} \mu_{13} \bar x_{1 \ldots nm-1} \bar y_{nm-1 \ldots n+1} \bar y_{n \ldots 3}^1.
\end{align*}
Since $\bar X_4 = \bar x_4^3$ we have 
\begin{align*}
\begin{split}
\psi_{\bar X_1} \bar X_4 &= \mu_5 \bar x_{1 \ldots nm-2} \bar y_{nm-2 \ldots 2} \bar x_4^3 + \mu_6 \bar x_{1 \ldots nm-1} \bar y_{nm-1 \ldots n+3} \bar y_{n+2 \ldots 4}^3\bar x_4^3 \\
&\quad\ + \mu_7 \bar x_{3 \ldots n+1}^2 \bar x_{n+2 \ldots nm-1} \bar y_{nm-1 \ldots 2}\bar x_4^3
\end{split}
\\
\begin{split}
&= (-1)^n \mu_5 \bar x_{1 \ldots nm-2} \bar y_{nm-2 \ldots n} \bar x_n \bar y_{n \ldots 3}^1 \\
&\quad\ - \mu_6 \bar x_{1 \ldots nm-1} \bar y_{nm-1 \ldots n+1} \bar y_{n \ldots 3}^1
\end{split}
\\
&= ((-1)^{m-1}\mu_5 - \mu_6) \bar x_{1 \ldots nm-1} \bar y_{nm-1 \ldots n+1} \bar y_{n \ldots 3}^1
\end{align*}
where the summand with $\mu_7$ vanishes since $\bar y_{nm-1 \ldots 2}\bar x_4^3=0$  by Remark \ref{remark:irreduciblegrading}. 
 
To obtain the summand with coefficient $\mu_5$ we use the anticommutativity relations and Lemma \ref{lemma:higherrelations3}. To obtain the summand with $\mu_6$, we apply the relation 
$\bar y_4^3 \bar x_4^3 = -\bar x_{3}^{2'} \bar y_{3}^{2'}  -\bar x_3^{4'} \bar y_3^{4'}- \bar x_5^3 \bar y_5^3$ and obtain 
\begin{multline*}
\mu_6 \bar x_{1 \ldots nm-1} \bar y_{nm-1 \ldots n+3} \bar y_{n+2 \ldots 4}^3\bar x_4^3 \\
\begin{split}
&= -\mu_6 \bar x_{1 \ldots nm-1}(\bar y_{nm-1 \ldots n+3} \bar y_{n+2 \ldots 5}^3 \bar x_{3}^{2'} \bar y_{3}^{2'} + \bar y_{nm-1 \ldots n+3} \bar y_{n+2 \ldots 5}^3\bar x_3^{4'} \bar y_3^{4'}) \\
&= -\mu_6\bar x_{1 \ldots nm-1} \bar y_{nm-1 \ldots n+1} \bar y_{n \ldots 3}^1
\end{split}
\end{multline*}
where the first equality uses  $\bar y_{nm-1 \ldots n+3} \bar y_{n+2 \ldots 5}^3 \bar x_5^3 = 0$ by Remark \ref{remark:irreduciblegrading} and in the second equality we note that $\bar y_5^3, \bar x_3^{4'}$ lie in a square so that  
 $\bar y_5^3 \bar x_3^{4'} = - \bar x' \bar y''$ for some $\bar x', \bar y''$, then $\bar y_{nm-1 \ldots n+3} \bar y_{n+2 \ldots 5}^3 \bar x' = 0$ by Remark \ref{remark:irreduciblegrading} and thus the second summand vanishes. For the first summand we use the anticommutativity relations ($2n-5$ times) and the Plücker-type relation once (involving $\bar y_4^2 \bar y_3^{2'}$). 
 
 Similar to the second equality in  \eqref{align:y1x3} we have  
 \begin{align*}
  \psi_{\bar X_7} \bar y_{3}^{2'} ={} &  \mu_{15} \bar x_{1 \ldots nm-1} \bar y_{nm-1 \ldots n+2} y_{n+1 \ldots 4}^2\bar y_{3}^{2'}\\
  ={}&(-1)^{n} \mu_{15} \bar x_{1 \ldots nm-1} \bar y_{nm-1 \ldots n+1} \bar y_{n \ldots 3}^1.
 \end{align*}
Note that $ \bar X_1 \psi_{\bar X_4} =  \mu_{12}\bar x_{1 \ldots nm-1} \bar y_{nm-1 \ldots n+1} \bar y_{n \ldots 3}^1$ is already irreducible. 
\end{proof}

The following lemma was used in Lemma \ref{lemma:A6}.

\begin{lemma}\label{lemma:A9}
We have 
\begin{align*}
\mu_3 \bar y_{nm-2 \ldots n+2}  \bar y_{n+1 \ldots 3}^2  \bar x_3^2 &= -(-1)^{n-m} \mu_3  \bar y_{nm-1 \ldots n+2}\bar y_{n+1 \ldots 4}^2  \\
\mu_4 \bar y_{nm-1 \ldots 2n+3} \bar y_{2n+2 \ldots n+4}^6 \bar y_{n+1 \ldots 3}^2 \bar x_3^2 &=0.
\end{align*}
\end{lemma}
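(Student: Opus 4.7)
The plan is to establish both identities by systematically pushing the terminal descending arrow $\bar x_3^2$ through the ascending path on its left, using the quadratic relations of $\KK_m^n$ described in Proposition \ref{proposition:relationdual}, in the same spirit as the proofs of Lemmas \ref{lemma:higherrelations3} and \ref{lemma:higherrelations4}.

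For the first identity, I would rewrite
\[
\bar y_{nm-2\ldots n+2}\bar y_{n+1\ldots 3}^2\bar x_3^2 = \bar y_{nm-2\ldots n+2}\bar y_{n+1\ldots 4}^2\,(\bar y_3^2\bar x_3^2)
\]
and apply the relation at the common vertex of $\bar y_3^2$ and $\bar x_3^2$ to expand $\bar y_3^2\bar x_3^2$ as a sum of $\bar x\bar y$ monomials along the other ascending arrows from that vertex (of $\bar x^1$ and $\bar x^3$ type), modulo Plücker-type corrections. One then propagates the newly produced descending arrow upward through $\bar y_{n+1\ldots 4}^2$ and then through the top-row tail $\bar y_{nm-2\ldots n+2}$ by iterated anticommutativity relations across the squares of Fig.~\ref{figure:squares}, exactly mirroring the calculation that led to \eqref{align:equationinlemma:higherrelations3} in the proof of Lemma \ref{lemma:higherrelations3}. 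At each step, the extra summands arising from Plücker-type relations are killed using Observation \ref{keyobservation}, or else they become appended to ascending paths whose length exceeds the Adams-grading bound of Remark \ref{remark:irreduciblegrading} and therefore vanish. What survives is a single term which, after applying the involutive version of Lemma \ref{lemma:higherrelations3} to the top-row piece, gives the coefficient $-(-1)^{n-m}$ and the target form $\bar y_{nm-1\ldots n+2}\bar y_{n+1\ldots 4}^2$.

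For the second identity, the same pushing strategy applies to $\bar y_{2n+2\ldots n+4}^6\bar y_{n+1\ldots 3}^2\bar x_3^2$. After invoking the vertex relation to reduce $\bar y_3^2\bar x_3^2$, each of the resulting intermediate terms acquires either a $\bar y^2_{n+1}\bar x^2_{n+1}$-type pair at the top of the second row, which vanishes by the top-row version of Lemma \ref{lemma:higherrelations1} \ref{lemma:higherrelations30}, or an ascending continuation past the vertex terminating $\bar y^6_{2n+2}$, whose parallel irreducible paths fail the length bound of Remark \ref{remark:irreduciblegrading} and hence vanish. The overall length of the expression ($nm-4$, with the degree of both endpoints fixed) is what forces the grading obstruction to bite uniformly on all surviving summands.

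The main obstacle will be the careful bookkeeping: each application of a Plücker-type relation at a vertex of a square in Fig.~\ref{figure:squares} (c) produces three summands rather than two, and one must verify systematically that the ``third'' summand in each case is killed by Observation \ref{keyobservation} or Remark \ref{remark:irreduciblegrading} — this is the same combinatorial pattern already handled in Lemmas \ref{lemma:higherrelations3} and \ref{lemma:higherrelations4}, so those proofs serve as templates, but the presence of the full $\bar y^2$-chain makes the accounting lengthier than in the earlier lemmas.
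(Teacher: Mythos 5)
Your plan for the first identity tracks the paper reasonably well: you isolate $\bar y_3^2\bar x_3^2$, expand it via the relation at that vertex (the paper uses $\bar y_3^2\bar x_3^2 = -\bar x_4^2\bar y_4^2 - \bar x_4^{7'}\bar y_4^{7'}$ and then $\bar y_4^2\bar x_4^2 = -\bar x_5^2\bar y_5^2 - \bar x_5^{3'}\bar y_5^{3'}$), kill two of the three resulting summands by $\bar y_{n+1}^2\bar x_{n+1}^2 = 0$ together with Lemma \ref{lemma:higherrelations1} \ref{lemma:higherrelations30}, push what remains through the top-row tail by further anticommutativity and Pl\"ucker-type relations (Observation \ref{keyobservation} killing the side terms), and finally invoke Lemma \ref{lemma:higherrelations3} to get the sign $-(-1)^{n-m}$. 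That is the paper's computation.

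Your plan for the second identity, however, has a genuine gap: it rests on the assertion that after expanding $\bar y_3^2\bar x_3^2$, \emph{every} surviving summand dies individually, either by a $\bar y_{n+1}^2\bar x_{n+1}^2$-type top-of-row pair or by the Adams-grading bound of Remark \ref{remark:irreduciblegrading}. This is false. Only one of the three summands from the expansion vanishes for such reasons. The other two reduce, via the chain of anticommutativity and Pl\"ucker-type relations displayed in \eqref{align:A10} and \eqref{last}, to the \emph{same} nonzero irreducible path $\mu_4\,\bar y_{nm-1\ldots 2n+3}\,\bar y^6_{2n+2\ldots n+6}\,\bar y_{n+5}^{6'}\,\bar y_{n+4}^{5'}\,\bar y_{n+3}^{4'}\,\bar y_{n+2}\,\bar y^2_{n+1\ldots 4}$, parallel to the original and of the same path length, hence \emph{not} killed by Remark \ref{remark:irreduciblegrading}. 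They enter the sum with opposite signs and cancel. A uniform ``grading obstruction'' cannot see this: the mechanism is a nontrivial cancellation of a pair of nonzero terms, and establishing that both terms reduce to the same normal form with opposite signs is precisely the bulk of the work in the paper's proof of the second identity.
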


\begin{proof}
Let us prove the first identity. Note that we have
\begin{multline*}
\mu_3\bar y_{nm-2 \ldots n+2}  \bar y_{n+1 \ldots 3}^2  \bar x_3^2 \\
\begin{split}
&= \mu_3 \bar y_{nm-2 \ldots n+2} (\bar y_{n+1 \ldots 5}^2 \bar x_5^2\bar y_5^2 \bar y_4^2 + \bar y_{n+1 \ldots 5}^2 \bar x_5^{3'} \bar y_5^{3'}\bar y_4^2 - \bar y_{n+1 \ldots 4}^2\bar x_4^{7'} \bar y_4^{7'}) \\
&= - \mu_3 \bar y_{nm-2 \ldots 2n} \bar x_{2n} \bar y_{2n \ldots n+2} \bar y_{n+1 \ldots 4}^2
\end{split}
\end{multline*}
where the first equality uses $\bar y_3^2 \bar x_3^2 = - \bar x_4^2 \bar y_4^2 - \bar x_4^{7'} \bar y_4^{7'}$ and then $\bar y_4^2\bar x_4^2 = -\bar x_5^2 \bar y_5^2 - \bar x_5^{3'} \bar y_5^{3'}$. Let us explain the second equality: the first summand vanishes by using  $\bar y_i^2 \bar x_i^2 = - \bar x_{i+1}^2 \bar y_{i+1}^2$ for $5\leq i \leq n$ and $\bar y_{n+1}^2 \bar x_{n+1}^2  = 0$; the second summand vanishes since by the anticommutativity relations  $\bar y_i^2 \bar x_i^{3'} = - \bar x_{i+1}^{3'}\bar y_{i+1}^3$ for $5\leq i \leq n+1$ (denote $\bar x_{n+2}^{3'} = \bar x_{n+2}$) we have 
\[
\bar y_{nm-2 \ldots n+2}  \bar y_{n+1 \ldots 5}^2 \bar x_5^{3'}  =-(-1)^n \bar y_{nm-2 \ldots n+2}  \bar x_{n+2} \bar y_{n+2 \ldots 6}^3 = 0
\] 
where the second equality follows from Lemma \ref{lemma:higherrelations1} \ref{lemma:higherrelations30}, and for the third summand we use the anticommutativity relations ($4n-7$ times) and the Plücker-type relation $\bar y_5^7 \bar y_4^{7'} =-\bar y_5^{7'} \bar y_4^2 - \dotsb$ (Observation \ref{keyobservation} is applied to kill the extra term here). Then the desired equality follows from Lemma \ref{lemma:higherrelations3}. 

Let us prove the second identity. Note that we have
\begin{multline}
\label{align:A10}
\mu_4 \bar y_{nm-1 \ldots 2n+3} \bar y_{2n+2 \ldots n+4}^6 \bar y_{n+1 \ldots 3}^2 \bar x_3^2 \\
\begin{split}
&= (-1)^{n+1}\mu_4 \bar y_{nm-1 \ldots 2n+3} \bar y_{2n+2 \ldots n+4}^6 (\bar x_{n+2}\bar y_{n+2 \ldots 6}^3 \bar y_5^{3'} \bar y_4^2 +\bar x_{n+4}^6 \bar y_{n+2 \ldots 5}^7\bar y_4^{7'} )\\
&= \mu_4 \bar y_{nm-1 \ldots 2n+3} \bar y_{2n+2 \ldots n+4}^6 (\bar x_{n+2} \bar y_{n+2} \bar y_{n+1 \ldots 4}^2 - \bar x_{n+4}^6 \bar y_{n+4}^6 \bar y_{n+1 \ldots 4}^2 )
\end{split}
\end{multline}
where the first equality follows from a similar computation as above, the second one uses the relations  $\bar y_i^3 \bar y_{i-1}^{3'} = - \bar y_i^{3'} \bar y_{i-1}^2$ and $\bar y_j^7 \bar y_{j-1}^{7'} = - \bar y_j^{7'} \bar y_{j-1}^2$ for $6 \leq i  \leq n+2$ and $ 5 \leq j \leq n+2$  (denote $y_{n+2}^{3'}= \bar y_{n+2}$ and $y_{n+2}^{7'}=y_{n+4}^6$). 

The first summand on the right-hand side of the second equality in \eqref{align:A10} equals
\begin{multline*}
\mu_4 \bar y_{nm-1 \ldots 2n+3} \bar y_{2n+2 \ldots n+4}^6 \bar x_{n+2} \bar y_{n+2} \bar y_{n+1 \ldots 4}^2 \\
= \mu_4 \bar y_{nm-1 \ldots 2n+3} \bar y_{2n+2 \ldots n+6}^6  \bar y_{n+5}^{6'} \bar y_{n+4}^{5'} \bar y_{n+3}^{4'} \bar y_{n+2} \bar y_{n+1 \ldots 4}^2
\end{multline*}
where we first use $\bar y_{n+4}^6 \bar x_{n+2} = - \bar x_{n+3}^4\bar y_{n+3}^{4'} $ and then  $\bar y_{n+5}^6 \bar x_{n+3}^4 = -\bar y_{n+5}^{6'} \bar y_{n+4}^{5'}.$

Let us compute the second summand in \eqref{align:A10}. Note that $\bar y_{n+4}^6 \bar x_{n+4}^6 = -\bar x_{n+3}^{5'} \bar y_{n+3}^{5'} -\bar x_{n+5}^6 \bar y_{n+5}^6 - \bar x' \bar y'$ where $\bar x'$ is the  arrow (which is missing in Fig.~\ref{figure:counterexample}) with the same starting vertex as $\bar x_{n+5}^6$. Then we have
\begin{multline}
\label{last}
\mu_4 \bar y_{nm-1 \ldots 2n+3} \bar y_{2n+2 \ldots n+4}^6\bar x_{n+4}^6 \bar y_{n+4}^6 \bar y_{n+1 \ldots 4}^2\\
\begin{aligned}
&= -\mu_4 \bar y_{nm-1 \ldots 2n+3} \bar y_{2n+2 \ldots n+5}^6(\bar x_{n+3}^{5'} \bar y_{n+3}^{5'} +\bar x_{n+5}^6 \bar y_{n+5}^6 + \bar x' \bar y') \bar y_{n+4}^6 \bar y_{n+1 \ldots 4}^2\\
&= - \mu_4 \bar y_{nm-1 \ldots 2n+3} \bar y_{2n+2 \ldots n+5}^6 \bar x_{n+3}^{5'} \bar y_{n+3}^{5'}  \bar y_{n+4}^6 \bar y_{n+1 \ldots 4}^2\\
&= \mu_4 \bar y_{nm-1 \ldots 2n+3} \bar y_{2n+2 \ldots n+6}^6  \bar y_{n+5}^{6'} \bar y_{n+4}^{5'} \bar y_{n+3}^{4'} \bar y_{n+2} \bar y_{n+1 \ldots 4}^2.
\end{aligned}
\end{multline}
To see the second equality of \eqref{last} follows since the second summand vanishes using $\bar y_{n+5}^6 \bar x_{n+5}^6 = 0$ and the third summand vanishes, since $\bar y_{n+5}^6 \bar x'$ lies in a square so that $\bar y_{n+5}^6 \bar x' = - \bar x'' \bar y'''$ for some arrows $\bar x'', \bar y'''$. We then have 
$$
\bar y_{nm-1 \ldots 2n+3} \bar y_{2n+2 \ldots n+5}^6\bar x' = - \bar y_{nm-1 \ldots 2n+3} \bar y_{2n+2 \ldots n+6}^6\bar x'' y'''= 0
$$
where the second equality follows from Remark \ref{remark:irreduciblegrading}.  
For the third equality of \eqref{last} we use $\bar y_{n+5}^6\bar x_{n+3}^{5'} = - \bar y_{n+5}^{6'} \bar y_{n+4}^{5}$,  $\bar y_{n+4}^{5} \bar y_{n+3}^{5'}  = - \bar y_{n+4}^{5'} \bar y_{n+3}^4 - \dotsb$ and then $ \bar y_{n+3}^4 \bar y_{n+4}^{6} = -\bar y_{n+3}^{4'} \bar y_{n+2}$ (the extra term $\dotsb$ vanishes due to Observation \ref{keyobservation}).

Substituting the above equalities into \eqref{align:A10} we obtain the desired equality. 
\end{proof}

\endgroup

\end{document}